\setlist{itemsep=4pt}
\title{Multiprojective Seshadri stratifications and Young-tableaux}
\author{Henrik Müller}
\address{Department Mathematik/Informatik, Universität zu Köln, Weyertal 86-90, 50931 Cologne, Germany}
\email{\href{henrikmueller.math@gmail.com}{henrikmueller.math@gmail.com}}
\date{}
\theoremstyle{plain}
\newtheorem{theorem}{Theorem}[section]
\numberwithin{theorem}{section}
\newtheorem{lemma}[theorem]{Lemma}
\newtheorem{corollary}[theorem]{Corollary}
\newtheorem{proposition}[theorem]{Proposition}
\theoremstyle{definition}
\newtheorem{definition}[theorem]{Definition}
\newtheorem{example}[theorem]{Example}
\newtheorem{remark}[theorem]{Remark}
\begin{document}

\setstretch{1.07}

\newcommand{\Abk}[1][Abk]{#1.\xspace}
\newcommand{\ie}{\Abk[i.\,e]}
\newcommand{\wwlog}{\Abk[w.\,l.\,o.\,g]}
\newcommand{\wrt}{\Abk[w.\,r.\,t]}
\newcommand{\WWlog}{\Abk[W.\,l.\,o.\,g]}
\newcommand{\loccit}{\textit{loc.\,cit.}}

\newcommand{\B}{\mathbb{B}}
\renewcommand{\F}{\mathbb{F}}
\renewcommand{\K}{\mathbb{K}}
\renewcommand{\A}{\mathbb{A}}
\renewcommand{\N}{\mathbb{N}}
\newcommand{\Z}{\mathbb{Z}}
\newcommand{\PP}{\mathbb{P}}
\newcommand{\Q}{\mathbb{Q}}
\newcommand{\R}{\mathbb{R}}

\newcommand{\set}[1]{\{#1\}}
\newcommand{\Set}[1]{\left\{#1\right\}}

\newcommand{\id}{\mathrm{id}}
\newcommand{\Spec}{\operatorname{Spec}}
\newcommand{\Proj}{\operatorname{Proj}}
\newcommand{\Multiproj}{\operatorname{Multiproj}}
\newcommand{\supp}{\operatorname{supp}}
\newcommand{\longhookrightarrow}{\lhook\joinrel\longrightarrow}
\newcommand{\ulW}{\underline W}

\newlist{abbrv}{itemize}{1}
\setlist[abbrv,1]{label=,labelwidth=1in,align=parleft,leftmargin=!,noitemsep}

\newcommand{\whatisthis}{article}

\numberwithin{equation}{section}


\begin{abstract}
We provide an algebraic-geometrical interpretation of the classical semistandard Young-tableaux via the notion of Seshadri stratifications. The columns appearing in such a tableau correspond to vanishing multiplicities of certain rational functions on Schubert varieties. To build a framework for this correspondence we generalize Seshadri stratifications to multiprojective varieties, which forms the largest part of this \whatisthis{}.
\end{abstract}

\maketitle
\thispagestyle{empty}

\section{Introduction}

Consider the simple algebraic group $G = \mathrm{SL}_n(\K)$ over a field $\K$ of characteristic zero and the Borel subgroup $B \subseteq G$ of all upper triangular matrices. The full flag variety $G/B$ can be embedded into a product of projective spaces via the Plücker embedding:
\begin{align*}
    G/B \mkern2mu \longhookrightarrow \mkern2mu \prod_{i=1}^{n-1} \mkern2mu \PP(\hbox{$\bigwedge\nolimits^{i} \K^n$}).
\end{align*}
The Plücker coordinates generate the multihomogeneous coordinate ring $R$ as a $\K$-algebra and they are indexed by the non-empty, proper subsets of $\set{1, \dots, n}$. It is well known that the monomials corresponding to semistandard Young-tableaux form a basis of $R$ (\cite[Chapter 2]{seshadri2016introduction}). This is one of the earliest examples of a standard monomial theory (SMT). It dates back to the 1940s, where Hodge described a similar basis for Grassmann varieties. However there still exists no clear definition of a SMT. This term rather refers to specific examples, which usually come from the representation theory of semisimple algebraic groups or Lie algebras. Given an algebra generated by a finite set $S$, the set of all monomials in $S$ generate this algebra as a vector space. One tries to extract a basis from this generating set via combinatorial methods. The basis vectors are then called \textit{standard} and every monomial in $S$ not belonging to this basis is called \textit{non-standard}.

This leads us to the following result motivating this \whatisthis{}. We obtain a geometrical interpretation of the above SMT by constructing a quasi-valuation $\mathcal V$ with at most one-dimensional leaves on $R$ and by proving the following statement.

\vskip 10pt
\noindent
\textbf{Theorem} (Corollary~\ref{cor:smt_type_A}). The elements in the image of $\mathcal V$ correspond bijectively to semistandard Young-tableaux with less than $n$ rows and entries in $\set{1, \dots, n}$.
\vskip 10pt

This geometrical interpretation of Young-tableaux is based on the work of Chiriv{\`i}, Fang and Littelmann in~\cite{seshstrat} and \cite{seshstratandschubvar}. They introduced the notion of a \textit{Seshadri strati\-fi\-cation} on an embedded projective variety $X \subseteq \PP(V)$. It consists of a family $(X_p)_{p \in A}$ of closed subvarieties $X_p \subseteq X$ indexed by a finite, graded poset $A$ of length $\dim X$ and a homogeneous function $f_p$, called \textit{extremal function}, in the homogeneous coordinate ring $\K[X]$ of $X$ for each $p \in A$. Of course, this data needs to fulfill certain conditions. For example, every variety $X_p$ has to be irreducible and smooth in codimension one and the grading on $A$ is required to be compatible with the dimensions of the subvarieties, \ie $X_q$ is a divisor in $X_p$, if and only if $q < p$ is a covering relation in $A$.

Seshadri stratifications use a web of subvarieties in contrast to the Newton-Okounkov theoretical approach (\cite{kaveh2012newton}, \cite{lazarsfeld2009convex}), which uses a flag of subvarieties. By taking successive vanishing multiplicities along this web, every Seshadri stratification induces a quasi-valuation $\mathcal V: \K[X] \setminus \set{0} \to \Q^A$, which can be thought of as a filtration of the homogeneous coordinate ring $\K[X]$. In general, the quasi-valuation $\mathcal V$ is not quite canonical, as it depends on the choice of a total order $\geq^t$ linearizing the partial order on $A$. The subquotients (called \textit{leaves}) of the filtration on $\K[X]$ are at most one-dimensional and they are indexed by the image $\Gamma$ of $\mathcal V$, which is a union of finitely generated semigroups $\Gamma_{\mathfrak C}$ over all maximal chains $\mathfrak C$ in the poset $A$. Hence $\Gamma$ is called the \textit{fan of monoids} to the stratification. The projective variety $X$ degenerates into a reduced union of the toric varieties to these semigroups $\Gamma_{\mathfrak C}$ via a Rees algebra construction. To each semigroup $\Gamma_{\mathfrak C}$ one can also associate a Newton-Okounkov body, which turns out to be a simplex. Hence for Seshadri stratifications, the Newton-Okounkov body of a flag of subvarieties is replaced by a simplicial complex.

For each \textit{normal} Seshadri stratification, \ie the semigroups $\Gamma_{\mathfrak C}$ are saturated, the fan of monoids $\Gamma$ defines a standard monomial theory on the homogeneous coordinate ring $\K[X]$. Every element in $\Gamma$ can be uniquely decomposed as a sum of indecomposable elements. When choosing a regular function $x_{\underline a}$ for each indecomposable element $\underline a \in \Gamma$ then the monomials in these functions generate $\K[X]$ as a vector space. Such a monomial $x_{\underline a^1} \cdots x_{\underline a^s}$ is called \textit{standard}, if and only if $\underline a^1 + \dots + \underline a^s$ is contained in $\Gamma$. The standard monomials form a basis of $\K[X]$. If the stratification is also \textit{balanced}, that is to say $\Gamma$ does not depend on the choice of the linearization $\geq^t$ of the partial order, then the resulting SMT is induced by the stratification itself.

To formally state and prove the above theorem, we therefore seek to construct a normal and balanced Seshadri stratification on $G/B$. This first requires answering the following question. 

\vskip 10pt
\noindent
\textbf{Question}. Can the Seshadri stratifications introduced in \cite{seshstrat} be generalized to multiprojective varieties, \ie projective varieties $X$ embedded into a product $\prod_{i=1}^m \PP(V_i)$ of projective spaces? These multiprojective stratifications should still induce a fan of monoids indexing a basis of the multihomogeneous coordinate ring.
\vskip 10pt

Therefore only the last section of this \whatisthis{} is devoted to the motivating example $G/B$. In all other sections we adapt the results of~\cite{seshstrat} to multiprojective varieties. We start by introducing multiprojective Seshadri stratifications. In contrast to the ordinary Seshadri stratifications in~\loccit{}, a stratum $X_p$ need not be a subvariety of $X$ itself, but of a projection of $X$ into a product $\prod_{i \in I_p} \PP(V_i)$ indexed by a non-empty subset $I_p \subseteq \set{1, \dots, m}$. The collection $\mathcal I = \set{I_p \mid p \in A}$ of these sets is called the \textit{index poset} of the stratification, which is an additional structure not visible for ordinary stratifications. By taking the affine multicones $\hat X_p$ of $X_p$, one can still view $\hat X_q$ as a closed subvariety of $\hat X_p$, if and only if $q \leq p$. The extremal function $f_p$ is chosen to be a multihomogeneous element of the multihomogeneous coordinate ring $\K[X]$, such that the degree of $f_p$ is compatible with the index set $I_p$.

For multiprojective stratifications one can still define a quasi-valuation $\mathcal V: \K[X] \setminus \set{0} \to \Q^A$ inducing a filtration on $\K[X]$ with at most one-dimensional leaves and a fan $\Gamma$ of finitely generated monoids. Most of the results in~\cite{seshstrat} can be directly generalized. The big difference to the original Seshadri stratifications lies in the Newton-Okounkov theory. Instead of a simplicial complex, we obtain a family of polytopal complexes, which is parametrized by the elements $\underline d \in \N_0^m$. Each semigroup $\Gamma_{\mathfrak C}$ to a maximal chain $\mathfrak C$ in $A$ defines a polytope $\Delta_{\mathfrak C}^{(\underline d)}$ and its faces correspond to certain subchains of $\mathfrak C$. Almost all of these polytopal complexes carry information about the variety $X$, e.\,g. its dimension, but in certain edge cases the dimension of the polytopal complex can actually be smaller than $\dim X$. Similar to \cite[Theorem 13.6]{seshstrat}, the volumes of these polytopal complexes with respect to certain lattices compute the leading term of the Hilbert polynomial. 

\vskip 10pt
\noindent
\textbf{Theorem} (Theorem~\ref{thm:G_R_formula}). The (homogeneous) leading term $G \in \Q[x_1, \dots, x_m]$ of the multivariate Hilbert polynomial of $X$ is given by
\begin{align*}
    G(\underline d) = \sum_{\mathfrak C} \operatorname{vol}(\mathrm{pr}_{\mathfrak C}(\Delta_{\mathfrak C}^{(\underline d)}))
\end{align*}
for all $\underline d \in \N_0^m$. The sum runs over all maximal chains in $A$ and $\mathrm{pr}_{\mathfrak C}$ is a suitable linear projection map.
\vskip 10pt

In the last section we return to the initial example $G/B$ and construct a multiprojective Seshadri stratification on this variety. In fact, we even construct a stratification on each partial flag variety in Dynkin type \texttt{A}, that means on all $G/Q$, where $Q \subseteq G$ is a parabolic subgroup containing $B$. This construction uses the combinatorial ideas from \cite{lakshmibaiGP4} and \cite{seshadri2016introduction}. As expected, the elements in the fan of monoids $\Gamma$ correspond to certain semistandard Young-tableaux. In addition, the stratification is normal and balanced and the resulting standard monomial theory coincides with the classical Hodge-Young theory of standard monomials in Pl\"ucker coordinates indexed by semistandard Young-tableaux.

It is a natural question, whether this stratification can be generalized to Schubert varieties, ideally even to other Dynkin types. The answer will be published in a separate article, where we construct multiprojective stratifications for certain representation-theoretically motivated embeddings into products of projective spaces. Being able to stratify Schubert varieties of $\mathrm{SL}_n(\K)$ will be the key for this generalization, as all difficulties already appear in type \texttt{A}. In the last subsection we therefore briefly illustrate the arising problems in the Schubert variety case.

This \whatisthis{} is based on the second and third chapter of the author's Ph.\,D. thesis \cite{thesis}. 

\section{Multiprojective Seshadri stratifications}

Throughout this article we fix an algebraically closed field $\K$ and a \textit{multiprojective} variety $X$, \ie a (Zariski-)closed subset $X \subseteq \PP(V_1) \times \dots \times \PP(V_m)$, where $V_1, \dots, V_m$ are finite-dimensional vector spaces over $\K$. Usually, we set $r$ to be the dimension of $X$. We included a section about multiprojective varieties in the Appendix~\ref{subsec:multiproj_varieties}, but for the most part they behave analogously to embedded projective varieties. There also exists a short list of notations after the appendix.

The multicone $\hat X$ of $X$ is a closed subvariety of the affine space $V = V_1 \times \dots \times V_m$. Let $R = \K[X] = \K[\hat X]$ be the multihomogeneous coordinate ring of $X$. We write $[k]$ for the set of all integers between $1$ and $k \in \N$. Each subset $I \subseteq [m]$ comes with the two natural projections
\begin{align}
    \label{eq:def_projection_maps}
    \pi_I: \prod_{i \in [m]} \PP(V_i) \twoheadrightarrow \prod_{i \in I} \PP(V_i) \quad \text{and} \quad \hat \pi_I: \prod_{i \in [m]} V_i \twoheadrightarrow \prod_{i \in I} V_i
\end{align}
as well as the multiprojective variety $X_I = \pi_I(X)$. Note that the multicone $\hat X_I$ of $X_I$ coincides with the image of $\hat X$ under the map $\hat \pi_I$. The surjection $\hat X \twoheadrightarrow \hat X_I$ induces an embedding of the multihomogeneous coordinate ring $\K[X_I]$ onto a graded subalgebra of $R$, namely the direct sum of all homogeneous components $R_{\underline d} \subseteq R$ for tuples $\underline d = (d_1, \dots, d_m) \in \N_0^m$ where $d_j = 0$ for all $j \notin I$.

Analogous to the definition of a Seshadri stratification in~\cite{seshstrat}, we fix a finite set $A$, a collection $\set{X_p \mid p \in A}$ of irreducible projective varieties, which are smooth in codimension one, and a collection of functions $\set{f_p \in R \mid p \in A}$ called \textbf{extremal functions}. The main difference to the original definition is that $X_p$ no longer needs to be a subvariety or even a subset of $X$. Instead we fix a third collection $\set{I_p \subseteq [m] \mid p \in A}$ of non-empty subsets of $[m]$ and require that $X_p$ is a closed subvariety of $X_{I_p} = \pi_{I_p}(X)$. If we view the affine space $\prod_{i \in I_p} V_i$ as a closed subvariety of $V$ via the linear embedding $\prod_{i \in I_p} V_i \hookrightarrow V$, then $\hat X_p$ can be seen as a closed subvariety of $\hat X$. This allows us to equip the set $A$ with the partial order $\leq$, such that $q \leq p$ if and only if $\hat X_q \subseteq \hat X_p$. The function $f_p$ needs to be non-constant, multihomogeneous and included in the subring $\K[X_{I_p}] \subseteq R$.

\begin{definition}[Multiprojective Seshadri stratification]
    \label{def:multiproj_seshadri_strat}
    These three collections of varieties, extremal functions and index sets are called a \textbf{(multiprojective) Seshadri stratification}, if there exists an element $p_{\text{max}} \in A$ with $I_{p_{\text{max}}} = [m]$ and $X_{p_{\text{max}}} = X$ and the following three conditions are fulfilled:
    \begin{enumerate}[label=(S\arabic{enumi})]
        \item \label{itm:seshadri_strat_a} If $q < p$ is a covering relation, then $\hat X_q \subseteq \hat X_p$ is a codimension one subvariety (where both are seen as subvarieties of $V$);
        \item \label{itm:seshadri_strat_b} The function $f_q$ vanishes on $\hat X_p$, if $q \nleq p$;
        \item \label{itm:seshadri_strat_c} For each $p \in A$ holds the set-theoretic equality
        \begin{equation}
        \label{eq:s3}
        \set{x \in \hat X \mid f_p(x) = 0} \cap \hat X_p = \set{0} \cup \bigcup_{p \;\text{covers}\; q} \hat X_q.
        \end{equation}
    \end{enumerate}
\end{definition}

Notice, that for $m = 1$ the notion of a multiprojective Seshadri stratification coincides with notion of a Seshadri stratification introduced in \cite{seshstrat}. In this case all strata $X_p$ for $p \in A$ are closed subvarieties of $X$, since $I_p = \set{1}$. 

The affine multicone $\hat X \subseteq V$ of $X$ is the affine cone of a projective variety $\widetilde X \subseteq \PP(V)$. Hence every multiprojective Seshadri stratification on $X \subseteq \prod_{i=1}^m \PP(V_i)$ can also be seen as a Seshadri stratification on $\widetilde X$. Therefore one can informally say that every result in \loccit{}, where the grading on $R$ is not involved, does also hold in the multiprojective case. As a first example: The poset $A$ is a graded poset of length $\dim \widetilde X = \dim \hat X - 1$, that is to say all maximal chains have length $\dim \widetilde X$. The rank of an element $p \in A$ is given by $r(p) = \dim \hat X_p - 1$.

\begin{proposition}
    Every multiprojective variety $X \subseteq \PP(V_1) \times \dots \times \PP(V_m)$ admits a Seshadri stratification.
\end{proposition}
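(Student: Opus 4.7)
I would proceed by induction on the dimension of the affine multicone $\hat X$, tacitly assuming $X$ is irreducible (this is forced, since the axioms require $X_{p_{\text{max}}}$ to be irreducible); then $R = \K[\hat X]$ is an integral domain. For the base case $\dim X = 0$, the variety $X$ is a single point $(\bar x_1, \ldots, \bar x_m)$ in $\prod_{i=1}^m \PP(V_i)$. An explicit stratification works: take $A$ to be the poset of non-empty subsets of $[m]$ ordered by inclusion, set $I_I = I$, $X_I = \pi_I(X)$, and $f_I = \prod_{i \in I} \ell_i$ with each $\ell_i \in V_i^*$ a chosen linear form non-vanishing on a lift of $\bar x_i$. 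A direct check confirms all axioms.

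For the inductive step with $\dim X > 0$, pick a non-constant multihomogeneous $f \in R$; for concreteness take a coordinate form in $V_j^*$ for some $j$ that does not vanish identically on $\hat X$. Since $R$ is a domain, Krull's principal ideal theorem gives that every irreducible component $Y_1, \ldots, Y_s$ of the zero locus of $f$ in $\hat X$ has codimension one. Each $Y_\ell$ is stable under the connected torus $(\K^*)^m$, hence multi-conical, and so $Y_\ell = \hat Z_\ell$ for some irreducible multiprojective variety $Z_\ell \subseteq \prod_{i \in J_\ell} \PP(V_i)$, with $J_\ell \subseteq [m]$ the set of factors in which $Z_\ell$ is non-trivial. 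Apply the inductive hypothesis to each $Z_\ell$ (its multicone has strictly smaller dimension than $\hat X$) to obtain a Seshadri stratification with poset $A_\ell$ and data.

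These are then glued into a stratification of $X$. The combined poset is
\[
A = \{p_{\text{max}}\} \sqcup A_1 \sqcup \cdots \sqcup A_s,
\]
with $p_{\text{max}}$ declared to cover the maximum of each $A_\ell$. Put $I_{p_{\text{max}}} = [m]$, $X_{p_{\text{max}}} = X$, $f_{p_{\text{max}}} = f$; for $q \in A_\ell$ inherit $I_q, X_q$ from the sub-stratification of $Z_\ell$ and lift each extremal function from $\K[Z_\ell]$ to a multihomogeneous preimage in $\K[X_{I_q}] \subseteq R$ via the graded surjection $\K[X_{J_\ell}] \twoheadrightarrow \K[Z_\ell]$. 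Axiom \ref{itm:seshadri_strat_a} at the top holds by Krull, and axioms \ref{itm:seshadri_strat_b}, \ref{itm:seshadri_strat_c} at the top follow directly from the decomposition of the zero locus of $f$; the axioms deeper in $A$ transfer from the inductive hypothesis, because the vanishing conditions depend only on the restriction of each extremal function to the relevant stratum.

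The main obstacles are twofold. First, the strata must be smooth in codimension one, which is not automatic for arbitrary choices of $f$; this is handled by a multihomogeneous Bertini-type argument, selecting $f$ generically among multihomogeneous elements of a given multidegree so that the components $Y_\ell$ inherit the $R_1$ property from $\hat X$. Second, the partial order on $A$ is intrinsically defined by inclusion of affine multicones, so one must rule out unwanted containments $\hat X_q \subseteq \hat X_{q'}$ between strata coming from different components $A_\ell$ and $A_{\ell'}$. These coincidental containments can be ruled out by choosing $f$ sufficiently generic and by identifying equal strata across the disjoint union; once this bookkeeping is carried out, the gradedness and correct length $\dim \widetilde X$ of $A$ follow from the codimension-one accounting at each step.
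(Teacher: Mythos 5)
Your route (induction on $\dim \hat X$, cutting with a multihomogeneous $f$ and gluing stratifications of the components of $V(f)\cap\hat X$) is genuinely different from the paper, which instead passes to the Segre embedding, cites the existence of an ordinary Seshadri stratification of $X\subseteq\PP(V_1\otimes\dots\otimes V_m)$ from \cite[Proposition 2.11]{seshstrat}, pulls back the extremal functions, and only appends $m-1$ new bottom ranks consisting of products of lines. Unfortunately your inductive step has two concrete gaps. First, the cross-branch conditions are not addressed: condition \ref{itm:seshadri_strat_b} demands that $f_q$ vanish on $\hat X_p$ whenever $q\nleq p$, in particular when $q\in A_\ell$ and $p\in A_{\ell'}$ belong to stratifications of \emph{different} components $Z_\ell, Z_{\ell'}$. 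The inductive hypothesis controls each branch only internally, and an arbitrary multihomogeneous lift of an extremal function from $\K[Z_\ell]$ to $R$ has no reason to vanish on strata of the other branches. Likewise, a stratum from $A_{\ell'}$ may be \emph{contained} in (not merely equal to) a stratum $\hat X_p$ from $A_\ell$, possibly in codimension one; this creates covering relations in $A$ that are invisible to the branch-wise verification of \ref{itm:seshadri_strat_c}, whose right-hand side for $p$ was computed only inside $A_\ell$. Genericity of the top-level $f$ cannot repair this, since the branch stratifications are built after $f$ is fixed and independently of one another; and when you ``identify equal strata'' the two branches may have assigned them different extremal functions, while \ref{itm:seshadri_strat_b}--\ref{itm:seshadri_strat_c} must hold for a single choice. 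This global coherence is exactly what the paper buys by starting from one already-existing stratification on the Segre embedding.

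Second, the Bertini step does not deliver the $R_1$ property for all components. Any non-constant multihomogeneous $f$ of multidegree $\underline d$ vanishes identically on $\hat X\cap\{x_j=0\}$ for every $j$ with $d_j>0$, because every monomial of $f$ involves coordinates of $V_j$. Hence these loci lie in the base locus of your linear system: whenever such a locus has codimension one in $\hat X$ it is a forced irreducible component of $V(f)\cap\hat X$, namely the multicone of a projection $\pi_I(X)$, and no generic choice of $f$ influences its geometry. Projections of a variety that is smooth in codimension one need not be smooth in codimension one (already the image of a smooth surface under a projection can be singular along a curve), so the claim that the components $Y_\ell$ ``inherit the $R_1$ property'' for generic $f$ is unjustified precisely where it cannot be arranged by genericity. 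Note that the paper's construction avoids this issue structurally: all strata indexed by the original poset are subvarieties of $X$ itself, and the only new strata are products of lines, which are trivially smooth; your approach would need an additional argument (or a different choice of strata) to handle the forced projection components.
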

\begin{proof}
    We embed the variety $X$ into the projective space over $W = V_1 \otimes \dots \otimes V_m$ via the Segre embedding, so we have two different coordinate rings of $X$: The multihomogeneous coordinate ring $R = \K[X]$ and the homogeneous coordinate ring $S$ of the embedding $X \hookrightarrow \PP(W)$. Now choose any Seshadri stratification of $X \subseteq \PP(W)$, which exists by \cite[Proposition~2.11]{seshstrat}. Hence for each $p \in A$ we have the closed, irreducible subvariety $X_p \subseteq X$ which is smooth in codimension one and the extremal function $f_p \in S$. This function can be pulled back to a multihomogeneous function in $R$ and its degree is a multiple of $(1, \dots, 1)$. Clearly the conditions~\ref{itm:seshadri_strat_b} to \ref{itm:seshadri_strat_c} are preserved under the pullback.  We also need to define a subset of $[m]$ for all $p \in A$: Here we take $I_p = [m]$.

    However, for $m \geq 2$ we do not obtain a multiprojective Seshadri stratification on $X$ in this way, as the dimension of the multicones $\hat X_p$ for $p \in A$ minimal is greater than $1$. Indeed, this multicone $\hat X_p$ is of the form
    \begin{align*}
        \hat X_p = L_1^{(p)} \times \dots \times L_m^{(p)},
    \end{align*}
    where $L_i^{(p)}$ is a one-dimensional linear subspace of $V_i$. Therefore we need to extend the graded poset $A$ by $m-1$ additional ranks. Set-theoretically this extension is of the form
    \begin{align*}
        \overline A = A \cup \set{L_1^{(p)} \times \dots \times L_i^{(p)} \mid \text{$p \in A$ minimal and $1 \leq i \leq m-1$}}.
    \end{align*}
    For each $q \in \overline A \setminus A$ of the form $L_1^{(p)} \times \dots \times L_i^{(p)}$ we define the subset $I_q = [i]$ and the projective variety $X_q = \PP(L_1^{(p)}) \times \dots \times \PP(L_i^{(p)}) \subseteq X_{I_q}$. Note that we mentioned at the beginning that in general $X_p$ does not need to be a subvariety of $X$, but there exists a subset $I \subseteq [m]$ such that $X_p$ is a closed subvariety of $X_I = \pi_I(X)$. This case did not occur so far because the stratification is induced by an ordinary Seshadri stratification (as it was defined in~\cite{seshstrat}). The variety $X_q$ only consists of one point, so it is irreducible and smooth in codimension one. The affine variety $q$ is the multicone over $X_q$. The partial order on $\overline A$ is now determined by the equivalence of $q \leq p$ and $\hat X_q \subseteq \hat X_p$.

    It remains to define the extremal functions for the elements in $\overline A \setminus A$. Let $\mathcal L_i$ be the set of all lines $L_i^{(p)}$ for $p \in A$ minimal. For each $i \in [m-1]$ and $L \in \mathcal L_i$ we choose a linear function $h_{L} \in V_i^*$ which vanishes on $L$ and that does not vanish on all other lines in $\mathcal L_i$. To an element $q \in \overline A \setminus A$ of the form $L_1^{(p)} \times \dots \times L_i^{(p)}$ we then associate the function
    \begin{align*}
        f_q = \prod_{j \in [i]} \prod_{L \in \mathcal L_i \atop L \neq L_i^{(p)}} h_{L}.
    \end{align*}
    This definition ensures that both conditions~\ref{itm:seshadri_strat_b} and \ref{itm:seshadri_strat_c} are fulfilled. The extremal functions $f_p$ for $p \in A$ also vanish on all varieties $\hat X_q$ for $q \in \overline A \setminus A$. We therefore have constructed a multiprojective Seshadri stratification on $X$.
\end{proof}

\begin{example}
    \label{ex:strat_y_1}
    Let $X$ be the image of the closed diagonal embedding $\PP^1 \hookrightarrow \PP(V_1) \times \PP(V_2)$ for $V_1 = V_2 = \K^2$. The coordinate ring $\K[V] = \K[x_0, x_1, y_0, y_1]$ of $V = V_1 \times V_2$ is graded with $\deg x_0 = \deg x_1 = (1,0)$ and $\deg y_0 = \deg y_1 = (0,1)$ and the vanishing ideal of $X$ is equal to $I_\PP(X) = (x_0 y_1 - x_1 y_0)$. We write $I(-)$ and $I_\PP(-)$ to distinguish between affine and projective vanishing ideals (see Appendix~\ref{subsec:multiproj_varieties}). Analogously, we differentiate between the affine vanishing set $V(-)$ and the projective vanishing set $V_\PP(-)$. The multicone $\hat X$ of $X$ is given by the vanishing set $V(x_0 y_1 - x_1 y_0) \subseteq V = \A^2 \times \A^2$. We define a poset $A$ via the Hasse-diagram
    \begin{center}
        \begin{tikzpicture}
        \node (X) at (0,0) {$X$};
        \node (1100) at (-1,-1.2) {$01$};
        \node (1010) at (1,-1.2) {$0 \overline 0$};
        \node (0100) at (-2,-2.4) {$1$};
        \node (1000) at (0,-2.4) {$0$};
        \node (0010) at (2,-2.4) {$\overline 0$};
        
        \draw [thick, -stealth] (X) -- (1010);
        \draw [thick, -stealth] (X) -- (1100);
        \draw [thick, -stealth] (1010) -- (1000);
        \draw [thick, -stealth] (1010) -- (0010);
        \draw [thick, -stealth] (1100) -- (1000);
        \draw [thick, -stealth] (1100) -- (0100);
        \end{tikzpicture}
    \end{center}
    and choose the following index sets, strata and extremal functions:
    \begin{center}
        \begin{tabular}{c wc{1.2cm} wc{6.1cm} c}
            $p \in A$ & $I_p$ & $X_p$ & $f_p$ \\[6pt] 
            $X$ & $\set{1,2}$ & $X \subseteq \PP(V_1) \times \PP(V_2)$ & $y_1$ \\ 
            $01$ & $\set{1}$ & $\PP(V_1)$ & $x_0 x_1$ \\ 
            $0 \overline 0$ & $\set{1,2}$ & $V_\PP(x_1) \times V_\PP(y_1) \subseteq \PP(V_1) \times \PP(V_2)$ & $x_0 y_0$ \\ 
            $1$ & $\set{1}$ & $V_\PP(x_0) \subseteq \PP(V_1)$ & $x_1$ \\ 
            $0$ & $\set{1}$ & $V_\PP(x_1) \subseteq \PP(V_1)$ & $x_0$ \\ 
            $\overline 0$ & $\set{2}$ & $V_\PP(y_1) \subseteq \PP(V_2)$ & $y_0$
        \end{tabular}
    \end{center}
    This data defines a Seshadri stratification on $X$, which can be summarized by a diagram of all multicones $\hat X_p$ and $f_p$ for $p \in A$:
    \begin{center}
        \begin{tikzpicture}
        \node (X) at (0,0) {$V(x_0 y_1 - x_1 y_0), y_1$};
        \node (1100) at (-2,-1.5) {$\A^2 \times \set{0}, x_0 x_1$};
        \node (1010) at (2,-1.5) {$V(x_1) \times V(y_1), x_0 y_0$};
        \node (0100) at (-4,-3) {$V(x_0) \times \set{0}, x_1$};
        \node (1000) at (0,-3) {$V(x_1) \times \set{0}, x_0$};
        \node (0010) at (4,-3) {$\set{0} \times V(y_1), y_0$};
        
        \draw [thick, -stealth] (X) -- (1010);
        \draw [thick, -stealth] (X) -- (1100);
        \draw [thick, -stealth] (1010) -- (1000);
        \draw [thick, -stealth] (1010) -- (0010);
        \draw [thick, -stealth] (1100) -- (1000);
        \draw [thick, -stealth] (1100) -- (0100);
        \end{tikzpicture}
    \end{center}
\end{example}

\begin{example}
    \label{ex:strat_y_0_y_1}
    Of course, for each multiprojective variety there can exist many different Seshadri stratifications. For example, there is another stratification on the variety $X = V_\PP(x_0 y_1 - x_1 y_0) \subseteq \PP^1 \times \PP^1$ with underlying poset
    \begin{center}
        \begin{tikzpicture}
        \node (X) at (0,0) {$X$};
        \node (1010) at (-2,-1.2) {$0 \overline 0$};
        \node (1100) at (0,-1.2) {$01$};
        \node (0101) at (2,-1.2) {$1 \overline 1$};
        \node (1000) at (-1,-2.4) {$0$};
        \node (0100) at (1,-2.4) {$1$};
        
        \draw [thick, -stealth] (X) -- (1010);
        \draw [thick, -stealth] (X) -- (1100);
        \draw [thick, -stealth] (X) -- (0101);
        \draw [thick, -stealth] (1010) -- (1000);
        \draw [thick, -stealth] (1100) -- (1000);
        \draw [thick, -stealth] (1100) -- (0100);
        \draw [thick, -stealth] (0101) -- (0100);
        \end{tikzpicture}
    \end{center}
    that is defined via the following diagram of multicones and extremal functions:
    \begin{center}
        \begin{tikzpicture}
        \node (X) at (0,0) {$V(x_0 y_1 - x_1 y_0), y_0 y_1$};
        \node (1010) at (-3.9,-1.5) {$V(x_1) \times V(y_1), y_0$};
        \node (1100) at (0,-1.5) {$\A^2 \times \set{0}, x_0 x_1$};
        \node (0101) at (3.9,-1.5) {$V(x_0) \times V(y_0), y_1$};
        \node (1000) at (-1.9,-3) {$V(x_1) \times \set{0}, x_0$};
        \node (0100) at (1.9,-3) {$V(x_0) \times \set{0}, x_1$};
        
        \draw [thick, -stealth] (X) -- (1010);
        \draw [thick, -stealth] (X) -- (1100);
        \draw [thick, -stealth] (X) -- (0101);
        \draw [thick, -stealth] (1010) -- (1000);
        \draw [thick, -stealth] (1100) -- (1000);
        \draw [thick, -stealth] (1100) -- (0100);
        \draw [thick, -stealth] (0101) -- (0100);
        \end{tikzpicture}
    \end{center}
\end{example}

In contrast to the Seshadri stratifications introduced in~\cite{seshstrat}, their multiprojective generali\-zations have an additional underlying structure, namely the poset
\begin{align}
    \label{eq:def_mathcal_I}
    \mathcal I = \{ I_p \subseteq [m] \mid p \in A \},
\end{align}
which is ordered by inclusion. We call it the \textbf{index poset}.

\begin{lemma}
    \label{lem:properties_A_mathcal_I}
    The map $A \to \mathcal I$, $p \mapsto I_p$ is monotone and has the following properties:
    \begin{enumerate}[label=(\alph{enumi})]
        \item \label{itm:properties_A_mathcal_I_a} Let $q < p$ be a covering relation in $A$. Then $I_p \setminus I_q$ contains at most one element. In the case $I_q \neq I_p$ it holds $\pi_{I_q}(X_p) = X_q$.
        \item \label{itm:properties_A_mathcal_I_b} If $p \in A$ is a minimal element, then $I_p$ is a one-element set.
    \end{enumerate}
    In particular, $\mathcal I$ is a graded poset of length $m-1$.
\end{lemma}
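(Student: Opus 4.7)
The plan rests on the elementary dimension identity $\dim \hat X_p = \dim X_p + |I_p|$ for the multicone of a multiprojective variety, which, together with the rank formula $r(p) = \dim \hat X_p - 1$ recalled just before the lemma, reduces the whole statement to non-negative integer bookkeeping.

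I start with monotonicity. Suppose $q \leq p$ in $A$, so $\hat X_q \subseteq \hat X_p$, and pick $i \in I_q$. Since $X_q \subseteq \prod_{j \in I_q} \PP(V_j)$ is a nonempty multiprojective variety, one can lift a point of $X_q$ to $x \in \hat X_q$ whose $i$-th component is nonzero. Under the embedding $\prod_{j \in I_p} V_j \hookrightarrow V$ used to view $\hat X_p$ as a subvariety of $V$, all points of $\hat X_p$ have vanishing components outside $I_p$; therefore $x \in \hat X_p$ forces $i \in I_p$, proving $I_q \subseteq I_p$.

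For (a), let $q < p$ be a covering relation. The rank formula gives $\dim \hat X_p - \dim \hat X_q = 1$, which the dimension identity rewrites as
\[
    1 \;=\; (\dim X_p - \dim X_q) \;+\; (|I_p| - |I_q|).
\]
Monotonicity yields $|I_p| - |I_q| \geq 0$, and the other summand is also non-negative: the inclusion $\hat X_q = \hat\pi_{I_q}(\hat X_q) \subseteq \hat\pi_{I_q}(\hat X_p)$ descends, via the properness of the projection $\pi_{I_q}\colon \prod_{j \in I_p} \PP(V_j) \to \prod_{j \in I_q} \PP(V_j)$, to a chain $X_q \subseteq \pi_{I_q}(X_p) \subseteq X_{I_q}$ of irreducible multiprojective varieties, so $\dim X_q \leq \dim \pi_{I_q}(X_p) \leq \dim X_p$. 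Two non-negative integers summing to $1$ give $|I_p \setminus I_q| \leq 1$, and in the case of equality $\dim X_q = \dim X_p$, so the irreducible chain $X_q \subseteq \pi_{I_q}(X_p)$ with equal dimensions collapses to $\pi_{I_q}(X_p) = X_q$.

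Part (b) is immediate: a minimal $p \in A$ has $\dim \hat X_p = 1 = \dim X_p + |I_p|$ with $\dim X_p \geq 0$ and $|I_p| \geq 1$ (index sets are non-empty by definition), forcing $|I_p| = 1$. For the concluding statement, I take any maximal chain $p_0 < \cdots < p_r$ in $A$, apply $p \mapsto I_p$, and use (b) together with $I_{p_\mathrm{max}} = [m]$ to obtain $|I_{p_0}| = 1$ and $I_{p_r} = [m]$; by (a), $|I|$ jumps by $0$ or $1$ at each cover. Collapsing repetitions produces a saturated chain in $\mathcal I$ from a singleton to $[m]$ of length $m - 1$, which is maximal since $\mathcal I$ sits inside the subset lattice of $[m]$. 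The only subtle ingredient is the dimension identity for multicones of multiprojective varieties, which I would cite from the appendix on multiprojective varieties rather than reprove.
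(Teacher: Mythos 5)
Your handling of monotonicity and of parts (a) and (b) is correct. For (a) you replace the paper's chain of intermediate projections $\hat\pi_{J}(\hat X_p)$ between $\hat X_q$ and $\hat X_p$ by the identity $\dim \hat X_p = \dim X_p + |I_p|$; this is the same dimension count in different packaging (when citing Lemma~\ref{lem:multiproj_dimension} you should note the rank hypothesis, which holds since $X_p$ is a closed subvariety of $\prod_{i \in I_p}\PP(V_i)$, so $\K[X_p]_{e_i} \neq 0$ for every $i \in I_p$). For (b) you argue via the rank formula $r(p) = \dim \hat X_p - 1$ and the gradedness of $A$ imported from the projective case, whereas the paper argues directly from condition (S3) together with Krull's principal ideal theorem (the zero set of $f_p$ in $\hat X_p$ has pure codimension one, so it can equal $\set{0}$ only if $\dim \hat X_p = 1$); both routes are legitimate.

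The genuine gap is in the concluding claim that $\mathcal I$ is graded of length $m-1$, where ``graded'' means that \emph{every} maximal chain of $\mathcal I$ has exactly $m$ elements. Your argument takes a maximal chain of $A$, pushes it into $\mathcal I$ and collapses repetitions; this produces \emph{one} maximal chain of $\mathcal I$ with $m$ elements (and shows the length of $\mathcal I$ is $m-1$), but it says nothing about the other maximal chains of $\mathcal I$. The point is that the order on $\mathcal I$ is inclusion of index sets, and two nested index sets may come from elements of $A$ that are incomparable, so maximal chains of $\mathcal I$ need not be images of chains in $A$. Two things have to be ruled out: a maximal chain of $\mathcal I$ starting at a non-singleton minimal element, and one that skips a cardinality. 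The first is easy (if $I_p$ is minimal in $\mathcal I$, pick a minimal $q \leq p$ in $A$; then $I_q \subseteq I_p$ is a singleton of $\mathcal I$, forcing $I_p = I_q$), but the second amounts to showing that every covering relation $J \subsetneq J'$ \emph{in $\mathcal I$} satisfies $|J' \setminus J| = 1$, and this does not follow from your chain-collapsing, nor formally from the statements of (a) and (b) alone, since the single-step behaviour in (a) is only available along covers in $A$. This is precisely the assertion the paper records as ``every maximal chain in $\mathcal I$ contains exactly $m$ elements''; your write-up needs an actual argument for it (relating strata whose index sets are nested but which are not comparable in $A$), rather than the existence of one chain of the right length.
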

\begin{proof}
    The map $A \to \mathcal I$ is monotone, since for all $q \leq p$ in $A$ we have the inclusion $\hat X_q \subseteq \hat X_p$ of their multicones and this implies $I_q \subseteq I_p$.
    \begin{enumerate}[label=(\alph{enumi})]
        \item Let $q < p$ be a covering relation and suppose that $I_q$ is a proper subset of $I_p$. For every subset $J \subseteq I_p$ we have the linear projection $\hat \pi_{J}$ (see (\ref{eq:def_projection_maps})). If
        \begin{align*}
            I_q = J_0 \subsetneq J_1 \subsetneq \dots \subsetneq J_s = I_p
        \end{align*}
        is a chain in $\mathcal I$ then we have the closed, irreducible subvarieties
        \begin{align*}
            \hat X_q \subseteq \hat \pi_{J_0}(\hat X_p) \subsetneq \dots \subsetneq \hat \pi_{J_s}(\hat X_p) = \hat X_p,
        \end{align*}
        which we see as subvarieties of $V$. As $\hat X_q$ is of codimension one in $\hat X_p$, it follows $I_p \setminus I_q = \set{i}$ for some $i \in I_p$ and $\hat X_q = \hat \pi_{I_q}(\hat X_p)$, because their dimensions agree.
        \item The condition~\ref{itm:seshadri_strat_c} implies, that an element $p \in A$ is minimal, if and only if the vanishing set of $f_p$ inside of $\hat X_p$ is just the point $0 \in V$, because this is the only point in $\hat X_p$, that does not belong to a projective subvariety of $X_J$ for some non-empty $J \subseteq I_p$. But as $f_p$ is multihomogeneous and non-constant, its vanishing set $V(f_p) \subseteq \hat X$ is a multicone (\ie stable under the $(\K^\times)^m$-action) and its irreducible components have codimension one in $\hat X$. Hence $V(f_p) \cap \hat X_p$ can only be zero, when $\dim X_p = 0$ and $|I_p| = 1$. It now follows from \ref{itm:properties_A_mathcal_I_a} and \ref{itm:properties_A_mathcal_I_b} that every maximal chain in $\mathcal I$ contains exactly $m$ elements, so $\mathcal I$ is graded of length $m-1$. \hfill\qedhere
    \end{enumerate}
\end{proof}

For multiprojective stratifications we have the following new kind of covering relations in $A$ which do not appear for $m = 1$.

\begin{lemma}
    \label{lem:projective_covering_relation}
    Let $q < p$ be a covering relation in $A$ with $I_p \setminus I_q = \set{i}$. 
    \begin{enumerate}[label=(\alph{enumi})]
        \item \label{itm:projective_covering_relation_a} The algebra $\K[\hat X_q]$ can be seen as an $\N_0^{I_q}$-graded subalgebra of $\K[\hat X_p]$ and it holds
        \begin{align*}
            \K[\hat X_q] \cong \bigoplus_{\underline d \in \N_0^m \atop d_i = 0} \K[\hat X_p]_{\underline d} \quad \text{and} \quad I(\hat X_q) = \bigoplus_{\underline d \in \N_0^m \atop d_i > 0} \K[\hat X_p]_{\underline d}.
        \end{align*}
        \item \label{itm:projective_covering_relation_b} The vanishing multiplicity of a multihomogeneous function $g \in \K[\hat X_p] \setminus \set{0}$ along the prime divisor $\hat X_q \subseteq \hat X_p$ is equal to the $i$-th component of $\deg g \in \N_0^m$. In particular, the $i$-th component of $\deg f_p$ is non-zero.
    \end{enumerate}
\end{lemma}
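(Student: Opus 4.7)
The plan has two parts. The geometric starting point is Lemma~\ref{lem:properties_A_mathcal_I}\ref{itm:properties_A_mathcal_I_a}, which gives $\hat X_q = \hat\pi_{I_q}(\hat X_p)$. Viewing $\prod_{j \in I_q} V_j \subseteq V$ as the linear subspace where the $V_i$-coordinate vanishes and combining this with the inclusion $\hat X_q \subseteq \hat X_p$, one obtains the set-theoretic identity $\hat X_q = \hat X_p \cap \prod_{j \in I_q} V_j$. This is the geometric source of the multigraded description.

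For part~\ref{itm:projective_covering_relation_a}, since $\hat X_p$ is a multicone, $I(\hat X_p) \subseteq \K[\prod_{j \in I_p} V_j]$ is $\N_0^m$-multihomogeneous, so $\K[\hat X_p]$ inherits a multigrading. An element of $\K[\hat X_p]_{\underline d}$ with $d_i = 0$ admits a representative in the subring $\K[\prod_{j \in I_q} V_j] \subseteq \K[\prod_{j \in I_p} V_j]$, and the standard identity $I(\hat X_p) \cap \K[\prod_{j \in I_q} V_j] = I(\hat\pi_{I_q}(\hat X_p)) = I(\hat X_q)$ yields the first isomorphism. The second identity follows because the restriction map $\K[\hat X_p] \twoheadrightarrow \K[\hat X_q]$ (whose kernel is the ideal $I(\hat X_q) \subseteq \K[\hat X_p]$) sends the $d_i = 0$-components isomorphically onto $\K[\hat X_q]$ and annihilates each $\K[\hat X_p]_{\underline d}$ with $d_i > 0$, since any multihomogeneous representative of the latter is a sum of monomials each containing a factor from $V_i^*$, which vanishes on $\hat X_q \subseteq \prod_{j \in I_q} V_j$.

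For part~\ref{itm:projective_covering_relation_b}, I first upgrade~\ref{itm:projective_covering_relation_a} to a statement about powers: as an ideal in $\K[\hat X_p]$, $I(\hat X_q)$ is generated by the images of the linear forms on $V_i$, so expanding and comparing multidegrees gives $I(\hat X_q)^k = \bigoplus_{d_i \geq k} \K[\hat X_p]_{\underline d}$. By~\ref{itm:seshadri_strat_a} and the smoothness-in-codimension-one assumption, $\mathcal O_{\hat X_p, \hat X_q}$ is a DVR, and the vanishing multiplicity of $g$ is the largest $k$ with $g \in I(\hat X_q)^k \mathcal O_{\hat X_p, \hat X_q}$. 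For a multihomogeneous $g$ of degree $\underline d$ with $d_i = k$, the containment $g \in I(\hat X_q)^k$ gives multiplicity at least $k$. The reverse inequality is the subtle step: assume $sg \in I(\hat X_q)^{k+1}$ for some $s \notin I(\hat X_q)$; decomposing $s = \sum_{\underline e} s_{\underline e}$ multihomogeneously and comparing $i$-th components in each term $s_{\underline e} g$, every nonzero $s_{\underline e}$ must have $e_i \geq 1$, hence lie in $I(\hat X_q)$. Here I use that $\K[\hat X_p]$ is a domain, so $g \neq 0$ does not annihilate any $s_{\underline e}$. This forces $s \in I(\hat X_q)$, a contradiction.

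The ``in particular'' statement is then immediate: by~\ref{itm:seshadri_strat_c}, $f_p$ vanishes on $\hat X_q$ whenever $p$ covers $q$, so it has positive vanishing multiplicity along $\hat X_q$, which by the formula just proved forces the $i$-th component of $\deg f_p$ to be positive. The main obstacle is the reverse inequality in~\ref{itm:projective_covering_relation_b}, which requires passing to the localization and carefully using the multigrading together with the integral-domain property of $\K[\hat X_p]$.
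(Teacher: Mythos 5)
Your proof is correct and follows essentially the same route as the paper: part (a) rests on the identity $\hat X_q = \hat\pi_{I_q}(\hat X_p)$, and part (b) localizes at $I(\hat X_q)$ and uses that $\K[\hat X_p]$ is generated in total degree one to identify $\mathfrak m^k$ with the span of the components with $d_i \geq k$, so the multiplicity equals $d_i$. Your explicit common-denominator argument showing $g \notin \mathfrak m^{d_i+1}$ merely spells out a step the paper treats as clear.
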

\begin{proof}
    \begin{enumerate}[label=(\alph{enumi})]
        \item This first statement is immediate from the equality $X_q = \pi_{I_q}(X_p)$.
        \item The discrete valuation ring $\mathcal O_{\hat X_p, \hat X_q} \subseteq \K(\hat X_p)$ is isomorphic to the localization of $\K[\hat X_p]$ at the prime ideal $I(\hat X_q)$. By viewing $g$ as an element of $\mathcal O_{\hat X_p, \hat X_q} \supseteq \K[\hat X_p]$, one can characterize the vanishing multiplicity of $g$ along $\hat X_q$ as the unique integer $n \in \N_0$ with $(g) = \mathfrak m^n$, where $\mathfrak m$ denotes the unique maximal ideal in $\mathcal O_{\hat X_p, \hat X_q}$. As the algebra $\K[\hat X_p]$ is generated in total degree one, it follows
        \begin{align*}
            \mathfrak m^n = \bigoplus_{\underline d \in \N_0^m \atop d_i \geq n} \K[\hat X_p]_{\underline d} \subseteq \mathcal O_{\hat X_p, \hat X_q}.
        \end{align*}
        Let $\deg g = (c_1, \dots, c_m)$. Clearly we have $(g) \subseteq \mathfrak m^{c_i}$ but $(g) \nsubseteq \mathfrak m^{c_i+1}$. Hence $(g) = \mathfrak m^{c_i}$, since every ideal of $\mathcal O_{\hat X_p, \hat X_q}$ is a power of $\mathfrak m$. \hfill\qedhere
    \end{enumerate}
\end{proof}

\section{The quasi-valuation and its associated graded algebra}

Until the end of Section~\ref{sec:LS-type} we fix a Seshadri stratification on $X \subseteq \prod_{i=1}^m \PP(V_i)$.

In this section we summarize some constructions and results from \cite{seshstrat}, since they are crucial for this \whatisthis{}. Among these results are the quasi-valuation $\mathcal V$ and the properties of the associated graded algebra. It is strongly recommended to read the original papers, as we cannot do justice to their results on just a few pages and this section mainly serves as a reminder for all the notation introduced for Seshadri stratifications.

We fix the following notation: \label{txt:def_support}If $K$ is any field of characteristic zero and $S$ is a finite set, then we write $K^S$ for the vector space over $K$ with basis $\set{e_s \mid s \in S}$ indexed by $S$. Let $\N_0^S$ be the monoid generated by these basis elements and $\Z^S \subseteq K^S$ be the smallest group containing $\N_0^S$. For each element $x = \sum_{s \in S} x_s e_s \in K^S$ with coefficients $x_s \in K$ the set
\begin{align*}
    \supp x = \set{s \in S \mid x_s \neq 0}
\end{align*}
is called the \textit{support} of $x$.

By definition, the multicone $\hat X_q$ is a prime divisor of $\hat X_p$ for every covering relation $q < p$ in $A$. If one extends the poset $A$ by a unique minimal element $p_{-1}$ with associated index set $I_{p_{-1}} = \varnothing$, then the multicone $\hat X_{p_{-1}} = \set{0}$ is a prime divisor of $\hat X_p \cong \A^1$ for each minimal element $p \in A$. To each covering relation $p > q$ in the extended poset $\hat A = A \cup \set{p_{-1}}$ we have an associated valuation, namely the discrete valuation
\begin{align*}
    \nu_{p, q}: \K(\hat X_p) \setminus \set{0} \to \Z,
\end{align*}
sending a non-zero, rational function $g$ to its vanishing multiplicity at the prime divisor $\hat X_q \subseteq \hat X_p$. Its value 
\begin{align*}
    b_{p,q} = \nu_{p,q}(f_p\big|_{\hat X_p}) \in \N
\end{align*}
at the extremal function $f_p$ is called the \textit{bond} of the covering relation $q < p$. If $p$ is minimal in $A$, then $b_{p, p_{-1}}$ coincides with the total degree $\vert \mkern-1mu \deg f_p \mkern1mu \vert$, which is the sum of all entries in the degree $\deg f_p \in \N_0^m$. 

Every Seshadri stratification gives rise to a collection of valuations on $R$, one for each maximal chain $\mathfrak C$ in $A$. Let $p_r > \dots > p_0$ be the elements of $\mathfrak C$. To a regular function $g \in R \setminus \set{0}$ one associates a sequence $g_{\mathfrak C} = (g_r, \dots, g_0)$ of rational functions inductively via $g_r \coloneqq g$ and 
\begin{align*}
    g_{i-1} = \frac{ g_i^{\mkern-1mu b_{p_i, p_{i-1}}} }{ f_{p_i}^{\mkern2mu \nu_{p_i, p_{i-1}}(g_i)} } \bigg|_{\hat X_{p_{i-1}}} \in \K(\hat X_{p_{i-1}}).
\end{align*}
for $i = r, \dots, 1$. Further one defines the element
\begin{align*}
    \mathcal V_{\mathfrak C}(g) = \sum_{j=0}^r \frac{\nu_{p_j, p_{j-1}}(g_j)}{\prod_{k=j}^r b_{p_k, p_{k-1}}} \, e_{p_j} \in \Q^{\mathfrak C}.
\end{align*}
By this definition, each extremal function $f_p$ for $p \in \mathfrak C$ is mapped to the vector $\mathcal V_{\mathfrak C}(f_p) = e_p$. We equip the abelian group $\Q^{\mathfrak C}$ with the lexicographic order induced by the total order on the maximal chain $\mathfrak C$, \ie for all elements $\underline a = \sum_{i=0}^r a_i e_{p_i}, \underline b = \sum_{i=0}^r b_i e_{p_i}$ in $\Q^{\mathfrak C}$ it holds
\begin{align*}
    \underline a \geq \underline b \quad \Longleftrightarrow \quad \text{$\underline a = \underline b$ or $a_i > b_i$ for the maximal index $i \in \set{0, \dots, r}$ with $a_i \neq b_i$}.
\end{align*}
Then the map $\mathcal V_{\mathfrak C}: R \setminus \set{0} \to \Q^{\mathfrak C}$ is a valuation. Chiriv{\`i}, Fang and Littelmann also gave another, equivalent definition in \cite{seshstrat}, which we do not use here, as it is less suited for computations. Note that, by its definition, $\mathcal V_{\mathfrak C}$ takes values in the lattice
\begin{align}
    \label{eq:def_L_C}
    L^{\mathfrak C} = \set{ (a_r, \dots, a_0) \in \Q^{\mathfrak C} \mid b_r \cdots b_{i+1} b_i a_i \in \Z \mkern5mu \forall i = 0, \dots, r}.
\end{align}
In general, the lattice $L_{\mathcal V}^{\mathfrak C}$ generated by the valuation monoid $\mathbb V_{\mathfrak C}(X) = \set{\mathcal V_{\mathfrak C}(g) \in \Q^{\mathfrak C} \mid g \in R \setminus \set{0}}$ is not equal to the lattice $L^{\mathfrak C}$. With the following results from~\cite{seshstrat} one can determine the lattice $L_{\mathcal V}^{\mathfrak C}$.

\begin{proposition}[{\cite[Lemma 6.12, Propositions 6.13 and 6.14]{seshstrat}}]
    \label{prop:valuation_lattice}
    There exist rational functions $F_r, \dots F_0 \in \K(\hat X) \setminus \set{0}$, such that their valuations are of the form
    \begin{align*}
        \mathcal V_{\mathfrak C}(F_j) = \sum_{i=0}^r a_{i,j} \mkern1mu e_{p_j}
    \end{align*}
    with coefficients $a_{i,j} \in \K$, $a_{j,j} = b_{p_j, p_{j-1}}^{-1}$, $a_{i,j} = 0$ for all $i > j$. For each such choice of functions $F_r, \dots, F_0$ the matrix $(a_{i,j})_{i,j = 0, \dots, r}$ is invertible and the entries of its inverse matrix $B_{\mathfrak C}$ are integers. Furthermore, an element $v = a_r e_{p_r} + \dots + a_0 e_{p_0} \in \Q^{\mathfrak C}$ is contained in the lattice $L_{\mathcal V}^{\mathfrak C}$, if and only if 
    \begin{align*}
        B_{\mathfrak C} \cdot \begin{pmatrix}
            a_r \\ \vdots \\ a_0
        \end{pmatrix} \in \Z^{\mathfrak C}.
    \end{align*}
\end{proposition}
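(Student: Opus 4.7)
The plan is to follow the strategy of \cite{seshstrat} in three stages: construct the rational functions $F_j$, note the invertibility of $(a_{i,j})$, and then establish both the integrality of $B_{\mathfrak C}$ and the lattice characterization of $L_{\mathcal V}^{\mathfrak C}$.

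First I would construct the $F_j$ by reverse induction on $j$. The key geometric input is that each covering relation $p_i > p_{i-1}$ in $\mathfrak C$ equips $\mathcal O_{\hat X_{p_i}, \hat X_{p_{i-1}}}$ with the structure of a discrete valuation ring by axiom \ref{itm:seshadri_strat_a}, and hence admits a uniformizer. For a fixed $j$, the goal is to produce $F_j \in \K(\hat X)^\times$ whose iterative restrictions $(F_j)_r, (F_j)_{r-1}, \dots, (F_j)_j$ all have vanishing multiplicity zero at the next prime divisor (so that $a_{i,j} = 0$ for $i > j$) and such that $F_j|_{\hat X_{p_j}}$ is a uniformizer of $\mathcal O_{\hat X_{p_j}, \hat X_{p_{j-1}}}$. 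When these two conditions hold, the recursion unfolds to $(F_j)_j = F_j^{\prod_{k=j+1}^{r} b_{p_k, p_{k-1}}}\big|_{\hat X_{p_j}}$, whose $\nu_{p_j, p_{j-1}}$-value is $\prod_{k=j+1}^{r} b_{p_k, p_{k-1}}$, giving exactly the required $a_{j,j} = 1/b_{p_j, p_{j-1}}$. A suitable $F_j$ can be obtained by extending a uniformizer of the lowest DVR to a rational function on $\hat X$ and then correcting it by units at each higher prime divisor via prime avoidance.

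Invertibility is immediate from upper-triangularity together with non-vanishing diagonal entries. For the integrality of $B_{\mathfrak C}$ and the characterization of $L_{\mathcal V}^{\mathfrak C}$, the idea is to show that $\{\mathcal V_{\mathfrak C}(F_j)\}_{j=0}^{r}$ is a $\Z$-basis of $L_{\mathcal V}^{\mathfrak C}$. Granted this, $B_{\mathfrak C}$ expresses the standard basis vectors $e_{p_i}$ as integer combinations of the $\mathcal V_{\mathfrak C}(F_j)$, and a vector $v \in \Q^{\mathfrak C}$ lies in $L_{\mathcal V}^{\mathfrak C}$ exactly when its coordinates in this basis---that is, the entries of $B_{\mathfrak C} v$---are integers. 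To produce the $\Z$-spanning, I would run a leading-term elimination: for $v = \mathcal V_{\mathfrak C}(g)$ with $v_i$ the largest non-zero coordinate, the vanishing of all higher coordinates unfolds the recursion to $g_i = g^{\prod_{k=i+1}^{r} b_{p_k, p_{k-1}}}\big|_{\hat X_{p_i}}$, whence $b_{p_i, p_{i-1}} v_i = \nu_{p_i, p_{i-1}}(g|_{\hat X_{p_i}}) \in \Z$. Subtracting the resulting integer multiple of $\mathcal V_{\mathfrak C}(F_i)$ from $v$ annihilates the top-support coordinate without disturbing the higher ones, so descending induction on the top-support closes the argument.

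The main obstacle I anticipate is the construction step, namely arranging \emph{simultaneously} that $F_j$ has trivial vanishing at every higher prime divisor $\hat X_{p_{i-1}} \subseteq \hat X_{p_i}$ for $i > j$ while still restricting to the prescribed uniformizer at level $j$. This requires a careful combination of prime avoidance with the surjectivity of the restriction maps along the chain; once the lift is in hand, the remaining computations are direct unwindings of the recursive definition of $\mathcal V_{\mathfrak C}$.
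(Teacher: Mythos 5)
This proposition is quoted in the paper from \cite[Lemma 6.12, Propositions 6.13, 6.14]{seshstrat} without a proof of its own, so your proposal can only be measured against the original argument; it does follow that argument in its essential structure (triangular system of rational functions, invertibility from triangularity, and a leading-term elimination showing that the vectors $\mathcal V_{\mathfrak C}(F_j)$ form a $\Z$-basis of $L_{\mathcal V}^{\mathfrak C}$, from which both the integrality of $B_{\mathfrak C}$ and the lattice criterion follow), and the elimination step you describe is correct: if the coordinates of $\mathcal V_{\mathfrak C}(g)$ above $i$ vanish, the recursion gives $g_i = g^{\prod_{k>i} b_{p_k,p_{k-1}}}\big\vert_{\hat X_{p_i}}$ and hence $b_{p_i,p_{i-1}} v_i \in \Z$, and subtracting $b_{p_i,p_{i-1}} v_i\,\mathcal V_{\mathfrak C}(F_i) = \mathcal V_{\mathfrak C}(F_i^{b_{p_i,p_{i-1}} v_i})$ lowers the top of the support. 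Two points deserve attention. First, the step you single out as the main obstacle is in fact vacuous: choose $u \in \K[\hat X_{p_j}]$ with $\nu_{p_j,p_{j-1}}(u) = 1$ (possible since the maximal ideal of the DVR is not its own square) and lift it along the surjection $R \twoheadrightarrow \K[\hat X_{p_j}]$ to a \emph{regular} function $F_j$; because $F_j$ does not vanish identically on $\hat X_{p_j}$, it cannot vanish identically on any $\hat X_{p_i} \supseteq \hat X_{p_j}$ of the chain, so automatically $\nu_{p_i,p_{i-1}}((F_j)_i) = 0$ for all $i > j$ and the unfolding gives $a_{j,j} = b_{p_j,p_{j-1}}^{-1}$ directly --- no prime avoidance or correction by units is needed, and no simultaneity problem arises. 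Second, make explicit the two small inputs your last stage uses: that $L_{\mathcal V}^{\mathfrak C}$ coincides with the image of $\K(\hat X)^\times$ under the valuation $\mathcal V_{\mathfrak C}$ (so that after dividing by powers of the $F_i$ you are still looking at a valuation vector of a rational function, and so that elements of $L_{\mathcal V}^{\mathfrak C}$ are of the form $\mathcal V_{\mathfrak C}(g)$ at all), and that the integrality of $B_{\mathfrak C}$ rests on $e_{p_i} = \mathcal V_{\mathfrak C}(f_{p_i}) \in L_{\mathcal V}^{\mathfrak C}$ for $p_i \in \mathfrak C$, which expresses each standard basis vector as an integral combination of the $\mathcal V_{\mathfrak C}(F_j)$. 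With these made precise your plan yields a complete proof along the same lines as the cited one.
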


\begin{remark}
    \label{rem:induced_strat}
    Every element $p \in A$ induces a Seshadri stratification on the multiprojective variety $X_p \subseteq \prod_{i \in I_p} \PP(V_i)$ via the poset $A_p = \set{q \in A \mid q \leq p}$, where we take the same strata, extremal functions and index sets as in the stratification on $X$. By its definition, the valuation $\mathcal V_{\mathfrak C}$ is compatible in the following sense with the valuation $\mathcal V_{\mathfrak C_p}$ of the induced stratification along the maximal chain $\mathfrak C_p = \mathfrak C \cap A_p$: For every $g \in R \setminus \set{0}$, that does not vanish identically on $\hat X_p$, the valuation $\mathcal V_{\mathfrak C_p}(g\big|_{\hat X_p}) \in \Q^{\mathfrak C_p}$ coincides with $\mathcal V_{\mathfrak C}(g)$, when extended by zeros to an element of $\Q^{\mathfrak C}$.
\end{remark}

The collection of all valuations $\mathcal V_{\mathfrak C}$ define a quasi-valuation $\mathcal V$, which respects the structure of the whole poset $A$, not just of one maximal chain. A \textbf{quasi-valuation} is defined similar to valuation, only the condition $\mathcal V(gh) = \mathcal V(g) + \mathcal V(h)$ for all $g, h \in R$ with $gh \neq 0$ is replaced by the inequality $\mathcal V(gh) \geq \mathcal V(g) + \mathcal V(h)$. To obtain this quasi-valuation one needs to extend $\mathcal V_{\mathfrak C}$ to a valuation $R \setminus \set{0} \to \Q^{\mathfrak C} \hookrightarrow \Q^A$, such that all valuations take values in the same abelian group. In order to make sense of this, we need a total order on $\Q^A$ such that each linear inclusion $\Q^{\mathfrak C} \hookrightarrow \Q^A$ is monotone. In general, there is no natural candidate for this total order. For this reason, one needs to choose and fix a total order $\geq^t$ on $A$ linearizing the partial order, \ie for each elements $p, q \in A$ the relation $p \geq q$ implies $p \geq^t q$. This total order induces the lexicographic order on $\Q^A$ and each map $\mathcal V_{\mathfrak C}: R \setminus \set{0} \to \Q^A$ is a valuation. One obtains the quasi-valuation $\mathcal V$ by taking their minimum with respect to this total order on $\Q^A$:
\begin{align*}
    \mathcal V: R \setminus \set{0} \to \Q^A, \quad g \mapsto \operatorname{min} \set{\mathcal V_{\mathfrak C}(g) \mid \text{$\mathfrak C$ maximal chain in $A$}}.
\end{align*}
Hence the quasi-valuation depends on the choice of this total order $\geq^t$ on $A$.

There is also the following inductive way of describing the quasi-valuation $\mathcal V$. Let $p$ be any element in $A$, $g \in \K(\hat X_p)$ be a non-zero rational function. We write $\mathcal V_p$ for the quasi-valuation on the induced Seshadri stratification on $X_p$ with underlying poset $A_p = \set{q \in A \mid q \leq p}$. Then it holds
\begin{align}
    \label{eq:inductive_def_quasi_val}
    \mathcal V_p(g) = \frac{\nu_{p, q}(g)}{b_{p,q}} e_p + \frac{1}{b_{p,q}} \mathcal V_q \big( \frac{g^{b_{p,q}}}{f_p^{\nu_{p, q}(g)}} \big\vert_{\hat X_q} \big),
\end{align}
where $q$ is the unique minimal element covered by $p$ with respect to the total order $\geq^t$, such that it holds
\begin{align*}
    \frac{\nu_{p, q}(g)}{b_{p,q}} = \operatorname{min} \Set{ \frac{\nu_{p, q'}(g)}{b_{p,q'}} \ \middle\vert \ \text{$q' \in A$ covered by $p$}}.
\end{align*}

The quasi-valuation $\mathcal V$ has the following important properties, which we use many times throughout this \whatisthis{} without mention (see \cite[Section 8]{seshstrat}). 
\begin{itemize}
    \item The values of $\mathcal V$ have non-negative entries, \ie the quasi-valuation $\mathcal V(g)$ of every function $g \in R \setminus \set{0}$ is contained in the non-negative orthant $\Q_{\geq 0}^A$.
    \item One can characterize combinatorially for which maximal chains $\mathfrak C$ the quasi-valuation attains its minimum. For each $g \in R \setminus \set{0}$ it holds $\mathcal V_{\mathfrak C}(g) = \mathcal V(g)$, if and only if the support $\supp \mathcal V(g) \subseteq A$ lies in $\mathfrak C$. As a consequence: If $g, h \in R$ are non-zero and there exists a maximal chain $\mathfrak C$ containing both $\supp \mathcal{V}(g)$ and $\supp \mathcal{V}(h)$, then the quasi-valuation is additive, \ie we have $\mathcal{V}(gh) = \mathcal{V}(g) + \mathcal{V}(h)$. 
    \item Every extremal function $f_p$ for $p \in A$ has the quasi-valuation $\mathcal V(f_p) = e_p$, so the support is given by $\supp \mathcal V(f_p) = \set{p}$. In particular: If $p_1, \dots, p_s \in A$ are contained in a chain in $A$ and $n_1, \dots, n_s \in \N_0$, then it follows
    \begin{align*}
        \mathcal V(f_{p_1}^{n_1} \cdots f_{p_s}^{n_s}) = \sum_{i=1}^s n_i e_{p_i}.
    \end{align*}
\end{itemize}

\begin{example}
    \label{ex:quasi_val_x_0_y_1}
    The regular function $g = x_0 y_1 \in \K[X]$ in the Seshadri stratification from Example~\ref{ex:strat_y_0_y_1} has the quasi-valuation $\mathcal V(g) = \tfrac12 e_X + \tfrac12 e_{01}$, independent of the choice of the total order $\geq^t$. To see this, we choose the following three parametrizations of open subsets of $\hat X = V(x_0 y_1 - x_1 y_0)$:
    \begin{align*}
        \phi_{0 \overline 0}: \K^\times \times \K^\times \times \K \to \hat X,& \quad (x, y, t) \longmapsto ((x, tx), (y, ty)),\\
        \phi_{01}: \K^\times \times \K^\times \times \K \to \hat X,& \quad (x, y, t) \longmapsto ((x, y), (tx, ty)),\\
        \phi_{1 \overline 1}: \K^\times \times \K^\times \times \K \to \hat X,& \quad (x, y, t) \longmapsto ((tx, x), (ty, y)).
    \end{align*}
    They are defined such that $\phi_q(\K^\times \times \K^\times \times \set{0})$ is equal to the intersection of the image of $\phi_q$ with the multicone $\hat X_q$ for each covering relation $q < X$ in $A$. The vanishing multiplicity of $g$ at the divisor $\hat X_q$ then agrees with the exponent of $t$ in the Laurent polynomial $g \circ \phi_q \in \K[x^{\pm 1}, y^{\pm 1}, t]$. We therefore have
    \begin{align*}
        \frac{\nu_{X, 0 \overline 0}(g)}{b_{X, 0 \overline 0}} = 1 > \frac{\nu_{X, 01}(g)}{b_{X, 01}} = \frac12 < \frac{\nu_{X, 1 \overline 1}(g)}{b_{X, 1 \overline 1}} = 1.
    \end{align*}
    By the characterization of the quasi-valuation $\mathcal V$ from equation~(\ref{eq:inductive_def_quasi_val}) it now follows $\mathcal V(g) = \tfrac12 e_X + \tfrac12 \mathcal V_{01}(g_1)$, where $g_1$ is the rational function
    \begin{align*}
        g_1 = \left(\frac{g^{\mkern2mu b_{X, 01}}}{f_{X}}\right)\bigg|_{\hat X_{01}} = \left( \frac{x_0^2 y_1^2}{y_0 y_1} \right)\bigg|_{\hat X_{01}} = x_0 x_1
    \end{align*}
    on $\hat X_{01}$. As $g_1$ is the restriction of the extremal function $f_{01}$ to $\hat X_{01}$, we have $\mathcal V_{01}(g_1) = e_{01}$.
\end{example}

The image of the quasi-valuation is denoted by $\Gamma = \set{\mathcal V(g) \in \Q^A \mid g \in R \setminus \set{0}}$. For each (not necessarily maximal) chain $C$ in $A$ the subset
\begin{align*}
    \Gamma_{C} = \set{\underline a \in \Gamma \mid \supp \underline a \subseteq C}
\end{align*}
is a finitely generated monoid. In~\cite{seshstrat} this was only shown if $C$ is a maximal chain, but it implies that $\Gamma_C$ is finitely generated as well, since its elements have non-negative entries. The set $\Gamma$ is called the \textit{fan of monoids} of the Seshadri stratification, since it is the union of all monoids $\Gamma_{C}$ and the cones in $\R^A$ generated by these monoids form a fan.

The quasi-valuation $\mathcal V: R \setminus \set{0} \to \Q^m$ induces a filtration on $R$ by the subrings
\begin{align*}
    R_{\geq \underline a} = \set{g \in R \setminus \set{0} \mid \mathcal V(g) \geq \underline a} \cup \set{0}
\end{align*}
for $\underline a \in \Gamma$. Since $\mathcal V(g)$ only has non-negative entries for all $g \in R \setminus \set{0}$, these subrings are ideals in $R$. The quotient of $R_{\geq \underline a}$ by the ideal $R_{> \underline a} = \set{g \in R \setminus \set{0} \mid \mathcal V(g) > \underline a} \cup \set{0}$ is one-dimensional for every $\underline a \in \Gamma$. They are called the \textit{leaves} of the quasi-valuation $\mathcal V$. Let
\begin{align*}
    \mathrm{gr}_{\mathcal V} R = \bigoplus_{\underline a \in \Gamma} R_{\geq \underline a} / R_{> \underline a}
\end{align*}
be the associated graded algebra. For each chain $C$ in $A$ it contains the subalgebra
\begin{align*}
    \mathrm{gr}_{\mathcal V, C} R = \bigoplus_{\underline a \in \Gamma_{C}} R_{\geq \underline a} / R_{> \underline a} \subseteq \mathrm{gr}_{\mathcal V} R,
\end{align*}
which is isomorphic to the semigroup algebra $\K[\Gamma_{C}]$ as a $\Gamma_{C}$-graded algebra. It is a finitely generated integral domain, so it gives rise to a toric variety $\Spec \mathrm{gr}_{\mathcal V, C} R$. The fact that the associated graded algebra is the union of all these subalgebras $\mathrm{gr}_{\mathcal V, C} R \cong \K[\Gamma_{C}]$ suggests that there is also a combinatorial way of describing the associated graded algebra by gluing the semigroup algebras $\K[\Gamma_{C}]$ into the \textit{fan algebra} of $\Gamma$. It is defined as the algebra
\begin{align*}
    \K[\Gamma] = \K[x_{\underline a} \mid \underline a \in \Gamma]/I(\Gamma),
\end{align*}
where $I(\Gamma)$ is the ideal generated by all elements of the form
\begin{align*}
    \begin{cases}
        x_{\underline a} x_{\underline b} - x_{\underline a + \underline b}, & \text{if there exists a chain $C$ in $A$ containing $\supp \underline a$ and $\supp \underline b$}, \\
        x_{\underline a} x_{\underline b}, & \text{else}
    \end{cases}
\end{align*}
with $\underline a, \underline b \in \Gamma$. For each chain $C$ in $A$ the fan algebra contains the subalgebra
\begin{align*}
    \bigoplus_{\underline a \in \Gamma_{C}} \K x_{\underline a} \subseteq \K[\Gamma],
\end{align*}
which is isomorphic to the semigroup algebra $\K[\Gamma_{C}]$.

All leaves of the quasi-valuation $\mathcal V$ are at most one-dimensional. Hence choosing a regular function $g_{\underline a} \in R$ with $\mathcal V(g_{\underline a}) = \underline a$ for each $\underline a \in \Gamma$ yields a basis
\begin{align*}
    \mathbb B = \set{g_{\underline a} \mid \underline a \in \Gamma}
\end{align*}
of $R$ as a vector space over $\K$ and the elements $\overline g_{\underline a}$ from a basis of the associated graded algebra $\mathrm{gr}_{\mathcal V} R$.

\begin{theorem}[{\cite[Theorem 11.1]{seshstrat}}]
    \label{thm:fan_algebra_cong_gr_V_R}
    There exist scalars $c_{\underline a} \in \K^\times$ such that the map
    \begin{align*}
        \K[\Gamma] \to \mathrm{gr}_{\mathcal V} R, \quad x_{\underline a} \mapsto c_{\underline a} \overline g_{\underline a}
    \end{align*}
    is an isomorphism of algebras.
\end{theorem}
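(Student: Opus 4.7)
The plan is to mirror the proof of \cite[Theorem 11.1]{seshstrat}, which proceeds by analyzing the multiplication in $\mathrm{gr}_{\mathcal V} R$ and matching it against the fan algebra relations. The only inputs needed are the properties of the quasi-valuation $\mathcal V$ collected just before Example~\ref{ex:quasi_val_x_0_y_1}: positivity, the support characterization $\mathcal V_{\mathfrak C}(g) = \mathcal V(g) \iff \supp \mathcal V(g) \subseteq \mathfrak C$, the resulting additivity on pairs whose supports lie in a common chain, and $\mathcal V(f_p) = e_p$. All of these have been established in the multiprojective setup, so no further structural work on $\mathcal V$ is required.

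The first step is to compute $\overline g_{\underline a} \cdot \overline g_{\underline b}$ in $\mathrm{gr}_{\mathcal V} R$ for $\underline a, \underline b \in \Gamma$. If $\supp \underline a \cup \supp \underline b$ lies in a common maximal chain $\mathfrak C$, then $\mathcal V_{\mathfrak C}(g_{\underline a}) = \underline a$ and $\mathcal V_{\mathfrak C}(g_{\underline b}) = \underline b$ by the support characterization, so multiplicativity of $\mathcal V_{\mathfrak C}$ together with the minimality definition of $\mathcal V$ yields $\mathcal V(g_{\underline a} g_{\underline b}) = \underline a + \underline b$. This gives $g_{\underline a} g_{\underline b} \equiv \lambda_{\underline a, \underline b} \, g_{\underline a + \underline b} \pmod{R_{> \underline a + \underline b}}$ for some $\lambda_{\underline a, \underline b} \in \K^\times$. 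If no chain contains both supports, then $\underline a + \underline b \notin \Gamma$ (its support would be $\supp \underline a \cup \supp \underline b$, which lies in no chain), and hence $\mathcal V(g_{\underline a} g_{\underline b}) > \underline a + \underline b$ strictly, giving $\overline g_{\underline a} \cdot \overline g_{\underline b} = 0$. Thus the two families of defining relations of $\K[\Gamma]$ are exactly reproduced, up to the scalars $\lambda_{\underline a, \underline b}$.

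The key technical step, and the main obstacle, is to produce scalars $c_{\underline a} \in \K^\times$ satisfying the cocycle equation $c_{\underline a} c_{\underline b} \lambda_{\underline a, \underline b} = c_{\underline a + \underline b}$ for every compatible pair in $\Gamma$. Associativity of multiplication in $R$ forces $\lambda$ to be a $2$-cocycle $\lambda_{\underline a + \underline b, \underline c} \lambda_{\underline a, \underline b} = \lambda_{\underline a, \underline b + \underline c} \lambda_{\underline b, \underline c}$ on each monoid $\Gamma_{\mathfrak C}$. The chain-wise case is already supplied by the isomorphism $\mathrm{gr}_{\mathcal V, \mathfrak C} R \cong \K[\Gamma_{\mathfrak C}]$ mentioned in the paragraph preceding the theorem, which gives canonical scalars on each $\Gamma_{\mathfrak C}$. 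I would then glue these chain-wise choices together: on the overlap $\Gamma_{\mathfrak C} \cap \Gamma_{\mathfrak C'}$, which equals $\Gamma_{\mathfrak C \cap \mathfrak C'}$ and is itself a submonoid of each, one adjusts the scalars inductively along a well-founded ordering of the maximal chains so that the previously chosen values are preserved on the overlap. This is where the argument of \cite[Theorem 11.1]{seshstrat} really needs to be tracked carefully; no new multiprojective ingredient enters.

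Once the scalars $c_{\underline a}$ are in place, the resulting map $\K[\Gamma] \to \mathrm{gr}_{\mathcal V} R$, $x_{\underline a} \mapsto c_{\underline a} \overline g_{\underline a}$, respects the fan algebra relations by construction and is therefore a graded algebra homomorphism. Bijectivity is then immediate: both sides are $\Gamma$-graded with one-dimensional homogeneous components indexed by $\Gamma$, and our map sends the basis vector $x_{\underline a}$ to the nonzero scalar multiple $c_{\underline a} \overline g_{\underline a}$ of the basis vector $\overline g_{\underline a}$.
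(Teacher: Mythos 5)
Your proposal should be measured against what the paper actually does here: it does not reprove this statement at all. Theorem~\ref{thm:fan_algebra_cong_gr_V_R} is imported verbatim from \cite[Theorem 11.1]{seshstrat}, the justification being the remark in Section~2 that the multicone $\hat X$ is the affine cone of a projective variety $\widetilde X \subseteq \PP(V)$, so a multiprojective Seshadri stratification is in particular an ordinary Seshadri stratification of $\widetilde X$ with the same poset, extremal functions, quasi-valuation, fan of monoids $\Gamma$ and associated graded algebra $\mathrm{gr}_{\mathcal V} R$; since the statement nowhere refers to the $\N_0^m$-grading, it carries over word for word. Your guiding claim that ``no new multiprojective ingredient enters'' is therefore exactly the paper's stance, and the cleanest correct write-up is that one-line reduction rather than a re-derivation. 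Within your re-derivation, the analysis of the structure constants is correct: additivity of $\mathcal V$ on pairs whose supports lie in a common maximal chain gives $g_{\underline a} g_{\underline b} \equiv \lambda_{\underline a,\underline b}\, g_{\underline a + \underline b}$ modulo $R_{> \underline a + \underline b}$, and if no chain contains both supports then $\underline a + \underline b \notin \Gamma$ forces $\overline g_{\underline a} \cdot \overline g_{\underline b} = 0$; the concluding bijectivity argument from one-dimensional leaves is also fine.

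The genuine gap, if your text is read as a standalone proof, is the step you yourself flag: the existence of a globally consistent system of scalars $c_{\underline a}$, which is precisely the nontrivial content of \cite[Theorem 11.1]{seshstrat}. The chain-wise isomorphisms $\mathrm{gr}_{\mathcal V, \mathfrak C} R \cong \K[\Gamma_{\mathfrak C}]$ do give consistent families on each $\Gamma_{\mathfrak C}$, and two consistent families on one monoid differ by a character, but your inductive adjustment over a well-founded ordering of the maximal chains is not a proof. When you renormalize a new chain $\mathfrak C'$ you must match \emph{all} previously treated chains at once: the required correction is prescribed separately on each overlap $\Gamma_{\mathfrak C' \cap \mathfrak C}$, and you would need these prescriptions to be mutually compatible, to define a character on the submonoid of $\Gamma_{\mathfrak C'}$ generated by the union of the overlaps, and then to extend it to all of $\Gamma_{\mathfrak C'}$ --- a Čech-type compatibility condition that the ordering alone does not provide. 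This is exactly where the original proof does its real work, so either reproduce that argument in detail or, better, replace the whole discussion by the reduction to the projective case via $\widetilde X$ and cite the theorem as the paper does.
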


The concepts of normal and balanced Seshadri stratifications were introduced in \cite[Sections 13, 15]{seshstrat}. They can also be used in the multiprojective case. 

\begin{definition}
    A multiprojective Seshadri stratification is called 
    \begin{enumerate}[label=(\alph{enumi})]
        \item \textbf{normal}, if $\Gamma_{\mathfrak C}$ is saturated for every maximal chain $\mathfrak C$, \ie it is equal to the intersection of the lattice $\mathcal L^{\mathfrak C}$ generated by $\Gamma_{\mathfrak C}$ with the positive orthant $\Q^{\mathfrak C}_{\geq 0}$;
        \item \textbf{balanced}, when the fan of monoids $\Gamma$ is independent of the choice of the total order $\geq^t$.
    \end{enumerate}
\end{definition}

Every normal Seshadri stratification defines a standard monomial theory on $R$ in the sense of the next proposition. When the stratification is balanced as well, then the normality and its associated standard monomial theory do not depend on the choice if the total order $\geq^t$.

An element $\underline a \in \Gamma$ is called \textit{decomposable}, if it is $0$ or it can be written in the form $\underline a = \underline a^1 + \underline a^2$ for two elements $\underline a^1, \underline a^2 \in \Gamma \setminus \set{0}$ with $\min \supp \underline a^1 \geq \max \supp \underline a^2$. Otherwise $\underline a$ is  called \textit{indecomposable}. Note that the minima and maxima exist, since the support of each element in $\Gamma$ is totally ordered. Let $\mathbb G$ be the set of all indecomposable elements in $\Gamma$. For each $\underline a \in \mathbb G$ we fix a regular function $x_{\underline a} \in R \setminus \set{0}$ with $\mathcal V(x_{\underline a}) = \underline a$ and let $\mathbb G_R = \set{x_{\underline a} \mid \underline a \in \mathbb G}$ be the set of these functions.

We assume that the stratification is normal. In this case every element $\underline a \in \Gamma$ has a unique decomposition into a sum $\underline a = \underline a^1 + \dots + \underline a^s$ of indecomposable elements $\underline a^k \in \Gamma$, such that $\min \supp \underline a^k \geq \max \supp \underline a^{k+1}$ holds for all $k = 1, \dots, s-1$. With the choice of the set $\mathbb G_R$ one can therefore associate a regular function to every element $\underline a \in \Gamma$ via
\begin{align*}
    x_{\underline a} \coloneqq x_{\underline a^1} \cdots x_{\underline a^s} \in R.
\end{align*}
A monomial in the functions in $\mathbb G_R$ is called \textit{standard}, if it is of the form $x_{\underline a}$ for some element $\underline a \in \Gamma$.

\begin{proposition}[{\cite[Proposition 15.6]{seshstrat}}]
    \label{prop:smt}
    If the stratification is normal and $\mathbb G_R$ and $x_{\underline a}$ are chosen as above, then the following statements hold:
    \begin{enumerate}[label=(\alph{enumi})]
        \item The set $\mathbb G_R$ generates $R$ as a $\K$-algebra.
        \item The set of all standard monomials in $\mathbb G_R$ is a basis of $R$ as a vector space.
        \item If $\underline a = \underline a^1 + \dots + \underline a^s$ is the unique decomposition of $\underline a$ into indecomposables, then $x_{\underline a} \coloneqq x_{\underline a^1} \cdots x_{\underline a^s}$ is a standard monomial with $\mathcal V(x_{\underline a}) = \underline a$.
        \item For each non-standard monomial $x_{\underline a^1} \cdots x_{\underline a^s}$ in $\mathbb G_R$ there exists a straightening relation
        \begin{align*}
            x_{\underline a^1} \cdots x_{\underline a^s} = \sum_{\underline b \in \Gamma} u_{\underline b} \mkern2mu x_{\underline b}
        \end{align*}
        expressing it as a linear combination of standard monomials, where $u_{\underline b} \neq 0$ only if $\underline b \geq^t \underline a^1 + \dots + \underline a^s$.
    \end{enumerate}
\end{proposition}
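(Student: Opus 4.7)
My strategy is to use Theorem~\ref{thm:fan_algebra_cong_gr_V_R} together with normality of $\Gamma$ to transport the basis structure of $\mathrm{gr}_{\mathcal V} R$ to $R$ itself. I would establish (c) first, then derive linear independence and spanning, and finally obtain (a) and (d) as corollaries.

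For (c), the hypothesis $\min \supp \underline a^k \geq \max \supp \underline a^{k+1}$ forces $\bigcup_k \supp \underline a^k$ to be a chain in $A$. Since the quasi-valuation is additive on factors whose supports lie in a common chain (one of the properties listed just before Example~\ref{ex:quasi_val_x_0_y_1}), an easy induction gives
\begin{align*}
    \mathcal V(x_{\underline a^1} \cdots x_{\underline a^s}) = \sum_{k=1}^s \mathcal V(x_{\underline a^k}) = \sum_{k=1}^s \underline a^k = \underline a.
\end{align*}

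Before attacking (b), I need existence and uniqueness of the indecomposable decomposition of an arbitrary $\underline a \in \Gamma$, and this is where normality enters. Extending $\supp \underline a$ to a maximal chain $\mathfrak C$ places $\underline a$ in $\Gamma_{\mathfrak C}$, and saturation of $\Gamma_{\mathfrak C}$ inside its lattice allows a greedy algorithm, starting from $p = \max \supp \underline a$, to peel off a uniquely determined indecomposable summand $\underline a^1$ with $\min \supp \underline a^1 = p$ and $\underline a - \underline a^1 \in \Gamma_{\mathfrak C}$; iteration yields existence, and uniqueness is enforced by the nesting condition on supports. I expect this purely combinatorial step to be the main obstacle, since it is the content of the normality hypothesis.

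For (b), combining Step (c) with this uniqueness shows that distinct standard monomials carry pairwise distinct quasi-valuations. A nontrivial linear relation $\sum c_i x_{\underline b^i} = 0$ would, upon isolating the $\geq^t$-minimal term $\underline b^{i_0}$, produce an equation whose left-hand side has quasi-valuation $\underline b^{i_0}$ while its right-hand side has strictly greater quasi-valuation (or is zero), which is impossible. For spanning, I fix a multidegree $\underline d \in \N_0^m$ and work inside the finite set $\Gamma_{\underline d} = \mathcal V(R_{\underline d} \setminus \set{0})$, on which $\geq^t$ is a finite total order; I proceed by downward induction on $\underline a \in \Gamma_{\underline d}$. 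Since the leaf at $\underline a$ is one-dimensional, one has $g_{\underline a} - c \cdot x_{\underline a} \in R_{>\underline a} \cap R_{\underline d}$ for a unique $c \in \K^\times$, and this residual term is a $\K$-linear combination of basis vectors $g_{\underline b}$ with $\underline b >^t \underline a$, already expressible in standard monomials by the induction hypothesis.

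Statement (a) is then immediate from (b). For (d), expand a non-standard monomial $M = x_{\underline a^1} \cdots x_{\underline a^s}$ in the basis of standard monomials as $M = \sum u_{\underline b} \, x_{\underline b}$; when $M \neq 0$, the quasi-valuation $\mathcal V(M)$ equals the $\geq^t$-minimum $\underline b_{\min}$ of the support of the expansion (by the distinct-valuations argument above), and subadditivity of $\mathcal V$ on products yields $\underline b_{\min} = \mathcal V(M) \geq \sum_k \mathcal V(x_{\underline a^k}) = \underline a^1 + \dots + \underline a^s$. Hence every $\underline b$ appearing with $u_{\underline b} \neq 0$ satisfies $\underline b \geq^t \underline b_{\min} \geq^t \underline a^1 + \dots + \underline a^s$, as required.
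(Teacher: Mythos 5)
The paper itself gives no proof of this proposition; it is quoted verbatim from \cite[Proposition 15.6]{seshstrat}. Your outline — prove (c) via additivity of $\mathcal V$ on factors supported in a common chain, get linear independence from the pairwise distinct quasi-valuations of standard monomials, get spanning by a triangularity/induction argument inside each finite-dimensional graded piece, and deduce (a) and (d) — is the standard argument and is essentially the route of the cited source, so the overall strategy is sound. Two points, however, need repair.

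First, your greedy construction of the indecomposable decomposition is wrong as literally stated: requiring $\min \supp \underline a^1 = \max \supp \underline a$ forces $\supp \underline a^1$ to be a single element, which fails already for elements whose top indecomposable piece looks like $\tfrac12 e_p + \tfrac12 e_q$ (as in $\Gamma_{\mathfrak C_2}$ of Example~\ref{ex:A}). You do not actually need to prove existence and uniqueness of the decomposition here — the paper assumes it in the paragraph preceding the proposition as a consequence of normality (it is a separate lemma in \cite{seshstrat}) — but if you do include it, the peeling must be formulated differently, and it is indeed the genuinely combinatorial content of the normality hypothesis, not something the sketch above delivers.

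Second, your spanning step writes $g_{\underline a} - c\, x_{\underline a} \in R_{>\underline a} \cap R_{\underline d}$, which tacitly assumes that the chosen representative $x_{\underline a}$ is multihomogeneous of degree $\underline d$; otherwise the residual leaves $R_{\underline d}$ and the finite downward induction over $\Gamma_{\underline d}$ does not close up. This is not a cosmetic point: the proposition's hypotheses, as printed, allow an arbitrary $x_{\underline a}$ with $\mathcal V(x_{\underline a}) = \underline a$, and with a genuinely inhomogeneous choice the spanning claim can fail — e.g.\ for the trivial stratification of a point in $\PP(\K)$ one has $R = \K[t]$, $\mathbb G = \set{e_p}$, and choosing $x_{e_p} = t + t^2$ the standard monomials $(t+t^2)^n$ span only the proper subalgebra $\K[t+t^2]$. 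So you should state explicitly that the representatives of the indecomposables are taken multihomogeneous (which is always possible: by Lemma~\ref{lem:multihom_valuation} some multihomogeneous component of any representative attains the quasi-valuation, and by Lemma~\ref{lem:quasi_val_compatible_with_multidegree} it then has degree $\deg \underline a$); this is clearly the intended reading of the proposition, and with it your induction, as well as parts (a), (c), (d) and the independence argument, are correct.
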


The above standard monomial theory is compatible with the induced stratification (see Remark~\ref{rem:induced_strat}) on $X_p$ for $p \in A$. This was shown in \cite[Theorem 15.12]{seshstrat}. By adapting this result to the multiprojective setting, we obtain the following corollary.

\begin{corollary}
    \label{cor:smt_induced_strat}
    If the Seshadri stratification on $X$ is normal and balanced, then the following statements are fulfilled for each $p \in A$:
    \begin{enumerate}[label=(\alph{enumi})]
        \item The induced stratification on $X_p$ is also normal and balanced.
        \item The fan of monoids of this stratification is equal to $\Gamma_p = \set{\underline a \in \Gamma \mid \max \supp \underline a \leq p}$.
        \item The set of indecomposable elements in $\Gamma_p$ is given by $\mathbb G_p = \mathbb G \cap \Gamma_p$ and the restriction of a function $x_{\underline a}$ with $\underline a \in \mathbb G_p$ fulfills $\mathcal V_p(x_{\underline a}\big\vert_{\hat X_p}) = \underline a$, where $\mathcal V_p$ denotes the quasi-valuation of the induced stratification.
        \item A standard monomial $x_{\underline a}$, $a \in \Gamma$, vanishes identically on $\hat X_p$, if and only if $\max \supp \underline a \leq p$. In this case, the monomial is called \textbf{\boldmath{}standard on $X_p$}.
        \item The restrictions of the standard monomials $x_{\underline a}$, $a \in \Gamma$, which are standard on $X_p$, form a basis of the multihomogeneous coordinate ring $\K[X_p]$ \wrt{} the embedding
        \begin{align*}
            X_p \longhookrightarrow \prod_{i \in I_p} \PP(V_i).
        \end{align*}
    \end{enumerate}
\end{corollary}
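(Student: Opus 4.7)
The plan is to adapt the proof of \cite[Theorem 15.12]{seshstrat} to the multiprojective setting, using the compatibility between $\mathcal V_{\mathfrak C}$ and the valuation $\mathcal V_{\mathfrak C_p}$ of the induced stratification (Remark~\ref{rem:induced_strat}) as the central technical tool. I would fix the linearization $\geq^t$ on $A_p$ obtained by restricting the one on $A$, so that the quasi-valuations $\mathcal V$ and $\mathcal V_p$ can be compared directly. With this choice, every maximal chain of $A_p$ extends to a maximal chain of $A$ passing through $p$, and the lexicographic order on $\Q^{A_p}$ agrees with the pullback of the one on $\Q^A$.

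The crucial step is to establish (b): for every $g \in R \setminus \set{0}$ whose restriction to $\hat X_p$ is non-zero, $\mathcal V(g) = \mathcal V_p(g\big|_{\hat X_p})$ provided $\max \supp \mathcal V(g) \leq p$; and conversely, every element of the fan of monoids of the induced stratification arises this way. For the first direction, I would choose a maximal chain $\mathfrak C$ of $A$ containing $\supp \mathcal V(g)$ and passing through $p$, which is possible because the support of $\mathcal V(g)$ is a chain bounded by $p$. Remark~\ref{rem:induced_strat} then gives $\mathcal V_{\mathfrak C_p}(g|_{\hat X_p}) = \mathcal V_{\mathfrak C}(g) = \mathcal V(g)$, the last equality by the minimum-characterization of $\mathcal V$ recalled before Example~\ref{ex:quasi_val_x_0_y_1}. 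To upgrade from $\mathcal V_{\mathfrak C_p}$ to $\mathcal V_p$, I would verify that no other maximal chain $\mathfrak C'_p$ of $A_p$ yields a strictly smaller value: any such chain extends to a maximal chain $\mathfrak C'$ of $A$ through $p$, and Remark~\ref{rem:induced_strat} would then produce a $\mathfrak C'$-valuation of $g$ strictly undercutting $\mathcal V(g)$, contradicting the minimality. For the converse inclusion, any non-zero $h \in \K[X_p]$ pulls back through $\K[X_p] \hookrightarrow \K[X_{I_p}] \subseteq R$ to some $g \in R$ with $g|_{\hat X_p} = h$, and a basis-element choice in $\mathbb B$ restricting non-trivially produces a lift with $\mathcal V(g)$ supported in $A_p$.

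Parts (a), (c), (d), (e) then follow formally. For (a), for each maximal chain $\mathfrak C_p$ of $A_p$ the equality $\Gamma_{\mathfrak C_p} = \Gamma_{\mathfrak C} \cap \Q^{\mathfrak C_p}$ (for any extension $\mathfrak C$ of $\mathfrak C_p$ through $p$) propagates saturation, since intersecting a saturated monoid with a rational coordinate subspace remains saturated; hence normality transfers. Balance transfers because $\Gamma_p = \set{\underline a \in \Gamma \mid \max \supp \underline a \leq p}$ is carved out of $\Gamma$ by an intrinsic support condition that does not involve $\geq^t$. Since any decomposition of $\underline a \in \Gamma_p$ into summands of $\Gamma$ automatically remains inside $\Gamma_p$, the identification $\mathbb G_p = \mathbb G \cap \Gamma_p$ in (c) is immediate, and its quasi-valuation identity is a direct application of (b). Parts (d) and (e) then combine Proposition~\ref{prop:smt} applied to $X_p$ with the identification of its standard monomials as the restrictions of standard monomials on $X$ supported in $A_p$; dimension counting via the leaves, which are at most one-dimensional, confirms that the stated restrictions span $\K[X_p]$.

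The hard part will be obtaining equality, rather than mere inequality, in step (b): a priori $\mathcal V_p(g|_{\hat X_p})$ could be strictly greater than $\mathcal V(g)$, and ruling this out requires the balance hypothesis so that the relevant minima over maximal chains are computed consistently in $A$ and in $A_p$. Once this coherence is established, the rest of the argument is bookkeeping between $\Gamma$, $\Gamma_p$, and their respective fan algebras.
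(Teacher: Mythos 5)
Your proposal matches the paper's treatment: the paper offers no independent argument but simply invokes \cite[Theorem 15.12]{seshstrat} together with the compatibility of Remark~\ref{rem:induced_strat}, which is exactly the adaptation you outline (restrict the linearization to $A_p$, compare $\mathcal V$ and $\mathcal V_p$ chain by chain via the minimum characterization, then transfer normality, balancedness and the standard monomial basis). The only point to make explicit in a full write-up is the fact, taken from \loccit{}, that $\supp \mathcal V(g) \subseteq A_p$ already forces $g\big\vert_{\hat X_p} \neq 0$ — so that Remark~\ref{rem:induced_strat} applies to representatives of every $\underline a \in \Gamma_p$, in particular to the functions $x_{\underline a}$ in parts (c) and (d) — which your sketch assumes rather than proves.
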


\begin{example}
    \label{ex:hodge_type}
    The monoid $\Gamma_{\mathfrak C}$ to a maximal chain $\mathfrak C$ always contains the set $\N_0^{\mathfrak C}$, as every extremal function $f_p$ for $p \in \mathfrak C$ has the quasi-valuation $e_p$. The stratification is called of \textbf{Hodge type}, if all its bonds $b_{p,q}$ are equal to $1$. In this case every monoid $\Gamma_{\mathfrak C}$ coincides with $\N_0^m$, since $\Gamma_{\mathfrak C}$ is contained in the lattice $L^{\mathfrak C} = \Z^{\mathfrak C}$ from equation (\ref{eq:def_L_C}). For instance, the stratification we defined in Example~\ref{ex:strat_y_1} is of Hodge type. Seshadri stratifications of Hodge type are always normal and balanced. More of their properties can be found in~\cite[Section 16]{seshstrat}.
\end{example}

\begin{example}
    \label{ex:A}
    We return to the Seshadri stratification from Example~\ref{ex:strat_y_0_y_1}. It has the following bonds:
    \begin{center}
        \begin{tikzpicture}
        \node (X) at (0,0) {$V(x_0 y_1 - x_1 y_0), y_0 y_1$};
        \node (1010) at (-3.9,-1.8) {$V(x_1) \times V(y_1), y_0$};
        \node (1100) at (0,-1.8) {$\A^2 \times \set{0}, x_0 x_1$};
        \node (0101) at (3.9,-1.8) {$V(x_0) \times V(y_0), y_1$};
        \node (1000) at (-1.9,-3.6) {$V(x_1) \times \set{0}, x_0$};
        \node (0100) at (1.9,-3.6) {$V(x_0) \times \set{0}, x_1$};
        
        \draw [thick, -stealth] (X) -- node[left, above, xshift=-0.2em, yshift=0em]{\footnotesize $1$}(1010);
        \draw [thick, -stealth] (X) -- node[left, xshift=-0.1em, yshift=0em]{\footnotesize $2$}(1100);
        \draw [thick, -stealth] (X) -- node[right, above, xshift=0.2em, yshift=0em]{\footnotesize $1$}(0101);
        \draw [thick, -stealth] (1010) -- node[left, below, xshift=-0.6em, yshift=0.3em]{\footnotesize $1$}(1000);
        \draw [thick, -stealth] (1100) -- node[right, below, xshift=0.6em, yshift=0.3em]{\footnotesize $1$}(1000);
        \draw [thick, -stealth] (1100) -- node[left, below, xshift=-0.6em, yshift=0.3em]{\footnotesize $1$}(0100);
        \draw [thick, -stealth] (0101) -- node[right, below, xshift=0.6em, yshift=0.3em]{\footnotesize $1$}(0100);
        \end{tikzpicture}
    \end{center}
    There are four maximal chains in $A$, which we denote by $\mathfrak C_1, \mathfrak C_2, \mathfrak C_3, \mathfrak C_4$ from left to right. In two of these chains all bonds are equal to $1$. By Example~\ref{ex:hodge_type} we get the associated monoids $\Gamma_{\mathfrak C_1} = \N_0^{\mathfrak C_1}$ and $\Gamma_{\mathfrak C_4} = \N_0^{\mathfrak C_4}$. The monoid $\Gamma_{\mathfrak C_2}$ is contained in the intersection of the lattice
    \begin{align*}
        L^{\mathfrak C_2} = \set{ a_X e_{X} + a_{01} e_{01} + a_0 e_{0} \mid a_X, a_X + a_{01} , a_X + a_{01} + a_0 \in \tfrac12 \Z } = (\tfrac12 \Z)^{\mathfrak C_2}
    \end{align*}
    with the positive orthant $\Q_{\geq 0}^{\mathfrak C_2}$. 
    
    Using Proposition~\ref{prop:valuation_lattice} one can even find a smaller lattice containing $\Gamma_{\mathfrak C_2}$. We choose the regular functions $F_2 = x_0 y_1$, $F_1 = f_{01}$ and $F_0 = f_0$. Two of those are extremal functions and we already computed the quasi-valuation of $F_2$ in Example~\ref{ex:quasi_val_x_0_y_1}. Then the matrix $B_{\mathfrak C}$ in Proposition~\ref{prop:valuation_lattice} is given by
    \begin{align*}
        B_{\mathfrak C} = \begin{pmatrix}
            2 & 0 & 0 \\
            -1 & 1 & 0 \\
            0 & 0 & 1
        \end{pmatrix}.
    \end{align*}
    It thus follows that $\Gamma_{\mathfrak C_2}$ is contained in the lattice
    \begin{align*}
        L^{\mathfrak C_2}_{\mathcal V} = \set{ a_X e_{X} + a_{01} e_{01} + a_0 e_{0} \mid 2a_X, a_{01} - a_X , a_0 \in \Z }.
    \end{align*}
    On the other hand, every element in $\mathcal L^{\mathfrak C_2} \cap \Q_{\geq 0}^{\mathfrak C_2}$ actually lies in $\Gamma_{\mathfrak C_2}$, since it can be written as a sum of the elements $e_X, e_{01}, e_0, \tfrac12 e_X + \tfrac12 e_{01} \in \Gamma_{\mathfrak C_2}$. Analogously, one can determine the monoid $\Gamma_{\mathfrak C_3}$. Summarizing our computations, we have:
    \begin{align*}
        \Gamma_{\mathfrak C_1} &= \set{ a e_X + b e_{0 \overline 0} + c e_0 \mid a, b, c \in \N_0}, \\
        \Gamma_{\mathfrak C_2} &= \set{ a e_X + b e_{0 1} + c e_0 \mid a, b \in \tfrac{1}{2}\N_0, c \in \N_0, a + b \in \N_0}, \\
        \Gamma_{\mathfrak C_3} &= \set{ a e_X + b e_{0 1} + c e_1 \mid a, b \in \tfrac{1}{2}\N_0, c \in \N_0, a + b \in \N_0}, \\
        \Gamma_{\mathfrak C_4} &= \set{ a e_X + b e_{1 \overline 1} + c e_1 \mid a, b, c \in \N_0}.
    \end{align*}
    As all these monoids are saturated, the stratification is normal. It also is balanced: Every element in $\Gamma$ is a sum of the elements $e_p$ for $p \in A$ and $\mathcal V(F_2) = \tfrac12 e_X + \tfrac12 e_{01}$ and the quasi-valuation of $F_2$ is independent of the choice of the total order $\geq^t$ (see Example~\ref{ex:quasi_val_x_0_y_1}). 
\end{example}

\section{Multidegrees and multigradings}

The $\N_0^m$-grading on the multihomogeneous coordinate ring $R = \K[X]$ corresponds to an action of the torus $T = (\K^\times)^m$ on the multicone $\hat X \subseteq V$ by scaling in each factor $V_i$. This also induces an $T$-action on the multihomogeneous coordinate ring itself: If $g$ is a function in $R$ and $\underline t \in T$, then $g^{\underline t} \coloneqq \underline t \cdot g$ is defined by $(g^{\underline t})(x) = g(\underline t^{-1} \cdot x)$ for all $x \in \hat X$.

\begin{lemma}
    \label{lem:multihom_valuation}
    $ $
    \begin{enumerate}[label=(\alph{enumi})]
      \item For all $g \in R \setminus \set{0}$ and $\underline t \in (\K^\times)^m$ it holds $\mathcal V_{\mathfrak C}(\underline t \cdot g) = \mathcal V_{\mathfrak C}(g)$.
      \item If $h = \sum_{\underline d \in \N_0^m} h_{\underline d} \in R$ is the decomposition of $h \neq 0$ into its multi\-homo\-geneous components $h_{\underline d} \in R_{\underline d}$, then
        \begin{align*}
            \mathcal V_{\mathfrak C}(h) = \operatorname{min}{\set{\mathcal V_{\mathfrak C}(h_{\underline d}) \mid \underline d \in \N_0^m \ \text{such that} \ h_{\underline d} \neq 0}}.
        \end{align*}
\end{enumerate}
\end{lemma}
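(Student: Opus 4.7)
The plan is to exploit the $T = (\K^\times)^m$-equivariance of the whole Seshadri stratification. Every stratum $\hat X_p$ is a multicone, hence stable under the $T$-action on $\hat X$, and every extremal function $f_p$ is a $T$-eigenfunction with $f_p^{\underline t} = \underline t^{-\deg f_p} f_p$. Part~(a) will follow because nothing in the inductive definition of $\mathcal V_{\mathfrak C}$ actually sees the $T$-action, and part~(b) will be deduced from (a) by a Vandermonde-type interpolation argument.

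For part~(a), I would first observe that each valuation $\nu_{p_i, p_{i-1}}$ is $T$-invariant: since $\underline t \in T$ preserves the prime divisor $\hat X_{p_{i-1}} \subseteq \hat X_{p_i}$, it induces an automorphism of the local ring $\mathcal O_{\hat X_{p_i}, \hat X_{p_{i-1}}}$ fixing its maximal ideal, hence preserves the valuation. Using this together with the scalar identity $f_{p_i}^{\underline t} = \underline t^{-\deg f_{p_i}} f_{p_i}$, a downward induction on $i$ compares the sequences $(g_i)$ and $((g^{\underline t})_i)$ used to compute $\mathcal V_{\mathfrak C}(g)$ and $\mathcal V_{\mathfrak C}(g^{\underline t})$. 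The inductive claim is
\begin{align*}
    (g^{\underline t})_i = c_i(\underline t) \cdot (g_i)^{\underline t}
\end{align*}
for some $c_i(\underline t) \in \K^\times$. Since neither nonzero scalars nor the $T$-action change any $\nu_{p_j, p_{j-1}}$, the integers $\nu_{p_j, p_{j-1}}(g_j)$ appearing in the definition of $\mathcal V_{\mathfrak C}(g)$ are identical to those for $\mathcal V_{\mathfrak C}(g^{\underline t})$, yielding (a).

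For part~(b), the inequality $\mathcal V_{\mathfrak C}(h) \geq \operatorname{min}_{\underline d} \mathcal V_{\mathfrak C}(h_{\underline d})$ is immediate from the ultrametric property of the valuation. For the reverse inequality, let $D = \set{\underline d \in \N_0^m \mid h_{\underline d} \neq 0}$; this is a finite set. By part~(a), $\mathcal V_{\mathfrak C}(h^{\underline t}) = \mathcal V_{\mathfrak C}(h)$ for every $\underline t \in T$, while $h^{\underline t} = \sum_{\underline d \in D} \underline t^{-\underline d} h_{\underline d}$. I would then choose $|D|$ torus elements $\underline t_1, \ldots, \underline t_{|D|}$ so that the $|D| \times |D|$ matrix $(\underline t_k^{-\underline d})_{k, \underline d \in D}$ is invertible -- possible by a Vandermonde-type argument since the characters $\underline d \in D$ are pairwise distinct -- and solve for each $h_{\underline d}$ as a $\K$-linear combination of the $h^{\underline t_k}$. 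The ultrametric inequality then gives
\begin{align*}
    \mathcal V_{\mathfrak C}(h_{\underline d}) \geq \operatorname{min}_k \mathcal V_{\mathfrak C}(h^{\underline t_k}) = \mathcal V_{\mathfrak C}(h),
\end{align*}
completing the argument.

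The main nuisance will be the bookkeeping of the scalars $c_i(\underline t)$ in (a): at each step of the induction they absorb an extra factor $\underline t^{-\nu_{p_i, p_{i-1}}(g_i) \deg f_{p_i}}$ coming from the $T$-eigenfunction $f_{p_i}^{\nu_{p_i, p_{i-1}}(g_i)}$ in the denominator. This looks unpleasant but is ultimately harmless, because the scalars are annihilated by every $\nu_{p_j, p_{j-1}}$ and so never affect the final value of $\mathcal V_{\mathfrak C}$.
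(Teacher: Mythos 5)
Your proposal is correct and follows essentially the same route as the paper, which proves the lemma by transferring \cite[Lemma 6.15]{seshstrat} from the $\K^\times$-action to the $(\K^\times)^m$-action: part (a) via $T$-equivariance of the strata, extremal functions and divisorial valuations along the chain, and part (b) via the valuation inequality for sums together with expressing each component $h_{\underline d}$ as a linear combination of translates $h^{\underline t}$. Your Vandermonde interpolation step is precisely the content of the paper's Lemma~\ref{lem:span_homog_components}, which you simply re-derive inline instead of citing.
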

\begin{proof}
    The statements can be proved analogously to \cite[Lemma 6.15]{seshstrat}. One has to replace the $\K^\times$-action by the $(\K^\times)^m$-action and use Lemma~\ref{lem:span_homog_components}.
\end{proof}

\begin{definition}
    \label{def:degree_map}
    We define the \textbf{degree map} to be the $\Q$-linear map
    \begin{align*}
        \mathrm{deg}: \Q^A \to \Q^m, \quad e_p \mapsto \deg f_p
    \end{align*}
    and call $\deg \underline a$ the \textbf{degree} of an element $\underline a \in \Q^A$.
\end{definition}

\begin{lemma}
    \label{lem:quasi_val_compatible_with_multidegree}
    If $g \in R \setminus \set{0}$ is multihomogeneous, then $\deg g = \deg \mathcal V(g)$.
\end{lemma}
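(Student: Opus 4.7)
The plan is to prove the stronger statement that for every $p \in A$ and every non-zero multihomogeneous rational function $h \in \K(\hat X_p)^\times$ the equality $\deg \mathcal V_p(h) = \deg h$ holds, where $\mathcal V_p$ denotes the quasi-valuation of the induced stratification on $X_p$ from Remark~\ref{rem:induced_strat}. The lemma then follows by specializing to $p = p_{\text{max}}$, for which $\mathcal V_p = \mathcal V$. I would proceed by induction on the rank $r(p) = \dim \hat X_p - 1$.

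For the base case $r(p) = 0$ the element $p$ is minimal in $A$, so by Lemma~\ref{lem:properties_A_mathcal_I}\ref{itm:properties_A_mathcal_I_b} the index set $I_p = \set{i}$ is a singleton and the one-dimensional multicone $\hat X_p$ is isomorphic to $\A^1$ with coordinate of multidegree equal to the standard basis vector $e_i \in \N_0^m$. Every non-zero multihomogeneous rational function on $\hat X_p$ is of the form $c \mkern1mu t^n$ with $c \in \K^\times$ and $n \in \Z$, so $\deg h = n \mkern1mu e_i$ and $\nu_{p, p_{-1}}(h) = n$. Since $f_p \in \K[X_{I_p}]$ and $I_p$ is a singleton, $\deg f_p = b_{p, p_{-1}} \mkern1mu e_i$, and the definition of $\mathcal V_p$ on the unique maximal chain $\set{p}$ in $A_p$ yields $\deg \mathcal V_p(h) = (n/b_{p, p_{-1}}) \deg f_p = n \mkern1mu e_i = \deg h$.

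For the inductive step I apply the recursion~(\ref{eq:inductive_def_quasi_val}) to pick an element $q < p$ covered by $p$ with
\begin{align*}
    \mathcal V_p(h) = \frac{\nu_{p, q}(h)}{b_{p,q}} e_p + \frac{1}{b_{p,q}} \mathcal V_q(h'), \quad \text{where} \quad h' = \frac{h^{b_{p,q}}}{f_p^{\nu_{p,q}(h)}} \bigg\vert_{\hat X_q}.
\end{align*}
The crucial auxiliary observation is that $h'$ is a non-zero multihomogeneous rational function on $\hat X_q$ of multidegree $b_{p,q} \deg h - \nu_{p,q}(h) \deg f_p$: non-vanishing is built into the construction (the vanishing multiplicities cancel by the choice of exponents), and multihomogeneity is inherited from $h$ and $f_p$. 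In the case $I_p \setminus I_q = \set{i}$ one must additionally verify that the $i$-th component of the multidegree of $h^{b_{p,q}}/f_p^{\nu_{p,q}(h)}$ vanishes, so that its restriction to $\hat X_q$ has a well-defined multidegree in $\Z^{I_q}$; this follows directly from Lemma~\ref{lem:projective_covering_relation}\ref{itm:projective_covering_relation_b}, which identifies $\nu_{p,q}(h) = (\deg h)_i$ and $b_{p,q} = (\deg f_p)_i$ (both statements extending from regular functions to multihomogeneous rational ones by writing $h = h_1/h_2$). Taking degrees in the recursion and applying the induction hypothesis to $h'$ then gives
\begin{align*}
    \deg \mathcal V_p(h) = \frac{\nu_{p, q}(h)}{b_{p,q}} \deg f_p + \frac{1}{b_{p,q}} \bigl( b_{p,q} \deg h - \nu_{p,q}(h) \deg f_p \bigr) = \deg h,
\end{align*}
closing the induction.

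The only delicate point is the degree bookkeeping through the restriction step in the inductive case $I_q \subsetneq I_p$: one must confirm that $h'$ really does not vanish identically on $\hat X_q$ and that its multidegree sits in $\Z^{I_q}$ rather than merely in $\Z^{I_p}$. Both issues are settled by Lemma~\ref{lem:projective_covering_relation}, after which the entire proof reduces to a two-line algebraic cancellation.
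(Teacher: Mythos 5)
Your proof is correct, but it takes a genuinely different route from the paper. The paper argues globally via the leaves: pick a maximal chain $\mathfrak C$ with $\mathcal V_{\mathfrak C}(g)=\mathcal V(g)$, choose $N$ with $N\mathcal V(g)\in\Z^{\mathfrak C}$, and compare $g^N$ with the monomial in the extremal functions along $\mathfrak C$ having the same quasi-valuation $N\mathcal V(g)$; if their multidegrees differed, the one-dimensionality of the leaves would force the quasi-valuation of their difference to jump, contradicting Lemma~\ref{lem:multihom_valuation} (the quasi-valuation of a sum of multihomogeneous components is the minimum of their quasi-valuations). You instead induct along the poset using the recursion~(\ref{eq:inductive_def_quasi_val}), with the degree bookkeeping supplied by Lemma~\ref{lem:projective_covering_relation}\,\ref{itm:projective_covering_relation_b}; this avoids both the leaf-dimension result and Lemma~\ref{lem:multihom_valuation}, proves a slightly stronger statement (for multihomogeneous rational functions on every stratum, compatible with the induced stratifications of Remark~\ref{rem:induced_strat}), but requires pushing the valuation-theoretic statements to rational functions and checking that multihomogeneity and the multidegree survive the restriction step $h\mapsto h^{b_{p,q}}/f_p^{\nu_{p,q}(h)}\big\vert_{\hat X_q}$. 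That check, which you only gesture at, deserves one more sentence: write the unit $F=h^{b_{p,q}}/f_p^{\nu_{p,q}(h)}$ of $\mathcal O_{\hat X_p,\hat X_q}$ as $a/b$ with $a,b\in\K[\hat X_p]$ multihomogeneous and $b\notin I(\hat X_q)$ --- possible because the ideal of admissible denominators of $F$ is torus-stable, hence multigraded, and not contained in the multihomogeneous prime $I(\hat X_q)$ --- then $\nu_{p,q}(F)=0$ gives $a\notin I(\hat X_q)$ as well, so $F\big\vert_{\hat X_q}=(a\big\vert_{\hat X_q})/(b\big\vert_{\hat X_q})$ is multihomogeneous of degree $\deg a-\deg b=b_{p,q}\deg h-\nu_{p,q}(h)\deg f_p$, whose $i$-th component vanishes automatically (and consistently with your computation via Lemma~\ref{lem:projective_covering_relation}\,\ref{itm:projective_covering_relation_b}) when $I_p\setminus I_q=\set{i}$. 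In short: the paper's proof is shorter given the machinery imported from \cite{seshstrat}, while yours is more self-contained and more explicit about how the quasi-valuation interacts with the strata.
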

\begin{proof}
    Let $\mathfrak C: p_r > \dots > p_0$ be a maximal chain in $A$ with $\mathcal V_{\mathfrak C}(g) = \mathcal V(g)$. We write $\mathcal V(g)$ in the form $a_r e_{p_r} + \dots + a_0 e_{p_0}$ with coefficients $a_i \in \Q$ and fix a positive integer $N$, such that $N \mathcal V(g) \in \Z^{\mathfrak C}$. Then we have
    \begin{align*}
        \mathcal V(g^N) = N \mathcal V(g) = \mathcal V(\prod_{i=0}^r f_p^{N a_i}).
    \end{align*}
    Suppose that $g^N$ and $f \coloneqq \prod_{i=0}^r f_{p_i}^{N a_i}$ have different multidegrees. Since the leaves of the quasi-valuation are one-dimensional, the quasi-valuation of the non-zero function $h = g^N - f$ is strictly larger than $N \mathcal V(g)$. On the other hand, $h$ consists of the two multihomogeneous components $g^N$ and $f$, so Lemma~\ref{lem:multihom_valuation} implies $\mathcal V(h) = N \mathcal V(g)$, which contradicts our assumption. Thus $g^N$ and $f$ have the same multidegree and it therefore follows:
    \begin{align*}
        \deg g = \frac{1}{N} \deg \prod_{i=0}^r f_{p_i}^{N a_i} = \sum_{i=1}^r a_i \deg f_{p_i} = \deg \mathcal V(g). \tag*{\qedhere}
    \end{align*}
\end{proof}

For every chain $C$ in $A$ the degree map $\deg: \Gamma \to \N_0^m$ defines an $\N_0^m$-grading on the monoid $\Gamma_C$ via the subsets $\Gamma_{C, \underline d}$ of all elements of degree $\underline d$. We write $\Gamma_{\underline d}$ for the elements in $\Gamma$ of degree $\underline d$. This induces an $\N_0^m$-grading on $\mathrm{gr}_{\mathcal V} R$ by the subgroups
\begin{align*}
    (\mathrm{gr}_{\mathcal V} R)_{\underline d} = \bigoplus_{\underline a \in \Gamma_{\underline d}} R_{\geq \underline a} / R_{> \underline a}.
\end{align*}
For each chain $C$ in $A$, $\mathrm{gr}_{\mathcal V, C} R$ is a graded subalgebra of $\mathrm{gr}_{\mathcal V} R$. The fan algebra $\K[\Gamma]$ also carries a grading by $\N_0^m$ induced by the degree map and there exists an isomorphism $\mathrm{gr}_{\mathcal V} R \cong \K[\Gamma]$ of $\N_0^m$-graded algebras. This follows from the construction of the isomorphism on basis elements (see Theorem~\ref{thm:fan_algebra_cong_gr_V_R}).

Let $x_{\mathfrak C} = \prod_{p \in \mathfrak C} f_p \in R$ be the product of all extremal functions along a maximal chain $\mathfrak C$ in $A$ and $I_{\mathfrak C} \subseteq \mathrm{gr}_{\mathcal V} R$ be the annihilator of the element $\overline{x_{\mathfrak C}} \in \mathrm{gr}_{\mathcal V} R$. It was shown in \cite[Corollary 10.8]{seshstrat} that there exists an isomorphism of algebras
\begin{align}
    \label{eq:I_C_is_homogeneous}
    \mathrm{gr}_{\mathcal V} R / I_{\mathfrak C} \cong \mathrm{gr}_{\mathcal V, \mathfrak C} R
\end{align}
and the intersection of all ideals $I_{\mathfrak C}$ is the minimal prime decomposition of the zero ideal in $\mathrm{gr}_{\mathcal V} R$. As the associated graded algebra $\mathrm{gr}_{\mathcal V} R$ is finitely generated and reduced, its corresponding variety $\Spec \mathrm{gr}_{\mathcal V} R$ therefore is the scheme-theoretical union of the toric varieties $\Spec \mathrm{gr}_{\mathcal V, \mathfrak C} R$, each of which is irreducible and of dimension $\dim \hat X$. Using the language of Multiproj schemes, which can be found in Appendix \ref{sec:multiproj}, we can conclude an analogous statement about the scheme $\Multiproj(\mathrm{gr}_{\mathcal V} R)$. Its irreducible components, however, are only schemes, not necessarily projective varieties. We have already looked at an example where this happens: The stratification from Example~\ref{ex:strat_y_1} is of Hodge type and the algebra $\mathrm{gr}_{\mathcal V, \mathfrak C} R \cong \K[\Gamma_{\mathfrak C}]$ associated to the maximal chain $\mathfrak C: X > 0 \overline 0 > 0$ is isomorphic to the algebra from Example~\ref{ex:non_separated_scheme} as an $\N_0^2$-graded algebra. Hence it induces a non-separated scheme.

\begin{corollary}
    \label{cor:irred_components_multiproj_gr_V}
    The scheme $\Multiproj(\mathrm{gr}_{\mathcal V} R)$ is the scheme-theoretical union of the closed, integral subschemes $\Multiproj(\mathrm{gr}_{\mathcal V, \mathfrak C} R)$, where $\mathfrak C$ runs over all maximal chains in $A$. Each of these subschemes is integral and of dimension $\dim X$.
\end{corollary}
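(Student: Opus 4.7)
The plan is to lift the affine decomposition of $\Spec(\mathrm{gr}_{\mathcal V} R)$ recalled before the corollary to the multiprojective setting by checking that every ingredient respects the $\N_0^m$-grading on $\mathrm{gr}_{\mathcal V} R$, and then to handle the dimension count separately.

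First I would verify that each ideal $I_{\mathfrak C}$ is $\N_0^m$-homogeneous. Since the extremal functions $f_p$ are multihomogeneous, the product $x_{\mathfrak C} = \prod_{p \in \mathfrak C} f_p$ is multihomogeneous, and by Lemma~\ref{lem:quasi_val_compatible_with_multidegree} its class $\overline{x_{\mathfrak C}} \in \mathrm{gr}_{\mathcal V} R$ is homogeneous with respect to the $\N_0^m$-grading. Its annihilator $I_{\mathfrak C}$ is therefore a homogeneous ideal, so the isomorphism (\ref{eq:I_C_is_homogeneous}) becomes one of $\N_0^m$-graded algebras, and $\Multiproj(\mathrm{gr}_{\mathcal V, \mathfrak C} R)$ is identified with a closed subscheme of $\Multiproj(\mathrm{gr}_{\mathcal V} R)$. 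This subscheme is integral because $\mathrm{gr}_{\mathcal V, \mathfrak C} R \cong \K[\Gamma_{\mathfrak C}]$ is an integral domain (as $\Gamma_{\mathfrak C}$ embeds into the torsion-free lattice $L^{\mathfrak C}$), a property inherited by multihomogeneous localizations. Since $\bigcap_{\mathfrak C} I_{\mathfrak C} = (0)$ and the scheme-theoretic union of closed subschemes corresponds to the intersection of their ideal sheaves, one concludes that $\Multiproj(\mathrm{gr}_{\mathcal V} R)$ is the scheme-theoretic union of the $\Multiproj(\mathrm{gr}_{\mathcal V, \mathfrak C} R)$.

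The main obstacle is the dimension claim $\dim \Multiproj(\mathrm{gr}_{\mathcal V, \mathfrak C} R) = \dim X$. I expect to argue as follows: because $\mathcal V$ has one-dimensional leaves and is compatible with the multidegree via Lemma~\ref{lem:quasi_val_compatible_with_multidegree}, the multigraded Hilbert functions of $R$ and $\mathrm{gr}_{\mathcal V} R$ coincide, so $\Multiproj(\mathrm{gr}_{\mathcal V} R)$ has the same dimension $\dim X$ as $X$ itself. Combined with the affine statement $\dim \Spec \K[\Gamma_{\mathfrak C}] = \dim \hat X$ — which holds because $\Gamma_{\mathfrak C}$ spans a lattice of rank $\dim \hat X$, as in \cite{seshstrat} — and the positivity of the $\N_0^m$-grading on $\mathrm{gr}_{\mathcal V, \mathfrak C} R$ (each covering relation along $\mathfrak C$ in which the index set grows contributes an extremal function with a nonzero coordinate in the fresh direction, by Lemma~\ref{lem:projective_covering_relation}\ref{itm:projective_covering_relation_b}), the dimension formula for $\Multiproj$ of a finitely generated, positively $\N_0^m$-graded integral domain reviewed in Appendix~\ref{sec:multiproj} yields the claim. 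An alternative route, should positivity be awkward to verify uniformly, is to invoke Theorem~\ref{thm:G_R_formula} (or rather its proof, which ultimately rests on a dimension count for each $\mathfrak C$) to conclude that each maximal chain indeed contributes a top-degree polytope of dimension $\dim X$ to the Hilbert polynomial.
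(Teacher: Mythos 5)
Your main line of argument is essentially the paper's: homogeneity and primality of the ideals $I_{\mathfrak C}$ via the isomorphism~(\ref{eq:I_C_is_homogeneous}), the vanishing of $\bigcap_{\mathfrak C} I_{\mathfrak C}$ transported to $\Multiproj$ through Lemma~\ref{lem:closed_subschemes_multiproj}, and the integrality/dimension statement obtained from the $\Multiproj$-dimension formula of Lemma~\ref{lem:multiproj_dimension} once one checks that the degrees occurring in $\mathrm{gr}_{\mathcal V, \mathfrak C} R$ span a rank-$m$ subgroup of $\Z^m$ --- your parenthetical remark about covering relations where the index set grows, via Lemma~\ref{lem:projective_covering_relation}\,\ref{itm:projective_covering_relation_b}, is exactly the paper's triangularity argument for this, so that part is correct (note the hypothesis of Lemma~\ref{lem:multiproj_dimension} is this full-rank condition, not positivity of the grading). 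Two caveats on the surrounding material: the step ``the Hilbert functions of $R$ and $\mathrm{gr}_{\mathcal V} R$ coincide, hence $\Multiproj(\mathrm{gr}_{\mathcal V} R)$ has dimension $\dim X$'' is not justified as written, since $\mathrm{gr}_{\mathcal V} R$ need not be generated in total degree one and its $\Multiproj$ need not be a multiprojective variety (it can even be non-separated, as the paper notes), so the appendix correspondence between Hilbert polynomial degree and dimension does not apply --- but this step is also unnecessary, because Lemma~\ref{lem:multiproj_dimension} applied to each $\mathrm{gr}_{\mathcal V, \mathfrak C} R$ already gives $\dim \Spec \K[\Gamma_{\mathfrak C}] - m = \dim \hat X - m = \dim X$ component by component. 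Second, the proposed fallback via Theorem~\ref{thm:G_R_formula} would be circular: Lemma~\ref{lem:veronese_lattice}, used in that theorem's proof, cites precisely the rank-$m$ fact established in the proof of Corollary~\ref{cor:irred_components_multiproj_gr_V}.
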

\begin{proof}
    All the statements follow directly from Corollary 10.8 in \cite{seshstrat} in combination with the Lemmas~\ref{lem:closed_subschemes_multiproj} and \ref{lem:multiproj_dimension}. To use these lemmas we require the ideal $I_{\mathfrak C} \subseteq \mathrm{gr}_{\mathcal V, \mathfrak C} R$ to be homogeneous and prime, which holds by the isomorphism~(\ref{eq:I_C_is_homogeneous}). We also need to show that the degrees of the homogeneous elements in $\mathrm{gr}_{\mathcal V, \mathfrak C} R$ generate a sublattice of $\Z^m$ of full rank, \ie the image of the degree map $\Gamma_{\mathfrak C} \to \Z^m$ generates a group of rank $m$. It contains the degrees of all extremal functions $f_p$ for $p \in \mathfrak C$. A suitable subset of size $m$ of these degrees is linearly independent, as they can be arranged in an upper triangular matrix with non-zero diagonal (up to permutation of the rows). This can be seen via Lemma~\ref{lem:projective_covering_relation}\,\ref{itm:projective_covering_relation_b}.
\end{proof}

\section{Semi-toric degeneration}

Every Seshadri stratification on an embedded projective variety $Y \subseteq \PP(V)$ induces a degeneration of $Y$ into an union of projective toric varieties. We generalize this result using an analogous approach to the construction in~\cite[Chapter 12]{seshstrat} via Rees algebras.

Let $\mathcal J$ be the image of the map $\Gamma \to \N_0^m \times \Gamma$, $\underline a \mapsto (\deg \underline a, \underline a)$ and let $\succeq$ be the lexicographic order on $\N_0^m \times \Gamma$. For each $(\underline d, \underline a) \in \N_0^m \times \Gamma$ we define the following multihomogeneous ideals in $R$:
\begin{align*}
    \mathcal I_{\succeq (\underline d, \underline a)} = \left\langle \, g \in R \ \middle\vert \ \text{$g$ multihomogeneous and $(\deg g, \mathcal V(g)) \succeq (\underline d, \underline a)$} \, \right\rangle, \\
    \mathcal I_{\succ (\underline d, \underline a)} = \left\langle \, g \in R \ \middle\vert \ \text{$g$ multihomogeneous and $(\deg g, \mathcal V(g)) \succ (\underline d, \underline a)$} \, \right\rangle.
\end{align*}
Their quotient is given by
\begin{align*}
    \mathcal I_{\succeq (\underline d, \underline a)}/\mathcal I_{\succ (\underline d, \underline a)} = \begin{cases}
        \set{0}, & \text{if $(\underline d, \underline a) \notin \mathcal J$}, \\
        R_{\geq \underline a}/R_{> \underline a}, & \text{if $(\underline d, \underline a) \in \mathcal J$}.
    \end{cases}
\end{align*}
By fixing an isomorphism of posets $\pi: (\N_0, \geq) \to (\mathcal J, \succeq)$ we get a descending filtration
\begin{align*}
    R = \mathcal I_0 \supseteq \mathcal I_1 \supseteq \mathcal I_2 \supseteq \dots
\end{align*}
where we write $\mathcal I_j = \mathcal I_{\pi(j)}$ for $j \in \N_0$. Since each ideal $\mathcal I_j$ is multihomogeneous, the Rees algebra
\begin{align*}
    \mathcal A = \ldots \oplus R t^2 \oplus R t \oplus R \oplus \mathcal I_1 t^{-1} \oplus \mathcal I_2 t^{-2} \oplus \ldots
\end{align*}
to this filtration is an $\N_0^m$-graded subalgebra of $R[t, t^{-1}] = \bigoplus_{\underline d \in \N_0^m} R_{\underline d}[t, t^{-1}]$.

As a submodule of $R[t,t^{-1}]$, the algebra $\mathcal A$ is a torsion free module over a Dedekind domain, hence flat over $\K[t]$. Additionally the inclusion $\K[t] \hookrightarrow \mathcal A$ maps to degree $0$. These two properties imply that the induced morphism $\phi: \Spec \mathcal A \to \A^1$ is flat and $(\K^\times)^m$-equivariant (with the trivial action on $\A^1$). In particular, it induces a morphism $\psi: \Multiproj \mathcal A \to \A^1$. By Corollary 2.2.11\,(iv) in \cite{grothendieck1965elements}, $\psi$ inherits the flatness of $\phi$, because the morphism $\Spec \mathcal A \setminus V(\mathcal A_+) \to \Multiproj \mathcal A$ is surjective (set-theoretically) as a geometric quotient.

The general fiber of $\phi$ at $t \neq 0$ is isomorphic to the multicone $\hat X$, because $\mathcal A/(t-b) \cong R$ for all $b \in \A^1 \setminus \set{0}$. On the other hand we have
\begin{align*}
    \mathcal A/(t) \cong \bigoplus_{j \in \N_0} \mathcal I_j / \mathcal I_{j+1} \cong \mathrm{gr}_{\mathcal V} R,
\end{align*}
so the special fiber is isomorphic to $\Spec\mkern1mu(\mathrm{gr}_{\mathcal V} R)$.

\begin{corollary}
    The general fiber of the flat morphism $\psi: \Multiproj \mathcal A \to \A^1$ is isomorphic to $\Multiproj R \cong X$ and its special fiber at $t = 0$ is isomorphic to $\Multiproj\mkern1mu(\mathrm{gr}_{\mathcal V} R)$.
\end{corollary}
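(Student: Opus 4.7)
The plan is to translate the ring-theoretic identifications $\mathcal A/(t-b) \cong R$ (for $b \neq 0$) and $\mathcal A/(t) \cong \mathrm{gr}_{\mathcal V} R$, which have already been established above, into geometric statements about $\Multiproj$ via a base-change argument. More precisely, for any closed point $b \in \A^1$ the scheme-theoretic fiber $\psi^{-1}(b)$ should be canonically isomorphic to $\Multiproj(\mathcal A \otimes_{\K[t]} \K[t]/(t-b)) = \Multiproj(\mathcal A/(t-b))$. This is the multigraded analogue of the classical base-change formula for $\Proj$, and the variant needed here is contained in Appendix~\ref{sec:multiproj}.

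For the general fiber I would argue as follows. The evaluation map $\mathcal A \to R$, $\sum_j r_j t^j \mapsto \sum_j r_j b^j$, is surjective with kernel $(t-b)$, giving the ring isomorphism already quoted. Since $\mathcal A$ was constructed as an $\N_0^m$-graded subalgebra of $R[t,t^{-1}]$ with $t$ placed in multidegree $0$, this evaluation is in addition an isomorphism of $\N_0^m$-graded $\K$-algebras. Applying the base-change identification above together with the standard description $\Multiproj R \cong X$ of a multiprojective variety via its multihomogeneous coordinate ring (itself recorded in the appendix), one obtains $\psi^{-1}(b) \cong X$.

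For the special fiber the argument runs in parallel. The isomorphism $\mathcal A/(t) \cong \mathrm{gr}_{\mathcal V} R$ is again multigraded because each ideal $\mathcal I_j$ in the Rees construction is multihomogeneous, so the multigrading on the associated graded algebra is simply the one induced componentwise by that on $R$. Base change therefore yields $\psi^{-1}(0) \cong \Multiproj\mkern1mu(\mathrm{gr}_{\mathcal V} R)$.

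The only point in this outline that is not an immediate quotation of what has already been done is the base-change formula for $\Multiproj$ itself. In our setting $\K[t]$ sits in the degree-$0$ component and $t$ is a non-zero-divisor on $\mathcal A$, so the argument parallels the classical $\Proj$ statement and should not present any real difficulty once the framework from Appendix~\ref{sec:multiproj} is invoked. Granting this, both assertions of the corollary follow directly from the two multigraded ring isomorphisms already at hand.
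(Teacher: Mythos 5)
Your proposal is correct and follows essentially the same route as the paper, which deduces the corollary directly from the graded isomorphisms $\mathcal A/(t-b) \cong R$ and $\mathcal A/(t) \cong \mathrm{gr}_{\mathcal V} R$ established just before it, by passing to $\Multiproj$ on the fibers. One small caveat: Appendix~\ref{sec:multiproj} does not literally state a base-change formula for $\Multiproj$ — it provides Lemma~\ref{lem:closed_subschemes_multiproj} on the closed immersion $\Multiproj(\mathcal A/I) \hookrightarrow \Multiproj \mathcal A$ for a multihomogeneous ideal $I$ — but since $t-b$ is homogeneous of degree $0$, identifying the scheme-theoretic fiber $\psi^{-1}(b)$ with $\Multiproj(\mathcal A/(t-b))$ reduces to the routine chartwise check $(\mathcal A/(t-b))_{(\bar f)} \cong \mathcal A_{(f)}/(t-b)\mathcal A_{(f)}$, so your outline closes as claimed.
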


\section{Newton-Okounkov theory: Veronese submonoids}
\label{subsec:NO-complex}

In \cite{seshstrat} a Newton-Okounkov theoretical object was associated to a given Seshadri stratification. For each maximal chain $\mathfrak C$ one obtains a simplex, such that its lattice points describe the rate of growth for the dimensions of the graded components of $\mathrm{gr}_{\mathcal V, \mathfrak C} R$. These simplices fit together to form a simplicial complex $\Delta_{\mathcal V}$. The dimension of $X$ is equal to the dimension of the simplicial complex and the degree of $X \subseteq \PP(V)$ can be extracted via the volume of $\Delta_{\mathcal V}$ with respect to certain lattices.

The simplices generalize to polytopes in the multiprojective setting. However, we obtain not just one polytopal complex, but a polytopal complex $\Delta_{\mathcal V}^{(\underline d)}$ for each multidegree $\underline d \in \N_0^m$. This structure is not visible for $m = 1$ since the polytopal complexes are scaled versions of each other in this case. For most values of $\underline d$ the complex $\Delta_{\mathcal V}^{(\underline d)}$ has the same dimension as the variety $X$, but in some edge cases it can collapse to a smaller dimension.

In this first section on Newton-Okounkov theory, we define the poset $\Delta^{(\underline d)}(A)$ indexing the polytopes in the polytopal complex $\Delta_{\mathcal V}^{(\underline d)}$ and relate this poset to the geometry of a certain degenerate variety $\Proj(\mathrm{gr}_{\mathcal V}^{(\underline d)} R)$. 

For multiprojective varieties there also exists a Hilbert polynomial $H_R \in \Q[x_1, \dots, x_m]$. We refer to the Appendix~\ref{subsec:multiproj_varieties} for its properties and the definition of the multidegrees of $X$. Again, we set $r \coloneqq \dim X$. Let
\begin{align*}
    G_R = \sum_{\underline k} \frac{\deg_{\underline k}(X)}{k_1! \cdots k_m!} \, x_1^{k_1} \cdots x_m^{k_m}
\end{align*}
be the homogeneous component of highest total degree in $H_R$. We have $\deg G_R = r$. Since the leaves of the quasi-valuation $\mathcal V$ are at most one-dimensional, there exists a basis $\mathbb B$ of $R$ as a vector space over $\K$, such that $\mathbb B \to \Gamma$, $g \mapsto \mathcal V(g)$ is a bijection. In particular:
\begin{align}
    \label{eq:dimension_equality_R_gr}
    \dim R_{\underline d} = \dim \mkern2mu (\mathrm{gr}_{\mathcal V} R)_{\underline d} = \vert \Gamma_{\underline d} \vert.
\end{align}
The equality
\begin{align*}
    G_R(\underline d) = \lim_{n \to \infty} \frac{\dim R_{n \underline d}}{n^r}
\end{align*}
for all $\underline d \in \N_0^m$ suggests that we should examine the Veronese subalgebras 
\begin{align*}
    \mathrm{gr}_{\mathcal V}^{(\underline d)} R = \bigoplus_{n \in \N_0} \mkern2mu (\mathrm{gr}_{\mathcal V} R)_{n \underline d}
\end{align*}
of the associated graded algebra $\mathrm{gr}_{\mathcal V} R$. First we need to fix some notation. For each chain $C$ in $A$, the algebra $\mathrm{gr}_{\mathcal V}^{(\underline d)} R$ contains the Veronese subalgebra
\begin{align*}
    \mathrm{gr}_{\mathcal V, C}^{(\underline d)} R = \mathrm{gr}_{\mathcal V}^{(\underline d)} R \cap \mathrm{gr}_{\mathcal V, C} R
\end{align*}
of $\mathrm{gr}_{\mathcal V, C} R$. The fan of monoids contains the Veronese subfan $\Gamma^{(\underline d)} = \bigcup_{n \in \N_0} \Gamma_{n \underline d}$ and for each monoid $\Gamma_{C}$ we have the Veronese submonoid $\Gamma_{C}^{(\underline d)} = \Gamma_{C} \cap \Gamma^{(\underline d)}$. By Theorem~\ref{thm:fan_algebra_cong_gr_V_R} there are again isomorphisms of $\N_0$-graded algebras:
\begin{align*}
    \mathrm{gr}_{\mathcal V}^{(\underline d)} R \cong \K[\Gamma^{(\underline d)}] \quad \text{and} \quad \mathrm{gr}_{\mathcal V, C}^{(\underline d)} R \cong \K[\Gamma_{C}^{(\underline d)}].
\end{align*}

In general, the fan of monoids $\Gamma$ and the Veronese-fan of monoids $\Gamma^{(\underline d)}$ have different combinatorial structures. Whereas the poset of all monoids $\Gamma_C$, ordered by inclusion, is isomorphic to the poset $\Delta(A)$ of all chains in $A$, different chains can have the same Veronese monoid $\Gamma_C^{(\underline d)}$. For this reason we now define a map
\begin{align*}
    \Delta(A) \to \Delta(A), \quad C \mapsto C_{\underline d}, 
\end{align*}
so that $\Gamma_{C}^{(\underline d)} = \Gamma_{D}^{(\underline d)}$ is equivalent to $C_{\underline d} = D_{\underline d}$ for all chains $C, D \subseteq A$. The chain $C_{\underline d}$ depends on the cone
\begin{align}
    \label{eq:sigma_C}
    \sigma_{C} = \operatorname{Cone} \mkern2mu \set{\deg \underline a \mid \underline a \in \Gamma_{C}} \subseteq \R^m.
\end{align}
Regarding (rational) polyhedral cones and (lattice) polytopes, we use the language of Cox, Little and Schenck from \cite{cox2011toric}. On polyhedral cones and polytopes, we always use the standard euclidean topology. The cone in $\R^A$ spanned by $\Gamma_C$ is generated by the vectors $e_p \in \R^C$ for $p \in C$, as $\Gamma_C \subseteq \Q^C_{\geq 0}$. Therefore $\sigma_C$ is a rational polyhedral cone with respect to the lattice $\Z^m \subseteq \R^m$.

If $\underline d \notin \sigma_C$ then we set $C_{\underline d} = \varnothing$. Now assume $\underline d \in \sigma_C$. Since every polyhedral cone is the disjoint union of the relative interiors (\ie the interior in its closure) of its faces, there exists a unique face $\tau$ of $\sigma_C$ with $\underline d \in \operatorname{relint} \tau$. This is also the unique minimal face containing $\underline d$. Now each convex cone, which is generated by a finite set $S$ is generated by its edges, \ie its one dimensional faces, and every generating set of the cone contains at least one non-zero element from each edge. As $\sigma_C$ is generated by the set of all $\deg f_p$ with $p \in C$, every edge of $\sigma_C$ is of the form $\R_{\geq 0} \mkern2mu e_p$ for $p \in C$. We then define
\begin{align*}
    C_{\underline d} = \set{p \in C \mid \text{$\R_{\geq 0} \mkern2mu e_p$ is an edge of $\tau$}}.
\end{align*}
Since $\tau = \sigma_{C_{\underline d}}$, it is immediate that the image $\Delta^{(\underline d)}(A)$ of the map $\Delta(A) \to \Delta(A)$, $C \mapsto C_{\underline d}$ is equal to
\begin{align*}
    \Delta^{(\underline d)}(A) = \set{C \in \Delta(A) \mid \underline d \in \operatorname{relint} \sigma_C} \cup \set{\varnothing}.
\end{align*}

\begin{lemma}
    $ $
    \label{lem:veronese_order_complex}
    \begin{enumerate}[label=(\alph{enumi})]
        \item \label{itm:veronese_order_complex_a} For any two chains $C, D \in \Delta(A)$ it holds $\Gamma_{C}^{(\underline d)} \subseteq \Gamma_{D}^{(\underline d)}$, if and only if $\mkern1mu C_{\underline d} \subseteq D_{\underline d}$.
        \item \label{itm:veronese_order_complex_b} The map $\Delta(A) \to \Delta^{(\underline d)}(A)$, $C \mapsto C_{\underline d}$ is monotone.
        \item \label{itm:veronese_order_complex_c} The following map is an isomorphism of posets:
        \begin{align*}
            \Delta^{(\underline d)}(A) \to \set{\Gamma_{C}^{(\underline d)} \mid C \in \Delta(A)}, \quad C \longmapsto \Gamma_{C}^{(\underline d)}.
        \end{align*}
    \end{enumerate}
\end{lemma}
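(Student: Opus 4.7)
The plan is to reduce all three parts of the lemma to the single structural identity
\[
    \Gamma_C^{(\underline d)} \;=\; \Gamma_{C_{\underline d}}^{(\underline d)} \quad \text{for every chain } C \in \Delta(A),
\]
combined with the observation that $\Delta^{(\underline d)}(A)$ consists of the fixed points of the self-map $C \mapsto C_{\underline d}$. The latter follows from the characterization of $\Delta^{(\underline d)}(A)$ given just above the lemma: if $\underline d \in \operatorname{relint}\sigma_C$ then the minimal face of $\sigma_C$ containing $\underline d$ is $\sigma_C$ itself, and the discussion immediately preceding the lemma already identifies every $p \in C$ with an edge of this face.

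For the identity, the inclusion $\Gamma_{C_{\underline d}}^{(\underline d)} \subseteq \Gamma_C^{(\underline d)}$ is immediate from $C_{\underline d} \subseteq C$. For the reverse, I fix $\underline a \in \Gamma_C^{(\underline d)}$, write $\underline a = \sum_{p \in \supp \underline a} a_p e_p$ with $a_p > 0$, and use Lemma~\ref{lem:quasi_val_compatible_with_multidegree} to see that $\deg \underline a = \sum a_p \deg f_p$ is a non-negative rational multiple of $\underline d$, hence lies in the minimal face $\tau$ of $\sigma_C$ containing $\underline d$. The defining face property of $\tau$ forces each summand $\deg f_p$ into $\tau$. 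The delicate step is then to upgrade ``$\deg f_p \in \tau$'' to ``$\R_{\geq 0}\deg f_p$ is an edge of $\tau$'', so that $p \in C_{\underline d}$; this must use the stratification axioms, most naturally Lemma~\ref{lem:projective_covering_relation}\,\ref{itm:projective_covering_relation_b} together with the way successive covering relations along the chain contribute coordinates to the degrees, rather than polyhedral geometry alone.

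With the identity in hand, part~\ref{itm:veronese_order_complex_a} follows quickly. The direction $\Leftarrow$ reduces to the trivial inclusion $\Gamma_{C_{\underline d}} \subseteq \Gamma_{D_{\underline d}}$, which the identity upgrades to the desired Veronese inclusion. For $\Rightarrow$ I construct a witness element $\underline a \in \Gamma_{C_{\underline d}}^{(\underline d)}$ with $\supp \underline a = C_{\underline d}$: since $\underline d$ lies in the relative interior of the rational polyhedral cone $\sigma_{C_{\underline d}}$, a standard rationality argument yields positive integers $N$ and $(n_p)_{p \in C_{\underline d}}$ with $N\underline d = \sum n_p \deg f_p$, so $\underline a = \sum n_p e_p \in \N_0^{C_{\underline d}} \subseteq \Gamma_{C_{\underline d}}$ has support $C_{\underline d}$ and degree $N\underline d$; propagating $\underline a$ through the hypothesis and the identity forces $C_{\underline d} = \supp \underline a \subseteq D_{\underline d}$. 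Part~\ref{itm:veronese_order_complex_b} is the special case of part~\ref{itm:veronese_order_complex_a} combined with the obvious monotonicity $C \subseteq D \Rightarrow \Gamma_C^{(\underline d)} \subseteq \Gamma_D^{(\underline d)}$. Part~\ref{itm:veronese_order_complex_c} is then formal: surjectivity of $C \mapsto \Gamma_C^{(\underline d)}$ on $\Delta^{(\underline d)}(A)$ is the identity, while the fixed-point observation combined with part~\ref{itm:veronese_order_complex_a} supplies injectivity and order-preservation.

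The only nontrivial ingredient is the edge-versus-interior upgrade in the structural identity: pure cone-theoretic reasoning yields at best $\deg f_p \in \tau$, and a priori a generator of $\sigma_C$ may fail to span an extreme ray of a given face. This is the one place where the specifically multiprojective Seshadri structure enters essentially, and it is also the step that genuinely determines whether the polytopal complex $\Delta_{\mathcal V}^{(\underline d)}$ is indexed by the poset $\Delta^{(\underline d)}(A)$ defined via faces of $\sigma_C$ in the first place. Everything else in the argument is either polyhedral bookkeeping or routine manipulation of the monoids $\Gamma_C$.
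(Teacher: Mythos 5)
Your overall skeleton coincides with the paper's: everything is reduced to the identity $\Gamma_{C}^{(\underline d)} = \Gamma_{C_{\underline d}}^{(\underline d)}$, and your derivation of part~\ref{itm:veronese_order_complex_a} from it is essentially the paper's argument --- the direction $\Leftarrow$ is the trivial inclusion, and for $\Rightarrow$ you build a witness in $\Gamma_{C_{\underline d}}^{(\underline d)}$ with support $C_{\underline d}$ from $\underline d \in \operatorname{relint} \sigma_{C_{\underline d}}$ by a rationality argument (the paper builds one such witness $e_p + \sum_{q \in C_{\underline d}} a_q e_q$ for each $p \in C_{\underline d}$; this is only a cosmetic difference). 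Parts~\ref{itm:veronese_order_complex_b} and \ref{itm:veronese_order_complex_c} are indeed formal consequences.

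The problem is that you never prove the identity itself: the passage from ``$\deg f_p \in \tau$ for every $p \in \supp \underline a$'' to ``$p \in C_{\underline d}$'' is exactly the step you leave open, offering only the remark that it ``must use'' Lemma~\ref{lem:projective_covering_relation}\,\ref{itm:projective_covering_relation_b} and the coordinate contributions of successive covering relations --- no such argument is supplied, and this is not how the step is closed in the paper. There, for $p \in C \setminus C_{\underline d}$ one has $\deg f_p \notin \tau$, so writing $\deg \underline a = c \deg f_p + \sum_{q} c_q \deg f_q$ with $c > 0$ and all summands in $\sigma_C$ contradicts the face property of $\tau$; in effect $C_{\underline d}$ is used as the set of chain elements whose degrees lie in the minimal face $\tau$ containing $\underline d$, so no ``edge-versus-interior upgrade'' is performed at all and the stratification axioms do not enter at this point. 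You are right that, read literally as ``$\R_{\geq 0} \deg f_p$ is an edge of $\tau$'', the definition leaves something to reconcile (a generator of $\sigma_C$ may lie in a face without spanning one of its extreme rays), but the resolution has to be made at the level of that definition or by an explicit polyhedral verification for the degrees along a chain --- an appeal to Lemma~\ref{lem:projective_covering_relation}, which constrains only covering relations in $A$ and says nothing about non-covering pairs in a chain, is not an argument, and you sketch none. The same unproven claim also underlies your ``fixed point'' observation: the discussion preceding the lemma shows that every edge of $\tau$ is spanned by the degree of some chain element, not that the degree of every chain element lying in $\tau$ spans an edge. As it stands, the central identity --- and with it all three parts --- is asserted rather than proved.
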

\begin{proof}
    For each $C \in \Delta(A)$ the monoids $\Gamma_{C}^{(\underline d)}$ and $\Gamma_{C_{\underline d}}^{(\underline d)}$ coincide. Indeed, if $\underline a$ is an element of $\Gamma_{C}^{(\underline d)}$, then $\deg \underline a \in \N \underline d$ lies in the face $\tau \subseteq \sigma_C$ defined by $C_{\underline d}$. For every $p \in C \setminus C_{\underline d}$ with $p \in \supp \underline a$ we can write the degree of $\underline a$ in the form
    \begin{align*}
        \deg \underline a = c \deg f_p + \sum_{q \in C} c_q \deg f_q
    \end{align*}
    with real numbers $c_q \geq 0$ and $c > 0$. All elements $\deg f_q$ for $q \in C$ lie in $\sigma_C$ but $\deg f_p$ is not contained in the face $\tau$. This is impossible, as $\deg \underline a \in \tau$.

    Let $C, D$ be two chains in $A$. If $C_{\underline d} \subseteq D_{\underline d}$, then we clearly have $\Gamma_{C}^{(\underline d)} \subseteq \Gamma_{D}^{(\underline d)}$. Now suppose that $\Gamma_{C}^{(\underline d)} \subseteq \Gamma_{D}^{(\underline d)}$ and fix an element $p \in C_{\underline d}$. By the definition of $C_{\underline d}$, the multidegree $\deg f_p$ lies in the relative interior of the face $\tau$ corresponding to $C_{\underline d}$. This allows us to use the following argument, which appears multiple times throughout the sections on Newton-Okounkov theory: By the properties of the relative interior, there exists an element in the intersection of the translated cone $\deg f_p + \tau \subseteq \R^m$ with the set $\N \underline d$. This holds for every convex polyhedral cone. As the cone $\sigma_C$ is generated by lattice points in $\Z^m \subseteq \R^m$, we can therefore find non-negative, rational numbers $a_q \in \Q$ and $N \in \N$, such that
    \begin{align*}
        N \underline d = \deg f_p + \sum_{q \in C_{\underline d}} a_q \deg f_q.
    \end{align*}
    By multiplying with a common denominator of all $a_q$, we can assume that these rational numbers are non-negative integers. Hence we see that
    \begin{align*}
        \mathcal V(f_p \cdot \prod_{q \in C_{\underline d}} f_q^{a_p}) = e_p + \sum_{q \in C_{\underline d}} a_q e_q \in \Gamma_{C}^{(\underline d)} = \Gamma_{D_{\underline d}}^{(\underline d)},
    \end{align*}
    which implies $C_{\underline d} \subseteq D_{\underline d}$. Finally, the parts~\ref{itm:veronese_order_complex_b} and \ref{itm:veronese_order_complex_c} follow from the first statement.
\end{proof}

\begin{lemma}
    \label{lem:veronese_submonoid_fin_gen}
    The monoid $\Gamma_{C}^{(\underline d)}$ is finitely generated for each chain $C \subseteq A$ and $\underline d \in \N_0^m$.
\end{lemma}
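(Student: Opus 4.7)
The plan is to reduce the claim to Gordan's lemma via a standard affine-monoid argument. Since $\Gamma_C$ is itself finitely generated (as noted earlier in this section, because $\Gamma_C \subseteq \Q^C_{\geq 0}$ and its ambient maximal-chain monoid $\Gamma_{\mathfrak C}$ is finitely generated by \cite{seshstrat}), I choose finitely many generators $\underline a^{(1)}, \dots, \underline a^{(k)}$ of $\Gamma_C$ and form the surjection $\pi \colon \N_0^k \twoheadrightarrow \Gamma_C$, $\underline n \mapsto \sum_i n_i \underline a^{(i)}$. Because the image of a finitely generated monoid under a homomorphism is again finitely generated, it suffices to exhibit a finitely generated sub-monoid of $\N_0^k$ whose image under $\pi$ is all of $\Gamma_C^{(\underline d)}$.

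Composing $\pi$ with the degree map from Definition~\ref{def:degree_map} yields a monoid homomorphism $\phi \colon \N_0^k \to \N_0^m$, $\underline n \mapsto \sum_i n_i \deg \underline a^{(i)}$, and by the very definition of $\Gamma_C^{(\underline d)} = \Gamma_C \cap \bigcup_n \Gamma_{n \underline d}$ one has $\pi^{-1}(\Gamma_C^{(\underline d)}) = \phi^{-1}(\N_0 \underline d)$. I would then introduce the auxiliary set
\begin{align*}
    M = \set{(\underline n, t) \in \N_0^{k+1} \mid \phi(\underline n) = t \underline d},
\end{align*}
which equals the intersection of the lattice $\Z^{k+1}$ with the rational polyhedral cone obtained by cutting $\R^{k+1}_{\geq 0}$ by the $m$ linear equations $\sum_i n_i (\deg \underline a^{(i)})_j = t d_j$. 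Gordan's lemma shows that $M$ is a finitely generated monoid. Projection onto the first $k$ coordinates is a monoid homomorphism $M \twoheadrightarrow \phi^{-1}(\N_0 \underline d)$, so $\phi^{-1}(\N_0 \underline d)$ and hence $\Gamma_C^{(\underline d)} = \pi(\phi^{-1}(\N_0 \underline d))$ are finitely generated.

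The main subtlety — and the reason I pass through the auxiliary monoid $M$ rather than working directly in $\N_0^k$ — is the integrality of the scalar $t$. Merely requiring $\phi(\underline n) \in \R_{\geq 0} \underline d$ cuts out a rational polyhedral cone in $\N_0^k$ whose lattice points form a finitely generated monoid by Gordan, but those lattice points can correspond to rational, non-integer multiples of $\underline d$ (depending on $\gcd(d_1,\dots,d_m)$), so they need not lie in $\phi^{-1}(\N_0 \underline d)$. Introducing $t \in \N_0$ as an auxiliary coordinate turns the condition ``integer multiple of $\underline d$'' into an honest \emph{linear} equation cutting out $M$ from $\N_0^{k+1}$, which is exactly what is needed to apply Gordan's lemma cleanly.
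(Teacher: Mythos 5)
Your proof is correct and follows essentially the same route as the paper: choose finitely many generators of $\Gamma_C$, encode the condition ``degree is an \emph{integral} multiple of $\underline d$'' as a rational polyhedral cone condition on the exponent tuples, apply Gordan's Lemma, and push forward along the (surjective) evaluation map. The only cosmetic difference is that you enforce integrality via the auxiliary coordinate $t$ in $\N_0^{k+1}$, whereas the paper takes the kernel of the map $\Z^s \to \Z^m/\Z\underline d$ and applies Gordan's Lemma with respect to that sublattice -- both devices address exactly the subtlety you point out.
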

\begin{proof}
    Choose finitely many generators $\underline a^{(1)}, \dots, \underline a^{(s)} \in \Gamma_C$ and consider the map
    \begin{align*}
        \phi: \Z^s \to \Z^m/\Z \underline d, \quad (n_1, \dots, n_s) \longmapsto \sum_{i=1}^s n_i \deg \underline a^{(i)}.
    \end{align*}
    It is sufficient to show, that the monoid $M = \N_0^s \cap \ker \phi \subseteq \Z^s$ is finitely generated, since its image under $\N_0^s \to \mathcal L^{C}$ coincides with $\Gamma_{C}^{(\underline d)}$, where $\mathcal L^{C} \subseteq \Q^{A}$ denotes the lattice generated by $\Gamma_{C}$. The set $\R_{\geq 0}^s \cap \operatorname{span}_{\R}(\ker \phi)$ is a rational polyhedral cone \wrt{} the lattice $\ker \phi$. The intersection of this cone with the kernel of $\phi$ is exactly $M$. By Gordan's Lemma, $M$ is finitely generated.
\end{proof}

Analogous to Corollary~\ref{cor:irred_components_multiproj_gr_V}, the irreducible components of the projective variety $\Proj(\mathrm{gr}_{\mathcal V}^{(\underline d)} R)$ are determined by the maximal elements in the poset $\Delta^{(\underline d)}(A)$. Clearly, every maximal element in $\Delta^{(\underline d)}(A)$ is of the form $\mathfrak C_{\underline d}$ for a maximal chain $\mathfrak C \in \Delta(A)$, but the converse is false. There can also exist two different maximal chains $\mathfrak C$ and $\mathfrak D$ in $A$ with $\mathfrak C_{\underline d} = \mathfrak D_{\underline d}$. Fortunately, in most cases, the maximal elements in $\Delta^{(\underline d)}(A)$ are easy to describe: When $\underline d$ does not lie on the boundary of the cone $\sigma_{\mathfrak C}$, then $\mathfrak C_{\underline d}$ is maximal in $\Delta^{(\underline d)}(A)$, if and only if $\underline d \in \sigma_{\mathfrak C}$.

\begin{lemma}
    \label{lem:irreducible_components_gr_V_d}
    The projective variety $\Proj(\mathrm{gr}_{\mathcal V}^{(\underline d)} R)$ is scheme-theoretically the irredundant union of the toric subvarieties $\Proj(\mathrm{gr}_{\mathcal V, C}^{(\underline d)} R)$, where $C$ runs over all maximal elements in the poset $\Delta^{(\underline d)}(A)$.
\end{lemma}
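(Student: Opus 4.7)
The plan is to mirror the argument of Corollary~\ref{cor:irred_components_multiproj_gr_V} in the Veronese-$\Proj$ setting, using the fan algebra presentation $\mathrm{gr}_{\mathcal V}^{(\underline d)} R \cong \K[\Gamma^{(\underline d)}]$ and the bijection from Lemma~\ref{lem:veronese_order_complex}\,\ref{itm:veronese_order_complex_c} between $\Delta^{(\underline d)}(A)$ and the poset of Veronese submonoids $\Gamma_C^{(\underline d)}$. The idea is to exhibit, for each maximal $C \in \Delta^{(\underline d)}(A)$, a homogeneous prime ideal whose quotient is $\K[\Gamma_C^{(\underline d)}]$, and to verify that these primes intersect in zero and are pairwise incomparable. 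Passing to $\Proj$ then delivers the claim.

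Concretely, I would define for each chain $C \in \Delta(A)$ the $\K$-linear span
\[
J_C^{(\underline d)} \;=\; \bigoplus_{\underline a \in \Gamma^{(\underline d)},\, \underline a \notin \Gamma_C^{(\underline d)}} \K\, x_{\underline a} \;\subseteq\; \mathrm{gr}_{\mathcal V}^{(\underline d)} R.
\]
From the fan-algebra relations of Theorem~\ref{thm:fan_algebra_cong_gr_V_R}, $J_C^{(\underline d)}$ is a homogeneous ideal and $\mathrm{gr}_{\mathcal V}^{(\underline d)} R / J_C^{(\underline d)} \cong \K[\Gamma_C^{(\underline d)}]$, which is an $\N_0$-graded affine semigroup algebra, hence an integral domain; so $J_C^{(\underline d)}$ is a homogeneous prime. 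By Lemma~\ref{lem:veronese_order_complex}, $J_C^{(\underline d)}$ depends only on $C_{\underline d}$, and for distinct maximal elements $C, C' \in \Delta^{(\underline d)}(A)$ the monoids $\Gamma_C^{(\underline d)}, \Gamma_{C'}^{(\underline d)}$ are incomparable, so the primes are incomparable as well, giving irredundancy.

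The remaining ingredient is $\bigcap_{C} J_C^{(\underline d)} = 0$, where $C$ ranges over the maximal elements of $\Delta^{(\underline d)}(A)$. Given any $\underline a \in \Gamma^{(\underline d)}$, choose a chain $D \in \Delta(A)$ with $\supp \underline a \subseteq D$; then $\deg \underline a \in \N \underline d$ lies in the relative interior of the face $\sigma_{D_{\underline d}}$ of $\sigma_D$, and since each extremal ray of this face contributing to $\deg \underline a$ must be of the form $\R_{\geq 0} e_p$ with $p \in D_{\underline d}$, we conclude $\supp \underline a \subseteq D_{\underline d}$. Finiteness of $\Delta^{(\underline d)}(A)$ lets us extend $D_{\underline d}$ to a maximal element $C$, and then $x_{\underline a} \notin J_C^{(\underline d)}$. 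Writing an arbitrary element of $\bigcap_{C} J_C^{(\underline d)}$ in the basis $\{x_{\underline a}\}_{\underline a \in \Gamma^{(\underline d)}}$ and applying this to each basis element in its support forces the element to be zero. Taking $\Proj$ of the resulting irredundant minimal prime decomposition yields the scheme-theoretical irredundant union in the statement, and finite generation of each $\Gamma_C^{(\underline d)}$ (Lemma~\ref{lem:veronese_submonoid_fin_gen}) ensures that each component is a genuine toric projective scheme.

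The main obstacle I expect is precisely the face-extraction step above: verifying that whenever $\underline a \in \Gamma^{(\underline d)}$ is supported on a chain $D$, every $p \in \supp \underline a$ actually lies in the truncated chain $D_{\underline d}$. This is where the Newton-polytopal geometry of $\sigma_D$ really enters, and it is the multiprojective novelty not present in the Multiproj analogue, since the relative interior of a proper face of $\sigma_D$ can contain the ray through $\underline d$ only in the multigraded setting. Once this geometric lemma is clean, the rest of the argument is a formal transcription of the proof of Corollary~\ref{cor:irred_components_multiproj_gr_V} through the degree map and the $\Proj$ construction.
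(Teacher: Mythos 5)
Your proposal is correct and produces the same decomposition as the paper, but by a somewhat different route. Under the isomorphism $\mathrm{gr}_{\mathcal V}^{(\underline d)} R \cong \K[\Gamma^{(\underline d)}]$ your monomial ideals $J_C^{(\underline d)}$ coincide with the ideals $I_{\mathfrak C}^{(\underline d)} = I_{\mathfrak C} \cap \mathrm{gr}_{\mathcal V}^{(\underline d)} R$ used in the paper (compare equation~(\ref{eq:I_C_d})): the paper obtains them by restricting the annihilators $I_{\mathfrak C}$ of the elements $\overline{x_{\mathfrak C}}$ and quoting the structure results from \cite{seshstrat}, while you build them directly in the fan algebra and deduce primality from the quotient being the affine semigroup algebra $\K[\Gamma_C^{(\underline d)}]$. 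The genuine divergence is in the last step: the paper shows each $I_{\mathfrak C}^{(\underline d)}$ is a \emph{minimal} prime by an explicit element argument --- given $\overline g \in I_{\mathfrak C}^{(\underline d)} \setminus I$, it multiplies $x_{\mathfrak C}$ by a combination $h$ of extremal functions so that $\deg(x_{\mathfrak C}h) \in \N\mkern1mu\underline d$ (this is where $\underline d \in \operatorname{relint}\sigma_{\mathfrak C}$ enters) and exploits $\overline g\cdot\overline{x_{\mathfrak C}h} = 0$ --- whereas you obtain minimality and irredundancy formally from the fact that a zero intersection of finitely many pairwise incomparable primes is the irredundant minimal prime decomposition, the incomparability coming from the poset isomorphism in Lemma~\ref{lem:veronese_order_complex}\,\ref{itm:veronese_order_complex_c}. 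Your route is a bit cleaner and makes the role of the maximal elements of $\Delta^{(\underline d)}(A)$ transparent; the paper's argument stays closer to the original proof of Proposition 10.7 in \cite{seshstrat} and avoids appealing to general facts about prime decompositions. Two small points: the ``face-extraction'' step you single out as the main obstacle is exactly the identity $\Gamma_D^{(\underline d)} = \Gamma_{D_{\underline d}}^{(\underline d)}$ established in the first paragraph of the proof of Lemma~\ref{lem:veronese_order_complex}, so you can simply cite it rather than re-derive it (and if you do re-derive it, note that the face property only yields $\deg f_p \in \sigma_{D_{\underline d}}$ for $p \in \supp \underline a$, and one must then argue as in that proof instead of asserting outright that every contributing generator spans an edge of the face); and when verifying that $J_C^{(\underline d)}$ is an ideal you implicitly use that $\underline a + \underline b \in \Gamma_C^{(\underline d)}$ forces $\underline a \in \Gamma_C^{(\underline d)}$, which holds because all entries of elements of $\Gamma$ are non-negative so supports are additive --- this is worth saying explicitly.
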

\begin{proof}
    The proof of this statement is mostly analogous to the proof of Proposition 10.7 in \cite{seshstrat}. Recall that for each maximal chain $\mathfrak C$ in $A$ we defined the product $x_{\mathfrak C} = \prod_{p \in \mathfrak C} f_p \in R$ of all extremal functions along $\mathfrak C$ and the annihilator $I_{\mathfrak C} \subseteq \mathrm{gr}_{\mathcal V} R$ of the element $\overline{x_{\mathfrak C}} \in \mathrm{gr}_{\mathcal V} R$. In \loccit{} it was shown that the ideal $I_{\mathfrak C}$ is given by
    \begin{align*}
        I_{\mathfrak C} = \bigoplus_{\underline a \in \Gamma \setminus \Gamma_{\mathfrak C}} R_{\geq \underline a} / R_{> \underline a}.
    \end{align*}
    The intersection $I_{\mathfrak C}^{(\underline d)} = I_{\mathfrak C} \cap \mathrm{gr}_{\mathcal V}^{(\underline d)} R$ is a prime ideal in $\mathrm{gr}_{\mathcal V}^{(\underline d)} R$ and it can be written as
    \begin{align}
        \label{eq:I_C_d}
        I_{\mathfrak C}^{(\underline d)} = \bigoplus_{\underline a \in \Gamma^{(\underline d)} \setminus \Gamma_{\mathfrak C}^{(\underline d)}} R_{\geq \underline a} / R_{> \underline a}.
    \end{align}
    It follows that the intersection of the ideals $I_{\mathfrak C}^{(\underline d)}$ over all maximal chains is equal to the zero ideal. On the other hand, $I_{\mathfrak C}^{(\underline d)}$ does not depend on $\mathfrak C$ but only on the monoid $\Gamma_{\mathfrak C}^{(\underline d)}$. Hence we can choose a subset $\mathcal C$ of all maximal chains in $A$, which maps bijectively to the maximal elements in $\Delta^{(\underline d)}(A)$, such that
    \begin{align*}
        \bigcap_{\mathfrak C \in \mathcal C} I_{\mathfrak C}^{(\underline d)} = (0).
    \end{align*}
    This intersection is irredundant, since the Veronese fan of monoids $\Gamma^{(\underline d)}$ is the irredundant union of the monoids $\Gamma_{C}^{(\underline d)}$ over all maximal elements $C \in \Delta^{(\underline d)}(A)$. By (\ref{eq:I_C_d}), we have $(\mathrm{gr}_{\mathcal V}^{(\underline d)} R) / I_{\mathfrak C}^{(\underline d)} \cong \mathrm{gr}_{\mathcal V, \mathfrak C}^{(\underline d)} R$ for every $\mathfrak C \in \mathcal C$.
    
    Finally, we need to show that $I_{\mathfrak C}^{(\underline d)}$ is a minimal prime ideal in $\mathrm{gr}_{\mathcal V}^{(\underline d)} R$ for all $\mathfrak C \in \mathcal C$. If $I$ was an ideal properly contained in $I_{\mathfrak C}^{(\underline d)}$, then there exists a non-zero function $g \in R$ with $\mathcal V(g) \notin \Gamma_{\mathfrak C}^{(\underline d)}$ and $\overline g \in I_{\mathfrak C}^{(\underline d)} \setminus I$. Then we have $\overline g \cdot \overline x_{\mathfrak C} = 0$ in $\mathrm{gr}_{\mathcal V} R$. We now wish to multiply $\overline x_{\mathfrak C}$ with a suitable element $\overline h \in \mathrm{gr}_{\mathcal V, \mathfrak C} R$ such that their product lies in $\mathrm{gr}_{\mathcal V}^{(\underline d)} R$. Then $I$ cannot be prime since both $\overline g$ and $\overline x_{\mathfrak C} \overline h$ are non-zero in $\mathrm{gr}_{\mathcal V}^{(\underline d)} R / I$, but their product is zero. The multidegree of $x_{\mathfrak C}$ lies in the cone $\sigma_{\mathfrak C} = \sigma_{\mathfrak C_{\underline d}}$ and the tuple $\underline d$ is contained in its relative interior. Therefore we can find natural numbers $n_p$, $p \in \mathfrak C$, such that
    \begin{align*}
        \deg(x_{\mathfrak C} \cdot \prod_{p \in \mathfrak C} f_p^{n_p}) = \deg x_{\mathfrak C} + \sum_{p \in \mathfrak C} n_i \deg f_p \in \N \mkern1mu \underline d,
    \end{align*}
    which gives us the desired function $h = \sum_{p \in \mathfrak C} f_p^{n_p}$.
\end{proof}

\begin{example}
    \label{ex:irreducible_components_gr_V_d}
    Let $\underline d = (0,1)$ in the Seshadri stratification from Example~\ref{ex:A}. The Veronese submonoids are given by
    \begin{align*}
        \Gamma_{\mathfrak C_1}^{(\underline d)} = \N_0 e_X + \N_0 e_{0 \overline 0}, \ \; \Gamma_{\mathfrak C_2}^{(\underline d)} = \Gamma_{\mathfrak C_3}^{(\underline d)} = \N_0 e_X, \ \; \Gamma_{\mathfrak C_4}^{(\underline d)} = \N_0 e_X + \N_0 e_{1 \overline 1}.
    \end{align*}
    Hence the set $\mathcal C = \set{\mathfrak C_1, \mathfrak C_4}$ in the proof of Lemma~\ref{lem:irreducible_components_gr_V_d}, so the projective variety $\Proj(\mathrm{gr}_{\mathcal V}^{(\underline d)} R)$ is the irredundant union of the two irreducible components $\Proj(\mathrm{gr}_{\mathcal V, \mathfrak C_1}^{(\underline d)} R)$ and $\Proj(\mathrm{gr}_{\mathcal V, \mathfrak C_4}^{(\underline d)} R)$. For $\underline d = (d_1, d_2)$ with $d_1, d_2 \geq 1$, however, every maximal chain in $A$ is maximal in $\Delta^{(\underline d)}(A)$ and $\Proj(\mathrm{gr}_{\mathcal V}^{(\underline d)} R)$ consists of four irreducible components.
\end{example}

\section{Newton-Okounkov theory: Polytopal complexes}

In the same way Kaveh and Khovanskii associate a Newton-Okounkov convex body to a pair of a semigroup and an admissable rational half-space (see \cite{kaveh2012newton}), we define the set
\begin{align*}
    \Delta_{C}^{(\underline d)} = \overline{\bigcup_{n \in \N} \tfrac{1}{n} \Gamma_{C, n \underline d} } \subseteq \R^A
\end{align*}
for the monoid $\Gamma_{C}^{(\underline d)}$ and the half-space of all elements in its span of degree $\R_{\geq 0} \mkern2mu \underline d$. When we extend the degree map to $\R^A \to \R^m$, $\Delta_{C}^{(\underline d)}$ can be written in the form
\begin{align}
    \label{eq:NO_body_is_polytope}
    \Delta_{C}^{(\underline d)} = \R^C_{\geq 0} \cap \set{\underline a \in \R^C \mid \deg \underline a = \underline d}.
\end{align}
Remember that $\R^C_{\geq 0}$ is exactly the cone spanned by $\Gamma_C$. To see this equality~(\ref{eq:NO_body_is_polytope}), it suffices to show that every rational conical combination $\underline a$ of elements in $\Gamma_C$ with $\deg \underline a = \underline d$ lies in $\Delta_{C}^{(\underline d)}$. So let $\underline a \in \R^C$ be of degree $\underline d$ and of the form $\underline a = \lambda_1 \underline a^{(1)} + \dots + \lambda_s \underline a^{(s)}$ with $\lambda_i \in \Q_{\geq 0}$ and $\underline a^{(i)} \in \Gamma_C$. Since $\Gamma_C$ is a subset of $\Q^C_{\geq 0}$, there exists a natural number $N$, such that $N \underline a$ is a $\Z$-linear combination of the unit vectors $e_p \in \R^C$ with non-negative coefficients. In particular, $N \underline a$ is contained in $\Gamma_C$ and $\underline a \in \Delta_{C}^{(\underline d)}$. The other inclusion is immediate from the definition of $\Delta_{C}^{(\underline d)}$. 

The cone generated by $\Gamma_C$ is also compatible with the Veronese submonoids: 
\begin{align*}
    \operatorname{Cone} \Gamma_C^{(\underline d)} = \operatorname{Cone} \Gamma_C \cap \set{ x \in \R^C \mid \deg x \in \R \underline d}
\end{align*}
Again, one can show this equality by looking at the rational conical combinations of $\Gamma_C$ and taking the closure. In particular, the set $\Delta_{C}^{(\underline d)}$ can be described by all elements of degree $\underline d$ in $\operatorname{Cone} \Gamma_C^{(\underline d)}$.

We denote the lattice generated by $\Gamma_{C}$ by $\mathcal L^{C} \subseteq \Q^{A}$ and the lattice generated by $\Gamma_{C}^{(\underline d)}$ by $\mathcal L^{C, (\underline d)} \subseteq \Q^{A}$. Equation~(\ref{eq:NO_body_is_polytope}) implies that $\Delta_{C}^{(\underline d)}$ is a polytope and we will see shortly that its vertices are contained in the $\Q$-span of the lattice $\mathcal L^{C, (\underline d)}$. Clearly this lattice is contained in $\mathcal L^{C}$, but in general not every element $\underline a \in \mathcal L^{C}$ with $\deg \underline a \in \Z \underline d$ is contained in $\mathcal L^{C, (\underline d)}$. However, this statement is true if $C$ is an element of $\Delta^{(\underline d)}(A)$.

\begin{lemma}
    \label{lem:veronese_lattice}
    For each chain $C$ in $A$ and $\underline d \in \N_0^m$ the lattice $\mathcal L^{C, (\underline d)}$ is equal to the $\underline d$-th Veronese sublattice of $\mathcal L^{C_{\underline d}}$:
    \begin{align*}
        \mathcal L^{C, (\underline d)} = \set{\underline a \in \mathcal L^{C_{\underline d}} \mid \deg \underline a \in \Z \underline d}.
    \end{align*}
    If $\underline d \in \sigma_C$, the lattice is of rank $\vert C_{\underline d} \vert - \dim \sigma_{C_{\underline d}} + 1$. Additionally, this number is bounded from above by $r + 1 = \dim X + 1$. 
\end{lemma}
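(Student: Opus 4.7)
The plan is to prove the lattice equality first, deduce the rank formula from it, and verify the upper bound directly. I would start by invoking the identity $\Gamma_C^{(\underline d)} = \Gamma_{C_{\underline d}}^{(\underline d)}$ from the proof of Lemma~\ref{lem:veronese_order_complex}: this reduces everything to the chain $C_{\underline d}$. The inclusion $\mathcal L^{C, (\underline d)} \subseteq H := \set{\underline a \in \mathcal L^{C_{\underline d}} \mid \deg \underline a \in \Z \underline d}$ is then immediate, since each generator of $\mathcal L^{C, (\underline d)}$ lies in $\Gamma_{C_{\underline d}} \subseteq \mathcal L^{C_{\underline d}}$ and has degree in $\N_0 \underline d$.

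For the reverse inclusion, I would take $\underline a \in H$ and decompose $\underline a = \underline b - \underline c$ with $\underline b, \underline c \in \Gamma_{C_{\underline d}}$; after possibly swapping $\underline b$ and $\underline c$, I may assume $\deg \underline b - \deg \underline c = k \underline d$ with $k \geq 0$. The goal is then to find an auxiliary element $\underline e \in \Gamma_{C_{\underline d}}$ with $\deg \underline e + \deg \underline c \in \N_0 \underline d$; both $\underline b + \underline e$ and $\underline c + \underline e$ would then lie in $\Gamma_{C_{\underline d}}^{(\underline d)}$, so $\underline a = (\underline b + \underline e) - (\underline c + \underline e) \in \mathcal L^{C, (\underline d)}$. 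The relative-interior hypothesis $\underline d \in \operatorname{relint} \sigma_{C_{\underline d}}$ guarantees that $N \underline d - \deg \underline c$ lies in $\sigma_{C_{\underline d}} \cap \Z^m$ for all sufficiently large $N$. The main obstacle is that this target degree need not lie in the non-negative integer span of $\set{\deg f_p \mid p \in C_{\underline d}}$ (compare Example~\ref{ex:A}: the element $\tfrac12 e_X + \tfrac12 e_{01} \in \Gamma_{\mathfrak C_2}$ has degree $(1,1)$, which is not an $\N_0$-combination of $(0,2), (2,0), (1,0)$), so the naive lift $\sum_p n_p e_p = \mathcal V(\prod_p f_p^{n_p})$ does not suffice; one has to combine such monomial lifts with fractional elements of $\Gamma_{C_{\underline d}}$ whose degrees supply the needed correction modulo the lattice generated by the $\deg f_p$. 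Establishing that a suitable $\underline e$ always exists is the heart of the proof.

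Once the lattice equality is established, the rank formula follows from linear algebra. The lattice $\mathcal L^{C_{\underline d}}$ contains $\Z^{C_{\underline d}}$ as a finite-index subgroup and is hence free of rank $|C_{\underline d}|$; the $\Q$-linear degree map $(\mathcal L^{C_{\underline d}})_{\Q} \to \Q^m$ has image of dimension $\dim \sigma_{C_{\underline d}}$; and $H$ is the preimage of the line $\Z \underline d$. Since $\underline d \in \sigma_C$ forces $\underline d \in \sigma_{C_{\underline d}}$ by the choice of $C_{\underline d}$ as the unique face containing $\underline d$ in its relative interior, $\Q \underline d$ meets the image of $\deg$, and the rank of $H$ equals $|C_{\underline d}| - \dim \sigma_{C_{\underline d}} + 1$. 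For the upper bound, the poset $A$ is graded of length $r$, so every chain in $A$ has at most $r + 1$ elements, which gives $|C_{\underline d}| \leq r + 1$; combined with $\dim \sigma_{C_{\underline d}} \geq 1$ (since $\underline d \neq 0$ forces $\sigma_{C_{\underline d}}$ to be positive-dimensional), this yields $\mathrm{rank}\,\mathcal L^{C, (\underline d)} \leq r + 1$.
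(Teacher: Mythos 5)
Your outline follows the paper's route (reduce to $C_{\underline d}$, write $\underline a = \underline b - \underline c$, shift into the Veronese submonoid, compute the rank as a kernel of the degree map), but the proposal has a hole exactly at its central step: you reduce the lattice equality to finding $\underline e \in \Gamma_{C_{\underline d}}$ with $\deg(\underline c + \underline e) \in \N_0 \underline d$ and then explicitly defer this ("establishing that a suitable $\underline e$ always exists is the heart of the proof") without giving an argument. Moreover, the obstacle you describe is easier to remove than you suggest: no auxiliary "fractional" elements of $\Gamma$ are needed, because the correction can be taken from $\underline c$ itself. Since $\underline d \in \operatorname{relint} \sigma_{C_{\underline d}}$ and $\deg \underline c \in \sigma_{C_{\underline d}}$, for $N$ large one has $N\underline d - \deg \underline c \in \sigma_{C_{\underline d}}$, hence $N\underline d - \deg\underline c = \sum_q a_q \deg f_q$ with $a_q \in \Q_{\geq 0}$; clearing denominators by $M$ gives $MN\underline d = M\deg\underline c + \sum_q (Ma_q)\deg f_q$ with $Ma_q \in \N_0$, so $\underline e \coloneqq (M-1)\underline c + \sum_q (Ma_q)\, e_q \in \Gamma_{C_{\underline d}}$ satisfies $\deg(\underline c + \underline e) = MN\underline d$. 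This is what the paper's appeal to "an argument similar to the proof of Lemma~\ref{lem:veronese_order_complex}" amounts to; without some such argument your proof of the equality is incomplete, and your example with $\tfrac12 e_X + \tfrac12 e_{01}$ only shows that pure monomials in the $f_p$ do not suffice, not that the existence is genuinely delicate.

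The second problem is the upper bound, where your argument is wrong rather than merely incomplete. The poset $A$ is graded of length $\dim \hat X - 1 = r + m - 1$, not $r$ (the rank function is $r(p) = \dim \hat X_p - 1$), so chains in $A$ have up to $r+m$ elements; already in Example~\ref{ex:strat_y_1} the maximal chains have three elements while $r+1 = 2$, and for $\underline d$ in the interior of $\sigma_{\mathfrak C}$ one has $C_{\underline d} = \mathfrak C$. Hence your estimate $\vert C_{\underline d}\vert \leq r+1$ fails, and the correct bound $\vert C_{\underline d}\vert \leq r+m$ together with $\dim \sigma_{C_{\underline d}} \geq 1$ only yields $r+m$. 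The paper instead enlarges $C$ to a maximal chain $\mathfrak C$, uses $\mathcal L^{C,(\underline d)} \subseteq \set{\underline a \in \mathcal L^{\mathfrak C} \mid \deg \underline a \in \Z\underline d}$, and the key fact $\dim \sigma_{\mathfrak C} = m$ — the degrees $\deg f_p$ for $p \in \mathfrak C$ span a rank-$m$ subgroup of $\Z^m$ by Lemma~\ref{lem:projective_covering_relation}, as noted in Corollary~\ref{cor:irred_components_multiproj_gr_V} — giving rank at most $\vert \mathfrak C\vert - m + 1 = \dim\hat X - m + 1 = r+1$. Your rank formula itself (kernel of $\mathcal L^{C_{\underline d}} \to \Z^m/\Z\underline d$) matches the paper's and is fine modulo the small point that a positive multiple of $\underline d$ lies in the image lattice, but the bound by $r+1$ genuinely needs the maximal-chain argument, not a bound on chain lengths in $A$.
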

\begin{proof}
    The statement is trivial when $\underline d$ is not contained in $\sigma_C$. Now let $\underline d \in \sigma_C$. Since the lattice $\mathcal L^{C, (\underline d)}$ does not change, when we replace $C$ by $C_{\underline d}$, we can assume that $\underline d$ lies in the relative interior of $\sigma_C$.
    
    Clearly $\mathcal L^{C, (\underline d)}$ is contained in $\set{\underline a \in \mathcal L^{C} \mid \deg \underline a \in \Z \underline d}$. To show the other inclusion, let $\underline a$ be an element of $\mathcal L^{C}$ with $\deg \underline a \in \Z \underline d$. We can write $\underline a$ in the form $\underline b - \underline c$ for $\underline b, \underline c \in \Gamma_{C}$. Since $\underline d \in \operatorname{relint} \sigma_{C}$, again we can use an argument similar to the proof of Lemma~\ref{lem:veronese_order_complex}: There exists an element $\underline b' \in \Gamma_{C}$ such that $\deg(\underline b + \underline b') \in \N_0 \underline d$. Then we have $\underline a = (\underline b + \underline b') - (\underline c + \underline b')$ and since $\deg \underline a \in \Z \underline d$, the degree of $\underline c + \underline b'$ is a multiple of $\underline d$ as well. Hence $\underline a \in \mathcal L^{C, (\underline d)}$.

    Therefore the lattice $\mathcal L^{C, (\underline d)}$ coincides with the kernel of the map $\phi: \mathcal L^{C} \to \Z^m/\Z \underline d$, $\underline a \mapsto \deg \underline a + \Z \underline d$. The image of the degree map $\mathcal L^{C} \to \Z^m$ is free and its rank is equal to the dimension of the cone $\sigma_C$. This implies that the rank of $\mathcal L^{C, (\underline d)}$ is given by $\operatorname{rank}(\mathcal L^{C}) - (\dim \sigma_C - 1)$. We get the desired formula, as the lattice $\mathcal L^{C}$ is of rank $\vert C \vert$, because it contains all unit vectors $e_p$ for $p \in C$.

    Finally, the rank of the lattice $\mathcal L^{C, (\underline d)}$ is at most $r + 1$. To see this, let $\mathfrak C$ be a maximal chain containing $C$. Of course, we have
    \begin{align*}
        \mathcal L^{C, (\underline d)} \subseteq \mathcal L^{\mathfrak C, (\underline d)} \subseteq \set{\underline a \in \mathcal L^{\mathfrak C} \mid \deg \underline a \in \Z \underline d}.
    \end{align*}
    By a similar argument, the lattice on the right hand side is of rank $\vert \mathfrak C \vert - \dim \sigma_{\mathfrak C} + 1$. We have seen in the proof of Corollary~\ref{cor:irred_components_multiproj_gr_V}, that the degrees of the extremal functions along $\mathfrak C$ span a group of rank $m$, so the dimension of $\sigma_{\mathfrak C}$ is equal to $m$ and it follows
    \begin{align*}
        \operatorname{rank} \mathcal L^{C, (\underline d)} \leq \vert \mathfrak C \vert - \dim \sigma_{\mathfrak C} + 1 = \dim \hat X - m + 1 = \dim X + 1. \tag*{\qedhere}
    \end{align*}
\end{proof}

\begin{example}
    \label{ex:multisimplices}
    The polytope $\Delta_{C}^{(\underline d)}$ of a chain $C$ is of a particularly nice form, namely a product of simplices, when the support of $\deg f_p$ is a one-element set for each $p \in A$. In this case, we get a partition of the poset $A$ into the subsets
    \begin{align*}
        A_i = \set{p \in A \mid \deg f_p \in \N e_i}.
    \end{align*}
    We fix a chain $C \in \Delta(A)$ and a degree $\underline d = (d_1, \dots, d_m) \in \N_0^m$. For each $i \in [m]$ we have the subchain $C_i = C \cap A_i$ and by equation~(\ref{eq:NO_body_is_polytope}), $\Delta_{C}^{(\underline d)}$ is equal to the direct product of the polytopes
    \begin{align*}
         P_i = \set{x \in \R^{A_i}_{\geq 0} \mid \supp x \subseteq C_i, (\deg x)_i = d_i}
    \end{align*}
    where $(\deg x)_i$ denotes the $i$-th component of $\deg x$. We write $\vert \underline c \vert = c_1 + \dots + c_m$ for the total degree of a tuple $\underline c \in \N_0^m$. We show that each of these polytopes is given by $d_i \Delta_{C_i} \subseteq \R^{A_i}$ with
    \begin{align*}
        \Delta_{C_i} = \begin{cases}
            \mkern2mu \operatorname{Conv} \mkern-2mu \Set{\frac{1}{\vert \deg f_p \vert} e_p \ \middle\vert \ p \in C_i}, & \text{if $C_i \neq \varnothing$}, \\
            \set{0}, & \text{if $C_i = \varnothing$ and $d_i = 0$}, \\
            \mkern9mu \varnothing, & \text{if $C_i = \varnothing$ and $d_i > 0$}.
        \end{cases}
    \end{align*}
    For $C_i = \varnothing$, we clearly have $P_i = d_i \Delta_{C_i}$. If $C_i \neq \varnothing$, then each point $x \in P_i$ can be written in the form
    \begin{align*}
        x = \sum_{p \in C_i} \lambda_p \cdot \frac{d_i}{\vert \deg f_p \vert} e_p
    \end{align*}
    with coefficients $\lambda_p \in \R_{\geq 0}$. Since $d_i = \deg x = \sum_{p \in C_i} \lambda_p d_i$, this is a convex combination of elements in $d_i \Delta_{C_i}$. The other inclusion $d_i \Delta_{C_i} \subseteq P_i$ is immediate from the definition of the two polytopes.
\end{example}

Now let us return to the general case. We are ready to describe the face lattice of the polytope $\Delta_{C}^{(\underline d)}$, \ie the poset
\begin{align*}
    L(\Delta_{C}^{(\underline d)}) = \set{F \mid \text{$F$ face of $\Delta_{C}^{(\underline d)}$}}
\end{align*}
ordered by inclusion. It is well known that $L(\Delta_{C}^{(\underline d)})$ is a graded lattice of length $\dim \Delta_{C}^{(\underline d)} + 1$.

\begin{proposition}
    \label{prop:face_lattice}
    The following statements hold for each maximal chain $\mathfrak C$ in $A$:
    \begin{enumerate}[label=(\alph{enumi})]
        \item \label{itm:face_lattice_a} For any two subchains $C, D \subseteq \mathfrak C$ we have $\Delta_{C}^{(\underline d)} \subseteq \Delta_{D}^{(\underline d)}$, if and only if $C_{\underline d} \subseteq D_{\underline d}$.
        \item The map
        \begin{align*}
            F_{\mathfrak C}^{(\underline d)}: \set{C \in \Delta^{(\underline d)}(A) \mid C \subseteq \mathfrak C } \rightarrow L(\Delta_{\mathfrak C}^{(\underline d)}), \quad C \longmapsto \Delta_{C}^{(\underline d)}
        \end{align*}
        is an isomorphism of posets.
        \item \label{itm:face_lattice_c} For all $C \subseteq \mathfrak C$ the face $\Delta_{C}^{(\underline d)}$ is of dimension $\vert C_{\underline d} \vert - \dim \sigma_{C_{\underline d}}$ and its vertices lie in the $\Q$-span of the lattice $\mathcal L^{C, (\underline d)}$.
    \end{enumerate}
\end{proposition}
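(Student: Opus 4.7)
The plan rests on the explicit description $\Delta_C^{(\underline d)} = \R_{\geq 0}^C \cap \{\underline a \mid \deg \underline a = \underline d\}$ from equation \eqref{eq:NO_body_is_polytope}. Viewing $\R^C$ as the coordinate subspace $\{\underline a_p = 0 : p \in \mathfrak C \setminus C\}$ of $\R^{\mathfrak C}$, this realizes $\Delta_{\mathfrak C}^{(\underline d)}$ as the slice of the non-negative orthant $\R_{\geq 0}^{\mathfrak C}$ by an affine subspace $H$, and $\Delta_C^{(\underline d)}$ as its intersection with a coordinate subspace. A crucial book-keeping identity, already extracted in the proof of Lemma~\ref{lem:veronese_order_complex}, is $\Gamma_C^{(\underline d)} = \Gamma_{C_{\underline d}}^{(\underline d)}$, so that $\Delta_C^{(\underline d)} = \Delta_{C_{\underline d}}^{(\underline d)}$; I use this to replace any $C$ by $C_{\underline d}$ throughout.

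For part (a), I may therefore assume $C, D \in \Delta^{(\underline d)}(A)$. The forward implication $C \subseteq D \Rightarrow \Delta_C^{(\underline d)} \subseteq \Delta_D^{(\underline d)}$ is immediate from the coordinate description. For the converse, since $\underline d \in \operatorname{relint} \sigma_C$ there exists a strictly positive conical combination $\underline d = \sum_{p \in C} \lambda_p \deg f_p$, and then $\sum_{p \in C} \lambda_p e_p$ is a point of $\Delta_C^{(\underline d)}$ whose support is exactly $C$; its membership in $\Delta_D^{(\underline d)} \subseteq \R_{\geq 0}^D$ forces $C \subseteq D$. Part (b) then reduces to surjectivity of $F_{\mathfrak C}^{(\underline d)}$: I invoke the standard polyhedral fact that every face of a polytope $\R_{\geq 0}^{\mathfrak C} \cap H$ is obtained by intersecting with some coordinate subspace $\{\underline a_p = 0 : p \in S\}$ (a consequence of the halfspace presentation of $\R_{\geq 0}^{\mathfrak C}$). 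Writing $C = \mathfrak C \setminus S$, such a face equals $\Delta_C^{(\underline d)} = \Delta_{C_{\underline d}}^{(\underline d)}$, and $C_{\underline d}$ lies in the domain of $F_{\mathfrak C}^{(\underline d)}$; injectivity follows from (a).

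For part (c), I replace $C$ by $C_{\underline d}$. The polytope $\Delta_{C_{\underline d}}^{(\underline d)}$ has non-empty relative interior inside the affine slice $H_{C_{\underline d}} = \{\underline a \in \R^{C_{\underline d}} \mid \deg \underline a = \underline d\}$, since the point constructed in (a) has all coordinates strictly positive; the dimension of $H_{C_{\underline d}}$ is $|C_{\underline d}|$ minus the rank of the restricted degree map, which equals $\dim \sigma_{C_{\underline d}}$. For the vertex statement, each vertex $v$ is by (b) the unique point of some $\Delta_D^{(\underline d)}$ with $D \subseteq C_{\underline d}$; choosing $N \in \N$ so that $Nv$ has integer coordinates, $Nv \in \Gamma_D^{(\underline d)} \subseteq \Gamma_{C_{\underline d}}^{(\underline d)} = \Gamma_C^{(\underline d)}$, whence $v \in \Q \otimes \mathcal L^{C, (\underline d)}$.

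The least automatic step is the convex-geometric input in (b), namely identifying the faces of $\R_{\geq 0}^{\mathfrak C} \cap H$ with intersections by coordinate subspaces; this is however a standard consequence of the facet description of the non-negative orthant. Everything else is a fairly mechanical translation of the semigroup identity $\Gamma_C^{(\underline d)} = \Gamma_{C_{\underline d}}^{(\underline d)}$ into the polytope picture, combined with elementary linear algebra for the dimension formula in (c).
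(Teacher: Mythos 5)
Your proof is correct and follows the paper's overall skeleton---both arguments rest on the slice description~(\ref{eq:NO_body_is_polytope}) and on the identity $\Delta_{C}^{(\underline d)} = \Delta_{C_{\underline d}}^{(\underline d)}$ extracted from Lemma~\ref{lem:veronese_order_complex}---but several of your sub-arguments genuinely differ. In (a) the paper certifies $p \in \supp$ for each $p \in C_{\underline d}$ by producing semigroup elements $\underline a^{(p)} \in \Gamma_{C}^{(\underline d)}$, whereas you use a single real point: writing $\underline d \in \operatorname{relint} \sigma_C$ as a strictly positive combination of the $\deg f_p$, $p \in C$, gives a point of $\Delta_{C}^{(\underline d)}$ with full support; this is cleaner and avoids the monoid entirely (real coefficients suffice thanks to~(\ref{eq:NO_body_is_polytope})). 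In (b) the paper identifies the facets of $\Delta_{\mathfrak C_{\underline d}}^{(\underline d)}$ via a convexity argument on the boundary (implicitly using that each $\Delta_{\mathfrak C_{\underline d} \setminus \set{p}}^{(\underline d)}$ is a facet) and then writes faces as intersections of facets; you instead invoke the H-description of the orthant slice, under which every face arises by making some of the inequalities $x_p \geq 0$ tight---a more standard route that also gives well-definedness of $F_{\mathfrak C}^{(\underline d)}$ for free. In (c) the paper obtains the dimension from $\dim \operatorname{Cone} \Gamma_C^{(\underline d)} - 1 = \operatorname{rank} \mathcal L^{C, (\underline d)} - 1$ via Lemma~\ref{lem:veronese_lattice}, while you compute it directly as the dimension of the affine slice $\set{x \in \R^{C_{\underline d}} \mid \deg x = \underline d}$ using your interior point, so Lemma~\ref{lem:veronese_lattice} is not needed at this step. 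The only points you should make explicit in the vertex argument are that vertices are rational (the polytope is cut out by rational equations and the inequalities $x_p \geq 0$), and that $Nv \in \N_0^D \subseteq \Gamma_D$ with $\deg(Nv) = N \underline d$, hence $Nv \in \Gamma_D^{(\underline d)} \subseteq \Gamma_{C}^{(\underline d)}$; both are immediate, and with them your conclusion coincides with the paper's.
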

\begin{proof}
    For each $p \in \mathfrak C$ the set $\Delta_{\mathfrak C \setminus \set{p}}^{(\underline d)} = \Delta_{\mathfrak C}^{(\underline d)} \cap \set{x \in \R^{\mathfrak C} \mid p \notin \supp x}$ is a face of $\Delta_{\mathfrak C}^{(\underline d)}$. As an intersection of these faces, $\Delta_{C}^{(\underline d)}$ is a face as well for each $C \subseteq \mathfrak C$. By Lemma~\ref{lem:veronese_order_complex} it coincides with the face $\Delta_{C_{\underline d}}^{(\underline d)}$.

    \begin{enumerate}[label=(\alph{enumi})]
        \item For any two subsets $C, D \subseteq \mathfrak C$ the polytopes $\Delta_{C}^{(\underline d)}$ and $\Delta_{D}^{(\underline d)}$ agree if $C_{\underline d} = D_{\underline d}$. Conversely, suppose that $\Delta_{C}^{(\underline d)} \subseteq \Delta_{D}^{(\underline d)}$. We have seen in the proof of Lemma~\ref{lem:veronese_order_complex} that the monoid $\Gamma_C^{(\underline d)}$ contains an element $\underline a^{(p)}$ with $p \in \supp \underline a^{(p)}$ for every $p \in C_{\underline d}$, which implies $C_{\underline d} \subseteq D_{\underline d}$.
        \item First, we prove that every face of $\Delta_{\mathfrak C}^{(\underline d)}$ is induced by a subchain $C \subseteq \mathfrak C$. We set $D \coloneqq \mathfrak C_{\underline d}$. By part~\ref{itm:face_lattice_a}, we have $\Delta_{D}^{(\underline d)} = \Delta_{\mathfrak C}^{(\underline d)}$ and $\Delta_{D \setminus \set{p}}^{(\underline d)}$ is a facet of $\Delta_{D}^{(\underline d)}$ for all $p \in D$, \ie a face of codimension $1$. Let $F$ be any facet of $\Delta_{D}^{(\underline d)}$. Clearly it holds
        \begin{align*}
            F \subseteq \partial \Delta_{D}^{(\underline d)} \subseteq \bigcup_{p \in D} \mkern2mu \set{x \in \R^{D}_{\geq 0} \mid p \notin \supp x}.
        \end{align*}
        If $K$ is any convex set in $\R^{D}_{\geq 0}$ and $x, y \in K$, then there is a point $z \in K$ on the line through $x$ and $y$ with $\supp z = \supp x \cup \supp y$. This implies that there must exist an element $p \in D$ with
        \begin{align*}
            F \subseteq \Delta_{D}^{(\underline d)} \cap \set{x \in \R^{D}_{\geq 0} \mid p \notin \supp x} = \Delta_{D \setminus \set{p}}^{(\underline d)}.
        \end{align*}
        and since $\Delta_{D \setminus \set{p}}^{(\underline d)}$ is a facet of $\Delta_{D}^{(\underline d)}$, it follows $F = \Delta_{D \setminus \set{p}}^{(\underline d)}$. Because every face of a polytope is an intersection of facets, each face of $\Delta_{\mathfrak C}^{(\underline d)}$ is equal to $\Delta_{C}^{(\underline d)}$ for some $C \subseteq \mathfrak C$. Hence the map $F_{\mathfrak C}^{(\underline d)}$ is surjective. By~\ref{itm:face_lattice_a}, it is injective as well.
        \item Fix a subchain $C \subseteq \mathfrak C$. We know that the polytope $\Delta_{C}^{(\underline d)}$ is given by the intersection of a cone with an affine hyperplane of codimension one:
        \begin{align*}
            \Delta_{C}^{(\underline d)} = \operatorname{Cone} \Gamma_C^{(\underline d)} \cap \set{x \in \operatorname{span}_\R(\Gamma_C^{(\underline d)}) \mid \deg x = \underline d}.
        \end{align*}
        In particular, Lemma~\ref{lem:veronese_lattice} implies:
        \begin{align*}
            \dim \Delta_C^{(\underline d)} = \dim \operatorname{Cone} \Gamma_C^{(\underline d)} - 1 = \operatorname{rank} \mathcal L^{C, (\underline d)} - 1 = \vert C_{\underline d} \vert - \dim \sigma_{C_{\underline d}}.
        \end{align*}
        The vertices of $\Delta_{C}^{(\underline d)}$ are of the form $\Delta_D^{(\underline d)}$ for a subchain $D \subseteq C$. By the definition of these polytopes, we have $\Delta_D^{(\underline d)} = \set{\tfrac{1}{n} \underline a}$ for each element $\underline a \in \Gamma_D^{(\underline d)}$ of degree $n \underline d$. Hence the vertices of $\Delta_{C}^{(\underline d)}$ lie in the rational span of the lattice $\mathcal L^{C, (\underline d)}$.\hfill\qedhere
    \end{enumerate}
\end{proof}

As the face lattice of every polytope is a graded poset, it follows that $\Delta^{(\underline d)}(A)$ is the union of graded posets. In general, not all maximal chains in $\Delta^{(\underline d)}$ have the same length, but there still exists a rank function $r: \Delta^{(\underline d)}(A) \to \N_0$, where the rank of a chain $C \in \Delta^{(\underline d)}(A)$ is given by
\begin{align*}
    r(C) = \operatorname{rank} \mathcal L^{C, (\underline d)} = \vert C \vert - \dim \sigma_C + 1.
\end{align*}

\begin{definition}
    A \textbf{polytopal complex} in a finite-dimensional real vector space $W$ is a set $\mathcal K$ of polytopes in $W$ that satisfies the following properties:
    \begin{enumerate}[label=(\alph{enumi})]
        \item If $P \in \mathcal K$ and $Q$ is a (possibly empty) face of $P$, then $Q \in \mathcal K$;
        \item the intersection of any two polytopes $P, Q \in \mathcal K$ is a face of both $P$ and $Q$.
    \end{enumerate}
\end{definition}

\begin{definition}
    Let $\underline d \in \N_0^m$. We define the \textbf{Newton-Okounkov polytopal complex} of the Veronese subalgebra $R^{(\underline d)} \subseteq R$ as the union
    \begin{align*}
        \Delta_{\mathcal V}^{(\underline d)} = \bigcup_{\mathfrak C} \mkern2mu \Delta_{\mathfrak C}^{(\underline d)} \subseteq \R^A
    \end{align*}
    running over all maximal chains $\mathfrak C$ in $A$.
\end{definition}

By the description of the polytopes from equation~(\ref{eq:NO_body_is_polytope}) we have $\Delta_{C}^{(\underline d)} \cap \Delta_{D}^{(\underline d)} = \Delta_{C \cap D}^{(\underline d)}$ for all chains $C, D \subseteq A$. Therefore the set
\begin{align*}
    \mathcal K_{\mathcal V} = \set{\Delta_{C}^{(\underline d)} \mid C \in \Delta^{(\underline d)}(A)}
\end{align*}
is a polytopal complex. Technically, $\Delta_{\mathcal V}^{(\underline d)}$ is not a polytopal complex, but rather the geometric realization of the polytopal complex $\mathcal K_{\mathcal V}$.

\begin{remark}
    The Newton-Okounkov polytopal complex $\Delta_{\mathcal V}^{(\underline d)}$ can also be interpreted as a slice of the fan of cones:
    \begin{align*}
        \Delta_{\mathcal V}^{(\underline d)} = \Big( \bigcup_{\mathfrak C \subseteq A \atop \text{max.\,chain}} \mkern-5mu \operatorname{Cone} \Gamma_{\mathfrak C} \mkern3mu \Big) \cap \Set{x \in \R^A \mid \deg x = \underline d}.
    \end{align*}
    In the literature, the cone generated by $\Gamma_{\mathfrak C}$ is known as the \textit{global Newton-Okounkov body} of the algebra $\mathrm{gr}_{\mathcal V, \mathfrak C} R$ as it captures the behaviour of the Newton-Okounkov bodies of all its Veronese subalgebras. These global bodies were examined in \cite{cid2021multigraded} and \cite{lazarsfeld2009convex}.
\end{remark}

\section{Newton-Okounkov theory: Rational structures}

Let $\mathfrak C$ be a maximal chain in $A$. The semigroup 
\begin{align*}
    \widetilde{\Gamma}_{\mathfrak C}^{(\underline d)} = \mathcal L^{\mathfrak C, (\underline d)} \cap \operatorname{Cone} \Gamma_{\mathfrak C}^{(\underline d)} = \mathcal L^{\mathfrak C, (\underline d)} \cap \operatorname{Cone} \Gamma_{\mathfrak C}
\end{align*}
is called the \textit{saturation} of $\Gamma_{\mathfrak C}^{(\underline d)}$, as it is equal to the monoid of all $\underline a \in \mathcal L^{\mathfrak C, (\underline d)}$, such that there exists a natural number $k$ with $k \underline a \in \Gamma_{\mathfrak C}^{(\underline d)}$. Gordan's Lemma implies that the saturation is finitely generated. By definition, its elements are given by the lattice points in the scaled polytopes $n \Delta_{\mathfrak C}^{(\underline d)}$:
\begin{align}
    \label{eq:regularization_vs_polytope}
    (\widetilde{\Gamma}_{\mathfrak C}^{(\underline d)})_n = \set{\underline a \in \widetilde{\Gamma}_{\mathfrak C}^{(\underline d)} \mid \deg \underline a = n} = n \Delta_{\mathfrak C}^{(\underline d)} \cap \mathcal L^{\mathfrak C, (\underline d)}.
\end{align}
This links our problem of describing the leading function $G_R$ of the Hilbert polynomial to Ehrhart theory. The growth rate of an Ehrhart polynomial is determined by the dimension and the volume of the polytope. But as $\Delta_{\mathfrak C}^{(\underline d)}$ is not full-dimensional in the span of the lattice $\mathcal L^{\mathfrak C, (\underline d)}$, we first need to find a suitable rational structure.

\begin{definition}
    Let $P$ be a polytope in a real vector space $\R^d$. An \textbf{integral structure} (respectively \textbf{rational structure}) on $P$ is an affine embedding $\iota: P \hookrightarrow \R^{\dim P}$ together with a collection of subsets $P(n) \subseteq P$ for all $n \in \N$, such that the following conditions are fulfilled:
    \begin{enumerate}[label=(\alph{enumi})]
        \item The vertices of $\iota(P)$ have integral (respectively rational) coordinates;
        \item for each $n \in \N$ it holds
        \begin{align*}
            \iota(P(n)) = \set{ x \in \iota(P) \mid nx \in \Z^{\dim P}}.
        \end{align*}
    \end{enumerate}
\end{definition}

Having a rational structure on a given polytope $P$ allows the use of Ehrhart theory, even if $P$ is not full-dimensional in its ambient space: The cardinality
\begin{align*}
    \vert P(n) \vert = \vert \mkern2mu \iota(P) \cap \tfrac{1}{n} \Z^{\dim P} \vert = \vert \mkern2mu n \mkern1mu \iota(P) \cap \Z^{\dim P} \vert
\end{align*}
then is a quasi-polynomial in $n$ of degree $\dim P$ and its leading coefficient is a constant equal to the standard Euclidean volume of $\iota(P)$.

The main obstacle for constructing a rational structure for the polytope $\Delta_{\mathfrak C}^{(\underline d)}$ are the degrees appearing in the lattice $\mathcal L^{\mathfrak C, (\underline d)}$. To proceed, we need to show that $\mathcal L^{\mathfrak C, (\underline d)}$ is not empty in degree $1$. Unfortunately, this statement can actually be wrong in certain edge cases, as we show in the example below. However, when $\underline d$ lies in the relative interior of $\sigma_{\mathfrak C}$, then $\mathcal L^{\mathfrak C, (\underline d)}$ has elements of degree $1$: By Lemma~\ref{lem:veronese_lattice}, the lattice $\mathcal L^{\mathfrak C, (\underline d)}$ is the $\underline d$-th Veronese sublattice of $\mathcal L^{\mathfrak C}$, hence we only need to check whether $\mathcal L^{\mathfrak C}$ has an element of degree $\underline d$. This serves as the motivation for the next two lemmas.

\begin{example}
    \label{ex:lattice_not_in_deg_1}
    Consider the maximal chain $\mathfrak C \coloneqq \mathfrak C_2: X > 01 > 0$ in the Seshadri stratification from Example~\ref{ex:strat_y_0_y_1} and the degree $\underline d = (0,1) \in \N_0^2$. By Lemma~\ref{lem:veronese_lattice}, the lattice $\mathcal L^{\mathfrak C, (\underline d)}$ is generated by the monoid to the subchain $\mathfrak C_{\underline d} = \set{X}$. It follows from our computations in Example~\ref{ex:A} that every element $\underline a \in \Gamma$ with $\supp \underline a \subseteq \set{X}$ is a multiple of $e_X$. Hence $\mathcal L^{\mathfrak C, (\underline d)}$ is generated by $e_X$ and this element is of degree $2 \underline d$.
\end{example}

\begin{lemma}
    \label{lem:support_in_C}
    For each non-zero rational function $g \in \K(\hat X)$ and every maximal chain $\mathfrak C$ there exists a regular function $h \in \K[\hat X]$ with $\supp \mathcal V(gh^k) \subseteq \mathfrak C$ for all $k \in \N$.
\end{lemma}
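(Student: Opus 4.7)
The strategy is to write $g = g_1/g_2$ with $g_1, g_2 \in R \setminus \set{0}$ and take $h$ of the form $h = g_2 \cdot \tilde h$, where $\tilde h \in R$ is constructed by induction on $r = \dim X$. This choice already ensures $gh^k = g_1 g_2^{k-1} \tilde h^k \in R$ for every $k \in \N$, so that $\mathcal V(gh^k)$ is defined. Reading the support condition through the inductive formula (\ref{eq:inductive_def_quasi_val}), the goal is then to guarantee that the top-down recursive choice of the minimizing covered element (tie-broken via $\geq^t$) for $F = gh^k$ traces out $\mathfrak C \colon p_r > p_{r-1} > \dots > p_0$ starting from the top.

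To force $p_{r-1}$ as the minimizer at the topmost level, I would construct a regular function $H_r \in R$ with $\nu_{p_r, p_{r-1}}(H_r) = 0$ but $\nu_{p_r, q}(H_r) \geq 1$ for every $q \neq p_{r-1}$ covered by $p_r$. Such a function exists because, by axioms \ref{itm:seshadri_strat_a} and \ref{itm:seshadri_strat_c}, the subvarieties $\hat X_{p_{r-1}}$ and $\hat X_q$ are distinct prime divisors of $\hat X_{p_r}$, so the ideal of $\hat X_q$ inside $\K[\hat X_{p_r}]$ contains elements that do not vanish on $\hat X_{p_{r-1}}$, and a product over all such $q$ lifts multihomogeneously to $R$ through the surjection $R \twoheadrightarrow \K[\hat X_{p_r}]$. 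Next, the induction hypothesis is applied to the induced Seshadri stratification on $X_{p_{r-1}}$ (Remark~\ref{rem:induced_strat}) with the chain $\mathfrak C \cap A_{p_{r-1}}$, producing a regular function $h' \in \K[\hat X_{p_{r-1}}]$ with the analogous support property. Lifting $h'$ to $\tilde h_{r-1} \in R$ through the restriction $R \twoheadrightarrow \K[\hat X_{p_{r-1}}]$ and setting $\tilde h = H_r^N \cdot \tilde h_{r-1}$ for a sufficiently large exponent $N$ would complete the construction.

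The main obstacle is ensuring that the inductive factor $\tilde h_{r-1}$ does not spoil the top-level choice of $p_{r-1}$, which is precisely why a large power of $H_r$ is needed: since $\nu_{p_r, q}(H_r) \geq 1$ strictly for every $q \neq p_{r-1}$, the contribution $kN \nu_{p_r, q}(H_r)$ to $\nu_{p_r, q}(gh^k)$ grows at least linearly in $kN$ and dominates the bounded contributions coming from $g_1$, $g_2$ and $\tilde h_{r-1}$ once $N$ is large enough, uniformly in $k \geq 1$. A secondary subtlety is that the rational function fed into the induction depends on $k$ via the exponent $\nu_{p_r, p_{r-1}}(gh^k)$, but this dependence amounts only to multiplication by a power of $f_{p_r}|_{\hat X_{p_{r-1}}}$, whose $\mathcal V$-support is $\set{p_{r-1}} \subseteq \mathfrak C \cap A_{p_{r-1}}$, so a single choice of $h'$ works uniformly for all $k$ and closes the induction.
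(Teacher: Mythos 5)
Your overall skeleton is the paper's: kill the unwanted divisors $\hat X_q$, $q \neq p_{r-1}$, covered by $p_r$ with a regular function vanishing on them but not on $\hat X_{p_{r-1}}$, then induct on the induced stratification of $X_{p_{r-1}}$ along $\mathfrak C \cap A_{p_{r-1}}$. However, the way you close the induction contains a genuine gap, concentrated in your ``secondary subtlety''. First, the claim that the $k$-dependence of the function fed into the recursion is ``multiplication by a power of $f_{p_r}\vert_{\hat X_{p_{r-1}}}$, whose $\mathcal V$-support is $\set{p_{r-1}}$'' is false: by condition~\ref{itm:seshadri_strat_c}, $f_{p_r}$ vanishes identically on $\hat X_{p_{r-1}}$, so $f_{p_r}\vert_{\hat X_{p_{r-1}}} = 0$ and has no quasi-valuation at all; what appears in formula~(\ref{eq:inductive_def_quasi_val}) is the residue of the whole quotient $(gh^k)^{b_{p_r,p_{r-1}}}/f_{p_r}^{\nu_{p_r,p_{r-1}}(gh^k)}$, which is a unit along $\hat X_{p_{r-1}}$, not a restriction of $f_{p_r}$ by itself. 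Moreover the numerator changes with $k$ as well: writing $b = b_{p_r,p_{r-1}}$ and $u$ for the residue of $h^{b}/f_{p_r}^{\nu_{p_r,p_{r-1}}(h)}$ on $\hat X_{p_{r-1}}$, the input at level $r-1$ is $F_k = F_1\, u^{\,k-1}$, and $u$ involves $g_2$, $H_r$ and $\tilde h_{r-1}$. This exposes the structural problem: your induction input depends on $h = g_2 H_r^{N}\tilde h_{r-1}$, i.e.\ on the lift of the very function $h'$ the induction is supposed to produce (and on $N$), so as written the construction is circular, and the inductive hypothesis ``$\supp \mathcal V_{p_{r-1}}(G\, h'^{\,j}) \subseteq \mathfrak C \setminus \set{p_r}$ for all $j$'' is never applied to a well-defined, previously fixed $G$.

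The paper breaks this circle by a specific ordering: it first fixes the top-level correction $h = \prod_q h_q^{n_q}$ (no inductive factor inside), with exponents chosen from the valuations of $g$ alone; it then forms the derived function $g_1$ of $gh$ on $\hat X_{p_{r-1}}$, applies the induction hypothesis to this single $g_1$ to get $h_1$, and only afterwards multiplies by a lift $\overline h_1$, so that the derived function of $g h^k \overline h_1^{\,k}$ is $g_1$ times powers of $h_1$ (and of the nonvanishing residue of $h$), which is exactly what the ``for all $k$'' uniformity of the inductive statement is there to absorb. Two smaller remarks: your preliminary step $h = g_2\tilde h$ is unnecessary, since the quasi-valuation is defined on rational functions via (\ref{eq:inductive_def_quasi_val}); and in your domination argument the contributions of $g_2$ and $\tilde h_{r-1}$ to $\nu_{p_r,p_{r-1}}(gh^k)$ are not bounded in $k$ but grow linearly — the argument still works because $kN\nu_{p_r,q}(H_r)$ grows with slope at least $N$, so a sufficiently large $N$ wins uniformly in $k$, but only once the circular dependence of $\tilde h_{r-1}$ and $N$ on each other has been removed. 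Until you specify an induction input independent of $h'$ and $N$ and correct the description of the $k$-dependence, the inductive step does not go through.
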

\begin{proof}
    Let $p_r > \dots > p_0$ be the elements of the chain $\mathfrak C$. For each covering relation $q < p$ in $A$ we have the discrete valuation $\nu_{p,q}: \K(\hat X_p) \setminus \set{0} \to \Z$ of the prime divisor $\hat X_q \subseteq \hat X_p$ and the bond $b_{p,q} = \nu_{p,q}(f_p) \in \N$.
    
    The function $h$ can be constructed inductively over the length of the poset $A$. The statement of this lemma is trivial, when the length is zero. Otherwise, let $B$ be the set of all $q \in A \setminus \set{p_{r-1}}$ which are covered by $p_r$. For each element $q \in B$ we choose a regular function $h_q \in \K[\hat X]$, such that $h_q$ is the zero function on $\hat X_q$ and does not vanish identically on $\hat X_{p_{r-1}}$. Since $\nu_{p_r, q}(h_q) \geq 1$, we can choose natural numbers $n_q$, $q \in B$, fulfilling the following inequalities:
    \begin{align*}
         \frac{ n_q \mkern2mu \nu_{p_r, q}(h_q) }{ b_{p_r, q} } \geq \frac{\nu_{p_r, p_{r-1}}(g)}{b_{p_r, p_{r-1}}} - \frac{ \nu_{p_r, q}(g) }{ b_{p_r, q} }.
    \end{align*}
    We now define the regular function
    \begin{align*}
        h = \prod_{q \in B} h_q^{n_q} \in \K[\hat X].
    \end{align*}
    As all functions $h_q$ do not vanish identically on $\hat X_{p_{r-1}}$, we get $\nu_{p_r, p_{r-1}}(gh) = \nu_{p_r, p_{r-1}}(g)$. The choice of the number $n_q$ implies
    \begin{align*}
        \frac{ \nu_{p_r, q}(g h) }{ b_{p_r, q} } &= \frac{1}{ b_{p_r, q} } \bigg( \nu_{p_r, q}(g) + \sum_{p \in B} n_p \nu_{p_r, q}(h_p) \bigg) \geq \frac{1}{ b_{p_r, q} } \bigg( \nu_{p_r, q}(g) + n_q \nu_{p_r, q}(h_q) \bigg) \\
        &\geq \frac{\nu_{p_r, p_{r-1}}(g)}{b_{p_r, p_{r-1}}} = \frac{\nu_{p_r, p_{r-1}}(gh)}{b_{p_r, p_{r-1}}}.
    \end{align*}
    By the construction of the quasi-valuation, its values can be computed inductively using the induced Seshadri stratification on $X_{p_{r-1}}$ with the underlying poset $\set{q \in A \mid q \leq p_{r-1}}$. Let $\mathcal V_{p_{r-1}}$ denote its quasi-valuation. Then we have
    \begin{align*}
        \mathcal V(gh) = \frac{\nu_{p_r, p_{r-1}}(g)}{b_{p_r, p_{r-1}}} \mkern3mu e_{p_r} + \frac{\mathcal V_{p_{r-1}}(g_1)}{b_{p_r, p_{r-1}}},
    \end{align*}
    where $g_1$ is the rational function
    \begin{align*}
        g_1 = \frac{(gh)^{ b_{p_r, p_{r-1}} }}{ f_p^{\nu_{p_r, p_{r-1}}(gh)} } \bigg\vert_{\hat X_{p_{r-1}}}.
    \end{align*}
    Here we used the alternative description of the quasi-valuation from Remark 6.5 in \cite{seshstrat}. 
    
    By induction, there exists a non-zero function $h_1 \in \K[\hat X_{p_{r-1}}]$ with $\supp \mathcal V_{p_{r-1}}(g_1 h_1^k) \subseteq \mathfrak C \setminus \set{p_r}$ for all $k \in \N$. We choose any lift $\overline h_1$ of $h_1$ in $\K[\hat X]$. Note that we still have
    \begin{align*}
        \frac{ \nu_{p_r, q}(g h^k h_1^k) }{ b_{p_r, q} } \geq \frac{\nu_{p_r, p_{r-1}}(g)}{b_{p_r, p_{r-1}}} = \frac{\nu_{p_r, p_{r-1}}(gh^k h_1^k)}{b_{p_r, p_{r-1}}}.
    \end{align*}
    The quasi-valuation of $g h^k h_1^k$ is equal to
    \begin{align*}
        \mathcal V(gh^k h_1^k) = \frac{\nu_{p_r, p_{r-1}}(g)}{b_{p_r, p_{r-1}}} \mkern3mu e_{p_r} + \frac{\mathcal V_{p_{r-1}}(\widetilde g_1)}{b_{p_r, p_{r-1}}},
    \end{align*}
    for the regular function
    \begin{align*}
        \widetilde g_1 = \frac{(gh^k h_1^k)^{ b_{p_r, p_{r-1}} }}{ f_p^{\nu_{p_r, p_{r-1}}(g h^k h_1^k)} } \bigg\vert_{\hat X_{p_{r-1}}} = g_1 \cdot h_1^{ k \mkern2mu b_{p_r, p_{r-1}} }.
    \end{align*}
    In particular, $\supp \mathcal V(g h^k h_1^k)$ is contained in $\mathfrak C$ for every $k \in \N$.
\end{proof}

\begin{lemma}
    \label{lem:L_C_all_degrees}
    The degree map $\mathcal L^{\mathfrak C} \to \Z^m$ is surjective for each maximal chain $\mathfrak C$ in $A$.
\end{lemma}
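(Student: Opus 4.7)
The plan is to show that each standard basis vector $e_i \in \Z^m$ lies in the image of the degree map $\mathcal L^{\mathfrak C} \to \Z^m$. Since the image is a subgroup of $\Z^m$ and $\set{e_1, \dots, e_m}$ generates $\Z^m$, this will give surjectivity. The main tool is the previous Lemma~\ref{lem:support_in_C}, which lets us enlarge a given multihomogeneous function until its quasi-valuation has support in $\mathfrak C$, combined with Lemma~\ref{lem:quasi_val_compatible_with_multidegree} to read off its degree.

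First I would check that $R_{e_i} \neq \set{0}$ for every $i \in [m]$. This follows from the assumption $X_{p_{\text{max}}} = X$ with $I_{p_{\text{max}}} = [m]$: the variety $X$ has non-trivial image in every factor $\PP(V_i)$, so picking a point $([v_1], \dots, [v_m]) \in X$ and a linear form $\lambda \in V_i^*$ with $\lambda(v_i) \neq 0$ yields a non-zero element of $R_{e_i}$. Now for fixed $i$, pick $g_i \in R_{e_i} \setminus \set{0}$ and apply Lemma~\ref{lem:support_in_C} to produce $h_i \in R$ with $\supp \mathcal V(g_i h_i^k) \subseteq \mathfrak C$ for every $k \in \N$. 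Crucially, I would observe that $h_i$ can be chosen multihomogeneous: in the proof of Lemma~\ref{lem:support_in_C} the function $h$ is assembled as a product of elements of the vanishing ideals $I(\hat X_q)$, and since each $\hat X_q$ is a multicone these ideals are multihomogeneous, so we can pick multihomogeneous $h_q$'s; the inductive step then propagates multihomogeneity.

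With $h_i$ multihomogeneous, the product $g_i h_i^k$ is multihomogeneous of multidegree $e_i + k \deg h_i$, and Lemma~\ref{lem:quasi_val_compatible_with_multidegree} gives $\deg \mathcal V(g_i h_i^k) = e_i + k \deg h_i$. Writing $\underline a_k \coloneqq \mathcal V(g_i h_i^k) \in \Gamma_{\mathfrak C} \subseteq \mathcal L^{\mathfrak C}$, the combination $2\underline a_1 - \underline a_2$ lies in $\mathcal L^{\mathfrak C}$ and has degree
\begin{align*}
    2(e_i + \deg h_i) - (e_i + 2 \deg h_i) = e_i,
\end{align*}
so $e_i$ is in the image of the degree map. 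Doing this for each $i \in [m]$ completes the proof. The main obstacle I foresee is the multihomogeneity of $h_i$; without it, $g_i h_i^k$ would not be multihomogeneous and one could not directly invoke Lemma~\ref{lem:quasi_val_compatible_with_multidegree} to identify its degree, so some care in re-reading the construction of $h$ in the previous lemma is required.
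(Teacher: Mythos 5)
Your proof is correct, and it takes a genuinely different route from the paper's. The paper proceeds by induction on the length of $A$: along the top covering relation $p_r > p_{r-1}$ of $\mathfrak C$ at most one index $j$ is lost; in that case it multiplies a linear form $\ell \in V_j^*$ by explicit functions vanishing on the other divisors covered by $p_r$ and by the correction from Lemma~\ref{lem:support_in_C}, passes to a multihomogeneous component via Lemma~\ref{lem:multihom_valuation}, and so produces an element of $\Gamma_{\mathfrak C}$ whose degree has $j$-th entry $1$; surjectivity then follows by combining this with the inductive hypothesis, which covers all degrees $\underline d$ with $d_j = 0$. You avoid that induction entirely: for each $i$ you take a nonzero $g_i \in R_{e_i}$, correct it by a \emph{multihomogeneous} $h_i$ as in Lemma~\ref{lem:support_in_C}, and cancel the unknown $\deg h_i$ through the lattice combination $2\mathcal V(g_i h_i) - \mathcal V(g_i h_i^2) \in \mathcal L^{\mathfrak C}$, which exploits precisely the ``for all $k$'' part of that lemma. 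The extra input you need, multihomogeneity of $h_i$, is not asserted in Lemma~\ref{lem:support_in_C}, but your justification is sound: each $\hat X_q$ is a multicone, so $I(\hat X_q)$ is multihomogeneous and one may take $h_q$ multihomogeneous in $I(\hat X_q) \setminus I(\hat X_{p_{r-1}})$; moreover the restriction $\K[\hat X] \to \K[\hat X_{p_{r-1}}]$ is graded, so the lift $\overline h_1$ occurring in that proof can also be chosen multihomogeneous, and the induction there propagates the property with no homogeneity assumption on the input function. You are right that this is the crux: without it, Lemma~\ref{lem:multihom_valuation} would only give $\deg \mathcal V(g_i h_i^k) = e_i + \underline c_k$ with uncontrolled $\underline c_k$, and the cancellation would fail. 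As for what each approach buys: the paper stays strictly within its stated lemmas, at the cost of an induction and a case distinction governed by the index poset, while your argument is shorter and realizes each $e_i$ directly, at the cost of reopening the proof of Lemma~\ref{lem:support_in_C} (ideally you would record the multihomogeneous variant as a separate statement).
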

\begin{proof}
    Again, we prove this statement via induction over the length of $A$. When it is zero, then $A$ only consists of one element, $m = 1$ and $\hat X$ is a line. Any linear function $g$ on the multicone has the property $\deg \mathcal V(g) = 1$, which implies the surjectivity.

    Now suppose that the length of $A$ is non-zero. Let $p_r > \dots > p_0$ be the elements of the chain $\mathfrak C$. If the index set $I_{p_{r-1}}$ is equal to $[m]$, then $\mathcal L^{\mathfrak C} \to \Z^m$ is surjective by induction. Otherwise $[m] \setminus I_{p_{r-1}}$ contains exactly one element, which we denote by $j$. By Lemma~\ref{lem:properties_A_mathcal_I} the multicone $\hat X_{p_{r-1}}$ is equal to
    \begin{align*}
        \hat X_{p_{r-1}} = \set{ (v_1, \dots, v_m) \in \hat X \mid v_i \in V_i \mkern5mu \forall i \in [m], v_j = 0}.
    \end{align*}
    Using the projection map $\hat X \twoheadrightarrow \hat X_{p_{r-1}}$ we view $\K[\hat X_{p_{r-1}}]$ as the graded subring
    \begin{align*}
        \bigoplus_{\underline d \in \N_0^m \atop d_j = 0} \K[\hat X]_{\underline d} \subseteq \K[\hat X].
    \end{align*}
    We choose any non-zero linear function $\ell \in V_j^*$. By induction, each tuple $\underline d \in \Z^m$ with $d_j = 0$ lies in the image of $\mathcal L^{\mathfrak C} \to \Z^m$. Our goal is to construct a function $g \in \K[\hat X_{p_{r-1}}]$ with $\supp \mathcal V(g \ell) \subseteq \mathfrak C$. Then, by Lemma~\ref{lem:multihom_valuation}, we can assume that $g$ was multihomogeneous of degree $\underline d \in \N_0^m$ with $d_j = 0$ and it follows from Lemma~\ref{lem:quasi_val_compatible_with_multidegree} that the $j$-th component of $\deg \mathcal V(g \ell)$ is equal to $1$. As $\mathcal V(g \ell) \in \Gamma_{\mathfrak C}$, the map $\mathcal L^{\mathfrak C} \to \Z^m$ must be surjective.
    
    Let $B$ be the set of all $q \in A \setminus \set{p_{r-1}}$ covered by $p_r$. For every $q \in B$ the intersection
    \begin{align*}
        \hat X_{q} \cap \hat X_{p_{r-1}}  = \set{ (v_1, \dots, v_m) \in \hat X_q \mid v_i \in V_i \mkern5mu \forall i \in [m], v_j = 0}
    \end{align*}
    is a proper subvariety of $\hat X_{p_{r-1}}$, otherwise this would imply $p_{r-1} \leq q$. Hence we can choose a non-zero regular function $h_q \in \K[\hat X_{p_{r-1}}]$, which restricts to the zero function on $\hat X_{q} \cap \hat X_{p_{r-1}}$. Seen as a function on $\hat X$, $h_q$ vanishes on the whole multicone $\hat X_q$. Similar to the proof of Lemma~\ref{lem:support_in_C} we choose $n_q \in \N$ with
    \begin{align*}
         \frac{n_{q}}{ b_{p_r, q} } \, \nu_{p_r, q}(h_q) \geq \frac{\nu_{p_r, p_{r-1}}(\ell)}{b_{p_r, p_{r-1}}}
    \end{align*}
    and define the regular function
    \begin{align*}
        g = \prod_{q \in B} h_q^{n_q} \in \K[\hat X_{p_{r-1}}].
    \end{align*}
    By the choice of the number $n_q$ we get
    \begin{align*}
        \frac{ \nu_{p_r, q}(g \ell) }{ b_{p_r, q} } \geq \frac{n_{q}}{ b_{p_r, q} } \, \nu_{p_r, q}(h_q) \geq \frac{\nu_{p_r, p_{r-1}}(\ell)}{b_{p_r, p_{r-1}}} = \frac{\nu_{p_r, p_{r-1}}(g \ell)}{b_{p_r, p_{r-1}}},
    \end{align*}
    so $p_{r-1}$ lies in every maximal chain $\mathfrak D \subseteq A$ with $\supp \mathcal V(g \ell) \subseteq \mathfrak D$. But in general $\supp \mathcal V(g \ell)$ is not contained in $\mathfrak C$. To achieve this, we need to multiply $g \ell$ by another suitable function, which we get from Lemma~\ref{lem:support_in_C}: There exists a regular function $h \in \K[\hat X_{p_{r-1}}]$ with $\supp \mathcal V_{p_{r-1}}(g_1 h^k) \subseteq \mathfrak C \setminus \set{p_r}$ for all $k \in \N$, where $g_1$ is the rational function
    \begin{align*}
        g_1 = \frac{(g \ell)^{ b_{p_r, p_{r-1}} }}{ f_p^{\nu_{p_r, p_{r-1}}(g \ell)} } \bigg\vert_{\hat X_{p_{r-1}}}.
    \end{align*}
    Then we have
    \begin{align*}
        \mathcal V(gh \ell) = \frac{\nu_{p_r, p_{r-1}}(g \ell)}{b_{p_r, p_{r-1}}} \mkern3mu e_{p_r} + \frac{\mathcal V_{p_{r-1}}(g_1 h^{ b_{p_r, p_{r-1}}} )}{b_{p_r, p_{r-1}}},
    \end{align*}
    so the support of $\mathcal V_{p_{r-1}}(gh \ell)$ is contained in $\mathfrak C$. This completes the proof.
\end{proof}

With the help of the last lemma, we can now construct a rational structure on the polytope $\Delta_{\mathfrak C}^{(\underline d)}$. The equation $(\widetilde{\Gamma}_{\mathfrak C}^{(\underline d)})_n = \mathcal L^{\mathfrak C}_{n \underline d} \cap \operatorname{Cone} \Gamma_{\mathfrak C}$ suggests that these structures should be compatible for different $\underline d$ in the following sense.

\begin{proposition}
    \label{prop:global_rational_structure}
    For each maximal chain $\mathfrak C$ in $A$ there exists a linear map $\mathrm{pr}: \R^{\mathfrak C} \to \R^r$ with the following property: For each $\underline d \in \operatorname{relint} \sigma_{\mathfrak C}$ the subsets
    \begin{align*}
        \Delta_{\mathfrak C}^{(\underline d)}(n) = \set{ \tfrac{1}{n} \underline a \mid \underline a \in (\widetilde{\Gamma}_{\mathfrak C}^{(\underline d)})_n } \subseteq \Delta_{\mathfrak C}^{(\underline d)}
    \end{align*}
    for $n \in \N$ form a rational structure on $\Delta_{\mathfrak C}^{(\underline d)}$ together with the map $\Delta_{\mathfrak C}^{(\underline d)} \hookrightarrow \R^{\mathfrak C} \to \R^r$.
\end{proposition}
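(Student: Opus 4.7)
The plan is to construct $\mathrm{pr}$ as a linear projection $\R^{\mathfrak C} \twoheadrightarrow \ker(\deg_\R)$ along a complement obtained by splitting the degree map at the integral level, depending only on $\mathfrak C$ (not on $\underline d$). Observe first that $|\mathfrak C| = \dim \hat X = r + m$ and $\dim \sigma_{\mathfrak C} = m$ (the latter from the proof of Corollary~\ref{cor:irred_components_multiproj_gr_V}), so for every $\underline d \in \operatorname{relint}\sigma_{\mathfrak C}$ the affine hyperplane $H_{\underline d} = \{x \in \R^{\mathfrak C} \mid \deg x = \underline d\}$ has dimension $r$, and $\Delta_{\mathfrak C}^{(\underline d)} \subseteq H_{\underline d}$ is full-dimensional in it by Proposition~\ref{prop:face_lattice}\,\ref{itm:face_lattice_c}. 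The delicate point is to make the projection work \emph{simultaneously} for all such $\underline d$; this forces us to build $\mathrm{pr}$ from the full lattice $\mathcal L^{\mathfrak C}$ rather than from any individual Veronese sublattice.

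To this end, invoke Lemma~\ref{lem:L_C_all_degrees} to ensure that $\deg: \mathcal L^{\mathfrak C} \to \Z^m$ is surjective. Since $\Z^m$ is free, pick a splitting $s: \Z^m \hookrightarrow \mathcal L^{\mathfrak C}$ of the degree map and extend it $\R$-linearly to $s: \R^m \to \R^{\mathfrak C}$. Setting $W = s(\R^m)$ yields the direct-sum decomposition $\R^{\mathfrak C} = \ker(\deg_\R) \oplus W$. After fixing a $\Z$-basis of the rank-$r$ free abelian group $\ker(\deg) \cap \mathcal L^{\mathfrak C}$, identify the projection onto $\ker(\deg_\R)$ with a linear map $\mathrm{pr}: \R^{\mathfrak C} \to \R^r$; by construction $\mathrm{pr}$ sends $\mathcal L^{\mathfrak C}$ surjectively onto $\Z^r$.

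For any $\underline d \in \operatorname{relint}\sigma_{\mathfrak C}$, every $x \in H_{\underline d}$ decomposes uniquely as $x = x_K + s(\underline d)$ with $x_K \in \ker(\deg_\R)$, so the restriction $\mathrm{pr}|_{H_{\underline d}}$ is the affine isomorphism $x \mapsto x - s(\underline d)$ onto $\R^r$; restricting further to $\Delta_{\mathfrak C}^{(\underline d)}$ gives the required affine embedding. The vertices of $\Delta_{\mathfrak C}^{(\underline d)}$ lie in $\Q^{\mathfrak C}$ by Proposition~\ref{prop:face_lattice}\,\ref{itm:face_lattice_c} and hence map to rational points. For the lattice condition, Lemma~\ref{lem:veronese_lattice} combined with $\mathfrak C_{\underline d} = \mathfrak C$ identifies $\mathcal L^{\mathfrak C, (\underline d)}$ with $\{\underline a \in \mathcal L^{\mathfrak C} \mid \deg \underline a \in \Z \underline d\}$. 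An element $\underline a \in H_{n\underline d}$ satisfies $\mathrm{pr}(\underline a) \in \Z^r$ if and only if $\underline a - s(n \underline d) \in \ker(\deg) \cap \mathcal L^{\mathfrak C}$, equivalently $\underline a \in \mathcal L^{\mathfrak C, (\underline d)}$. Rescaling by $n$ yields the required equality $\mathrm{pr}(\Delta_{\mathfrak C}^{(\underline d)}(n)) = \mathrm{pr}(\Delta_{\mathfrak C}^{(\underline d)}) \cap \tfrac{1}{n}\Z^r$. The main obstacle is to produce the splitting $s$ \emph{inside} $\mathcal L^{\mathfrak C}$ and not merely inside $\Q^{\mathfrak C}$: this is exactly what Lemma~\ref{lem:L_C_all_degrees} provides, and without it the edge case illustrated in Example~\ref{ex:lattice_not_in_deg_1} (where $\underline d$ lies on the boundary of $\sigma_{\mathfrak C}$, so $\mathfrak C_{\underline d} \subsetneq \mathfrak C$ and the natural candidate lattice may not meet degree $\underline d$) would obstruct the construction.
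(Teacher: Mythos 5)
Your proposal is correct and is essentially the paper's own argument: choosing a splitting $s$ of the surjective degree map $\mathcal L^{\mathfrak C} \to \Z^m$ (guaranteed by Lemma~\ref{lem:L_C_all_degrees}) is the same as the paper's choice of elements $\underline b^{(1)}, \dots, \underline b^{(m)} \in \mathcal L^{\mathfrak C}$ with $\deg \underline b^{(i)} = e_i$, and your projection onto $\ker(\deg_\R)$ along $s(\R^m)$, identified with $\R^r$ via a basis of the rank-$r$ degree-zero lattice, is exactly the paper's map $\underline a \mapsto \underline a - \sum_i c_i \underline b^{(i)}$ onto $U_0 \cong \R^r$, with the same verification of the affine-embedding, vertex-rationality and lattice-point conditions (using $\mathcal L^{\mathfrak C,(\underline d)} = \{\underline a \in \mathcal L^{\mathfrak C} \mid \deg \underline a \in \Z\underline d\}$ and $n\Delta_{\mathfrak C}^{(\underline d)} = \Delta_{\mathfrak C}^{(n\underline d)}$).
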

\begin{proof}
    By Lemma~\ref{lem:L_C_all_degrees}, we can choose elements $\underline b^{(1)}, \dots, \underline b^{(m)} \in \mathcal L^{\mathfrak C}$ with $\deg \underline b^{(i)} = e_i$. They define a group homomorphism
    \begin{align*}
        \mathcal L^{\mathfrak C} \to \mathcal L_0^{\mathfrak C}, \quad \underline a \mapsto \underline a - \sum_{i=1}^m c_i \mkern2mu \underline b^{(i)} \quad \text{for $\deg \underline a = (c_1, \dots, c_m)$}.
    \end{align*}
    Note that the lattice $\mathcal L_0^{\mathfrak C}$ of degree $0$ elements in $\mathcal L^{\mathfrak C}$ is of rank $\vert \mathfrak C \vert - m = r$. We extend the above map to an $\R$-linear map $\mathrm{pr}: \R^{\mathfrak C} \to U_0$, where $U_0$ is the real span of $\mathcal L_0^{\mathfrak C} \cong \Z^r$ in $\R^{\mathfrak C}$. For each $\underline d \in \operatorname{relint} \sigma_{\mathfrak C}$ the affine subspace $U_{\underline d} \coloneqq \set{ x \in \R^{\mathfrak C} \mid \deg x = \underline d}$ contains the polytope $\Delta_{\mathfrak C}^{(\underline d)}$ and both are of dimension $r$. By construction, $\mathrm{pr}$ induces a bijection $\mathcal L_{\underline d}^{\mathfrak C} \to \mathcal L_0^{\mathfrak C}$, $\underline a \mapsto \mathrm{pr}(\underline a)$, so the composition $\iota_{\underline d}: \Delta_{\mathfrak C}^{(\underline d)} \hookrightarrow \R^{\mathfrak C} \to U_0$ is an affine embedding. Furthermore it maps the set
    \begin{align*}
        \Delta_{\mathfrak C}^{(\underline d)}(1) = (\widetilde{\Gamma}_{\mathfrak C}^{(\underline d)})_1 = \Delta_{\mathfrak C}^{(\underline d)} \cap \mathcal L^{\mathfrak C}
    \end{align*}
    bijectively to $\iota_{\underline d}(\Delta_{\mathfrak C}^{(\underline d)}) \cap \mathcal L_0^{\mathfrak C}$. Since $n \Delta_{\mathfrak C}^{(\underline d)} = \Delta_{\mathfrak C}^{(n \underline d)}$ for all $n \in \N$, it follows
    \begin{align*}
        \iota_{\underline d}(\Delta_{\mathfrak C}^{(\underline d)}(n)) = \iota_{\underline d} \big( \tfrac{1}{n} (\Delta_{\mathfrak C}^{(n \underline d)} \cap \mathcal L^{\mathfrak C}) \big) = \tfrac{1}{n} \iota_{\underline d}(\Delta_{\mathfrak C}^{(n \underline d)}) \cap \mathcal L_0^{\mathfrak C} = \iota_{\underline d}(\Delta_{\mathfrak C}^{(\underline d)}) \cap \tfrac{1}{n} \mathcal L_0^{\mathfrak C}.
    \end{align*}
    Lastly, we have seen in Proposition~\ref{prop:face_lattice} that the vertices of $\Delta_{\mathfrak C}^{(\underline d)}$ lie in the $\Q$-span of $\mathcal L^{\mathfrak C}$. As the map $\mathrm{pr}$ is compatible with the lattices, $\iota_{\underline d}$ defines a rational structure on $\Delta_{\mathfrak C}^{(\underline d)}$.
\end{proof}

\section{Newton-Okounkov theory: The leading term of the Hilbert polynomial}

\begin{theorem}
    \label{thm:G_R_formula}
    For each maximal chain $\mathfrak C$ in $A$ we fix a map $\mathrm{pr}_{\mathfrak C}: \R^{\mathfrak C} \to \R^r$ as in Proposition~\ref{prop:global_rational_structure}. If $\underline d \in \N_0^m$ does not lie on the boundary of $\sigma_{\mathfrak C}$ for any maximal chain $\mathfrak C$, then it holds
    \begin{align*}
        G_R(\underline d) = \sum_{\mathfrak C} \operatorname{vol}(\mathrm{pr}_{\mathfrak C}(\Delta_{\mathfrak C}^{(\underline d)})),
    \end{align*}
    where the sum runs over all maximal chains in $A$.
\end{theorem}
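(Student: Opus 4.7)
The approach is to pass from $\dim R_{n\underline d}$ to a lattice-point count and then apply Ehrhart theory, chain by chain. Using the dimension equality (\ref{eq:dimension_equality_R_gr}) one has $\dim R_{n\underline d} = |\Gamma_{n\underline d}|$, and therefore
\[
G_R(\underline d) \;=\; \lim_{n \to \infty} \frac{|\Gamma_{n\underline d}|}{n^{r}}.
\]
Because every element of $\Gamma$ has totally ordered support, $\Gamma = \bigcup_{\mathfrak C} \Gamma_{\mathfrak C}$, and for any family of maximal chains $\mathfrak C_1,\dots,\mathfrak C_k$ the intersection $\Gamma_{\mathfrak C_1} \cap \dots \cap \Gamma_{\mathfrak C_k}$ coincides with $\Gamma_C$ for the subchain $C = \mathfrak C_1 \cap \dots \cap \mathfrak C_k$. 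Inclusion--exclusion thus expresses $|\Gamma_{n\underline d}|$ as an alternating sum of counts $|\Gamma_{C,n\underline d}|$ over non-empty families of maximal chains, and it suffices to understand each $|\Gamma_{C,n\underline d}|$ asymptotically.

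For a maximal chain $\mathfrak C$ the hypothesis on $\underline d$ forces either $\underline d \notin \sigma_{\mathfrak C}$ (so $|\Gamma_{\mathfrak C, n\underline d}|$ is eventually zero) or $\underline d \in \operatorname{relint}\sigma_{\mathfrak C}$. In the second case Proposition~\ref{prop:global_rational_structure} equips $\Delta_{\mathfrak C}^{(\underline d)}$ with a rational structure via the affine embedding $\mathrm{pr}_{\mathfrak C}$, and by (\ref{eq:regularization_vs_polytope}) the count $|(\widetilde\Gamma_{\mathfrak C}^{(\underline d)})_n|$ agrees with the number of $\mathcal L^{\mathfrak C,(\underline d)}$-points in $n \Delta_{\mathfrak C}^{(\underline d)}$. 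Standard Ehrhart theory for rational polytopes in their own affine span then identifies this count as a quasi-polynomial of degree $\dim \Delta_{\mathfrak C}^{(\underline d)} = |\mathfrak C| - m = r$ (using Proposition~\ref{prop:face_lattice}\ref{itm:face_lattice_c} together with $\dim \sigma_{\mathfrak C}=m$ from the proof of Corollary~\ref{cor:irred_components_multiproj_gr_V}), with leading coefficient $\operatorname{vol}(\mathrm{pr}_{\mathfrak C}(\Delta_{\mathfrak C}^{(\underline d)}))$. To transport the asymptotics back from the saturation to $\Gamma_{\mathfrak C}^{(\underline d)}$ itself, I would invoke Gordan's lemma to produce an $N$ with $N \widetilde\Gamma_{\mathfrak C}^{(\underline d)} \subseteq \Gamma_{\mathfrak C}^{(\underline d)}$; as both semigroups span the same cone inside the same lattice, their Hilbert quasi-polynomials share degree and leading coefficient, giving $|\Gamma_{\mathfrak C,n\underline d}|/n^r \to \operatorname{vol}(\mathrm{pr}_{\mathfrak C}(\Delta_{\mathfrak C}^{(\underline d)}))$.

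To finish, every intersection term in the inclusion--exclusion must contribute only $O(n^{r-1})$. By Lemma~\ref{lem:veronese_order_complex} any subchain $C$ may be replaced by $C_{\underline d}$, so I may assume $\underline d \in \operatorname{relint}\sigma_C$. The boundary hypothesis excludes $\dim\sigma_C < m$: for any maximal chain $\mathfrak D \supseteq C$ the cone $\sigma_C$ is then contained in a proper face of the $m$-dimensional $\sigma_{\mathfrak D}$, placing $\underline d$ on $\partial\sigma_{\mathfrak D}$, a contradiction. Thus $\dim\sigma_C = m$, and since $C$ is a proper subchain, $|C| \leq |\mathfrak D|-1 = r+m-1$, so Proposition~\ref{prop:face_lattice}\ref{itm:face_lattice_c} yields $\dim\Delta_C^{(\underline d)} = |C| - m \leq r-1$. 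The same Ehrhart/saturation argument as above then bounds $|\Gamma_{C,n\underline d}|$ by $O(n^{r-1})$, so these intersection terms vanish after dividing by $n^r$. Summing over maximal chains with $\underline d \in \operatorname{relint}\sigma_{\mathfrak C}$ yields the desired formula.

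The main obstacle I anticipate is the careful comparison between the Hilbert function of $\Gamma_{\mathfrak C}^{(\underline d)}$ and that of its saturation $\widetilde\Gamma_{\mathfrak C}^{(\underline d)}$ --- ensuring that the leading coefficient is genuinely the Euclidean volume computed in the chart provided by $\mathrm{pr}_{\mathfrak C}$. A related subtlety is verifying that the non-boundary hypothesis is exactly what rules out the degenerate configuration $\dim\sigma_C < m$ in which a proper subchain could otherwise inflate $\dim\Delta_C^{(\underline d)}$ up to $r$ and spoil the cancellation.
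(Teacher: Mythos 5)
Your overall architecture is sound and is essentially a combinatorial reformulation of the paper's proof: where you use inclusion--exclusion over the monoids $\Gamma_{\mathfrak C}$, the paper passes through the irredundant decomposition of $\Proj(\mathrm{gr}_{\mathcal V}^{(\underline d)}R)$ into toric components (Lemma~\ref{lem:irreducible_components_gr_V_d}) and sums the leading coefficients of their Hilbert quasi-polynomials; the intersections of components have coordinate rings with graded pieces of dimension $\vert\Gamma_{C,n\underline d}\vert$ for $C$ a proper subchain, so the two bookkeeping schemes are equivalent. However, two of your justifications fail as written. First, the saturation comparison: producing $N$ with $N\widetilde\Gamma_{\mathfrak C}^{(\underline d)}\subseteq\Gamma_{\mathfrak C}^{(\underline d)}$ does \emph{not} give equal leading coefficients. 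The induced injection $(\widetilde\Gamma_{\mathfrak C}^{(\underline d)})_n\to(\Gamma_{\mathfrak C}^{(\underline d)})_{Nn}$, $\underline a\mapsto N\underline a$, only yields $H_{\Gamma}(Nn)\geq H_{\widetilde\Gamma}(n)\approx \operatorname{vol}\cdot n^r=(\operatorname{vol}/N^r)(Nn)^r$, so the dilation trick loses a factor $N^r$ and gives no lower bound matching the upper bound $H_\Gamma(n)\leq H_{\widetilde\Gamma}(n)$. What is needed is the Khovanskii-type approximation statement that some \emph{translate} of the saturation lies in the monoid (an element $\underline\gamma_0\in\Gamma_{\mathfrak C}^{(\underline d)}$ with $\underline\gamma_0+\widetilde\Gamma_{\mathfrak C}^{(\underline d)}\subseteq\Gamma_{\mathfrak C}^{(\underline d)}$), which forces $H_\Gamma(n)\geq H_{\widetilde\Gamma}(n-n_0)$ and hence equal leading terms; this is precisely what the paper imports from \cite{seshstrat} (Lemmas 9.9 and 9.10), and you correctly flagged it as the main obstacle but your proposed fix does not close it.

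Second, the error-term bound: your claim that $\dim\sigma_C<m$ would place $\sigma_C$ inside a proper face of $\sigma_{\mathfrak D}$ (and hence $\underline d$ on $\partial\sigma_{\mathfrak D}$) is false. A subcone spanned by a subset of the generators can be lower-dimensional and still meet the interior of $\sigma_{\mathfrak D}$; in Example~\ref{ex:strat_y_1} the degrees along the chain $X>0\overline 0>0$ are $(0,1),(1,1),(1,0)$, and the ray through $(1,1)$ is a one-dimensional $\sigma_C$ passing through the interior of the quadrant $\sigma_{\mathfrak D}$. The repair is easy and uses machinery you already cite: since $\underline d\notin\partial\sigma_{\mathfrak D}$ and $\underline d\in\sigma_{\mathfrak D}$, we have $\underline d\in\operatorname{relint}\sigma_{\mathfrak D}$, so $\Delta_{\mathfrak D}^{(\underline d)}$ contains a point with full support in $\mathfrak D$; consequently, for every proper subchain $C\subsetneq\mathfrak D$ the set $\Delta_C^{(\underline d)}$ is a \emph{proper} face of $\Delta_{\mathfrak D}^{(\underline d)}$ (Proposition~\ref{prop:face_lattice}), hence $\dim\Delta_C^{(\underline d)}\leq r-1$, and counting points of $\mathcal L^{\mathfrak D}$ in $n\Delta_C^{(\underline d)}$ bounds $\vert\Gamma_{C,n\underline d}\vert$ by $O(n^{r-1})$. (This is also how one sees that distinct maximal chains with $\underline d\in\sigma_{\mathfrak C}$ give distinct, non-nested Veronese monoids, so your main terms are not double-counted.) With these two repairs your inclusion--exclusion argument goes through and agrees with the paper's proof.
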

\begin{proof}
    As $R$ is finitely generated in total degree $1$, its Veronese subalgebra $R^{(\underline d)}$ is finitely generated in degree one, so the Hilbert quasi-polynomial of $R^{(\underline d)}$ is a polynomial. By (\ref{eq:dimension_equality_R_gr}) it coincides with the Hilbert quasi-polynomial $H^{(\underline d)}$ of $\mathrm{gr}_{\mathcal V}^{(\underline d)} R$. We have seen in Lemma~\ref{lem:irreducible_components_gr_V_d} that the associated projective variety of this degenerated algebra is the irredundant union of its irreducible components $\Proj(\mathrm{gr}_{\mathcal V, C}^{(\underline d)} R)$, where $C$ runs over the set $\mathcal C$ of all maximal elements in $\Delta^{(\underline d)}(A)$. Since $\underline d$ does not lie on the boundary of $\sigma_{\mathfrak C}$, we know that $\mathcal C$ consists exactly of the maximal chains $\mathfrak C$ in $A$ with $\underline d \in \sigma_{\mathfrak C}$.
    
    By Lemma~\ref{lem:veronese_lattice} the component $\Proj(\mathrm{gr}_{\mathcal V, \mathfrak C}^{(\underline d)} R)$ for $\mathfrak C \in \mathcal C$ is a projective toric variety of dimension $\vert \mathfrak C \vert - \dim \sigma_{\mathfrak C} = r$. In particular, 
    \begin{align*}
        G_R(\underline d) = \lim_{n \to \infty} \frac{\vert \Gamma_{n \underline d} \vert}{n^r}
    \end{align*}
    computes the coefficient $a$ of the monomial $x^r$ in $H^{(\underline d)} \in \Q[x]$. It is zero, when $\mathcal C$ is empty, otherwise it is the leading coefficient of $H^{(\underline d)}$. 
    
    For $\mathfrak C \in \mathcal C$ let $H_{\mathfrak C}^{(\underline d)}$ denote the Hilbert quasi-polynomial of $\mathrm{gr}_{\mathcal V, \mathfrak C}^{(\underline d)} R \cong \K[\Gamma_{\mathfrak C}^{(\underline d)}]$. Then $a$ is given by the sum of the leading terms $a_{\mathfrak C}$ of all quasi-polynomials $H_{\mathfrak C}^{(\underline d)}$. Using the arguments from the Lemmas 9.9 and 9.10 in \cite{seshstrat}, one can prove that $a_{\mathfrak C}$ is the leading term of the Hilbert quasi-polynomial of $\K[\widetilde{\Gamma}_{\mathfrak C}^{(\underline d)}]$, induced by the saturated monoid. By Proposition~\ref{prop:global_rational_structure} this quasi-polynomial is an Ehrhart quasi-polynomial, so $a_{\mathfrak C}$ is constant and equal to the volume of the embedded polytope $\mathrm{pr}_{\mathfrak C}(\Delta_{\mathfrak C}^{(\underline d)})$. This completes the proof, as $\Delta_{\mathfrak C}^{(\underline d)}$ is the empty polytope for all maximal chains $\mathfrak C \subseteq A$ not in $\mathcal C$.
\end{proof}

\begin{example}
    In the Seshadri stratification of Hodge type from Example~\ref{ex:strat_y_1} there are the four maximal chains $\mathfrak C_1, \dots, \mathfrak C_4$ from left to right.
    \begin{center}
        \begin{tikzpicture}
        \node (X) at (0,0) {$V(x_0 y_1 - x_1 y_0), y_1$};
        \node (1100) at (-2,-1.7) {$\A^2 \times \set{0}, x_0 x_1$};
        \node (1010) at (2,-1.7) {$V(x_1) \times V(y_1), x_0 y_0$};
        \node (0100) at (-4,-3.4) {$V(x_0) \times \set{0}, x_1$};
        \node (1000) at (0,-3.4) {$V(x_1) \times \set{0}, x_0$};
        \node (0010) at (4,-3.4) {$\set{0} \times V(y_1), y_0$};
        
        \draw [thick, -stealth] (X) -- node[right, above, xshift=0.4em, yshift=-0.2em]{\footnotesize $1$}(1010);
        \draw [thick, -stealth] (X) -- node[left, above, xshift=-0.4em, yshift=-0.2em]{\footnotesize $1$}(1100);
        \draw [thick, -stealth] (1010) -- node[left, above, xshift=-0.4em, yshift=-0.2em]{\footnotesize $1$}(1000);
        \draw [thick, -stealth] (1010) -- node[right, above, xshift=0.4em, yshift=-0.2em]{\footnotesize $1$}(0010);
        \draw [thick, -stealth] (1100) -- node[right, above, xshift=0.4em, yshift=-0.2em]{\footnotesize $1$}(1000);
        \draw [thick, -stealth] (1100) -- node[left, above, xshift=-0.4em, yshift=-0.2em]{\footnotesize $1$}(0100);
        \end{tikzpicture}
    \end{center}
    For a tuple $\underline d \in \N_0^2$ in the interior of $\sigma_{\mathfrak C_1} = \R_{\geq 0}^2$ the vertices of the polytope $\Delta_{\mathfrak C_1}^{(\underline d)}$ are given by $d_1 e_0 + d_2 e_X$ and $\tfrac12 d_1 e_{01} + d_2 e_X$. By fixing the element $\underline b^{(1)} = e_0$ of degree $(1,0)$ and $\underline b^{(2)} = e_X$ of degree $(0,1)$ we get an integral structure on $\Delta_{\mathfrak C_1}^{(\underline d)}$ via the proof of Proposition~\ref{prop:global_rational_structure}. It identifies the vertices with the points $0$ and $d_1 (\tfrac12 e_{01} - e_0)$ in the lattice $\mathcal L_0^{\mathfrak C_1} = \Z \cdot (e_{01} - 2 e_0)$. The volume of the resulting polytope in the linear span of this lattice is equal to $\tfrac12 d_1$.

    In the same way one can compute an integral structure on $\Delta_{\mathfrak C_1}^{(\underline d)}$ with volume $\tfrac12 d_1$. For the third maximal chain $\mathfrak C_3$ we again have $\sigma_{\mathfrak C_3} = \R_{\geq 0}^2$. We fix the elements $\underline b^{(1)} = e_1$ of degree $(1,0)$ and $\underline b^{(2)} = e_X$ of degree $(0,1)$ for the integral structure. For $\underline d \in \N_0^2$ in the interior of this cone, one needs to distinguish between two cases. If $d_1 \geq d_2$, then the polytope $\Delta_{\mathfrak C_3}^{(\underline d)}$ has the vertices $d_1 e_1 + d_2 e_X$ and $(d_1 - d_2) e_1 + d_2 e_{0 \overline 0}$, which correspond to the points $0$ and $d_2 (- e_1 + e_{0 \overline 0} - e_X)$ in $\mathcal L_0^{\mathfrak C_1} = \Z \cdot (- e_1 + e_{0 \overline 0} - e_X)$. Hence we get the volume $d_2$. Analogously, we have the volume $d_1$ in the case $d_2 \geq d_1$.

    The cone of the last maximal chain $\mathfrak C_4$ is spanned by $(1,1)$ and $(0,1)$. For every $\underline d \in \N_0^2$ not contained in this cone, the polytope $\Delta_{\mathfrak C_4}^{(\underline d)}$ is empty. Otherwise $\Delta_{\mathfrak C_4}^{(\underline d)}$ has the vertices $d_1 e_{0 \overline 0} + (d_2 - d_1) e_{\overline 0}$ and $d_1 e_{0 \overline 0} + (d_2 - d_1) e_X$. Via the elements $\underline b^{(1)} = e_{0 \overline 0} - e_{\overline 0}$ and $\underline b^{(2)} = e_{\overline 0}$ we get the volume $d_2 - d_1$.

    With these volumes, we can now compute the leading term of the Hilbert polynomial:
    \begin{align*}
        G_R(\underline d) = \left. \begin{cases}
            \tfrac12 d_1 + \tfrac12 d_1 + d_2, & \text{for $d_1 > d_2 > 0$} \\
            \tfrac12 d_1 + \tfrac12 d_1 + d_1 + (d_2 - d_1), & \text{for $d_2 > d_1 > 0$}
        \end{cases} \right\} = d_1 + d_2.
    \end{align*}
    The multidegrees of $X$ are therefore given by $\deg_{(1,0)}(X) = \deg_{(0,1)}(X) = 1$. Indeed, the multiprojective coordinate ring $R = \K[x_0, x_1, y_0, y_1]/(x_0 y_1 - x_1 y_0)$ has a basis consisting of all monomials $x_0^a x_1^b y_0^c y_1^d$ with $a,b,c,d \in \N_0$ and $bc = 0$. Hence the graded component $R_{\underline d}$ is of dimension $(d_1 + 1) + (d_2 + 1) - 1 = d_1 + d_2 + 1$. This is already a polynomial in $\underline d$ and its leading term agrees with the function $G_R$ we computed above. 
\end{example}

\section{Seshadri stratifications of LS-type}
\label{sec:LS-type}

For suitable choices of extremal functions, the polytopes $\Delta_{\mathfrak C}^{(\underline d)}$ are products of simplices for all $\underline d$, e.\,g. when the support of $\deg f_p$ is a one-element set for each $p \in A$ (see Example~\ref{ex:multisimplices}) or when $I_p = I_q \in \mathcal I$ is equivalent to $\deg f_p = \deg f_q$ for all $p, q \in A$ (see below). One might ask if there exists a rational structure as in Proposition~\ref{prop:global_rational_structure}, that is compatible with this decomposition into simplices, \ie the map $\mathrm{pr}_{\mathfrak C}: \R^{\mathfrak C} \to \R^r$ is a product of rational structures, one for each simplex. In general, this idea is too naive: It already fails for the stratification we examined in Example~\ref{ex:lattice_not_in_deg_1}, since the lattice $\mathcal L^{\mathfrak C}$ to the maximal chain $\mathfrak C: X > 01 > 0$ does not decompose into the product $\mathcal L^{\set{0, \mkern2mu 01}} \times \mathcal L^{\set{X}}$. However, when all the monoids $\Gamma_{\mathfrak C}$ are so-called LS-monoids, then such a decomposition does exist and one can compute the volumes of the polytopes explicitly via the bonds in the stratification.

\begin{definition}
    Let $C$ be a chain of covering relations in $A$, \ie it consists of elements $p_s > \dots > p_0$ such that $p_i$ covers $p_{i-1}$ for each $i = 1, \dots, s$. The \textbf{Lakshmibai-Seshadri-lattice} (short: LS-lattice) associated to $C$ is the lattice
    \begin{align*}
        \mathrm{LS}_C = \Set{ \sum_{i=0}^s a_i e_{p_i} \in \Q^C \ \middle\vert \ b_{p_i, p_{i-1}} (a_i + \dots + a_s) \in \Z \ \forall i \in [s], a_0 + \dots + a_s \in \Z}.
    \end{align*}
    and its intersection $\mathrm{LS}_C^+ = \mathrm{LS}_C \cap \Q^C_{\geq 0}$ with the positive orthant is called the \textbf{LS-monoid} to the chain $C$.
\end{definition}

Every LS-lattice is generated by its LS-monoid, since one can shift each element in $\mathrm{LS}_C$ into the positive orthant via the vectors $e_{p_i} \in \mathrm{LS}_C$. LS-lattices are also compatible with subchains: If $D \subseteq C$ are two chains of covering relations, then we have $\mathrm{LS}_C \cap \Q^D = \mathrm{LS}_D$. This has the following consequences: If the monoid $\Gamma_{\mathfrak C}$ is an LS-monoid for every maximal chain $\mathfrak C$, then the Seshadri stratification is normal. The set $\mathbb G \subseteq \Gamma$ of all indecomposable elements is finite and for every $\underline u = \sum_{p \in A} u_p e_p \in \mathbb G$ the coefficients $u_p$ add up to $1$ (this follows from the proof of Lemma 3.3 in~\cite{seshstratnormal}).

\begin{definition}
    We call a Seshadri stratification on $X \subseteq \prod_{i=1}^m \PP(V_i)$ \textbf{of LS-type}, if the following conditions are fulfilled:
    \begin{enumerate}[label=(\alph{enumi})]
        \item \label{itm:LS_a} Each component of the multidegree $\deg f_p \in \N_0^m$ is at most $1$ for all $p \in A$;
        \item \label{itm:LS_b} if $I_p = I_q$ for any two elements $p, q \in A$, then $\deg f_p = \deg f_q$;
        \item \label{itm:LS_c} the fan of monoids $\Gamma$ is equal to the union $\bigcup_{\mathfrak C} \mathrm{LS}_{\mathfrak C}^+$ over all maximal chains $\mathfrak C$ in $A$.
    \end{enumerate}
\end{definition}

The next remark implies that this definition generalizes the notion of a Seshadri stratification of LS-type from~\cite[Definition 2.6]{seshstratnormal}. For $m = 1$ both definitions agree.

\begin{remark}
    For every stratification of LS-type, the monoid $\Gamma_{\mathfrak C}$ agrees with the LS-monoid $\mathrm{LS}_{\mathfrak C}^+$ for each maximal chain in $A$: We clearly have $\mathrm{LS}_{\mathfrak C}^+ \subseteq \Gamma \cap \Q^{\mathfrak C} = \Gamma_{\mathfrak C}$. For the reverse inclusion, let $p_r > \dots > p_0$ be the elements in $\mathfrak C$ and $b_j = b_{p_j, p_{j-1}}$ be the bond to the covering relation $p_j > p_{j-1}$ for all $j = 0, \dots, r$. By the definition of an LS-lattice, each element $\underline a^{(j)} = \tfrac{1}{b_j} e_{p_j} - \tfrac{1}{b_j} e_{p_{j-1}}$ is contained in $\mathrm{LS}_{\mathfrak C} \subseteq \mathcal L^{\mathfrak C}$. Hence one can find rational functions $F_r, \dots, F_1 \in \K(\hat X) \setminus \set{0}$ with $\mathcal V(F_j) = \underline a^{(j)}$ for all $j = 1, \dots, r$. We can now use Proposition~\ref{prop:valuation_lattice}: The matrix $B_{\mathfrak C}$ is given by
    \begin{align*}
        B_{\mathfrak C} = \begin{pmatrix}
            b_{r}^{-1} & 0 & \cdots & \cdots & 0 \\
            - b_{r}^{-1} & b_{r-1}^{-1} & \ddots & & \vdots \\
            0 & - b_{r-1}^{-1} & \ddots & \ddots & \vdots \\
            \vdots & \ddots & \ddots & b_{1}^{-1} & 0 \\
            0 & 0 & \cdots & - b_{1}^{-1} & b_{0}^{-1}
        \end{pmatrix}^{-1} = 
        \begin{pmatrix}
            b_r & 0 & \cdots & \cdots & 0 \\
            b_{r-1} & b_{r-1} & \ddots & & \vdots \\
            \vdots & \vdots & \ddots & \ddots & \vdots \\
            b_{1} & b_{1} & \cdots & b_1 & 0 \\
            b_{0} & b_0 & \cdots & b_0 & b_0
        \end{pmatrix}
    \end{align*}
    Therefore $\mathcal L^{\mathfrak C}$ is contained in $\mathrm{LS}_{\mathfrak C}$ and by intersecting with $\Q^{\mathfrak C}_{\geq 0}$ we get $\Gamma_{\mathfrak C} \subseteq \mathrm{LS}_{\mathfrak C}^+$.
\end{remark}

We encounter our first LS-type stratification in Section~\ref{sec:type_A}. More examples will be provided in a future article by stratifying Schubert-varieties. Both of our running examples~\ref{ex:strat_y_1} and \ref{ex:strat_y_0_y_1} do not fulfill the condition~\ref{itm:LS_a} in the above definition. But their fans of monoids are still of LS-type, \ie the condition~\ref{itm:LS_c} is satisfied. This is clearly the case for Hodge-type stratifications and we computed the fan of monoids of the second stratification in Example~\ref{ex:A}. One might try to modify these stratifications by raising the extremal functions to suitable powers. In this way, condition~\ref{itm:LS_b} can be fulfilled, but unfortunately the resulting fans of monoids (which are scaled by the inverses of these powers) might not be of LS-type anymore. 

Until the end of this section we fix a Seshadri stratification of LS-type on a multiprojective variety $X$. The poset $\mathcal I$ defines a partition of $A$ into the subsets
\begin{align*}
    A_I = \set{p \in A \mid I_p = I}
\end{align*}
for $I \in \mathcal I$. By definition, all extremal functions in $A_I$ have the same multidegree. We see in the next lemma that this degree is always given by
\begin{align*}
    e_I = \sum_{i \in \underline I} e_i \in \N_0^m
\end{align*}
\label{txt:def_underline_I}for a subset $\underline I \subseteq I$ characterized by the covering relations in the index poset $\mathcal I$: If $I$ is a minimal element in $\mathcal I$ then it holds $\underline I = I$, otherwise $\underline I$ is the union of all sets $I \setminus J$, where $J \subsetneq I$ is a covering relation in $\mathcal I$.

\begin{example}
    If $\mathcal I$ is totally ordered, one can assume \wwlog{} that it consists of the sets $[i]$ for all $i \in [m]$. In this case, we have $\underline{[i]} = \set{i}$.

    To give another example, consider the poset $\mathcal I$ with the elements $I = \set{2}$, $J = \set{1,2}$, $K = \set{2,3}$ and $L = [3]$. Here $\underline I = \set{2}$, $\underline J = \set{1}$, $\underline K = \set{3}$ and $\underline L = \set{1,3}$.
\end{example}

\begin{lemma}
    For all $p \in A$ it holds $\deg f_p = e_{I_p}$.
\end{lemma}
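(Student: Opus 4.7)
The plan is to induct on the rank of $I_p$ in the graded poset $\mathcal{I}$ (equivalently, on $|I_p|$). By condition~(a), every coordinate of $\deg f_p \in \N_0^m$ lies in $\{0,1\}$, and since $f_p \in \K[X_{I_p}] \subseteq R$ the multidegree is supported in $I_p$. By condition~(b), $\deg f_p$ depends only on $I_p$, so we may write $\deg f_p = \sum_{i \in S(I)} e_i$ for some $S(I) \subseteq I := I_p$, reducing the task to proving $S(I) = \underline{I}$.

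For the base case $|I|=1$, Lemma~\ref{lem:properties_A_mathcal_I}\,\ref{itm:properties_A_mathcal_I_b} forces any minimal $I \in \mathcal{I}$ to be a singleton $\{i\}$. Starting from any $p_0 \in A_I$ we can descend in $A$ while staying in $A_I$ (covers can decrease $|I_p|$ by at most one, but no element has empty index), so we reach some $p \in A_I$ that is minimal in $A$. Lemma~\ref{lem:projective_covering_relation}\,\ref{itm:projective_covering_relation_b} applied to the covering $p_{-1}<p$ in the extended poset (with $I_{p_{-1}} = \varnothing$) gives $(\deg f_p)_i = b_{p,p_{-1}} = |\deg f_p|$; combining this with condition~(a) and the fact that $\deg f_p \in \N_0 e_i$ forces $\deg f_p = e_i = e_I$.

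For the inductive step $|I|\geq 2$, I would prove both inclusions separately. For $S(I) \subseteq \underline{I}$: fix $i \in S(I)$, so $f_p$ vanishes on $\hat X_p \cap \{v_i = 0\}$, a closed subvariety of $\hat X_p$ of codimension one (strict because $X_p$ has a genuine $i$-th projective coordinate). By~(S3), each irreducible component of this intersection is contained in some $\hat X_{q_0}$ for a covering $q_0 < p$. Since such a component lies in $\{v_i=0\}$ while multiprojectivity of $X_{q_0} \subseteq \prod_{j \in I_{q_0}}\PP(V_j)$ forces $\hat X_{q_0} \not\subseteq \{v_i=0\}$ whenever $i \in I_{q_0}$, we must have $i \notin I_{q_0}$. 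Together with Lemma~\ref{lem:properties_A_mathcal_I}\,\ref{itm:properties_A_mathcal_I_a}, this gives $I_{q_0} = I \setminus \{i\}$, whence $I\setminus\{i\} \in \mathcal{I}$ and $i \in \underline{I}$. For $\underline{I} \subseteq S(I)$: given $i \in \underline{I}$, the set $J := I \setminus \{i\}$ lies in $\mathcal{I}$ and $J<I$ is a covering relation there. The crucial sub-step is to lift this covering to $A$, producing $q<p'$ in $A$ with $I_{p'}=I$ and $I_q=J$; granted the lift, Lemma~\ref{lem:projective_covering_relation}\,\ref{itm:projective_covering_relation_b} gives $(\deg f_{p'})_i = b_{p',q} \geq 1$, and condition~(b) transfers this to $i \in S(I)$.

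The main obstacle is the combinatorial lifting of a covering relation in $\mathcal{I}$ to one in $A$, required for the direction $\underline{I} \subseteq S(I)$. My intended approach exploits that $\mathcal{I}$ is graded of length $m-1$ (Lemma~\ref{lem:properties_A_mathcal_I}): given the covering $J<I$, one takes any $q_0 \in A_J$, extends upward to a maximal chain in $A$, and uses the fact that its induced chain in $\mathcal{I}$ passes through every index size from $|J|$ to $m$. The LS-type condition~(c) is then invoked to supply enough freedom in rearranging such a chain so that its first strict superset above $J$ is precisely $I$. Should the direct lift prove delicate, a backup route is to combine the already-established $S(I) \subseteq \underline{I}$ with a cardinality count along a maximal chain: the LS-structure on $\Gamma_{\mathfrak{C}}$ together with the bond computation in the remark after the definition of LS-type pins down the total degrees of the extremal functions along $\mathfrak{C}$, yielding $|S(I)| = |\underline{I}|$ and hence equality.
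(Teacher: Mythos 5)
Your inclusion $S(I)\subseteq\underline I$ contains the decisive gap. You assert that $Y=\hat X_p\cap\set{v_i=0}$ has codimension one in $\hat X_p$, but strictness of the inclusion only gives codimension \emph{at least} one. In fact $Y$ is the multicone of $\pi_{I\setminus\set{i}}(X_p)$, so $\operatorname{codim}_{\hat X_p}Y=\dim X_p-\dim\pi_{I\setminus\set{i}}(X_p)+1$, which exceeds one whenever the projection drops dimension. This already happens in the motivating example: for the stratification of $G/B$ in type $\texttt{A}_2$ from Section~\ref{sec:type_A}, with $p$ maximal and $i=1$, the codimension is $2$, and the unique covering relation $q_0<p$ has $I_{q_0}=\set{1,2}\ni i$. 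Consequently your deduction ``$i\notin I_{q_0}$'' is invalid: from $Y\subseteq\hat X_{q_0}$ and $Y\subseteq\set{v_i=0}$ you may conclude $\hat X_{q_0}\subseteq\set{v_i=0}$ only if $Y=\hat X_{q_0}$, i.e.\ only in the codimension-one case. The paper's proof is organized around exactly this difficulty: $Y$ is irreducible (a multicone of a projection of $X_p$), hence by condition~\ref{itm:seshadri_strat_c} it lies in $\hat X_q$ for some covering $q<p$; if the codimension is one, then $Y=\hat X_q$, so $I_q=I\setminus\set{i}$ and $i\in\underline I$; otherwise the relevant $q$ again lies in $A_I$ (or has $I_q=I\setminus\set{i}$, in which case one is done), so that $Y=\hat X_q\cap\set{v_i=0}$ and one proceeds by induction on the codimension of $Y$. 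Some such descent is indispensable; without it this half fails.

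Your setup and base case are fine (though Lemma~\ref{lem:projective_covering_relation} is stated only for coverings inside $A$; for a minimal $p$ it is simpler to use that $\deg f_p$ is supported in $I_p$ and $f_p$ is non-constant). For $\underline I\subseteq S(I)$ you correctly reduce to producing a covering relation $q<p'$ in $A$ with $I_{p'}=I$ and $I_{p'}\setminus I_q=\set{i}$ and then applying Lemma~\ref{lem:projective_covering_relation}; this is the same reduction the paper makes (and states in one line). However, neither of your proposed justifications is an argument: extending a maximal chain upward from some $q_0\in A_{I\setminus\set{i}}$ only shows that \emph{some} index gets added first, not $i$, and ``invoking condition (c) to rearrange the chain'' is not substantiated; the backup count $\vert S(I)\vert=\vert\underline I\vert$ does not follow from the LS/bond remark (condition (c) constrains the monoids $\Gamma_{\mathfrak C}$, not the multidegrees of the extremal functions) and it moreover relies on the inclusion $S(I)\subseteq\underline I$, which is the gapped half. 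As written, both inclusions remain open in your proposal.
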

\begin{proof}
    We fix an element $I \in \mathcal I$ and let $\underline d = (d_1, \dots, d_m)$ be the multidegree of any extremal function $f_p$ for $p \in A_I$. For all $i \in \underline I$ there exists a covering relation $q < p$ in $A$ with $I_p \setminus I_q = \set{i}$ and we have $d_i \neq 0$ by Lemma~\ref{lem:projective_covering_relation}\,\ref{itm:projective_covering_relation_b}.
    
    Conversely, let $d_i = 1$ for some $i \in I$ and let $p$ be any element of $A_I$. Then the subvariety
    \begin{align*}
        Y = \set{ (v_1, \dots, v_m) \in \hat X_p \mid v_j \in V_j \mkern6mu \forall j \in [m], v_i = 0}
    \end{align*}
    of $\hat X_p$ is irreducible, contained in the vanishing set of $f_p$ and the codimension of $Y$ in $\hat X_p$ is at least one. If $\operatorname{codim}_Y(\hat X_p) = 1$, then $i \in \underline I$. Otherwise there exists an element $q \in A_I$ with $q < p$ and $Y \subseteq \hat X_q$ and we can proceed by induction over the codimension of $Y$.
\end{proof}

Fix a maximal chain $\mathfrak C$ in $A$ with associated maximal chain $I_1 \subsetneq \dots \subsetneq I_m = [m]$ in $\mathcal I$. It defines a decomposition of $\mathfrak C$ into the subchains $\mathfrak C_j = \set{p \in \mathfrak C \mid I_p = I_j}$. The covering relation $\min \mathfrak C_j > \max \mathfrak C_{j-1}$ has bond $1$ by Lemma~\ref{lem:projective_covering_relation}\,\ref{itm:projective_covering_relation_b}. It follows from the definition of LS-lattices, that they decompose into a product of sublattices along covering relations with bond $1$:
\begin{align*}
    \mathrm{LS}_{\mathfrak C} = \mathrm{LS}_{\mathfrak C_1} \times \dots \times \mathrm{LS}_{\mathfrak C_m} \subseteq \Q^{\mathfrak C}.
\end{align*}
Therefore $\mathcal L^{\mathfrak C}$ is equal to the product of the sublattices $\mathcal L^{\mathfrak C_j} \subseteq \Q^{\mathfrak C_j}$ generated by $\Gamma_{\mathfrak C_j}$. Of course, this is compatible with the monoids as well: $\Gamma_{\mathfrak C} = \Gamma_{\mathfrak C_1} \times \dots \times \Gamma_{\mathfrak C_m}$. We define the $m \times m$-matrix $M_{\mathfrak C}$ with entries in $\Z$, such that its $j$-th column consists of the degree vector $e_{I_j} \in \N_0^m$. Its follows from Lemma~\ref{lem:projective_covering_relation}\,\ref{itm:projective_covering_relation_b} that this matrix is invertible over $\Z$, so its inverse gives rise to a group isomorphism
\begin{align*}
    \phi^{\mathfrak C}: \Z^m \to \Z^m, \quad \underline d \mapsto M_{\mathfrak C}^{-1} \underline d
\end{align*}
identifying $\sigma_{\mathfrak C} \cap \N_0^m$ with $\N_0^m$. For $j \in [m]$ let $\phi_j^{\mathfrak C}: \Z^m \to \Z$ be its projection onto the $j$-th component. This allows us to show
\begin{align}
    \label{eq:LS_type_veronese_lattice}
    \mathcal L^{\mathfrak C, (\underline d)} = \set{\underline a \in \mathcal L^{\mathfrak C} \mid \deg \underline a \in \Z \underline d}
\end{align}
for all $\underline d \in \sigma_{\mathfrak C}$. Each element $\underline a \in \mathcal L^{\mathfrak C}$ with $\deg \underline a \in \Z \underline d$ can be written as $\underline a = \underline b - \underline c$ for $\underline b, \underline c \in \Gamma_{\mathfrak C}$. Using the isomorphism $\phi^{\mathfrak C}$ one can find an element $\underline a' \in \sum_{p \in \mathfrak C} \N_0 e_p$, such that $\deg (\underline b + \underline a') \in \Z \underline d$. Then we have $\underline b + \underline a', \underline c + \underline a' \in \Gamma_{\mathfrak C}^{(\underline d)}$, hence $\underline a \in \mathcal L^{\mathfrak C, (\underline d)}$.

Let $r$ be the dimension of $X$. The Newton-Okounkov polytopes of a stratification of LS-type decompose into products of simplices: For each maximal chain $\mathfrak C$ and $\underline d \in \sigma_{\mathfrak C}$ we can write the polytope $\Delta_{\mathfrak C}^{(\underline d)}$ in the form
\begin{align*}
    \Delta_{\mathfrak C}^{(\underline d)} &= \R_{\geq 0}^{\mathfrak C} \cap \set{ x \in \R^{\mathfrak C} \mid \deg x = \underline d} \\
    &= \prod_{j=1}^m \R_{\geq 0}^{\mathfrak C_j}  \cap \set{ x \in \R^{\mathfrak C_j} \mid \deg x = \phi_j^{\mathfrak C}(\underline d) \mkern1mu e_{I_j} } = \prod_{j=1}^m \Delta_{\mathfrak C_j}^{(\phi_j^{\mathfrak C}(\underline d) \mkern1mu e_{I_j})}.
\end{align*}
Hence $\Delta_{\mathfrak C}^{(\underline d)}$ is a multisimplex, since we have
\begin{align*}
    \Delta_{\mathfrak C_j}^{(\phi_j^{\mathfrak C}(\underline d) \mkern1mu e_{I_j})} = \phi_j^{\mathfrak C}(\underline d) \Delta_{\mathfrak C_j},
\end{align*}
where $\Delta_{\mathfrak C_j}$ is the convex hull of all vectors $e_p$ for $p \in \mathfrak C_j$. For fixed $j \in [m]$ let $p_s > \dots > p_0$ be the elements of the subchain $\mathfrak C_j$ and $b_{k,k-1}$ be the bond of the covering relation $p_k > p_{k-1}$ in $A$ for $k = 1, \dots, s$. We define the linear map
\begin{align*}
    \mathrm{pr}_{\mathfrak C_j}: \R^{\mathfrak C_j} \to \R^s, \quad e_{p_i} \mapsto \begin{cases}
        0, & \text{if $i=0$}, \\
        \sum_{k=1}^i b_{k,k-1} e_k, & \text{if $i \geq 1$}.
    \end{cases}
\end{align*}

\begin{proposition}
    For each $k \in \N$ the map $\mathrm{pr}_{\mathfrak C_j}$ and the sets
    \begin{align*}
        (k \Delta_{\mathfrak C_j})(n) = \set{ \tfrac{1}{n} \underline a \mid \underline a \in \Gamma_{\mathfrak C_j, nk}}
    \end{align*}
    form an integral structure on the scaled polytope $k \Delta_{\mathfrak C_j} \subseteq \R^{\mathfrak C_j}$.
\end{proposition}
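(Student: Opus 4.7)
The plan is a direct verification of the two axioms of an integral structure, once the polytope $\Delta_{\mathfrak C_j}$ has been identified explicitly. Since every extremal function $f_p$ for $p \in A_{I_j}$ has multidegree $e_{I_j}$, the description of $\Delta_{\mathfrak C_j}^{(\underline d)}$ from equation~(\ref{eq:NO_body_is_polytope}) collapses to the standard simplex with vertices $e_{p_0}, \dots, e_{p_s}$, so $k\Delta_{\mathfrak C_j} = \{\sum_{i=0}^s a_i e_{p_i} \mid a_i \geq 0, \sum a_i = k\}$.

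First I would check that $\mathrm{pr}_{\mathfrak C_j}$ restricted to the affine span of $k\Delta_{\mathfrak C_j}$ is injective, hence defines an affine embedding into $\R^s$. This follows because the $s$ vectors $\mathrm{pr}_{\mathfrak C_j}(e_{p_i}) - \mathrm{pr}_{\mathfrak C_j}(e_{p_0}) = \sum_{k=1}^{i} b_{k,k-1}\, e_k$ for $i = 1, \dots, s$ are the columns of an upper triangular matrix whose diagonal entries $b_{1,0}, \dots, b_{s,s-1}$ are positive integers, so they are linearly independent over $\R$. The vertices of the image $\mathrm{pr}_{\mathfrak C_j}(k\Delta_{\mathfrak C_j})$ are $k\, \mathrm{pr}_{\mathfrak C_j}(e_{p_i})$, which manifestly lie in $\Z^s$.

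The core of the proof is the second axiom. Given $y = \sum_{i=0}^s a_i e_{p_i} \in k\Delta_{\mathfrak C_j}$, a direct swap of summation order yields
\begin{align*}
    \mathrm{pr}_{\mathfrak C_j}(y) = \sum_{k=1}^s b_{k,k-1}\Big(\sum_{i=k}^s a_i\Big) e_k,
\end{align*}
so $n\, \mathrm{pr}_{\mathfrak C_j}(y) \in \Z^s$ is equivalent to $n\, b_{k,k-1}\,(a_k + \dots + a_s) \in \Z$ for every $k \in [s]$. On the other hand, $y \in (k\Delta_{\mathfrak C_j})(n)$ means $ny \in \Gamma_{\mathfrak C_j, nk}$. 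Since the stratification is of LS-type, $\Gamma_{\mathfrak C_j} = \mathrm{LS}_{\mathfrak C_j}^+$, and unwinding the definition of the LS-lattice, $ny \in \mathrm{LS}_{\mathfrak C_j}^+$ reduces (using $a_i \geq 0$ and the fact that $n(a_0 + \dots + a_s) = nk$ is automatically an integer) to exactly the conditions $n\, b_{k,k-1}\,(a_k + \dots + a_s) \in \Z$ for all $k \in [s]$. This gives the required equality $\mathrm{pr}_{\mathfrak C_j}((k\Delta_{\mathfrak C_j})(n)) = \{ x \in \mathrm{pr}_{\mathfrak C_j}(k\Delta_{\mathfrak C_j}) \mid nx \in \Z^s\}$.

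The only mildly subtle point is the bookkeeping: one must verify that in the chain $\mathfrak C_j$ all bonds are accounted for (the covering relation $\min \mathfrak C_j > \max \mathfrak C_{j-1}$ plays no role inside $\mathfrak C_j$), and that the free coefficient $a_0$ can be recovered from the constraint $\sum a_i = k$, which is why $\mathrm{pr}_{\mathfrak C_j}$ sends $e_{p_0}$ to $0$ without losing injectivity on the affine hyperplane. I expect this is the only place that requires care; everything else is an immediate unwinding of the definitions.
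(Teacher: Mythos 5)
Your proof is correct. The place where care is needed — the equivalence, for $y=\sum_i a_i e_{p_i}\in k\Delta_{\mathfrak C_j}$, between $ny\in\Gamma_{\mathfrak C_j,nk}$ and $n\,b_{k,k-1}(a_k+\dots+a_s)\in\Z$ for all $k$ — is handled properly: it uses exactly the two facts established just before the proposition, namely $\Gamma_{\mathfrak C_j}=\mathrm{LS}_{\mathfrak C_j}^+$ (from the LS-type hypothesis together with the product decomposition along bond-$1$ covers) and the fact that the degree and positivity constraints are automatic on the polytope; the injectivity of $\mathrm{pr}_{\mathfrak C_j}$ on the affine hull via the triangular matrix with diagonal $b_{1,0},\dots,b_{s,s-1}$ is also fine. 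Your route is, however, more direct than the paper's: the paper does not unwind the LS congruences against the coordinates of the image, but instead factors the embedding as in Proposition~\ref{prop:global_rational_structure} — first the map $\underline a\mapsto \underline a-d\,e_{p_0}$ onto the degree-zero hyperplane, which identifies $(k\Delta_{\mathfrak C_j})(n)$ with $\overline{\mathrm{pr}}_{\mathfrak C_j}(k\Delta_{\mathfrak C_j})\cap\tfrac1n\mathcal L_0^{\mathfrak C_j}$ (this step silently uses the same saturation $\Gamma_{\mathfrak C_j}=\mathrm{LS}_{\mathfrak C_j}^+$ you invoke) — and then shows that $\mathrm{pr}_{\mathfrak C_j}$ restricts to a lattice isomorphism $\mathcal L_0^{\mathfrak C_j}\to\Z^s$ by checking that the images of the generators $\tfrac1{b_{i,i-1}}(e_{p_i}-e_{p_{i-1}})$ form a triangular basis of $\Z^s$. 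The two arguments rest on the same ingredients; yours trades the explicit lattice isomorphism for a coordinate computation, which is shorter here, while the paper's formulation keeps the identification with the degree-zero lattice $\mathcal L_0^{\mathfrak C_j}$ explicit, in parallel with the general rational-structure construction it mirrors.
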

\begin{proof}
    The lattice $\mathcal L^{\mathfrak C_j}$ is graded by $\Z \cong \Z e_{I_j}$. Analogous to the proof of Proposition~\ref{prop:global_rational_structure} any element $\underline b \in \mathcal L^{\mathfrak C_j}$ of degree $1$ defines a linear map $\overline{\mathrm{pr}}_{\mathfrak C}: \R^{\mathfrak C} \to U_0$ sending an element $\underline a \in \mathcal L^{\mathfrak C_j}$ of degree $d \in \Z$ to $\underline a - d \underline b$, where $U_0$ is the linear span of the lattice $\mathcal L^{\mathfrak C_j}_0$ of degree zero elements in $\mathcal L^{\mathfrak C_j}$. The restriction of this map to any affine subspace
    \begin{align*}
        U_d = \set{ x \in \R^{\mathfrak C_j} \mid \deg x = d}
    \end{align*}
    for $d \in \Z$ is bijective and identifies $d \underline b + \mathcal L^{\mathfrak C_j}_0 = \mathcal L^{\mathfrak C_j} \cap U_d$ with $\mathcal L^{\mathfrak C_j}_0$. For every $n \in \N$ the polytope $kn \Delta_{\mathfrak C_j}$ is contained in $U_{kn}$, hence $\overline{\mathrm{pr}}_{\mathfrak C_j}$ maps the subset $(k \Delta_{\mathfrak C_j})(1)$ onto $\overline{\mathrm{pr}}_{\mathfrak C_j}(k \Delta_{\mathfrak C_j}) \cap \mathcal L^{\mathfrak C_j}_0$. It follows
    \begin{align*}
        \overline{\mathrm{pr}}_{\mathfrak C_j} \big( (k \Delta_{\mathfrak C_j})(n) \big) &= \overline{\mathrm{pr}}_{\mathfrak C_j} \big( \tfrac{1}{n} (kn \Delta_{\mathfrak C_j})(1) \big) = \tfrac{1}{n} \big( \overline{\mathrm{pr}}_{\mathfrak C_j}(kn \Delta_{\mathfrak C_j}) \cap \mathcal L^{\mathfrak C_j}_0 \big) \\
        &= \overline{\mathrm{pr}}_{\mathfrak C_j}(k \Delta_{\mathfrak C_j}) \cap \tfrac{1}{n} \mathcal L^{\mathfrak C_j}_0.
    \end{align*}
    As all vertices of $k \Delta_{\mathfrak C_j}$ are contained in the lattice $\mathcal L^{\mathfrak C_j}$, the map $\overline{\mathrm{pr}}_{\mathfrak C_j}$ defines an integral structure on this polytope.
    
    For our purposes we choose $\underline b = e_{p_0}$, so that the composition of $\overline{\mathrm{pr}}_{\mathfrak C_j}$ with
    \begin{align*}
        \psi: U_0 \hookrightarrow \R^{\mathfrak C_j} \xrightarrow{\mathrm{pr}_{\mathfrak C_j}} \R^s
    \end{align*}
    coincides with $\mathrm{pr}_{\mathfrak C_j}$. By the definition of $\mathrm{pr}_{\mathfrak C_j}$ and the defining conditions of an LS-lattice, the map $\psi$ restricts to a group homomorphism $\overline\psi: \mathcal L^{\mathfrak C_j}_0 \to \Z^s$. To finish the proof, we need to show that $\overline\psi$ is an isomorphism. Its image is equal to the set of all elements $\mathrm{pr}_{\mathfrak C_j}(\underline a)$ with $\underline a \in \mathcal L^{\mathfrak C_j}$. The lattice $\mathcal L^{\mathfrak C_j}$ contains the elements
    \begin{align*}
        \underline a^{(i)} = \frac{1}{b_{i, i-1}} e_{p_i} - \frac{1}{b_{i, i-1}} e_{p_{i-1}}
    \end{align*}
    for $i = 1, \dots, s$. The image of $\underline a^{(i)}$ under the map $\overline\psi$ is of the form $e_i + \sum_{k=1}^{i-1} \Z e_k$, so these images form a basis of $\Z^s$ and $\overline\psi$ is surjective. This also implies that its kernel has rank zero.
\end{proof}

The proposition immediately has the consequence that the product map 
\begin{align*}
    \mathrm{pr}_{\mathfrak C} = \mathrm{pr}_{\mathfrak C_1} \times \dots \times \mathrm{pr}_{\mathfrak C_m}: \R^{\mathfrak C} \to \R^r
\end{align*}
forms an integral structure on the polytope $\Delta_{\mathfrak C}^{(\underline d)}$ for each $\underline d \in \operatorname{relint} \sigma_{\mathfrak C}$ together with the subsets
\begin{align*}
    \Delta_{\mathfrak C}^{(\underline d)}(n) = \prod_{j=1}^m \, \set{ \tfrac{1}{n} \underline a \mid \underline a \in \Gamma_{\mathfrak C_j, n \phi_j^{\mathfrak C}(\underline d) }} = \set{ \tfrac{1}{n} \underline a \mid \underline a \in \Gamma_{\mathfrak C, n \underline d}}.
\end{align*}
Since $\widetilde{\Gamma}_{\mathfrak C}^{(\underline d)} = \mathcal L^{\mathfrak C, (\underline d)} \cap \operatorname{Cone} \Gamma_{\mathfrak C} = \Gamma_{\mathfrak C}^{(\underline d)}$ follows from equation~(\ref{eq:LS_type_veronese_lattice}), the Veronese monoid $\Gamma_{\mathfrak C}^{(\underline d)}$ is saturated. Therefore the map $\mathrm{pr}_{\mathfrak C}$ meets the requirements from Proposition~\ref{prop:global_rational_structure}. 

The volume of $\mathrm{pr}_{\mathfrak C_j}(\Delta_{\mathfrak C_j})$ is equal to the product $\prod_{k=1}^s b_{k, k-1}$ of all bonds in the subchain $\mathfrak C_j$ divided by the factorial of $\vert \mathfrak C_j \vert - 1$. Let
\begin{align*}
    b_{\mathfrak C} = \prod_{i=1}^r b_{i, i-1}
\end{align*}
denote the product of all bonds in $\mathfrak C$. Since all bonds connecting the chains $\mathfrak C_j$ are equal to $1$, we get
\begin{align}
    \label{eq:multisimplex_volume}
    \mathrm{vol} \big( \mathrm{pr}_{\mathfrak C}(\Delta_{\mathfrak C}^{(\underline d)}) \big) = b_{\mathfrak C} \cdot \prod_{j=1}^m \frac{\phi_j^{\mathfrak C}(\underline d)^{\vert \mathfrak C_j \vert - 1}}{(\vert \mathfrak C_j \vert - 1)!}
\end{align}
for every $\underline d \in \operatorname{relint} \sigma_{\mathfrak C}$. As the polytope $\Delta_{\mathfrak C}^{(\underline d)}$ is empty for $\underline d \notin \sigma_{\mathfrak C}$, one needs to take care which maximal chains to consider when computing the leading term $G_R$ of the Hilbert polynomial via Theorem~\ref{thm:G_R_formula}.

The coefficients of $G_R$ contain the multidegrees of $X$. One can compute them explicitly in the case when the poset $\mathcal I$ is totally ordered. \WWlog{} we can then rearrange the numbering of the projective spaces $\PP(V_i)$ such that the following situation applies.

\begin{corollary}
    \label{cor:multidegree_formula}
    Suppose that the poset $\mathcal I$ consists only of the sets $[i]$ for $i \in [m]$. Then the multidegree of the variety $X \subseteq \prod_{i=1}^m \PP(V_i)$ to a tuple $\underline k \in \N_0^m$ with $k_1 + \dots + k_m = \dim X$ is given by
    \begin{align*}
        \deg_{\underline k}(X) = \sum_{\mathfrak C} \mkern2mu b_{\mathfrak C} \mkern1mu,
    \end{align*}
    where the sum runs over all maximal chains $\mathfrak C$ in $A$, which contain exactly $k_i + 1$ elements from $A_i = \set{p \in A \mid I_p = [i]}$ for each $i = 1, \dots, m$ and $b_{\mathfrak C}$ denotes the product of all bonds in $\mathfrak C$.
\end{corollary}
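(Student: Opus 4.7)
The plan is to obtain the formula by reading off coefficients in the identity from Theorem~\ref{thm:G_R_formula}, after specializing the ingredients on the right-hand side to the current situation.

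First I would observe that when $\mathcal I = \{[1] \subsetneq [2] \subsetneq \dots \subsetneq [m]\}$, every maximal chain $\mathfrak C$ in $A$ projects onto the unique maximal chain of $\mathcal I$, so the subchains $\mathfrak C_j = \{p \in \mathfrak C \mid I_p = [j]\}$ give a decomposition $\mathfrak C = \mathfrak C_1 \sqcup \dots \sqcup \mathfrak C_m$ and, by Lemma~\ref{lem:properties_A_mathcal_I}, each $\mathfrak C_j$ is nonempty. Since $\underline{[j]} = \{j\}$ by the remark following the definition of $\underline I$, every extremal function $f_p$ with $p \in A_{[j]}$ has multidegree $e_{[j]} = e_j$, the $j$-th standard basis vector of $\N_0^m$. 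Consequently the matrix $M_{\mathfrak C}$ (whose $j$-th column is $e_{I_j} = e_j$) is the identity, so $\phi_j^{\mathfrak C}(\underline d) = d_j$ for every $\underline d \in \N_0^m$, and $\sigma_{\mathfrak C} = \R_{\geq 0}^m$ for every maximal chain $\mathfrak C$.

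Next I would feed this into equation~(\ref{eq:multisimplex_volume}) and Theorem~\ref{thm:G_R_formula}. For $\underline d$ in the interior of $\R_{\geq 0}^m$ the assumption of Theorem~\ref{thm:G_R_formula} is satisfied (there is no boundary stratum of any $\sigma_{\mathfrak C}$ to worry about), and we obtain
\begin{align*}
    G_R(\underline d) = \sum_{\mathfrak C} b_{\mathfrak C} \prod_{j=1}^m \frac{d_j^{\vert \mathfrak C_j \vert - 1}}{(\vert \mathfrak C_j \vert - 1)!},
\end{align*}
the sum running over all maximal chains in $A$. Both sides are polynomial functions in $\underline d$, so the equality extends from the positive orthant to all of $\Q^m$.

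Finally I would compare coefficients. By definition $G_R = \sum_{\underline k} \tfrac{\deg_{\underline k}(X)}{k_1! \cdots k_m!}\, x_1^{k_1} \cdots x_m^{k_m}$, while the right-hand side above contributes, for each maximal chain $\mathfrak C$, a single monomial $\prod_j d_j^{\vert \mathfrak C_j \vert - 1}$ whose total degree equals $\vert \mathfrak C \vert - m = r$. Matching the coefficient of $x_1^{k_1} \cdots x_m^{k_m}$ on both sides, only those chains $\mathfrak C$ with $\vert \mathfrak C_j \vert - 1 = k_j$ for all $j$ contribute, and clearing the factorials gives precisely
\begin{align*}
    \deg_{\underline k}(X) = \sum_{\mathfrak C : \vert \mathfrak C_j \vert = k_j + 1 \; \forall j} b_{\mathfrak C}.
\end{align*}
No genuine obstacle arises; the only point that needs care is the specialization $M_{\mathfrak C} = \mathrm{id}$, which forces the cones $\sigma_{\mathfrak C}$ to coincide and the simplex factor $\phi_j^{\mathfrak C}(\underline d)$ to equal $d_j$, so that the coefficient extraction is unambiguous.
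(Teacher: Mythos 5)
Your proposal is correct and follows essentially the same route as the paper: specialize to $M_{\mathfrak C} = \mathrm{id}$, $\sigma_{\mathfrak C} = \R_{\geq 0}^m$ and $\phi_j^{\mathfrak C}(\underline d) = d_j$, plug the multisimplex volume formula~(\ref{eq:multisimplex_volume}) into Theorem~\ref{thm:G_R_formula}, and read off $\deg_{\underline k}(X)$ by comparing coefficients of $x_1^{k_1}\cdots x_m^{k_m}$ in $G_R$. Your extra remarks (checking the interior hypothesis of Theorem~\ref{thm:G_R_formula} and extending by polynomiality) only make explicit what the paper leaves implicit.
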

\begin{proof}
    For any maximal chain $\mathfrak C$ in $A$ the matrix $M_{\mathfrak C}$ we defined earlier is the identity matrix and $\sigma_{\mathfrak C}$ coincides with the positive orthant $\R_{\geq 0}^m$. For all $\underline d \in \N_0^m$ we have $\phi_j^{\mathfrak C}(\underline d) = d_j$ for all $\underline d \in \N_0^m$. Using equation~(\ref{eq:multisimplex_volume}) we therefore get
    \begin{align*}
        G_R(\underline d) = \frac{1}{k_1! \cdots k_m!} \sum_{\mathfrak C} b_{\mathfrak C} \mkern3mu d_1^{\mkern2mu \vert \mathfrak C_1 \vert - 1} \mkern-2mu \cdots \mkern2mu d_m^{\mkern2mu \vert \mathfrak C_m \vert - 1}.
    \end{align*}
    This implies the claimed formula, since the coefficient of the monomial $d_1^{k_1} \cdots d_m^{k_m}$ is equal to the multidegree $\deg_{\underline k}(X)$ divided by $k_1! \cdots k_m!$.
\end{proof}

\section{Multiprojective stratifications on flag varieties in type \texttt{A}}
\label{sec:type_A}

In this section we construct a multiprojective Seshadri stratification on every (partial) flag variety $G/Q$ in Dynkin type \texttt{A}. This strati\-fication is normal and balanced and the resulting standard monomial theory (as of Proposition~\ref{prop:smt}) is the classical Hodge-Young theory (see~\cite{hodge1943some} and \cite{hodge1994methods}) of products of Pl\"ucker coordinates indexed by semistandard Young-tableaux.

We fix the simple algebraic group $G = \mathrm{SL}_n(\K)$ over an algebraically closed field $\K$ of characteristic zero, the torus $T$ of diagonal matrices in $G$ and the Borel subgroup $B$ of all upper triangular matrices with determinant $1$, which contains $T$. The Weyl group $W = N_G(T)/C_G(T)$ can be identified with the symmetric group $S_n$, since $C_G(T) = T$ and the normalizer of $T$ consists of the matrices which have exactly one non-zero entry in every row and each column. Let $\varepsilon_i: T \to \K^\times$ be the character of $T$, where $\varepsilon_i(t)$ is equal to the $i$-th entry on the diagonal of $t \in T$. The root system $\Phi$ of $G$ is given by all characters $\varepsilon_i - \varepsilon_j$ for $i \neq j$ in $[n]$ and the choice of the Borel subgroup corresponds to the set $\Phi^+$ of positive roots and the set $\Delta$ of the simple roots $\alpha_i = \varepsilon_i - \varepsilon_{i+1}$ for $i \in [n-1]$. Let $\Lambda$ denote the weight lattice of the root system $\Phi$ and $\Lambda^+$ be the monoid of all dominant weights. To each $i \in [n-1]$ there is the associated fundamental weight $\omega_i = \varepsilon_1 + \dots + \varepsilon_i \in \Lambda^+$ and the maximal parabolic subgroup $P_i = B W_{P_i} B$, where $W_{P_i} \subseteq W$ is the stabilizer of $\omega_i$. It is generated by the simple reflections $s_\alpha$ for $\alpha \in \Delta \setminus \set{\alpha_i}$.

Every weight $\lambda \in \Lambda$ can be uniquely written in the form $\lambda = c_1 \varepsilon_1 + \dots + c_{n-1} \varepsilon_{n-1}$ with coefficients $c_i \in \Z$. Then $\lambda$ is a dominant weight, if and only if $c_1 \geq \dots \geq c_{n-1} \geq 0$. In this way, each dominant weight $\lambda$ corresponds to a partition $p(\lambda) = (c_1, \dots, c_{n-1})$ of $n-1$ parts (which are potentially zero). The partition $p(\lambda)$ is usually visualized via a Young diagram (we use the English notation) having exactly $c_i$ boxes in its $i$-th row. For each $i = 1, \dots, n-1$ it contains exactly $\langle \lambda, \alpha_i^\vee \rangle$ columns of length $i$.

\begin{definition}
    \label{def:YT}
    For each $\lambda \in \Lambda^+$ and its corresponding partition $p(\lambda)$ we define:
    \begin{enumerate}[label=(\alph{enumi})]
        \item The set $\mathrm{YT}(\lambda)$ of all Young-tableaux of shape $p(\lambda)$ with entries in $[n]$;
        \item The subset $\mathrm{SSYT}(\lambda) \subseteq \mathrm{YT}(\lambda)$ of all \textit{semistandard} Young-tableaux $T \in \mathrm{YT}(\lambda)$, \ie the entries of $T$ increase weakly along each row (from left to right) and strictly along each column (from top to bottom).
    \end{enumerate}
\end{definition}

\subsection{Maximal parabolic subgroups}

The Grassmann varieties $G/P_{i} \cong \mathrm{Gr}_i(\K^n)$ for $i = 1, \dots, n-1$ can be embedded into the projectivized fundamental representation $\PP(V(\omega_{i})) \cong \PP(\bigwedge^{\! i} \K^n)$ via the usual Pl\"ucker embedding: 
\begin{align*}
    G/P_{i} \hookrightarrow \PP(\hbox{$\bigwedge\nolimits^{i} \K^n$}), \quad g P_i \longmapsto [g \cdot (e_1 \wedge \dots \wedge e_i)].
\end{align*}
This representation is minuscule, that is to say the Weyl group acts transitively on the set of its weights. Hence all weight spaces are one-dimensional and the weights are in bijection to the elements of the Bruhat poset $W/W_{P_i} \cong S_n / (S_i \times S_{n-i})$. They correspond to subsets $J \subseteq[n]$ of size $i$ and can also be identified with semistandard Young-tableaux in $\mathrm{SSYT}(\omega_i)$. The weight space in $V(\omega_{i})$ of weight $\theta \in W/W_{P_{i}}$ is generated by the vector $e_\theta = e_{j_1} \wedge \dots \wedge e_{j_i} \in \bigwedge^{\! i} \K^n$, where $j_1 < \dots < j_i$ are the elements of the subset $J \subseteq [n]$ corresponding to $\theta$. The dual basis vectors $p_\theta \in V(\omega_{i})^*$ are known as \textit{Pl\"ucker coordinates}. It is well known that Pl\"ucker coordinates fulfill the conditions~\ref{itm:seshadri_strat_b} and \ref{itm:seshadri_strat_c} on a Seshadri stratification (see \cite[1.2.10, 1.4.11]{seshadri2016introduction}). In fact, the Grassmann varieties were one of the motivating examples for the development of Seshadri stratifications (\cite{littelmann2017grassmannian}).

\begin{proposition}
    \label{prop:strat_grassmannian}
    There exists a Seshadri stratification on $G/P_i$ with underlying poset $W/W_{P_i}$, where the strata $X_\theta$ are given by the Schubert varieties in $G/P_i$ and the extremal functions $f_\theta = p_\theta$ by Pl\"ucker coordinates.
\end{proposition}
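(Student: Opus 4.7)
The plan is to verify the three Seshadri stratification axioms \ref{itm:seshadri_strat_a}--\ref{itm:seshadri_strat_c} for the proposed data. Since $G/P_i$ sits inside a single projective space $\PP(V(\omega_i))$, this is a classical (non-multiprojective) Seshadri stratification and no index sets need to be specified. Essentially everything reduces to well-known facts from the geometry of Schubert varieties and the combinatorics of Plücker coordinates; the work is to assemble them in the correct order.

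First I would record the combinatorial and geometric setup. The Bruhat poset $W/W_{P_i}$ is a graded poset of length $\dim G/P_i = i(n-i)$, and the rank of $\theta \in W/W_{P_i}$ coincides with the length $\ell(\theta)$. The stratum $X_\theta = \overline{B\theta P_i/P_i}$ is a closed, irreducible subvariety of $G/P_i$ of dimension $\ell(\theta)$, and is known to be normal (Ramanathan), hence in particular smooth in codimension one by Serre's criterion. The top element $w_0 W_{P_i}$ yields $X_{w_0 W_{P_i}} = G/P_i$, as required. The partial order on $W/W_{P_i}$ coincides with the one induced by inclusion of (affine cones over) Schubert varieties.

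Next I would check each axiom. For \ref{itm:seshadri_strat_a}, if $\tau \lessdot \theta$ is a covering relation in the Bruhat order on $W/W_{P_i}$, then $\ell(\tau) = \ell(\theta) - 1$, so $X_\tau$ is a codimension one subvariety of $X_\theta$, and passing to affine cones $\hat X_\tau \subseteq \hat X_\theta$ preserves this. For \ref{itm:seshadri_strat_b}, the Plücker coordinate $p_\tau$ vanishes on $X_\theta$ whenever $\tau \not\leq \theta$; this is the standard vanishing criterion for Plücker coordinates on Schubert cells, proved in \cite[1.2.10, 1.4.11]{seshadri2016introduction}. For \ref{itm:seshadri_strat_c}, one uses that $p_\theta$ is an extremal weight vector of the one-dimensional weight space at $\theta$, so $p_\theta$ is non-vanishing on the open Bruhat cell $B\theta P_i/P_i \subseteq X_\theta$. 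Combined with \ref{itm:seshadri_strat_b}, this shows that the zero locus of $p_\theta$ on $X_\theta$ is precisely the complement of the open cell, which is the set-theoretic union of the Schubert divisors $X_\tau$ for $\tau \lessdot \theta$. Taking affine cones then yields the equality in \eqref{eq:s3}.

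The main conceptual step is axiom \ref{itm:seshadri_strat_c}, which rests on the non-vanishing of $p_\theta$ on the open Bruhat cell together with the Bruhat decomposition $X_\theta = \bigsqcup_{\tau \leq \theta} B\tau P_i/P_i$. Both ingredients are classical but essential, and the reader should be directed to \cite[Chapter 1]{seshadri2016introduction} (or equivalently \cite{littelmann2017grassmannian}, where exactly this stratification on the Grassmannian was one of the motivating examples for the notion of a Seshadri stratification). No genuinely new argument is required beyond assembling these facts.
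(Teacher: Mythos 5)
Your proposal is correct and follows essentially the same route as the paper, which states this proposition without a separate proof, relying on exactly the classical facts you cite: the vanishing behaviour of Pl\"ucker coordinates on Schubert varieties \cite[1.2.10, 1.4.11]{seshadri2016introduction}, the Bruhat-order combinatorics of $W/W_{P_i}$, and the standard geometry (irreducibility, normality, dimension) of Schubert varieties, with \cite{littelmann2017grassmannian} as the motivating example. Your write-up merely makes the verification of \ref{itm:seshadri_strat_a}--\ref{itm:seshadri_strat_c} explicit, and the key step for \ref{itm:seshadri_strat_c} (non-vanishing of $p_\theta$ on the open cell plus the Bruhat decomposition) is exactly the intended argument.
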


\subsection{Weyl group cosets}

To construct multiprojective stratifications, we need to work with different parabolic subgroups at the same time. We therefore introduce some notation. All statements in this subsection are well known and can be found in any classical text book about Coxeter groups, for example in~\cite{bjorner2006combinatorics}.

To every inclusion $Q \subseteq Q'$ of two parabolic subgroups in $G$ containing $B$ there is the monotone surjection\label{eq:def_pi_Q}
\begin{align*}
    \pi_{Q,Q'} \!: \: W/W_Q \twoheadrightarrow W/W_{Q'}, \ \sigma W_Q \mapsto \sigma W_{Q'},
\end{align*}
where we typically write $\pi_{Q'}$ instead, if the source is clear. Every element $\theta \in W/W_{Q'}$ has a unique minimal preimage $\min_Q(\theta)$ and a unique maximal preimage $\max_Q(\theta)$ in $W/W_Q$ under $\pi_{Q'}$. The corresponding two maps\label{eq:def_min_Q_max_Q}
\begin{align*}
    \mathrm{min}_Q \!: W/W_{Q'} &\hookrightarrow W/W_{Q}, \ \theta \mapsto \mathrm{min}_Q(\theta) \quad \text{and}\\
    \mathrm{max}_Q \!: W/W_{Q'} &\hookrightarrow W/W_{Q}, \ \theta \mapsto \mathrm{max}_Q(\theta),
\end{align*}
are isomorphisms of posets onto their image. 

\begin{definition}
    We say an element $\sigma W_Q \in W/W_Q$ is \textbf{\boldmath{}$Q'$-minimal}/\textbf{\boldmath{}$Q'$-maximal}, if it lies in the image of $\min_Q$ or $\max_Q$ respectively. 
\end{definition}

The Weyl group of $G = \mathrm{SL}_n(\K)$ is isomorphic to the symmetric group $S_n$. We usually write a permutation $\sigma: [n] \to [n]$ in $S_n$ in the one-line notation $\sigma = \sigma(1) \cdots \sigma(n)$. For $i = 1, \dots, n-1$ we write an element $\sigma W_{P_i}$ of $W/W_{P_i} \cong S_n / (S_i \times S_{n-i})$ in the form $\sigma(1) \cdots \sigma(i)$.

The Bruhat order on $W/W_{P_i}$ can be characterized via this one-line notation: For a tuple $\underline j = (j_1, \dots, j_i)$ of natural numbers, we write $\underline j^\leq$ for the permuted tuple with weakly increasing entries (from left to right). For all $\phi = \phi(1) \cdots \phi(i), \theta = \theta(1) \cdots \theta(i) \in W/W_{P_i}$ we then have
\begin{align*}
    \phi \leq \theta \quad \Longleftrightarrow \quad (\phi(1), \dots, \phi(i))^\leq \leq (\theta(1), \dots, \theta(i))^\leq,
\end{align*}
where the tuples on the right hand side are compared component-wise. The one-line notation can also be used to describe the Bruhat order on $W$, as $\sigma \leq \tau \in W$ is equivalent to $\pi_{P_i}(\sigma) \leq \pi_{P_i}(\tau)$ for all $i = 1, \dots, n-1$.

The one-line notation is very advantageous in conjunction with the lifting maps $\min_Q$ and $\max_Q$. For example, let $Q = P_1 \cap P_3$ for $n = 4$ and $\theta = 134 \in W/W_{P_3}$ (we omit the brackets of the tuple notation), then the unique maximal lift $\max_Q(\theta)$ in $W/W_Q$ is given by the projection of $\max_B(\theta) \in W$ to $W/W_Q$ and the element $\max_B(\theta)$ is equal to $4312$. More precisely, the image of all $P_3$-maximal elements in $W/W_Q$ under the map $\min_B: W/W_Q \to W$ are all permutations $\sigma = \sigma(1) \cdots \sigma(4)$ with $\sigma(1) > \sigma(2)$, $\sigma(1) > \sigma(3)$ and $\sigma(2) < \sigma(3)$.

\subsection{The underlying poset of the stratification}

We now go over to arbitrary parabolic subgroups by fixing the partial flag variety $X = G/Q$ to a parabolic subgroup
\begin{align*}
    Q = P_{k_1} \cap \dots \cap P_{k_m}
\end{align*}
with strictly ascending indices $1 \leq k_1 < \dots < k_m \leq n - 1$. Every parabolic subgroup containing $B$ can be uniquely written in this way. To reduce the number of indices, we write $\pi_i$ instead of $\pi_{P_{k_i}}$ and $W_i$ instead of $W_{P_{k_i}}$. Let $R$ denote the multihomogeneous coordinate ring of $G/Q$ with respect to the Pl\"ucker embedding
\begin{align}
    \label{eq:G_Q_pluecker_embedding}
    G/Q \hookrightarrow \prod_{i=1}^m G/P_{k_i} \hookrightarrow \prod_{i=1}^m \PP(V(\omega_{k_i})).
\end{align}
It is well known that this ring $R$ contains a lot of information about the representation theory of $G$. It carries the structure of a $G$-representation and the graded component $R_{\underline d} \subseteq R$ of degree $\underline d \in \N_0^m$ is isomorphic to the dual representation $V(\mu)^*$ to the dominant weight $\mu = d_1 \omega_{k_1} + \dots + d_m \omega_{k_m} \in \Lambda^+$. 

We view the Pl\"ucker coordinates in $V(\omega_{k_i})^*$ as elements of $R$ via the pullback along the projection $\prod_{j=1}^m V(\omega_{k_j}) \twoheadrightarrow V(\omega_{k_i})$. For each element $(\theta, i)$ in the disjoint union $\ulW = \coprod_{i=1}^m W/W_i \times \set{i}$ we therefore have an associated Pl\"ucker coordinate $p_{(\theta, i)} \in R$ and $R$ is clearly generated by these functions as a $\K$-algebra. Hence the monomials/products of Pl\"ucker coordinates form a generating system of $R$ as a vector space. Each of these monomials is either called \textit{standard} or \textit{non-standard} and the set of standard monomials is a basis of $R$. Using only combinatorial methods one can determine whether a monomial is standard. This is typically called a \textit{standard monomial theory}. For $G/Q$ it was shown in~\cite[Chapter 2]{seshadri2016introduction} that this basis is given by semistandard Young-tableaux in the following sense. Let $p_{\underline\theta} = p_{(\theta_1, i_1)} \cdots p_{(\theta_\ell, i_\ell)}$ be a product of Pl\"ucker coordinates with $i_1 \geq \dots \geq i_\ell$. Since each element $\theta_j \in W/W_{i_j}$ can be interpreted as a tableau in $\mathrm{SSYT}(\omega_{i_j})$, the product $p_{\underline\theta}$ corresponds to the Young-tableau
\begin{align*}
    (\theta_1, \dots, \theta_\ell) \in \mathrm{YT}(\omega_{i_1} + \dots + \omega_{i_\ell}),
\end{align*}
such that its $j$-th column is given by $\theta_j$.

\begin{theorem}[{\cite[Proposition 2.3.1, Theorem 2.6.1]{seshadri2016introduction}}]
    \label{thm:smt_basis_G_Q}
    The multihomogeneous coordinate ring $R = \K[G/Q]$ has a basis consisting of the products $p_{(\theta_1, i_1)} \cdots p_{(\theta_\ell, i_\ell)}$ of Pl\"ucker coordinates with $i_1 \geq \dots \geq i_\ell$, such that the corresponding Young-tableau $(\theta_1, \dots, \theta_\ell)$ is semistandard.
\end{theorem}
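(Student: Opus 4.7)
The plan is to split the statement into two parts — spanning and dimension match — and combine them. The multihomogeneous component $R_{\underline d}$ for $\underline d = (d_1,\dots,d_m)$ is isomorphic as a $G$-module to $V(\mu)^*$ with $\mu = \sum_{i=1}^m d_i \omega_{k_i}$, so it is enough to show that the standard monomials of multidegree $\underline d$ (those products $p_{(\theta_1, i_1)} \cdots p_{(\theta_\ell, i_\ell)}$ with $i_1 \geq \dots \geq i_\ell$ whose associated column-tableau has shape $p(\mu)$ and is semistandard) both span $R_{\underline d}$ and are equinumerous with a basis of $V(\mu)$.

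For the cardinality half, I would invoke the classical combinatorial identity $|\mathrm{SSYT}(\mu)| = \dim V(\mu)$. In type \texttt{A} this follows either from the Weyl character formula together with the Jacobi--Trudi expansion, or more directly from the fact that semistandard tableaux of shape $p(\mu)$ with entries in $[n]$ parametrise a weight basis of $V(\mu)$ via Gelfand--Tsetlin patterns. This step is purely representation-theoretic and independent of the geometry of $G/Q$.

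For spanning, the natural route is a column-straightening algorithm built on the single-Grassmannian case. Inside each $G/P_{k_i}$ the quadratic Plücker relations rewrite $p_\phi p_\psi$ for incomparable $\phi, \psi \in W/W_i$ as a $\Z$-linear combination of standard pairs $p_{\phi'} p_{\psi'}$ with $\phi' < \psi'$; this is a classical consequence of the minuscule structure of $V(\omega_{k_i})$ and could alternatively be read off from Proposition~\ref{prop:strat_grassmannian} combined with Proposition~\ref{prop:smt}. A non-standard monomial $p_{(\theta_1, i_1)} \cdots p_{(\theta_\ell, i_\ell)}$ exhibits its first failure of semistandardness at some adjacent pair of columns $(\theta_j, i_j), (\theta_{j+1}, i_{j+1})$ with $i_j \geq i_{j+1}$. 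I would lift both $\theta_j$ and $\theta_{j+1}$ to the common refinement $W/W_{P_{k_j} \cap P_{k_{j+1}}}$ via the $\min$ and $\max$ maps and compare them there; re-interpreting the violation inside the Grassmannian $G/P_{k_j}$ reduces it to a Grassmannian Plücker relation, after which the result is projected back down to $W/W_{i_{j+1}}$.

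The main obstacle is to set up a total ordering on tableaux in which each application of this algorithm strictly decreases the leading monomial, so that termination and correctness (producing only standard monomials, with no residual violations) are guaranteed. Classically one compares columns from right to left using the component-wise order on their weakly increasing rearrangements, and then verifies that each Plücker relation in a Grassmannian strictly decreases this leading term. Once termination is in hand, spanning is formal, and combining it with $|\mathrm{SSYT}(\mu)| = \dim V(\mu) = \dim R_{\underline d}$ upgrades the spanning set to a basis.
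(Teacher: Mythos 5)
The paper does not actually prove this statement: it is imported verbatim from Seshadri's book (Proposition 2.3.1 and Theorem 2.6.1 of the cited reference), so there is no internal proof to compare against. Your overall plan -- spanning by a straightening algorithm plus the count $\vert\mathrm{SSYT}(\mu)\vert = \dim V(\mu) = \dim R_{\underline d}$ -- is exactly the classical Hodge--Young route, and the counting half is sound granting the fact, stated in the paper, that $R_{\underline d} \cong V(\mu)^*$ for $\mu = \sum_i d_i\omega_{k_i}$ (in characteristic zero this is independent of standard monomial theory, so no circularity there).

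The genuine gap is in the spanning step, precisely where the flag variety differs from a single Grassmannian. A violating adjacent pair may involve columns of \emph{different} lengths, i.e.\ a product $p_{(\theta_j, i_j)}\,p_{(\theta_{j+1}, i_{j+1})}$ with $i_j \neq i_{j+1}$; this element has multidegree $e_{i_j} + e_{i_{j+1}}$, so any identity rewriting it must be a quadratic relation of $G/Q$ in $R_{e_{i_j}+e_{i_{j+1}}}$, mixing the two fundamental representations $V(\omega_{k_{i_j}})^*$ and $V(\omega_{k_{i_{j+1}}})^*$. The Plücker relations of the single Grassmannian $G/P_{k_{i_j}}$ live in multidegree $2e_{i_j}$ and simply cannot involve this product, and lifting the cosets $\theta_j, \theta_{j+1}$ to $W/W_{P_{k_{i_j}} \cap P_{k_{i_{j+1}}}}$ via $\min_Q$ and $\max_Q$ is a combinatorial operation on indices -- it produces no identity of regular functions. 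So the heart of the classical spanning proof, namely the existence of straightening relations for a non-semistandard pair of columns of different lengths together with the triangularity that makes your rewriting terminate, is missing rather than deferred. The standard repairs are either Hodge's quadratic relations between Plücker coordinates of different sizes (equivalently the Doubilet--Rota--Stein straightening law for products of minors of different orders, obtainable by the dummy-index trick of embedding $\bigwedge^{k_b}\K^n$ into an exterior power of a larger space, pulling back Grassmannian Plücker relations and checking that the unwanted terms vanish on the flag variety), or avoiding straightening altogether: prove linear independence of the standard monomials by induction over Schubert varieties, as in the cited book, and then obtain spanning from the same dimension count. One further caution: reading the one-Grassmannian straightening off Proposition~\ref{prop:smt} via Proposition~\ref{prop:strat_grassmannian} presupposes normality of that stratification, which in the literature is established using the Hodge basis itself, so that shortcut risks circularity; the elementary quadratic Plücker relations are the safer input.
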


It is our goal to construct a multiprojective stratification on $G/Q$, such that the associated fan of monoids is in bijection to semistandard tableaux with columns in $\ulW$. In order to construct such a stratification, we first need a suitable candidate for the underlying poset. Notice that the entries of every Young-tableau, which only contains columns from the set $\ulW$, are already strictly increasing along each column, by definition. Therefore semistandardness can be seen as a local property: Such a tableau is semistandard, if and only if every two consecutive columns are semistandard (as a tableau of just $2$ columns). This induces a partial order on the set $\ulW$.

\begin{definition}
    \label{def:ulWlambda_type_A}
    We define a relation $\geq$ on the set $\ulW = \coprod_{i=1}^m W/W_i \times \set{i}$ via
    \begin{equation}
    \label{eq:def_ulW_type_A}
        (\theta, i) \geq (\phi, j) \quad :\Longleftrightarrow \quad i \leq j \ \ \text{and} \ \ \mathrm{max}_Q(\theta) \geq \mathrm{min}_Q(\phi).
    \end{equation}
    for all $(\theta, i), (\phi, j) \in \ulW$.
\end{definition}

With the interpretation of the elements of $\ulW$ as Young-tableaux, we show in Corollary~\ref{cor:partial_order_via_tableaux} that the relation $\geq$ can be written as
\begin{align}
    \label{eq:relation_via_tableaux}
    \vcenter{\hbox{\begin{ytableau} a_1 \\ \vdots \\ a_{k_i} \end{ytableau}}} \, \geq \, \vcenter{\hbox{\begin{ytableau} b_1 \\ \vdots \\ \vdots \\ b_{k_j} \end{ytableau}}} \quad \Longleftrightarrow \quad i \leq j \quad \text{and} \quad \vcenter{\hbox{\begin{ytableau} b_1 & a_1 \\ \vdots & \vdots \\ \vdots & a_{k_i} \\ b_{k_j} \end{ytableau}}} \quad \text{is semistandard}.
\end{align} 
In particular, this implies that the relation $\geq$ is a partial order. However, as we do not show this characterization of the relation right now, we carefully avoid using the transitivity of $\geq$ (the reflexivity and antisymmetry are immediate from the definition).

\ytableausetup{smalltableaux}
\begin{figure}
\centering
\begin{subfigure}{.5\textwidth}
    \begin{center}
        \begin{tikzpicture}[scale=0.65]
        \node (p3) at (0,0) {$\begin{ytableau} 3 \end{ytableau}$};
        \node (p2) at (0,-1.5)  {$\begin{ytableau} 2 \end{ytableau}$};
        \node (p1) at (-1.5,-3) {$\begin{ytableau} 1 \end{ytableau}$};
        \node (p23) at (1.5,-3) {$\begin{ytableau} 2 \\ 3 \end{ytableau}$};
        \node (p13) at (0,-4.5) {$\begin{ytableau} 1 \\ 3 \end{ytableau}$};
        \node (p12) at (0,-6.5) {$\begin{ytableau} 1 \\ 2 \end{ytableau}$};
        
        \draw [ thick, shorten <=-2pt, shorten >=-2pt, -stealth] (p3) -- (p2);
        \draw [thick, shorten <=-2pt, shorten >=-2pt, -stealth] (p2) -- (p1);
        \draw [thick, shorten <=-2pt, shorten >=-2pt, -stealth] (p2) -- (p23);
        \draw [thick, shorten <=-2pt, shorten >=-2pt, -stealth] (p1) -- (p13);
        \draw [thick, shorten <=-2pt, shorten >=-2pt, -stealth] (p23) -- (p13);
        \draw [thick, shorten <=-2pt, shorten >=-2pt, -stealth] (p13) -- (p12);
        \end{tikzpicture}
    \end{center}
    \caption{Type $\texttt{A}_2$}
    \label{fig:hasse_a_2}
\end{subfigure}%
\begin{subfigure}{.5\textwidth}
    \begin{center}
        \begin{tikzpicture}[scale=0.70]
        \node (p4) at (0,0) {$\begin{ytableau} 4 \end{ytableau}$};
        \node (p3) at (1.5,0) {$\begin{ytableau} 3 \end{ytableau}$};
        \node (p2) at (3,0)  {$\begin{ytableau} 2 \end{ytableau}$};
        \node (p1) at (4.5,0) {$\begin{ytableau} 1 \end{ytableau}$};
        \node (p34) at (1.5,-1.75) {$\begin{ytableau} 3 \\ 4 \end{ytableau}$};
        \node (p24) at (3,-1.75) {$\begin{ytableau} 2 \\ 4 \end{ytableau}$};
        \node (p14) at (4.5,-1.75) {$\begin{ytableau} 1 \\ 4 \end{ytableau}$};
        \node (p23) at (3,-3.8) {$\begin{ytableau} 2 \\ 3 \end{ytableau}$};
        \node (p13) at (4.5,-3.8) {$\begin{ytableau} 1 \\ 3 \end{ytableau}$};
        \node (p12) at (6,-3.8) {$\begin{ytableau} 1 \\ 2 \end{ytableau}$};
        \node (p234) at (3,-6.1) {$\begin{ytableau} 2 \\ 3 \\ 4 \end{ytableau}$};
        \node (p134) at (4.5,-6.1) {$\begin{ytableau} 1 \\ 3 \\ 4 \end{ytableau}$};
        \node (p124) at (6,-6.1) {$\begin{ytableau} 1 \\ 2 \\ 4 \end{ytableau}$};
        \node (p123) at (7.5,-6.1) {$\begin{ytableau} 1 \\ 2 \\ 3 \end{ytableau}$};
        
        \draw [thick, shorten <=-2pt, shorten >=-2pt, -stealth] (p4) -- (p3);
        \draw [thick, shorten <=-2pt, shorten >=-2pt, -stealth] (p3) -- (p2);
        \draw [thick, shorten <=-2pt, shorten >=-2pt, -stealth] (p2) -- (p1);
        \draw [thick, shorten <=-2pt, shorten >=-2pt, -stealth] (p3) -- (p34);
        \draw [thick, shorten <=-2pt, shorten >=-2pt, -stealth] (p2) -- (p24);
        \draw [thick, shorten <=-2pt, shorten >=-2pt, -stealth] (p1) -- (p14);
        \draw [thick, shorten <=-2pt, shorten >=-2pt, -stealth] (p34) -- (p24);
        \draw [thick, shorten <=-2pt, shorten >=-2pt, -stealth] (p24) -- (p23);
        \draw [thick, shorten <=-2pt, shorten >=-2pt, -stealth] (p24) -- (p14);
        \draw [thick, shorten <=-2pt, shorten >=-2pt, -stealth] (p23) -- (p13);
        \draw [thick, shorten <=-2pt, shorten >=-2pt, -stealth] (p14) -- (p13);
        \draw [thick, shorten <=-2pt, shorten >=-2pt, -stealth] (p13) -- (p12);
        \draw [thick, shorten <=-2pt, shorten >=-2pt, -stealth] (p23) -- (p234);
        \draw [thick, shorten <=-2pt, shorten >=-2pt, -stealth] (p13) -- (p134);
        \draw [thick, shorten <=-2pt, shorten >=-2pt, -stealth] (p12) -- (p124);
        \draw [thick, shorten <=-2pt, shorten >=-2pt, -stealth] (p234) -- (p134);
        \draw [thick, shorten <=-2pt, shorten >=-2pt, -stealth] (p134) -- (p124);
        \draw [thick, shorten <=-2pt, shorten >=-2pt, -stealth] (p124) -- (p123);
        \end{tikzpicture}
    \end{center}
    \caption{Type $\texttt{A}_3$}
    \label{fig:hasse_a_3}
\end{subfigure}
\caption{Hasse-diagrams of $\ulW$ for $Q = B$}
\label{fig:hasse_ulWlambda_type_A}
\end{figure}
\ytableausetup{nosmalltableaux}

\subsection{The stratification on a partial flag variety}

Using the underlying poset $\ulW$ we now define a multiprojective Seshadri stratification of the partial flag variety $X = G/Q$ with respect to the Pl\"ucker embedding~(\ref{eq:G_Q_pluecker_embedding}). Let $I_{(\theta, i)} = \set{i, \dots, m} \subseteq [m]$ be the index set of an element $(\theta, i) \in \underline W$. Then the projection of $G/Q$ via $\pi_{\set{i, \dots, m}}$ can be interpreted as the flag variety $G/Q_i$ to the parabolic subgroup
\begin{align*}
    Q_i \coloneqq P_{k_i} \cap \dots \cap P_{k_m},
\end{align*}
since the following diagram commutes:
\begin{equation*}
    \begin{tikzcd}
    G/Q \arrow[d, two heads] \arrow[r, hook] & \prod_{j=1}^m G/P_{k_j} \arrow[r, hook] \arrow[d, two heads] & \prod_{j=1}^m \PP(V(\omega_{k_j})) \arrow[d, two heads] \\
    G/Q_i \arrow[r, hook] & \prod_{j=i}^m G/P_{k_j} \arrow[r, hook] & \prod_{j=i}^m \PP(V(\omega_{k_j}))
    \end{tikzcd}
\end{equation*}
Furthermore, to each element $(\theta, i) \in \underline W$ we associate:
\begin{itemize}
    \item The Schubert variety $X_{\max_{Q_i}(\theta)} \subseteq G/Q_i$ as the stratum $X_{(\theta, i)}$,
    \item the extremal function $f_{(\theta, i)} = p_{(\theta, i)}$. 
\end{itemize}
Note that, if $Q$ is a maximal parabolic subgroup, then $G/Q$ is a Grassmann variety and we already know that these definitions give rise to a Seshadri stratification, namely the stratification from Proposition~\ref{prop:strat_grassmannian} of all Schubert varieties and Pl\"ucker coordinates.

We write $\hat X_{(\theta, i)}$ for the multicone of $X_{\max_{Q_i}(\theta)}$, viewed as a subvariety of $\prod_{i=1}^m V(\omega_{k_i})$. It coincides with the intersection
\begin{align*}
    \hat X_{\widetilde\theta}^{(i)} \coloneqq \hat X_{\widetilde\theta} \cap \set{(v_1, \dots, v_m) \in \prod_{j=1}^m V(\omega_{k_j}) \mid v_1 = \dots = v_{i-1} = 0}, 
\end{align*}
where $\hat X_{\widetilde\theta}$ is the multicone of the Schubert variety $X_{\widetilde\theta} \subseteq G/Q$ to the element $\widetilde\theta = \min_Q \circ \max_{Q_i}(\theta)$. 

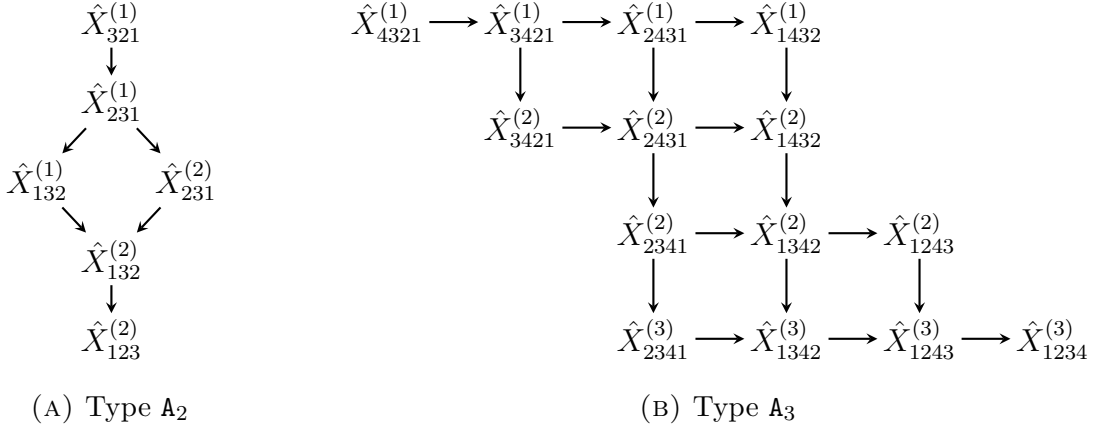
\begin{figure}
\centering
\begin{subfigure}{.3\textwidth}
    \begin{center}
        \begin{tikzpicture}[scale=0.7]
        \node (p3) at (0,0) {$\hat X_{321}^{(1)}$};
        \node (p2) at (0,-1.5)  {$\hat X_{231}^{(1)}$};
        \node (p1) at (-1.4,-3) {$\hat X_{132}^{(1)}$};
        \node (p23) at (1.4,-3) {$\hat X_{231}^{(2)}$};
        \node (p13) at (0,-4.5) {$\hat X_{132}^{(2)}$};
        \node (p12) at (0,-6) {$\hat X_{123}^{(2)}$};
        
        \draw [thick, shorten <=-2pt, shorten >=-2pt, -stealth] (p3) -- (p2);
        \draw [thick, shorten <=-2pt, shorten >=-2pt, -stealth] (p2) -- (p1);
        \draw [thick, shorten <=-2pt, shorten >=-2pt, -stealth] (p2) -- (p23);
        \draw [thick, shorten <=-2pt, shorten >=-2pt, -stealth] (p1) -- (p13);
        \draw [thick, shorten <=-2pt, shorten >=-2pt, -stealth] (p23) -- (p13);
        \draw [thick, shorten <=-2pt, shorten >=-2pt, -stealth] (p13) -- (p12);
        \end{tikzpicture}
    \end{center}
    \caption{Type $\texttt{A}_2$}
    \label{fig:strat_a_2}
\end{subfigure}%
\begin{subfigure}{.7\textwidth}
    \begin{center}
        \begin{tikzpicture}[scale=0.7]
        \node (p4) at (0,0) {$\hat X_{4321}^{(1)}$};
        \node (p3) at (2.5,0) {$\hat X_{3421}^{(1)}$};
        \node (p2) at (5,0)  {$\hat X_{2431}^{(1)}$};
        \node (p1) at (7.5,0) {$\hat X_{1432}^{(1)}$};
        \node (p34) at (2.5,-2) {$\hat X_{3421}^{(2)}$};
        \node (p24) at (5,-2) {$\hat X_{2431}^{(2)}$};
        \node (p14) at (7.5,-2) {$\hat X_{1432}^{(2)}$};
        \node (p23) at (5,-4) {$\hat X_{2341}^{(2)}$};
        \node (p13) at (7.5,-4) {$\hat X_{1342}^{(2)}$};
        \node (p12) at (10,-4) {$\hat X_{1243}^{(2)}$};
        \node (p234) at (5,-6) {$\hat X_{2341}^{(3)}$};
        \node (p134) at (7.5,-6) {$\hat X_{1342}^{(3)}$};
        \node (p124) at (10,-6) {$\hat X_{1243}^{(3)}$};
        \node (p123) at (12.5,-6) {$\hat X_{1234}^{(3)}$};
        
        \draw [thick, shorten <=-2pt, shorten >=-2pt, -stealth] (p4) -- (p3);
        \draw [thick, shorten <=-2pt, shorten >=-2pt, -stealth] (p3) -- (p2);
        \draw [thick, shorten <=-2pt, shorten >=-2pt, -stealth] (p2) -- (p1);
        \draw [thick, shorten <=-2pt, shorten >=-2pt, -stealth] (p3) -- (p34);
        \draw [thick, shorten <=-2pt, shorten >=-2pt, -stealth] (p2) -- (p24);
        \draw [thick, shorten <=-2pt, shorten >=-2pt, -stealth] (p1) -- (p14);
        \draw [thick, shorten <=-2pt, shorten >=-2pt, -stealth] (p34) -- (p24);
        \draw [thick, shorten <=-2pt, shorten >=-2pt, -stealth] (p24) -- (p23);
        \draw [thick, shorten <=-2pt, shorten >=-2pt, -stealth] (p24) -- (p14);
        \draw [thick, shorten <=-2pt, shorten >=-2pt, -stealth] (p23) -- (p13);
        \draw [thick, shorten <=-2pt, shorten >=-2pt, -stealth] (p14) -- (p13);
        \draw [thick, shorten <=-2pt, shorten >=-2pt, -stealth] (p13) -- (p12);
        \draw [thick, shorten <=-2pt, shorten >=-2pt, -stealth] (p23) -- (p234);
        \draw [thick, shorten <=-2pt, shorten >=-2pt, -stealth] (p13) -- (p134);
        \draw [thick, shorten <=-2pt, shorten >=-2pt, -stealth] (p12) -- (p124);
        \draw [thick, shorten <=-2pt, shorten >=-2pt, -stealth] (p234) -- (p134);
        \draw [thick, shorten <=-2pt, shorten >=-2pt, -stealth] (p134) -- (p124);
        \draw [thick, shorten <=-2pt, shorten >=-2pt, -stealth] (p124) -- (p123);
        \end{tikzpicture}
    \end{center}
    \caption{Type $\texttt{A}_3$}
    \label{fig:strat_a_3}
\end{subfigure}
\caption{Stratifications of $G/B$}
\label{fig:strat_type_A}
\end{figure}
    
\begin{theorem}
    \label{thm:stratification_type_A}
    The varieties $X_{(\theta, i)}$ for $(\theta, i) \in \ulW$ together with the extremal functions $f_{(\theta, i)}$ form a Seshadri stratification on $X = G/Q \hookrightarrow \prod_{j=1}^m \PP(V(\omega_{k_j}))$.
\end{theorem}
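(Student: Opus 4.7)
The plan is to verify the conditions of Definition~\ref{def:multiproj_seshadri_strat} systematically. Several setup conditions should be immediate: the maximum element is $p_{\max} = (\theta^\circ, 1)$, where $\theta^\circ$ is the longest element of $W/W_1$, so that $\max_Q(\theta^\circ)$ is the longest element of $W/W_Q$, yielding $I_{p_{\max}} = [m]$ and $X_{p_{\max}} = G/Q$. Each $X_{(\theta,i)}$ is a Schubert variety, hence irreducible and normal in characteristic zero, therefore smooth in codimension one. Each extremal function $p_{(\theta,i)}$ is non-constant, multihomogeneous of multidegree $e_i$, and pulled back from $V(\omega_{k_i})^*$, so it lies in $\K[G/Q_i] = \K[X_{I_{(\theta,i)}}]$.

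The crucial preliminary step will be to classify the covering relations in $\ulW$. Using the tableau characterization from Corollary~\ref{cor:partial_order_via_tableaux}, I intend to show that every cover is one of two types: \emph{horizontal}, $(\theta,i) > (\phi,i)$ with $\theta > \phi$ a cover in $W/W_i$, or \emph{transitional}, $(\theta,i) > (\phi,i+1)$ with $\min_Q \max_{Q_i}(\theta) = \min_Q \max_{Q_{i+1}}(\phi)$ in $W/W_Q$. In particular, no cover jumps more than one column height, and a transitional cover means the underlying Schubert variety in $G/Q$ is unchanged while one more factor is forced to zero.

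For (S1) with horizontal covers, a length computation shows that $\max_{Q_i}: W/W_i \hookrightarrow W/W_{Q_i}$ carries covers to covers (using that $G/Q_i \to G/P_{k_i}$ is a fibration with constant fibre dimension $\dim P_{k_i}/Q_i$), so $X_{\max_{Q_i}(\phi)}$ is codimension one in $X_{\max_{Q_i}(\theta)}$; since both multicones sit inside $\{v_1 = \cdots = v_{i-1} = 0\}$, codimension one persists after taking multicones. For transitional covers, I plan to verify $\hat X_{(\phi, i+1)} = \hat X_{(\theta,i)} \cap \{v_i = 0\}$ by analysing the parabolic fibration $G/Q_i \twoheadrightarrow G/Q_{i+1}$ restricted to the relevant Schubert subvarieties; codimension one then follows because $p_{(\theta,i)}$ is linear in $v_i$ and does not vanish identically on $\hat X_{(\theta,i)}$.

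For (S2), if $i < j$ then $\hat X_{(\phi,j)} \subseteq \{v_i = 0\}$ and vanishing of $p_{(\theta,i)}$ is automatic; if $i \geq j$, the projection $G/Q_j \twoheadrightarrow G/P_{k_i}$ (available since $Q_j \subseteq P_{k_i}$ when $i \geq j$) sends $X_{\max_{Q_j}(\phi)}$ onto a Schubert variety in $G/P_{k_i}$, and the Grassmannian vanishing of Proposition~\ref{prop:strat_grassmannian} combined with the standard Bruhat-lifting fact ($\theta \leq \pi_i(\max_Q \phi)$ in $W/W_i$ iff some lift of $\theta$ in $W/W_Q$ is $\leq \max_Q \phi$) will give exactly the desired equivalence. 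For (S3), the $\supseteq$ inclusion follows from (S2); for $\subseteq$, I decompose the vanishing locus of $p_{(\theta,i)}$ on $\hat X_{(\theta,i)}$ into the hyperplane slice $\{v_i = 0\}$ (accounting for the transitional cover) and the preimage under $G/Q_i \twoheadrightarrow G/P_{k_i}$ of the Pl\"ucker vanishing locus on $G/P_{k_i}$ (accounting for the horizontal covers), once again invoking Proposition~\ref{prop:strat_grassmannian}. The main obstacle will be the combinatorial classification of transitional covers and the geometric identification $\hat X_{(\phi,i+1)} = \hat X_{(\theta,i)} \cap \{v_i = 0\}$: both are genuinely new phenomena absent in the single-projective setting $m = 1$, and they require a careful analysis of the restriction of the fibration $G/Q_i \twoheadrightarrow G/Q_{i+1}$ to Schubert subvarieties.
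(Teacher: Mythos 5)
Your plan is essentially the paper's proof: classify the covering relations of $\ulW$ into horizontal ones ($i=j$, a cover in $W/W_i$) and transitional ones ($j=i+1$ with $\min_Q\circ\max_{Q_i}(\theta)=\min_Q\circ\max_{Q_{i+1}}(\phi)$), use the Grassmannian stratification of Proposition~\ref{prop:strat_grassmannian} in the relevant factor for \ref{itm:seshadri_strat_b} and \ref{itm:seshadri_strat_c}, and identify the slice $\{v_i=0\}$ of a stratum with a lower stratum (the paper's Lemma~\ref{lem:prop_strata_type_A}). Your variants — deriving the cover classification from Corollary~\ref{cor:partial_order_via_tableaux}, using the elementary Bruhat lifting for \ref{itm:seshadri_strat_b} (only the easy direction of the paper's Lemma~\ref{lem:poset_prop} is needed there), and replacing the Schubert-cell argument in \ref{itm:seshadri_strat_c} by the preimage of a Schubert variety under $G/Q_i\twoheadrightarrow G/P_{k_i}$ — are all workable, and your observation that the containment part of \ref{itm:seshadri_strat_c} follows from \ref{itm:seshadri_strat_b} with the roles of $p$ and $q$ swapped is correct.

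The genuine gap is your codimension-one argument for transitional covers in \ref{itm:seshadri_strat_a}. Knowing $\hat X_{(\phi,i+1)}=\hat X_{(\theta,i)}\cap\{v_i=0\}$ and that $p_{(\theta,i)}$ is linear in $v_i$ and not identically zero only places the slice inside the divisor $\{p_{(\theta,i)}=0\}\cap\hat X_{(\theta,i)}$, i.e.\ it gives codimension \emph{at least} one; setting the whole block $v_i=0$ imposes $\dim V(\omega_{k_i})$ equations and can cut more. Concretely, for $G/B$ in type $\texttt{A}_3$ take $(\theta,2)\in\ulW$ with $\theta=34\in W/W_2$: then $\hat X_{(\theta,2)}$ is the full multicone of $G/Q_2$, of dimension $7$, its slice $\{v_2=0\}$ is the affine cone over $G/P_3$ in $V(\omega_3)$, of dimension $4$, so the codimension is $3$, even though $p_{(\theta,2)}$ is linear in $v_2$ and nonzero on the stratum. (This pair is of course not a cover, which is exactly the point: your justification never uses the cover condition, so it cannot yield codimension exactly one.) The correct argument uses the transitional-cover condition you already establish: $\widetilde\theta=\widetilde\phi$ forces $\max_{Q_i}(\theta)$ and $\max_{Q_{i+1}}(\phi)$ to have the same minimal representative in $W$, hence the Schubert varieties $X_{\max_{Q_i}(\theta)}\subseteq G/Q_i$ and $X_{\max_{Q_{i+1}}(\phi)}\subseteq G/Q_{i+1}$ have equal dimension (equivalently, the projection between them is generically finite), and the multicones then differ in dimension by exactly one because the index set shrinks from $\{i,\dots,m\}$ to $\{i+1,\dots,m\}$. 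With this replacement your proposal goes through.
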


Before we are able to prove this theorem, we need to show, that $\geq$ is indeed a partial order on $\ulW$ and establish a good understanding of this poset and its covering relations. The key ingredient is the following innocent looking lemma. Although it can be shown more abstractly, we stick to a prove using methods of type \texttt{A} for simplicity. A more general statement can be found in \cite[Lemma 12.4]{lakshmibaiGP4}.

\begin{lemma}
\label{lem:key_lem_type_A}
    If $\theta \in W/W_{Q_i}$ is $P_{k_i}$-maximal, then $\pi_{Q_j}(\theta)$ is $P_{k_j}$-maximal for all $j \geq i$. 
\end{lemma}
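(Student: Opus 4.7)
The plan is to argue entirely in the combinatorial description of $W/W_{Q_i}$ as chains of subsets of $[n]$. Every element $\tau \in W/W_{Q_i}$ is uniquely described by an ascending chain $J_i \subsetneq J_{i+1} \subsetneq \dots \subsetneq J_m$ with $|J_\ell| = k_\ell$; the projection $\pi_{P_{k_\ell}}(\tau)$ is just $J_\ell$, and (by the same componentwise characterisation of the Bruhat order that the paper already recalled for $W$ itself) the Bruhat order on $W/W_{Q_i}$ is the componentwise order on such chains. Under this dictionary, $\theta \in W/W_{Q_i}$ being $P_{k_i}$-maximal is precisely the statement that the associated chain $(J_i, J_{i+1}, \dots, J_m)$ is the Bruhat-largest chain among all chains with the same first subset $J_i$.

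Now fix $j \geq i$ and let $(J_j, J'_{j+1}, \dots, J'_m)$ be an arbitrary element of $W/W_{Q_j}$ whose first subset equals $J_j$. The idea is to splice the head of $\theta$ onto this tail, producing
\[
\widetilde\theta \,=\, (J_i, J_{i+1}, \dots, J_{j-1}, J_j, J'_{j+1}, \dots, J'_m) \;\in\; W/W_{Q_i}.
\]
This is a legitimate chain because $J_i \subsetneq \dots \subsetneq J_j$ is inherited from $\theta$ and $J_j \subsetneq J'_{j+1} \subsetneq \dots \subsetneq J'_m$ is inherited from the competitor. Clearly $\pi_{P_{k_i}}(\widetilde\theta) = J_i = \pi_{P_{k_i}}(\theta)$, so the assumed $P_{k_i}$-maximality of $\theta$ gives $\widetilde\theta \leq \theta$ in $W/W_{Q_i}$. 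Reading this off componentwise for indices $\ell \geq j$ yields $J'_\ell \leq J_\ell$ in $W/W_{P_{k_\ell}}$ for every such $\ell$, which precisely says $(J_j, J'_{j+1}, \dots, J'_m) \leq \pi_{Q_j}(\theta)$ in $W/W_{Q_j}$. Since the competing chain was arbitrary, $\pi_{Q_j}(\theta)$ is the unique maximum lift of $J_j = \pi_{P_{k_j}}(\theta)$, i.e.\ it is $P_{k_j}$-maximal.

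The proof is essentially formal once one has the subset-chain picture, so no serious obstacle arises. The two points that need genuine verification are the componentwise characterisation of the Bruhat order on $W/W_{Q_i}$ (which follows from the analogous statement for $W$ by passing to cosets, since $W_{Q_i} = W_{P_{k_i}} \cap \dots \cap W_{P_{k_m}}$), and the observation that the spliced sequence $\widetilde\theta$ really is a strictly increasing chain of subsets. Both are immediate in type \texttt{A}, which is why the argument works at this level of generality.
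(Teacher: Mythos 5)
Your argument is essentially correct, and it takes a genuinely different route from the paper. The paper works entirely in one-line notation: it sets $\sigma=\max_B(\theta)$ and $\tau=\max_B(\pi_{Q_j}(\theta))$, observes that these permutations agree up to rearrangement inside the blocks $I_0\cup\dots\cup I_j$ and inside each later block $I_s$, and then uses the descent patterns forced by the $P_{k_i}$-maximality of $\sigma$ and by the maximality of $\tau$ in its coset to conclude that $\tau$ has the descent pattern of a $P_{k_j}$-maximal element. You instead encode cosets as chains of subsets $J_i\subsetneq\dots\subsetneq J_m$, interpret $P_{k_i}$-maximality as being the maximum of the fibre of $\pi_{P_{k_i}}$, and show that maximality survives projection by splicing the head $(J_i,\dots,J_{j-1})$ of $\theta$ onto an arbitrary competitor in the fibre over $J_j$. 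This is a clean structural argument that isolates exactly what is needed, namely that the Bruhat order on $W/W_{Q_i}$ is the componentwise (Gale) order on the step subsets; the paper's bookkeeping proof needs nothing beyond the definition of maximal representatives, and it makes visible why the ordering $k_1<\dots<k_m$ along the Dynkin diagram matters, which is the point the paper later emphasizes about this lemma.

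The imported order-theoretic fact is the one place where your justification is too quick. What the paper recalled is the criterion for $W$ itself: $\sigma\leq\tau$ if and only if $\pi_{P_r}(\sigma)\leq\pi_{P_r}(\tau)$ for all $r=1,\dots,n-1$. What you need is the statement for $W/W_{Q_i}$ using only the projections at the steps $k_i,\dots,k_m$, i.e. that the comparisons at the intermediate positions come for free; this does not follow formally ``by passing to cosets''. It is nevertheless a true classical fact in type \texttt{A}: for example, apply the refined tableau criterion ($x\leq y$ if and only if the sorted initial segments compare at every position in the descent set of $x$) to the minimal coset representatives, whose descents lie in $\{k_i,\dots,k_m\}$ and whose initial segments at those positions are precisely the $J_\ell$; equivalently, cite Ehresmann's description of Schubert-variety containment in partial flag varieties. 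With that short argument or reference supplied, your splicing proof is complete.
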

\begin{proof}
    The $P_{k_i}$-maximality of $\theta$ is equivalent to the $P_{k_i}$-maximality of its maximal representative $\sigma = \max_B(\theta)$ in $W$. We write $\sigma = \sigma(1) \cdots \sigma(n)$ and $\tau \coloneqq \max_B \circ \pi_{Q_j}(\theta) = \tau(1) \cdots \tau(n)$ in one-line notation. The parabolic subgroups $P_{k_1}, \dots, P_{k_m}$ partition the set $[n]$ into $m+1$ subsets $I_s = \set{k_s + 1, \dots, k_{s+1}}$ for $s = 0, \dots, m$, where we set $k_0 = 0$ and $k_{m+1} = n$. Since $Q_j = \bigcup_{s=j}^m P_{k_s}$, the one-line notations of $\sigma$ and $\tau$ agree up to permutation in the blocks $I_0 \cup \dots \cup I_j$ and in each $I_s$ for $s > j$. As $\tau$ is the maximal representative of $\pi_{Q_j}(\theta)$, we have $\tau s_{\alpha_\ell} < \tau$ for all $\ell \in [n] \setminus \set{k_j, \dots, k_m}$, hence the numbers $\tau(r)$ are strictly decreasing in $I_0 \cup \dots \cup I_j$ and in all $I_s$ for $s > j$. Additionally the $P_{k_i}$-maximality of $\sigma$ implies, that the numbers $\sigma(r)$ are strictly decreasing in $I_0 \cup \dots \cup I_i$ and in $I_{i+1} \cup \dots \cup I_m$. By combining these observations, we get $\sigma(r) = \tau(r)$ for all $j+1 \leq r \leq n$ and the numbers $\tau(r)$ are strictly decreasing in $I_{j+1} \cup \dots \cup I_m$. But this just means, that $\tau$ is $P_{k_j}$-maximal.
\end{proof}

\begin{lemma}
\label{lem:poset_prop}
    For all $(\theta, i), (\phi, j) \in \ulW$ the relation $(\theta, i) \geq (\phi, j)$ holds, if and only if $i \leq j$ and any one of the following equivalent statements is fulfilled:
    \begin{enumerate}[label=(\alph{enumi})]
        \item \label{itm:poset_i} $\pi_j \circ \max_{Q_i}(\theta) \geq \phi$;
        \item \label{itm:poset_ii} there exists a parabolic subgroup $Q \subseteq Q' \subseteq P_{k_i} \cap P_{k_j}$ and lifts $\overline\theta$, $\overline\phi \in W/W_{Q'}$ of $\theta$ and $\phi$ respectively, such that $\overline\theta \geq \overline\phi$ in $W/W_{Q'}$;
        \item \label{itm:poset_iii} $\min_Q \circ \max_{Q_i}(\theta) \geq \min_Q \circ \max_{Q_j}(\phi)$ in $W/W_Q$.
    \end{enumerate}
\end{lemma}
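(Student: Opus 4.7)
The plan is to show the cyclic chain of implications (definition) $\Leftrightarrow$ (a) $\Leftrightarrow$ (c) $\Rightarrow$ (b) $\Rightarrow$ (definition), throughout assuming $i \leq j$. The workhorse is the elementary \emph{tilt principle}: for any parabolic $Q \subseteq P \subseteq G$, any $\sigma \in W/W_Q$ and any $\phi \in W/W_P$,
\begin{align*}
    \sigma \geq \min_Q(\phi) \text{ in } W/W_Q \quad \Longleftrightarrow \quad \pi_P(\sigma) \geq \phi \text{ in } W/W_P,
\end{align*}
which follows from monotonicity of $\pi_P$ and the fact that $\sigma \geq \min_Q(\pi_P(\sigma))$ (the min is the smallest element in its fibre). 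Besides this, I need only that $\min_Q$ and $\max_Q$ are order isomorphisms onto their images, that $\max_Q = \max_Q \circ \max_{Q_i}$ (maximum of maximum is maximum), and the key Lemma~\ref{lem:key_lem_type_A}.

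The equivalence \textbf{(def) $\Leftrightarrow$ (a)} falls out immediately. Since $Q \subseteq Q_i \subseteq P_{k_j}$, the projection $W/W_Q \twoheadrightarrow W/W_j$ factors through $W/W_{Q_i}$, and combined with $\max_Q(\theta) = \max_Q \circ \max_{Q_i}(\theta)$ this gives $\pi_j(\max_Q(\theta)) = \pi_j \circ \max_{Q_i}(\theta)$. Applying the tilt principle to $\sigma = \max_Q(\theta)$ and $P = P_{k_j}$ then turns the defining inequality $\max_Q(\theta) \geq \min_Q(\phi)$ into (a).

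For \textbf{(a) $\Leftrightarrow$ (c)}, write $\mu = \max_{Q_i}(\theta) \in W/W_{Q_i}$ and $\nu = \max_{Q_j}(\phi) \in W/W_{Q_j}$. The converse direction (c) $\Rightarrow$ (a) is straightforward: apply the monotone projection $W/W_Q \twoheadrightarrow W/W_j$ (which factors through the appropriate quotients) to the inequality $\min_Q(\mu) \geq \min_Q(\nu)$; the left side maps to $\pi_j(\mu)$ and the right side maps to $\pi_j(\nu) = \phi$. The forward direction is the main substance: starting from $\pi_j(\mu) \geq \phi$, apply the monotone map $\max_{Q_j}$ to get $\max_{Q_j}(\pi_j(\mu)) \geq \nu$ in $W/W_{Q_j}$. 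Now Lemma~\ref{lem:key_lem_type_A} applied to the $P_{k_i}$-maximal element $\mu$ yields that $\pi_{Q_j,Q_i}(\mu)$ is $P_{k_j}$-maximal in $W/W_{Q_j}$, which by uniqueness of the maximal lift means $\pi_{Q_j,Q_i}(\mu) = \max_{Q_j}(\pi_j(\mu))$. Applying the monotone map $\min_Q : W/W_{Q_j} \to W/W_Q$ then gives $\min_Q \circ \pi_{Q_j,Q_i}(\mu) \geq \min_Q(\nu)$. Finally, since $W_{Q_i} \subseteq W_{Q_j}$, the $W_Q$-fibre of $\mu$ is contained in the $W_Q$-fibre of $\pi_{Q_j,Q_i}(\mu)$, so $\min_Q(\mu) \geq \min_Q(\pi_{Q_j,Q_i}(\mu))$, which chains together to give (c).

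The implication \textbf{(c) $\Rightarrow$ (b)} is immediate by choosing $Q' = Q$ with $\overline{\theta} = \min_Q(\mu)$ and $\overline{\phi} = \min_Q(\nu)$. For \textbf{(b) $\Rightarrow$ (def)}, push $\overline{\theta} \geq \overline{\phi}$ down to $W/W_Q$ via the monotone map $\min_Q : W/W_{Q'} \to W/W_Q$; then $\min_Q(\overline{\theta})$ is some lift of $\theta$ in $W/W_Q$ (so $\leq \max_Q(\theta)$) while $\min_Q(\overline{\phi})$ is some lift of $\phi$ (so $\geq \min_Q(\phi)$), producing the chain $\max_Q(\theta) \geq \min_Q(\overline{\theta}) \geq \min_Q(\overline{\phi}) \geq \min_Q(\phi)$. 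The only step that is not a formal manipulation of min/max lifts in Bruhat-ordered quotients is the passage (a) $\Rightarrow$ (c), where Lemma~\ref{lem:key_lem_type_A} is essential to identify the projection of the $P_{k_i}$-maximum with the $P_{k_j}$-maximum of the projection; this is the main obstacle and the reason the type~\texttt{A} combinatorics enters the proof.
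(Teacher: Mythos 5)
Your proposal is correct and follows essentially the same route as the paper's proof: the crux step — invoking Lemma~\ref{lem:key_lem_type_A} to identify $\pi_{Q_j} \circ \max_{Q_i}(\theta)$ with $\max_{Q_j} \circ \pi_j \circ \max_{Q_i}(\theta)$ in the passage from \ref{itm:poset_i} to \ref{itm:poset_iii} — is identical, and the remaining implications are the same routine manipulations of minimal/maximal lifts. The only differences are cosmetic: you close the logical cycle via the adjunction (definition)$\Leftrightarrow$\ref{itm:poset_i} and an explicit \ref{itm:poset_ii}$\Rightarrow$(definition), where the paper instead notes (definition)$\Leftrightarrow$\ref{itm:poset_ii} directly, and you assemble the final chain inside $W/W_Q$ rather than transferring an inequality from $W/W_{Q_i}$.
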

\begin{proof}
    By the definition of the partial order on $\ulW$, it suffices to show the following: For any two elements $(\theta, i), (\phi, j) \in \ulW$ with $i \leq j$ the inequality $\max_Q(\theta) \geq \min_Q(\phi)$ is equivalent to each of the three conditions \ref{itm:poset_i}, \ref{itm:poset_ii} and \ref{itm:poset_iii}. We assume the relation $i \leq j$ for the inclusion $Q_j \subseteq Q_i$.
    
    Let $Q'$ be a parabolic subgroup contained in $P_{k_i} \cap P_{k_j}$ containing $Q$. Clearly condition~\ref{itm:poset_ii} is equivalent to $\mathrm{max}_Q(\theta) \geq \mathrm{min}_Q(\phi)$ and \ref{itm:poset_ii} follows from \ref{itm:poset_iii}. Furthermore $\mathrm{max}_Q(\theta) \geq \mathrm{min}_Q(\phi)$ implies \ref{itm:poset_i}, since $\pi_j \circ \max_{Q_i}(\theta) = \pi_j \circ \mathrm{max}_Q(\theta) \geq \pi_j \circ \mathrm{min}_Q(\phi) = \phi$
    
    It remains to show, that \ref{itm:poset_iii} follows from \ref{itm:poset_i}. We write $\widetilde\theta = \min_Q \circ \max_{Q_i}(\theta)$ and $\widetilde\phi = \min_Q \circ \max_{Q_j}(\phi)$. Since both elements are $Q_i$-minimal, it is enough to prove the inequality $\widetilde\theta \geq \widetilde\phi$ in $W/W_{Q_i}$. As the element $\max_{Q_i}(\theta)$ is $P_{k_i}$-maximal, its projection to $W/W_{Q_j}$ is $P_{k_j}$-maximal by Lemma~\ref{lem:key_lem_type_A}, hence we have the equality $\pi_{Q_j} \circ \operatorname{max}_{Q_i}(\theta) = \operatorname{max}_{Q_j} \circ \pi_{j} \circ \operatorname{max}_{Q_i}(\theta)$. But this implies
    \begin{align*}
        \operatorname{max}_{Q_i}(\theta) &\geq \operatorname{min}_{Q_i} \circ \pi_{Q_j} \circ \operatorname{max}_{Q_i}(\theta) = \operatorname{min}_{Q_i} \circ \operatorname{max}_{Q_j} \circ \pi_{j} \circ \operatorname{max}_{Q_i}(\theta) \\
        &\geq \operatorname{min}_{Q_i} \circ \operatorname{max}_{Q_j}(\phi),
    \end{align*}
    where we used condition~\ref{itm:poset_i} for the last inequality. This completes the proof.
\end{proof}

\begin{corollary}
    \label{cor:partial_order_via_tableaux}
    The characterization (\ref{eq:relation_via_tableaux}) of the relation on $\ulW$ is fulfilled. In particular, the relation is a partial order.
\end{corollary}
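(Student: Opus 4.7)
The plan is to apply Lemma~\ref{lem:poset_prop}(b) with the parabolic $Q' = P_{k_i} \cap P_{k_j}$ (or $Q' = P_{k_i}$ when $i = j$) to rewrite $\max_Q(\theta) \geq \min_Q(\phi)$ as a concrete combinatorial condition on subsets of $[n]$. I will use the standard type-\texttt{A} dictionary: $W/W_{P_k}$ is in bijection with $k$-subsets of $[n]$ with Bruhat order equal to componentwise dominance on sorted tuples, and $W/W_{P_{k_i} \cap P_{k_j}}$ is in bijection with flags $A \subseteq B$ (with $|A| = k_i$, $|B| = k_j$), the Bruhat order again being componentwise in each factor. Writing $\theta = \{a_1 < \dots < a_{k_i}\}$ and $\phi = \{b_1 < \dots < b_{k_j}\}$, lifts of $\theta$ are the flags $(\theta, B)$ with $B \supseteq \theta$, and lifts of $\phi$ are the flags $(A, \phi)$ with $A \subseteq \phi$; so $(\theta, i) \geq (\phi, j)$ reduces to finding such lifts with $\theta \geq A$ and $B \geq \phi$ componentwise.

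Next I would observe that the two conditions decouple and are each controlled by an extremal choice. The Bruhat-smallest $A \subseteq \phi$ is $A^\ast = \{b_1, \dots, b_{k_i}\}$, and $\theta \geq A^\ast$ is exactly the semistandardness condition $a_r \geq b_r$ for $r = 1, \dots, k_i$ appearing in~(\ref{eq:relation_via_tableaux}). The Bruhat-largest $B \supseteq \theta$ is $B^\ast = \theta \cup T$, where $T$ consists of the top $k_j - k_i$ elements of $[n] \setminus \theta$, so the remaining task is to verify $B^\ast \geq \phi$ automatically whenever the tableau condition holds.

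The hard part will be this implication. I plan to use the standard equivalence "$X \geq Y$ componentwise iff $|X \cap [1,t]| \leq |Y \cap [1,t]|$ for every $t \in [n]$", after which a brief case split on whether the top of $[n] \setminus \theta$ dips below $t$ yields the explicit formula
\[
    |B^\ast \cap [1,t]| \;=\; \max\bigl( |\theta \cap [1,t]|,\; t - (n - k_j) \bigr).
\]
The first term is bounded by $|\phi \cap [1,t]|$ via a pigeonhole argument (each $a_r \leq t$ forces $b_r \leq a_r \leq t$), and the second by the trivial bound $|\phi \cap [t+1,n]| \leq n - t$. This finishes the nontrivial direction of~(\ref{eq:relation_via_tableaux}); the other direction is already built into the definition of $A^\ast$.

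Once the tableau characterization is in place, the claim that $\geq$ is a partial order is painless: reflexivity and antisymmetry are visible from the definition, while transitivity follows by chaining $a_r \geq b_r \geq c_r$ over the common index range, relying on the fact that the sequence of column lengths is determined by the index $i \in [m]$ alone. I do not anticipate further technical difficulty there; the essential combinatorics sit in the extremal-lift argument and the case split for $B^\ast$.
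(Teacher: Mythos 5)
Your proposal is correct, and it reaches the same combinatorial core as the paper by a somewhat different route. The paper uses Lemma~\ref{lem:poset_prop}\,\ref{itm:poset_i}: $(\theta,i)\geq(\phi,j)$ iff $i\leq j$ and $\pi_j\circ\max_{Q_i}(\theta)\geq\phi$, reads off from the one-line notation of $\min_B\circ\max_{Q_i}(\theta)$ that $\pi_j\circ\max_{Q_i}(\theta)$ is $\theta$ together with the largest $k_j-k_i$ missing numbers --- exactly your $B^\ast$ --- and compares with $\phi$ directly in $W/W_j$, obtaining the converse by restricting the dominance to the first $k_i$ order statistics. You instead route through Lemma~\ref{lem:poset_prop}\,\ref{itm:poset_ii} with the fixed choice $Q'=P_{k_i}\cap P_{k_j}$; this needs two small supplements: (1) the relation does give lifts at this particular level (project the lifts from the definition, or from \ref{itm:poset_iii}, via the monotone map $\pi_{Q'}$ --- immediate, but worth a word), and (2) the componentwise description of the Bruhat order on the two-step quotient $W/W_{P_{k_i}\cap P_{k_j}}$ (Ehresmann/tableau criterion), which the paper only states for $W$ and for Grassmannian quotients, so you should cite it (it is in \cite{bjorner2006combinatorics}). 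The payoff of your decoupling into the two extremal lifts is that each direction of the equivalence uses only one of them: the converse is immediate from $A^\ast$, while the forward direction reduces to $B^\ast\geq\phi$ --- precisely the inequality the paper asserts with a one-line justification --- and your formula $\vert B^\ast\cap[1,t]\vert=\max(\vert\theta\cap[1,t]\vert,\,t-(n-k_j))$ together with the counting form of dominance verifies it correctly and in more detail than the paper does. The concluding transitivity argument matches the paper's intent. I see no gap.
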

\begin{proof}
    Let $(\theta, i), (\phi, j)$ be two elements in $\ulW$ written as tableaux with one column and entries $a_1, \dots, a_{k_i}$ and $b_1, \dots, b_{k_j}$ respectively. The first $k_i$ numbers in the one-line notation of $\widetilde\theta \coloneqq \min_B \circ \max_{Q_i}(\theta) = \theta_1 \cdots \theta_n$ are strictly increasing and therefore $\theta_s = a_s$ for all $s = 1, \dots, k_i$. The last $n-k_i$ numbers are strictly decreasing. The analogous statement holds for the one-line notation of $\widetilde\phi \coloneqq \min_B \circ \max_{Q_j}(\phi) = \phi_1 \cdots \phi_n$. 
    
    If $i \leq j$ and $b_s \leq a_s$ holds for all $s = 1, \dots, k_i$, then we have $b_1 \cdots b_{k_j} = \pi_j(\widetilde\phi) \leq \pi_j(\widetilde\theta) = a_1 \cdots a_{k_i} \theta_{k_i + 1} \cdots \theta_{k_j}$, because the last $k_j - k_i$ numbers are the largest numbers missing in $a_1 \cdots a_{k_i}$. This implies $(\theta, i) \geq (\phi, j)$ by Lemma~\ref{lem:poset_prop}\,\ref{itm:poset_i}. Conversely, if $(\theta, i) \geq (\phi, j)$, then $i \leq j$ and $\widetilde\theta \geq \phi$. Hence we have $b_s \leq a_s$ for all $s = 1, \dots, k_i$, as the first $k_i$ numbers in their one-line notation are increasingly ordered.
\end{proof}

We are now able to fully understand the covering relations of $\ulW$. If $(\theta, i)$ covers $(\phi, j)$, then we either have $i = j$ and $\theta > \phi$ is a covering relation in $W/W_i$ or we have $i < j$. In the second case we have $(\theta, i) > (\pi_{r} \circ \max_Q(\theta), r) > (\phi, j)$ for each $i < r < j$, so it follows $j = i+1$. Additionally, the lifts $\widetilde\theta = \min_Q \circ \max_{Q_i}(\theta)$ and $\widetilde\phi = \min_Q \circ \max_{Q_j}(\phi)$ agree, which we can show in the quotient $W/W_{Q_i} = W/W_{P_{k_i}} \cap W/W_{Q_j}$, as the natural map $W/W_{Q_i} \hookrightarrow W/W_{P_{k_i}} \cap W/W_{Q_j}$ is an isomorphism of posets onto its image.

Since $\widetilde\theta \geq \widetilde\phi$ holds by the previous lemma, we have
\begin{align*}
    &(\theta, i) = (\pi_{i}(\widetilde\theta), i) \geq (\pi_{i}(\widetilde\phi), i) > (\phi, j) \quad \text{and} \\
    &(\theta, i) > (\pi_{j}(\widetilde\theta), j) \geq (\pi_{j}(\widetilde\phi), j) = (\phi, j),
\end{align*}
hence $\widetilde\theta$ and $\widetilde\phi$ are equal in $W/W_{P_{k_i}}$ and in $W/W_{P_{k_j}}$. This yields
\begin{align*}
    \pi_{Q_j}(\widetilde\phi) = \operatorname{max}_{Q_j}(\phi) = \operatorname{max}_{Q_j} \circ \pi_{j}(\widetilde\theta) = \operatorname{max}_{Q_j} \circ \pi_{P_{k_j}} \circ \operatorname{max}_{Q_i}(\theta).
\end{align*}
But by Lemma~\ref{lem:key_lem_type_A} the element $\pi_{Q_j} \circ \max_{Q_i}(\theta)$ is $P_{k_j}$-maximal, so the right hand side is equal to $\pi_{Q_j} \circ \operatorname{max}_{Q_i}(\theta) = \pi_{Q_j}(\widetilde\theta)$. Therefore $\widetilde\theta = \widetilde\phi$.

In particular, if $(\theta, i) > (\phi, j)$ is covering relation in $\ulW$, then $\hat X_{(\phi, j)}$ is of codimension one in $\hat X_{(\theta, i)}$. Therefore the condition~\ref{itm:seshadri_strat_a} on a Seshadri stratification is fulfilled and the relation $(\theta, i) \geq (\phi, j)$ implies $\hat X_{(\phi, j)} \subseteq \hat X_{(\theta, i)}$. Conversely if $\hat X_{(\phi, j)} \subseteq \hat X_{(\theta, i)}$, then the Schubert variety $X_{\max_{Q_j}(\phi)} \subseteq G/Q_j$ is contained in $X_{\pi_{Q_j} \circ \max_{Q_i}(\theta)}$. Hence $\max_{Q_j}(\phi) \leq \pi_{Q_j} \circ \max_{Q_i}(\theta)$, which implies $(\phi, j) \leq (\theta, i)$ by Lemma~\ref{lem:poset_prop}\,\ref{itm:poset_i}.

\begin{lemma}
    \label{lem:prop_strata_type_A}
    Let $(\theta, i) \in \ulW$ and $\widetilde\theta = \min_Q \circ \max_{Q_i}(\theta)$. Then the following equality holds for all $i < j \leq m$:
    \begin{align*}
        \set{(v_1, \dots, v_m) \in \hat X_{(\theta, i)} \mid v_i = \dots = v_{j-1} = 0} = \hat X_{(\pi_{j}(\widetilde{\theta}), j)}.
    \end{align*}
\end{lemma}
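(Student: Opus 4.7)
The plan is to introduce the element $\widetilde\eta = \min_Q \circ \max_{Q_j}(\pi_j(\widetilde\theta)) \in W/W_Q$, so that by definition of the stratum the right hand side unfolds to $\hat X_{\widetilde\eta} \cap \{v_1 = \dots = v_{j-1} = 0\}$. Since the left hand side equals $\hat X_{\widetilde\theta} \cap \{v_1 = \dots = v_{j-1} = 0\}$ (by definition of $\hat X_{(\theta,i)}$ together with $i < j$), the task reduces to comparing the two Schubert multicones $\hat X_{\widetilde\theta}$ and $\hat X_{\widetilde\eta}$ after zeroing out the first $j-1$ components.

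I would first collect the needed relations in $W/W_Q$. The element $\pi_{Q_i}(\widetilde\theta) = \max_{Q_i}(\theta)$ is $P_{k_i}$-maximal, so Lemma~\ref{lem:key_lem_type_A} gives that $\pi_{Q_j}(\widetilde\theta)$ is $P_{k_j}$-maximal, i.e.\ $\pi_{Q_j}(\widetilde\theta) = \max_{Q_j}(\pi_j(\widetilde\theta))$. Applying $\min_Q$ to this identity yields both $\widetilde\eta \leq \widetilde\theta$ and $\pi_{Q_j}(\widetilde\eta) = \pi_{Q_j}(\widetilde\theta)$. The first inequality gives $X_{\widetilde\eta} \subseteq X_{\widetilde\theta}$ in $G/Q$, and hence the multicone inclusion $\hat X_{\widetilde\eta} \subseteq \hat X_{\widetilde\theta}$; intersecting with $\{v_1 = \dots = v_{j-1} = 0\}$ gives the easy inclusion $\supseteq$.

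For the other inclusion, fix a point $x = (0, \dots, 0, v_j, \dots, v_m) \in \hat X_{\widetilde\theta}$. Its projection satisfies $\hat\pi_{[j,m]}(x) = (v_j, \dots, v_m) \in \hat\pi_{[j,m]}(\hat X_{\widetilde\theta}) = \hat X_{\pi_{Q_j}(\widetilde\theta)}$, where I use that the multicone of the image equals the image of the multicone (this was recorded at the beginning of Section~2 for any multiprojective variety). Since $\pi_{Q_j}(\widetilde\eta) = \pi_{Q_j}(\widetilde\theta)$, the same set equals $\hat\pi_{[j,m]}(\hat X_{\widetilde\eta})$, so one can choose a lift $y = (v_1', \dots, v_{j-1}', v_j, \dots, v_m) \in \hat X_{\widetilde\eta}$. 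Now I exploit that the multicone $\hat X_{\widetilde\eta}$ is closed and stable under the component-wise $(\K^\times)^m$-action: the morphism $\phi:\A^1 \to \prod_{s=1}^m V(\omega_{k_s})$, $t \mapsto (tv_1', \dots, tv_{j-1}', v_j, \dots, v_m)$, sends $\K^\times$ into $\hat X_{\widetilde\eta}$, so by closedness and density of $\K^\times$ in $\A^1$ it sends all of $\A^1$ into $\hat X_{\widetilde\eta}$. In particular $x = \phi(0) \in \hat X_{\widetilde\eta}$, finishing the inclusion $\subseteq$.

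The main obstacle is the last step, where we must promote $x$ from a point of $\hat X_{\widetilde\theta}$ to a point of the smaller Schubert multicone $\hat X_{\widetilde\eta}$. The $(\K^\times)^m$-stability of the multicone plus a limit argument handles this cleanly, and it is the only step that relies on the geometry of the Schubert variety rather than formal Bruhat-order manipulations via Lemma~\ref{lem:key_lem_type_A}.
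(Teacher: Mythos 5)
Your proof is correct, and while it rests on the same combinatorial pivot as the paper's — via Lemma~\ref{lem:key_lem_type_A} one gets $\pi_{Q_j}(\widetilde\theta) = \max_{Q_j}(\pi_j(\widetilde\theta))$, equivalently $\pi_{Q_j}\circ\max_{Q_i}(\theta) = \max_{Q_j}\circ\pi_j(\widetilde\theta)$, which the paper extracts through Lemma~\ref{lem:poset_prop} — the geometric packaging is genuinely different. The paper works directly with the strata: for $\subseteq$ it chooses non-zero representatives $w_i,\dots,w_m$ of a point of $\hat X_{(\theta,i)}$ and projects $([w_j],\dots,[w_m])$ into the Schubert variety of $\pi_{Q_j}\circ\max_{Q_i}(\theta)$, and for $\supseteq$ it lifts along the surjection $X_{\max_{Q_i}(\theta)} \twoheadrightarrow X_{\pi_{Q_j}\circ\max_{Q_i}(\theta)}$; zero coordinates never cause trouble because membership in a multicone only asks for the \emph{existence} of non-zero representatives. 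You instead rewrite both sides as $G/Q$-Schubert multicones cut by coordinate subspaces, obtain $\supseteq$ from the Bruhat containment $\widetilde\eta \le \widetilde\theta$, and handle $\subseteq$ by projecting, lifting into $\hat X_{\widetilde\eta}$ via "image of the multicone equals multicone of the image", and then degenerating the first $j-1$ coordinates to zero using closedness and $(\K^\times)^m$-stability — a clean substitute for the representative-based description, and arguably more transparent about where the geometry enters. Two remarks: your unfolding of both sides uses the identification $\hat X_{(\theta,i)} = \hat X_{\widetilde\theta} \cap \set{v_1 = \dots = v_{i-1} = 0}$, which the paper asserts in the setup without proof (it is itself justified by the same surjectivity/lifting argument), whereas the paper's proof of this lemma avoids relying on it; and your limit argument can be bypassed entirely by noting that the characterization of multicone points through non-zero representatives already makes a multicone stable under setting individual coordinates to zero.
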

\begin{proof}
    Let $v = (v_1, \dots, v_m) \in \hat X_{(\theta, i)}$ with $v_i = \dots = v_{j-1} = 0$. We choose a non-zero vector $w_r \in V(\omega_{k_r})$ for all $r = i, \dots, m$ with $v_r \in \K w_r$, such that $([w_i], \dots, [w_m])$ is an element of the Schubert variety
    \begin{align*}
        X_{(\theta, i)} = X_{\max_{Q_i}(\theta)} \subseteq \prod_{r=i}^m \PP(V(\omega_{k_r})).
    \end{align*}
    Because of the following commutative diagram, $([w_j], \dots, [w_m])$ lies in the Schubert variety to the element $\pi_{Q_j} \circ \max_{Q_i}(\theta) \in W/W_{Q_j}$:
    \begin{equation*}
    \begin{tikzcd}
    X_{\pi_{Q_i}(\theta)} \arrow[r, hook] \arrow[d, two heads] & \prod_{r=i}^m \PP(V(\omega_{k_r})) \arrow[d, two heads] &  \\
    X_{\pi_{Q_j}(\theta)} \arrow[r, hook] & \prod_{r=j}^m \PP(V(\omega_{k_r}))
    \end{tikzcd}
    \end{equation*}
    But by Lemma~\ref{lem:poset_prop} we have $\pi_{Q_j} \circ \operatorname{max}_{Q_i}(\theta) = \operatorname{max}_{Q_j} \circ \pi_j \circ \operatorname{max}_{Q_i}(\theta) = \operatorname{max}_{Q_j} \circ \pi_{j}(\widetilde{\theta})$, so $v$ is contained in the multicone $\hat X_{(\pi_{j}(\widetilde{\theta}), j)}$. 
    
    Conversely, every element of the multicone $\hat X_{(\pi_{j}(\widetilde{\theta}), j)}$ lies in the stratum $\hat X_{(\theta, i)}$ because of the surjectivity of the map $X_{\max_{Q_i}(\theta)} \twoheadrightarrow X_{\pi_{Q_j} \circ \max_{Q_i}(\theta)} = X_{\pi_{Q_j}(\widetilde\theta)}$.
\end{proof}

\begin{proof}[Proof of Theorem~\ref{thm:stratification_type_A}]
    It is well known, that Schubert-varieties are smooth in codimension one (see e.\,g. \cite[Corollary 3.5]{seshstratandschubvar}). Their multicones $\hat X_{(\theta, i)} \subseteq \prod_{j=1}^m V(\omega_{k_j})$ are closed, irreducible subvarieties and they are smooth in codimension one as well by Corollary~\ref{cor:smooth_in_codim_1}.

    We already proved condition~\ref{itm:seshadri_strat_a} and the equivalence of $(\phi, j) \leq (\theta, i)$ and the inclusion $\hat X_{(\phi, j)} \subseteq \hat X_{(\theta, i)}$ of their multicones. Next, we show \ref{itm:seshadri_strat_b}. Let $(\phi, j) \nleq (\theta, i)$ in $\ulW$. We need to prove, that the Pl\"ucker coordinate $p_{(\phi,j)}$ vanishes identically on $\hat X_{(\theta, i)}$. This is trivial, if $j < i$. Now we assume $j \geq i$ and set $\kappa = \pi_j \circ \max_{Q_i}(\theta) \in W/W_j$. Note that the affine cone $\hat X_{\kappa} \subseteq V(\omega_{k_j})$ of the Schubert variety $X_\kappa \subseteq G/P_{k_j}$ coincides with the projection of $\hat X_{(\theta, i)}$ to $V(\omega_{k_j})$. Now Schubert varieties and Pl\"ucker coordinates form a Seshadri stratification on $G/P_{k_j}$ by Proposition~\ref{prop:strat_grassmannian}, hence~\ref{itm:seshadri_strat_b} is fulfilled in this case. Therefore the function $p_{(\phi, i)}$ vanishes on $\hat X_{(\theta, i)}$, if and only if $\phi \nleq \kappa = \pi_j \circ \max_{Q_i}(\theta)$. By Lemma~\ref{lem:poset_prop}\,\ref{itm:poset_i} this is equivalent to $(\phi, j) \nleq (\theta, i)$.

    Lastly, we prove~\ref{itm:seshadri_strat_c}. We fix an element $(\theta, i) \in \ulW$. The function $p_{(\theta,i)}$ vanishes on all multicones $\hat X_{(\phi,j)}$ for $(\phi, j) < (\theta, i)$. This is clearly true for $j > i$, otherwise it follows from~\ref{itm:seshadri_strat_c} for the stratification on $G/P_{k_i}$. 
    
    Conversely, let $v = (v_1, \dots, v_m) \in \hat X_{(\theta, i)}$ such that $p_{(\theta,i)}(v) = 0$. In the case of $v_i = 0$, the element $v$ is contained in the multicone $\hat X_{(\phi, j)}$ for $j = i+1$ and $\phi = \pi_j \circ \max_{Q_i}(\theta)$ by Lemma~\ref{lem:prop_strata_type_A} and we have $(\phi, j) < (\theta, i)$. For $v_i \neq 0$, its projective class $[v_i]$ can be viewed as an element of the Schubert variety $X_\theta \subseteq G/P_{k_i}$. Again, using the Seshadri stratification on $G/P_{k_i}$ we see that $[v_i]$ is contained in the Schubert variety $X_\phi$ to an element $\phi < \theta$ in $W/W_i$. For each $r = i, \dots, m$ we choose a non-zero vector $w_r \in V(\omega_{k_r})$ with $v_r \in \K w_r$ and $w \coloneqq ([w_i], \dots, [w_m]) \in X_{(\theta, i)}$. Then $w$ lies in a Schubert cell $C_\sigma \subseteq G/Q_i$ for a unique element $\sigma \in W/W_{Q_i}$. It satisfies $\pi_i(\sigma) \leq \phi < \theta$, so $\sigma \leq \max_{Q_i} \circ \pi_i(\sigma) < \max_{Q_i}(\theta)$. Hence $v$ is contained in $\hat X_{(\pi_i(\sigma), i)}$, which completes the proof.
\end{proof}

\subsection{The fan of monoids and standard monomial theory}

Let $\mathrm{SSYT}_Q$ be the set of all semistandard Young-tableaux with entries in $[n]$, where only columns of length $k_1, \dots, k_m$ may appear. Equivalently, this is the union of the sets $\mathrm{YT}(\mu)$ over all $\mu \in \N_0 \mkern1mu \omega_{k_1} + \dots + \N_0 \mkern1mu \omega_{k_m}$. A Young-tableau is contained in this union, if and only if all columns come from elements in $\ulW$.

\begin{corollary}
    \label{cor:smt_type_A}
    $ $
    \begin{enumerate}[label=(\alph{enumi})]
        \item \label{itm:smt_type_A_a} The following map is a bijection:
        \begin{align*}
            \mathrm{SSYT}_Q \to \Gamma, \quad ((\theta_1, i_1), \dots, (\theta_\ell, i_\ell)) \mapsto e_{(\theta_1, i_1)} + \dots + e_{(\theta_\ell, i_\ell)}.
        \end{align*}
        \item The Seshadri stratification on $G/Q$ is normal and balanced.
        \item \label{itm:smt_type_A_c} The set $\mathbb G$ of all indecomposable elements in $\Gamma$ coincides with the set
        \begin{align*}
            \Gamma(1) = \set{\underline a \in \Gamma \mid \vert \mkern-1mu \deg \underline a \mkern2mu \vert = 1} = \set{e_{(\theta, i)} \mid (\theta, i) \in \ulW}
        \end{align*}
        of all elements of total degree $1$ in $\Gamma$.
        \item \label{itm:smt_type_A_d} Let $\mathbb G_R$ be the set of all Pl\"ucker coordinates $p_{(\theta, i)}$ for $(\theta, i) \in \ulW$. Then the standard monomial basis from Proposition~\ref{prop:smt} agrees with the basis from Theorem~\ref{thm:smt_basis_G_Q}.
    \end{enumerate}
\end{corollary}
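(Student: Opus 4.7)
My plan is to reduce everything to proving that the stratification is of Hodge type, \ie all bonds $b_{p,q}$ equal $1$. Once established, Example~\ref{ex:hodge_type} yields $\Gamma_{\mathfrak C} = \N_0^{\mathfrak C}$ for every maximal chain $\mathfrak C$, which is automatically normal and balanced (giving~(b)), and identifies $\Gamma$ with the set of all non-negative integer combinations $\sum_p n_p e_p$ whose support is a chain in $\ulW$.

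To verify Hodge type I split into two kinds of covering relations. For a covering relation $(\theta, i) > (\phi, j)$ between different levels, the earlier analysis forces $j = i + 1$ and $I_{(\theta, i)} \setminus I_{(\phi, j)} = \set{i}$, so Lemma~\ref{lem:projective_covering_relation}\,\ref{itm:projective_covering_relation_b} identifies the bond with the $i$-th component of $\deg p_{(\theta, i)} = e_i$, which is $1$. For a within-level covering relation $(\theta, i) > (\phi, i)$ the index sets coincide and the lemma does not apply; instead I would use that $p_{(\theta, i)}$ is pulled back from the Grassmannian Pl\"ucker coordinate $p_\theta \in \K[\hat X_\theta]$ along the surjection $\hat X_{(\theta, i)} \twoheadrightarrow \hat X_\theta$, that this surjection maps $\hat X_{(\phi, i)}$ onto the Schubert divisor $\hat X_\phi$ with reduced fibers, and that $p_\theta$ vanishes to first order along $\hat X_\phi$ by the classical Hodge-type property of the Grassmannian stratification of Proposition~\ref{prop:strat_grassmannian} (see \cite[Chapter~2]{seshadri2016introduction}). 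This within-level computation is the main technical obstacle of the proof; the between-level case is essentially automatic.

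The remaining parts follow quickly from the Hodge-type description of $\Gamma$. For~(a), Corollary~\ref{cor:partial_order_via_tableaux} identifies the partial order on $\ulW$ with the semistandardness of two-column tableaux, so a multichain $q_1 > q_2 > \dots > q_s$ in $\ulW$ with multiplicities $n_1, \dots, n_s \in \N$ arranges into a tableau by placing $q_s$ (which, by Lemma~\ref{lem:poset_prop}, has the largest $i$-index, hence the longest column) on the left and proceeding in reverse chain order; reading off column multiplicities recovers $\sum_j n_j e_{q_j}$, giving the claimed bijection. For~(c), each $e_p$ is indecomposable since it has total degree $1$ and $\Gamma \subseteq \N_0^A$, while any $\underline a \in \Gamma$ of total degree $\geq 2$ decomposes as $e_q + (\underline a - e_q)$ for $q = \max \supp \underline a$, so $\mathbb G = \Gamma(1)$. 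For~(d), choose $\mathbb G_R = \set{p_{(\theta, i)} \mid (\theta, i) \in \ulW}$; since each $p_{(\theta, i)}$ is an extremal function, $\mathcal V(p_{(\theta, i)}) = e_{(\theta, i)}$, and the unique ordered decomposition of $\underline a \in \Gamma$ from~(c) assembles into exactly the Pl\"ucker monomial corresponding under~(a) to the associated semistandard tableau. Proposition~\ref{prop:smt} then identifies the resulting basis with the Hodge-Young basis of Theorem~\ref{thm:smt_basis_G_Q}.
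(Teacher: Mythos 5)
Your reduction of (a)--(d) to the Hodge-type property is sound, and your between-level bond argument is correct: for a covering relation $(\theta,i) > (\phi,i+1)$ one has $I_{(\theta,i)} \setminus I_{(\phi,i+1)} = \set{i}$, so Lemma~\ref{lem:projective_covering_relation}\,\ref{itm:projective_covering_relation_b} gives bond $1$. The gap is the within-level bond $b_{(\theta,i),(\phi,i)}$, which is exactly the hard content and which your sketch does not establish. Writing $\pi$ for the projection $\hat X_{(\theta,i)} \twoheadrightarrow \hat X_\theta$ onto the $i$-th factor, what you actually need is that the induced local homomorphism of discrete valuation rings $\mathcal O_{\hat X_\theta, \hat X_\phi} \to \mathcal O_{\hat X_{(\theta,i)}, \hat X_{(\phi,i)}}$ is unramified, \ie a local equation of the Schubert divisor $\hat X_\phi$ pulls back to a function vanishing to order exactly one along $\hat X_{(\phi,i)}$; in general pullback multiplies vanishing orders by the ramification index $e \geq 1$, so knowing the Grassmannian order is $1$ only gives $b_{(\theta,i),(\phi,i)} = e$, and ruling out $e > 1$ requires a genuine argument (for instance a transversal curve computation at a generic point of the divisor in the spirit of \cite{seshstratandschubvar}). ``Maps $\hat X_{(\phi,i)}$ onto $\hat X_\phi$ with reduced fibers'' is neither the right condition nor proved, and note that $\pi^{-1}(\hat X_\phi) \cap \hat X_{(\theta,i)}$ is in general a union of several Schubert-type components of which $\hat X_{(\phi,i)}$ is only one. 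Moreover, the other input you invoke --- that the stratification of Proposition~\ref{prop:strat_grassmannian} has all bonds equal to $1$ --- is true (minuscule case) but is stated nowhere in the paper, so it would need its own proof, which again is either the same kind of local computation or the basis-counting argument below.

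It is worth comparing with the paper's route, which deliberately runs the logic in the opposite direction to avoid any bond computation. Part~\ref{itm:smt_type_A_a} is proved directly: for a semistandard $T$ the columns lie in a common maximal chain by Corollary~\ref{cor:partial_order_via_tableaux}, so $\mathcal V(f_T) = e_{(\theta_1,i_1)} + \dots + e_{(\theta_\ell,i_\ell)}$ by additivity of the quasi-valuation on products of extremal functions along a chain; injectivity is immediate; and surjectivity holds because the $f_T$ form a basis of $R$ by Theorem~\ref{thm:smt_basis_G_Q} while the leaves of $\mathcal V$ are at most one-dimensional, so the values $\mathcal V(f_T)$ exhaust $\Gamma$. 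Normality, balancedness, \ref{itm:smt_type_A_c} and \ref{itm:smt_type_A_d} then follow exactly as in your final paragraph, and the Hodge-type property is deduced \emph{afterwards} from the integrality of $\Gamma$ --- it is a consequence of this corollary, not an ingredient. So if you wish to keep your order of argument you must actually carry out the deferred within-level bond computation; as written, the step you label the ``main technical obstacle'' is where the proof is missing.
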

\begin{proof}
    \begin{enumerate}[label=(\alph{enumi})]
        \item For a tableau $T \in \mathrm{SSYT}_Q$ with columns $(\theta_1, i_1), \dots, (\theta_\ell, i_\ell)$ consider the regular function $f_T = p_{(\theta_1, i_1)} \cdots p_{(\theta_\ell, i_\ell)}$. Since $T$ is semistandard, it follows from the equivalence~(\ref{eq:relation_via_tableaux}) that there exists a maximal chain in $\ulW$ containing all elements $(\theta_1, i_1), \dots, (\theta_\ell, i_\ell)$. Hence $f_T$ has the quasi-valuation $\mathcal V(f_T) = e_{(\theta_1, i_1)} + \dots + e_{(\theta_\ell, i_\ell)}$, so the map $\mathrm{SSYT}_Q \to \Gamma$ is well-defined. The injectivity is already contained in the definition of this map, since one can reconstruct the semistandard tableau from the coefficients of the vectors $e_p$, $p \in \ulW$. We also know from Theorem~\ref{thm:smt_basis_G_Q} that the functions $f_T$ for $T \in \mathrm{SSYT}_Q$ form a basis of $R$. Therefore the map $\mathrm{SSYT}_Q \to \Gamma$ is surjective as well.
        \item The quasi-valuation of extremal functions does not depend on the choice of the total order $\geq^t$ on the poset $\ulW$ and every element in $\Gamma$ is the quasi-valuation of a product of extremal functions in a common maximal chain. Hence the stratification is balanced. By part~\ref{itm:smt_type_A_a}, the monoid $\Gamma_{\mathfrak C}$ to a maximal chain $\mathfrak C$ in $\ulW$ coincides with $\N_0^{\mathfrak C}$, which clearly is saturated. So the stratification is also normal.
        \item This statement is a consequence of part~\ref{itm:smt_type_A_a}.
        \item Let $\underline a \in \Gamma$ and $T$ be its corresponding tableau in $\mathrm{SSYT}_Q$. Then part~\ref{itm:smt_type_A_d} follow from the fact, that the unique decomposition $\underline a = \underline a^1 + \dots + \underline a^s$ into indecomposables with $\min \supp \underline a^k \geq \max \supp \underline a^{k+1}$ for all $k = 1, \dots, s-1$ is given by the columns of $T$, where $\underline a^1$ corresponds to the rightmost and $\underline a^s$ to the leftmost column. \hfill\qedhere
    \end{enumerate}
\end{proof}

Comparing this result to the fan of monoids of Hodge-type stratifications (see Example~\ref{ex:hodge_type}) suggests that our stratification on $G/Q$ should also be of Hodge type. Fortunately, we can skip the computation of the bonds with the following argument: The bond of every minimal element $p \in \underline W$ is determined by the degree of the extremal function $f_p$, hence the bond is $1$. Let $p$ be any non-minimal element in $\underline W$ and $q_1, \dots, q_s \in \underline W$ be the elements covered by $p$. \WWlog{} we can assume $b_1 \geq \dots \geq b_s$, where we set $b_i \coloneqq b_{p, q_i}$ for all $i = 1, \dots, s$. Now choose any regular function $g \in R$ which restricts to a non-zero function $\overline g$ on $\hat X_p$ with vanishing multiplicity $\nu_{p, q_1}(\overline g) = 1$. Such a function is called a \textit{uniformizer} in the local ring $\mathcal O_{\hat X_p, \hat X_{q_1}}$. We then have the following inequalities:
\begin{align*}
    \frac{\nu_{p,q_i}(g)}{b_i} \geq \frac{1}{b_i} \geq \frac{1}{b_1} = \frac{\nu_{p,q_1}(g)}{b_1}.
\end{align*}
By the definition of the quasi-valuation, the coefficient of the basis vector $e_p$ in $\mathcal V(g)$ is therefore equal to $1/b_1$. But $\mathcal V(g)$ is contained in the fan of monoids $\Gamma$ of our stratification on $G/Q$, hence $\mathcal V(g)$ has integer coefficients by Corollary~\ref{cor:smt_type_A}. This shows $1 = b_1 = \dots = b_s$.

In particular, the stratification on $G/Q$ is of LS-type and the index poset $\mathcal I$ is totally ordered. This allows the computation of the multidegrees of $G/Q$ via Corollary~\ref{cor:multidegree_formula}.

\begin{example}
    The multidegrees of $G/B$ in the Dynkin types $\texttt{A}_2$ and $\texttt{A}_3$ can be read off the Hasse-diagrams from Figure~\ref{fig:hasse_ulWlambda_type_A}. More precisely, for a tuple $\underline j \in \N_0^m$ with $j_1 + \dots + j_m = \dim G/B$ the multidegree $\deg_{\underline j}(G/B)$ is given by the number of maximal chains in $\underline W$ containing exactly $j_i + 1$ many elements of $W/W_{i}$. For $n = 2$ this yields $\deg_{(2,1)}(G/B) = 1$ and $\deg_{(2,1)}(G/B) = 1$ for $n = 2$ and for $n = 3$ we get
    \begin{align*}
        \deg_{(1,2,3)}(G/B) = 1, \quad &\deg_{(1,3,2)}(G/B) = 2, \quad \deg_{(1,4,1)}(G/B) = 2, \\
        \deg_{(2,2,2)}(G/B) = 2, \quad &\deg_{(2,1,3)}(G/B) = 1, \quad \deg_{(2,3,1)}(G/B) = 2, \\
        \deg_{(3,1,2)}(G/B) = 1, \quad &\deg_{(3,2,1)}(G/B) = 1.
    \end{align*}
\end{example}

\subsection{Schubert varieties and generalizations}

The definition of the multiprojective Seshadri stratification on $G/Q$ leads to several questions on possible generalizations. For instance, our construction was based on a few choices, \ie the numbers $k_1, \dots, k_m$, the embedding of $G/Q$ into a product of projective spaces and the index poset $\mathcal I$ we used for the stratification. We will answer the following questions in a separate paper. The rest of this section teases a few of the results and also shows some of the occurring problems.
\begin{itemize}
    \item Can we also choose the numbers $k_1, \dots, k_m$ in descending order $n-1 \geq k_1 > \dots > k_m \geq 1$ or in any order?
    \item Are there multiprojective stratifications using other embeddings of $G/Q$ into a product of projectivized irreducible representations?
    \item Is it possible to also define Seshadri stratifications on Schubert varieties \wrt{} the Plücker embedding or any other embedding?
    \item Can the construction even be generalized to Schubert varieties in arbitrary Dynkin types?
    \item If we can define any of the above stratifications, is it balanced and normal? Which tableau model corresponds to the associated fan of monoids?
\end{itemize}

Our stratification on $G/Q$ can actually be directly generalized to all other Dynkin types with the following caveat: The choice of the numbers $k_1, \dots, k_m$ corresponds to simple roots $\alpha_1, \dots, \alpha_m$ in the Dynkin diagram of our group. One obtains a well-defined Seshadri stratification, if and only if these roots are ordered along the Dynkin diagram, \ie there exists a path, which visits them in the order $\alpha_1, \dots, \alpha_m$. The path may also visit other roots along the way, but no root may be visited twice. Therefore we cannot define a stratification on $G/B$ in the types $\texttt{D}_n$ for $n \geq 4$ and $\texttt{E}$.

Since the Dynkin diagram in type $\texttt{A}$ is a line, this result shows that we can, in fact, choose the reverse ordering $n-1 \geq k_1 > \dots > k_m \geq 1$. The resulting stratification is defined in the exact same way. One can even use the same proofs, except for Lemma~\ref{lem:key_lem_type_A}, which is to be expected, as this Lemma heavily uses the order of the numbers $k_i$ (or the associated simple roots). Actually, this Lemma is exactly the reason why the simple roots need to be ordered along the Dynkin diagram. 

The stratification using the reversed ordering is also of Hodge type and the tableau model corresponding to the fan of monoids $\Gamma$ belongs to -- what we call -- \textit{Anti-Young-tableaux}. In contrast to usual Young-tableaux, the boxes in the underlying Anti-Young-diagrams of anti-tableaux are aligned at the bottom and right instead of top and left. The notion of semistandard tableaux does not change: An anti-tableau is called \textit{semistandard}, if the entries are weakly increasing along the rows and strictly increasing along the columns. Again, the underlying poset of the anti-stratification is designed, such that an anti-tableau is semistandard, if and only if its columns are contained in a common maximal chain.

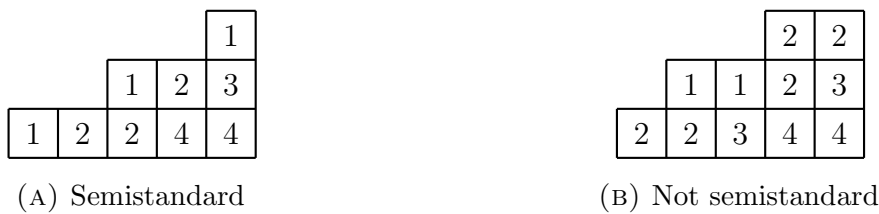
\begin{figure}
\centering
\begin{subfigure}{.5\textwidth}
    \begin{center}
        \begin{tikzpicture}[scale=0.65]
            \node at (-4.5,0.5) {$1$};
            \node at (-3.5,0.5) {$2$};
            \node at (-2.5,0.5) {$2$};
            \node at (-1.5,0.5) {$4$};
            \node at (-0.5,0.5) {$4$};
            \node at (-2.5,1.5) {$1$};
            \node at (-1.5,1.5) {$2$};
            \node at (-0.5,1.5) {$3$};
            \node at (-0.5,2.5) {$1$};
    
            \draw [thick] (0,0) -- (-5,0);
            \draw [thick] (0,0) -- (0,3);
            \draw [thick] (0,1) -- (-5,1);
            \draw [thick] (0,2) -- (-3,2);
            \draw [thick] (0,3) -- (-1,3);
            \draw [thick] (-1,0) -- (-1,3);
            \draw [thick] (-2,0) -- (-2,2);
            \draw [thick] (-3,0) -- (-3,2);
            \draw [thick] (-4,0) -- (-4,1);
            \draw [thick] (-5,0) -- (-5,1);
        \end{tikzpicture}
    \end{center}
    \caption{Semistandard}
    \label{fig:anti-ss}
\end{subfigure}%
\begin{subfigure}{.5\textwidth}
    \begin{center}
        \begin{tikzpicture}[scale=0.65]
            \node at (-4.5,0.5) {$2$};
            \node at (-3.5,0.5) {$2$};
            \node at (-2.5,0.5) {$3$};
            \node at (-1.5,0.5) {$4$};
            \node at (-0.5,0.5) {$4$};
            \node at (-3.5,1.5) {$1$};
            \node at (-2.5,1.5) {$1$};
            \node at (-1.5,1.5) {$2$};
            \node at (-0.5,1.5) {$3$};
            \node at (-1.5,2.5) {$2$};
            \node at (-0.5,2.5) {$2$};
    
            \draw [thick] (0,0) -- (-5,0);
            \draw [thick] (0,0) -- (0,3);
            \draw [thick] (0,1) -- (-5,1);
            \draw [thick] (0,2) -- (-4,2);
            \draw [thick] (0,3) -- (-2,3);
            \draw [thick] (-1,0) -- (-1,3);
            \draw [thick] (-2,0) -- (-2,3);
            \draw [thick] (-3,0) -- (-3,2);
            \draw [thick] (-4,0) -- (-4,2);
            \draw [thick] (-5,0) -- (-5,1);
        \end{tikzpicture}
    \end{center}
    \caption{Not semistandard}
    \label{fig:anti-no-ss}
\end{subfigure}
\caption{Anti-Young-tableaux for $n = 4$}
\label{fig:anti-tableaux}
\end{figure}

\ytableausetup{smalltableaux}
\begin{example}
    \label{ex:antitableaux}
    If we choose the reversed ordering $k_1 = 2$ and $k_2 = 1$ for $G/B$ in Dynkin type $\texttt{A}_2$, we obtain the following poset and stratification.
    \begin{center}
    \begin{minipage}{.35\textwidth}
        \begin{center}
            \begin{tikzpicture}[scale=0.65]
            \node (p23) at (0,0.57) {$\begin{ytableau} 2 \\ 3 \end{ytableau}$};
            \node (p13) at (0,-1.5)  {$\begin{ytableau} 1 \\ 3 \end{ytableau}$};
            \node (p12) at (-1.5,-3) {$\begin{ytableau} 1 \\ 2 \end{ytableau}$};
            \node (p3) at (1.5,-3) {$\begin{ytableau} 3 \end{ytableau}$};
            \node (p2) at (0,-4.5) {$\begin{ytableau} 2 \end{ytableau}$};
            \node (p1) at (0,-6) {$\begin{ytableau} 1 \end{ytableau}$};
            
            \draw [thick, shorten <=-2pt, shorten >=-2pt, -stealth] (p3) -- (p2);
            \draw [thick, shorten <=-2pt, shorten >=-2pt, -stealth] (p2) -- (p1);
            \draw [thick, shorten <=-2pt, shorten >=-2pt, -stealth] (p13) -- (p3);
            \draw [thick, shorten <=-2pt, shorten >=-2pt, -stealth] (p12) -- (p2);
            \draw [thick, shorten <=-2pt, shorten >=-2pt, -stealth] (p23) -- (p13);
            \draw [thick, shorten <=-2pt, shorten >=-2pt, -stealth] (p13) -- (p12);
            \end{tikzpicture}
        \end{center}
    \end{minipage}
    \begin{minipage}{.35\textwidth}
        \begin{center}
            \begin{tikzpicture}[scale=0.7]
            \node (p3) at (0,0) {$\hat X_{321}^{(1)}$};
            \node (p2) at (0,-1.5)  {$\hat X_{312}^{(1)}$};
            \node (p1) at (-1.4,-3) {$\hat X_{213}^{(1)}$};
            \node (p23) at (1.4,-3) {$\hat X_{312}^{(2)}$};
            \node (p13) at (0,-4.5) {$\hat X_{213}^{(2)}$};
            \node (p12) at (0,-6) {$\hat X_{123}^{(2)}$};
            
            \draw [thick, shorten <=-2pt, shorten >=-2pt, -stealth] (p3) -- (p2);
            \draw [thick, shorten <=-2pt, shorten >=-2pt, -stealth] (p2) -- (p1);
            \draw [thick, shorten <=-2pt, shorten >=-2pt, -stealth] (p2) -- (p23);
            \draw [thick, shorten <=-2pt, shorten >=-2pt, -stealth] (p1) -- (p13);
            \draw [thick, shorten <=-2pt, shorten >=-2pt, -stealth] (p23) -- (p13);
            \draw [thick, shorten <=-2pt, shorten >=-2pt, -stealth] (p13) -- (p12);
            \end{tikzpicture}
        \end{center}
    \end{minipage}
    \end{center}
\end{example}
\ytableausetup{nosmalltableaux}

It is not possible to directly generalize our stratifications on $G/Q$ to Schubert varieties, since Pl\"ucker coordinates have the wrong vanishing sets. As an example, let us take the Schubert variety $X_\tau \subseteq \mathrm{SL}_3(\K)/B$ for $\tau = 312$. The Pl\"ucker coordinate $p_{(3,1)}$ on the multicone $\hat X_{312} \subseteq V(\omega_1) \times V(\omega_2)$ vanishes on the two subvarieties $\hat X_{213}$ and $\hat X_{132}$, which are both of codimension one. Therefore $213$ and $132$ should be covered by $312$ in the underlying poset of the stratification. Analogously, $p_{(2,1)}$ vanishes on $\hat X_{123} \subseteq \hat X_{213}$, so $213$ covers $123$. But both $123$ and $132$ should have the same associated extremal function $p_{(1,1)}$, which is impossible due to condition~\ref{itm:seshadri_strat_b} on a Seshadri stratification. 

This behaviour fits with the problems of the standard monomial theory of Young-tableaux: Let us fix the standard monomial basis $\mathbb B$ of $\K[G/Q]$ indexed by the set $\mathrm{SSYT}_Q$. For each Weyl group coset $\tau \in W/W_Q$ one can consider the set of all basis vectors in $\mathbb B$ which do not vanish identically on $\hat X_\tau \subseteq \hat X$ and ask whether this set is a basis of the multihomogeneous coordinate ring $\K[X_\tau]$ with respect to the embedding $X_\tau \hookrightarrow \prod_{i=1}^m \PP(V(\omega_{k_i}))$. In general, this statement is false (see Example~\ref{ex:no_induced_smt_basis}). However, for all strata, \ie the Schubert varieties $X_{\max_{Q_i}(\theta)}$ for $(\theta, i) \in \underline W$, we do obtain a standard monomial theory in this way via Corollary~\ref{cor:smt_induced_strat}. This explains why not every Schubert subvariety of $G/Q$ needs to appear as a stratum.

\begin{remark}
    One can prove the following: For a fixed index $i \in [m]$ the set of lifts
    \begin{align*}
        \Set{\mkern2mu \operatorname{min}_Q \circ \operatorname{max}_{Q_i}(\theta) \in W/W_Q \mid (\theta, i) \in \ulW \mkern2mu }
    \end{align*}
    coincides with the set of all elements in $W/W_Q$, which are $Q_i$-minimal and $Q^i$-maximal for the parabolic subgroup $Q^i = \bigcap_{j=1}^i P_{k_i}$. We will show this statement in a future article.
\end{remark}

\begin{example}
    \label{ex:no_induced_smt_basis}
    Again, we consider the Schubert variety $X_{312}$ in $G/B$ of Dynkin type $\texttt{A}_2$. Notice that this Schubert variety occurs as a stratum in the Seshadri stratification from Example~\ref{ex:antitableaux}, which is of Hodge type and therefore normal and balanced. Hence we can use Corollary~\ref{cor:smt_induced_strat} to obtain a standard monomial basis on $X_{312}$: As before, the Plücker coordinates are indexed by $\underline W$ and the Plücker monomials $p_{(1,2)}^a \mkern2mu p_{(2,2)}^b \mkern2mu p_{(3,2)}^c \mkern2mu p_{(12,1)}^d \mkern2mu p_{(13,1)}^e$ with $a,b,c,d,e \in \N_0$ and $cd = 0$ form a basis of the homogeneous coordinate ring $\K[\hat X_{312}]$. In particular, the graded component of degree $(1,1)$ is of dimension $5$.

    On the other hand, in the standard monomial basis of $\K[G/B]$ from Corollary~\ref{cor:smt_type_A} there are $6$ standard monomials of degree $(1,1)$, which do not vanish identically on $X_{312}$:
    \begin{align*}
        p_{(1,2)} p_{(12,1)}, \ p_{(1,2)} p_{(13,1)}, \ p_{(2,2)} p_{(12,1)}, \ p_{(2,2)} p_{(13,1)}, \ p_{(3,2)} p_{(12,1)}, \ p_{(3,2)} p_{(13,1)}.
    \end{align*}
    Hence the restrictions of these function cannot form a basis of $\K[X_{312}]_{(1,1)}$.
\end{example}

\appendix
\section{Multiproj-schemes}
\label{sec:multiproj}

\subsection{The Multiproj-construction}

Let $R = \bigoplus_{\underline d \in \Z^m} R_{\underline d}$ be a (commutative) ring, which is graded by the group $\Z^m$. An element $r \in R$ is called \textit{(multi-$\mkern-1mu$)homogeneous}, if it is contained in a subgroup $R_{\underline d}$. In this case $\deg r \coloneqq \underline d$ is its \textit{(multi-$\mkern-1mu$)degree}. Ideals generated by homogeneous elements are called \textit{(multi-$\mkern-1mu$)homogeneous ideals}.

There is a similar construction to the Proj-construction for $\N_0$-graded rings, which associates a scheme $\Multiproj R$ to the multigraded ring $R$. In general, this scheme does not have all the nice properties of the usual Proj-scheme, for example it need not be projective or separated. To introduce these schemes, we follow the construction from Brenner and Schr\"oer in \cite{brenner2003ample}. The grading on $R$ corresponds to an action of the $m$-torus $\Spec \Z[t_1^{\pm 1}, \dots, t_m^{\pm 1}]$ on $\Spec R$. There exists a quotient $\operatorname{Quot}(R)$ of $\Spec R$ with respect to this action in the category of ringed spaces. However, this quotient is not a quotient in the category of schemes in general. 

An element $f \in R$ is called \textbf{relevant}, if it is homogeneous and the degrees of the homogeneous elements $g \in R$, which divide some power $f^k$ for $k \in \N_0$, generate a subgroup of $\Z^m$ of finite index. For every relevant $f \in R$ the morphism $\Spec R_f \to \Spec R_{(f)}$ is a geometric quotient (in the sense of GIT), where $R_{(f)}$ denotes the subring of $R_f$ of all elements of multidegree zero. Therefore we have an open subset $D_+(f) \subseteq \operatorname{Quot}(R)$ isomorphic to $\Spec R_{(f)}$. The Multiproj-scheme of $R$ is then defined as the locally ringed space
\begin{align}
    \label{eq:def_multiproj_as_quotient}
    \Multiproj R = \bigcup_{f \in R \atop \text{relevant}} D_+(f) \subseteq \operatorname{Quot}(R).
\end{align}
Let $R_+$ be the ideal in $R$ generated by all relevant elements. It is called the \textbf{irrelevant ideal}. The induced morphism
\begin{align*}
    \Spec R \setminus V(R_+) \to \Multiproj R
\end{align*}
is then a geometric quotient with respect to the torus action.

There is also another way of realizing the scheme $\Multiproj R$, which directly generalizes the usual Proj-construction. We denote this scheme by $\Multiproj R$ as well. Set-theoretically it is given by
\begin{align*}
    \Multiproj R = \set{ P \subseteq R \mid \text{$P$ multihomogeneous prime ideal}, R_+ \nsubseteq P}
\end{align*}
and the closed subsets are those of the form
\begin{align*}
    V(I) = \set{P \in \Multiproj R \mid P \supseteq I},
\end{align*}
where $I$ is a multihomogeneous ideal in $R$. For each $P \in \Multiproj R$ let $R_{(P)}$ denote the subring of homogeneous elements of multidegree $0$ in the localization $R_P$. For an open subset $U \subseteq \Multiproj R$ we define the ring $\mathcal O(U)$ of functions 
\begin{align*}
    f: U \to \coprod_{P \in U} R_{(P)},
\end{align*}
which are locally given by a quotient of elements in $R$: To each $P \in U$ there exists an open neighborhood $V$ of $P$ in $U$ and multihomogeneous elements $r,s \in R$ of the same multidegree, such that $f(P) = \frac{r}{s} \in R_{(P)}$.

The topological space $\Multiproj R$ together with the sheaf $\mathcal O$ forms a locally ringed space and the stalk at a point $P \in \Multiproj R$ is canonically isomorphic to the local ring $R_{(P)}$. For every relevant element $f \in R$ we have an isomorphism between the open subset $D_+(f) = \set{P \in \Multiproj R \mid f \notin P}$ and $\Spec R_{(f)}$, topologically given by
\begin{align*}
    \chi_f: D_+(f) \to \Spec R_{(f)}, \quad P \mapsto \langle \phi_f(P) \rangle \cap R_{(f)},
\end{align*}
where $\phi_f$ denotes the natural map $R \to R_f$ and $\langle \phi_f(P) \rangle$ is the ideal generated by $\phi_f(P)$. This can be seen as follows: First of all, one can use $\phi_f$ to construct an isomorphism $\Multiproj R_f \to D_+(f)$ of locally ringed spaces. On the other hand, the inclusion $\iota_f: R_{(f)} \hookrightarrow R_f$ induces an isomorphism $\Multiproj R_f \to \Spec R_{(f)}$. We can write the inclusion $\iota_f$ as the composition of the embedding $R_{(f)} \hookrightarrow R_{(f)}[t_1^{\pm 1}, \dots, t_m^{\pm 1}]$ and a ring homomorphism $R_{(f)}[t_1^{\pm 1}, \dots, t_m^{\pm 1}] \to R_f$ sending $t_i$ to a homogeneous unit in $R_f$ of multidegree $e_i$ (which exists, since $f$ is relevant). Both maps are graded ring homomorphisms, when we set $\deg t_i = e_i$. The first map induces an isomorphism $\Multiproj R_{(f)}[t_1^{\pm 1}, \dots, t_m^{\pm 1}] \to \Spec R_{(f)}$ and the second map is an isomorphism of graded rings. In total, we get the desired isomorphism $D_+(f) \cong \Spec R_{(f)}$. Note that the product $fg$ of two relevant elements $f, g \in R$ is relevant as well and the inclusions we just constructed are compatible in the sense that the following diagram commutes:
\begin{equation*}
    \begin{tikzcd}
    \Spec R_{(fg)} \arrow[d, hook] \arrow[r, hook] & \Spec R_{(f)} \arrow[d, hook] \\
    \Spec R_{(g)} \arrow[r, hook] & \Multiproj R.
    \end{tikzcd}
\end{equation*}
In particular, we see that $\Multiproj R$ is a scheme, which is isomorphic to the scheme defined in (\ref{eq:def_multiproj_as_quotient}). The structure morphism $\Spec R \setminus V(R_+) \to \Multiproj R$ maps every homogeneous prime ideal not containing $R_+$ to itself. 

\begin{lemma}
    \label{lem:multiproj_morphisms}
    Let $A: \Z^m \to \Z^m$ be an injective group homomorphism.
    \begin{enumerate}[label=(\alph{enumi})]
        \item \label{itm:lem_multiproj_morphisms} For every ring homomorphism $\phi: R \to S$ between two $\Z^m$-graded rings with $\phi(R_{\underline d}) \subseteq S_{A(\underline d)}$ for all $\underline d \in \Z^m$, the morphism $\Spec S \to \Spec R$ induces a morphism
        \begin{align}
        \label{eq:morphism_by_graded_ring_homom}
            (\Multiproj S) \, \setminus \, V( \langle \phi(R_+) \rangle ) \to \Multiproj R.
        \end{align}
        \item The inclusion of the graded ring 
        \begin{align*}
            R^{(A)} = \bigoplus_{\underline d \in \Z^m} R_{A(\underline d)} \subseteq R
        \end{align*}
        into $R$ induces an isomorphism $\Multiproj R \to \Multiproj R^{(A)}$.
    \end{enumerate}
\end{lemma}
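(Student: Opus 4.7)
The plan is to establish (a) by constructing the morphism locally on the covering of $\Multiproj R$ by relevant-element charts, and then to derive (b) by applying (a) to the inclusion $R^{(A)} \hookrightarrow R$ together with a local comparison of the charts.

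For (a), the set-theoretic description comes first. Since $A$ is injective, distinct multidegrees in $R$ are sent to distinct multidegrees in $S$; hence the preimage $\phi^{-1}(Q)$ of any multihomogeneous prime $Q \subseteq S$ is multihomogeneous in $R$. Moreover $\phi^{-1}(Q) \supseteq R_+$ if and only if $Q \supseteq \phi(R_+)$, so the assignment $Q \mapsto \phi^{-1}(Q)$ is well-defined on $\Multiproj S \setminus V(\langle \phi(R_+) \rangle)$. To upgrade this to a morphism of schemes, I cover $\Multiproj R$ by affines $D_+(f) \cong \Spec R_{(f)}$ for $f \in R$ relevant. The ring homomorphism $R_{(f)} \to (S_{\phi(f)})_0$ sending $r/f^n \mapsto \phi(r)/\phi(f)^n$ is well-defined since $\deg \phi(r) = A(n \deg f) = n \deg \phi(f)$. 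After covering the preimage of $D_+(f)$ by opens of the form $D_+(g) \cap \{\phi(f) \ne 0\}$ for relevant $g \in S$, these maps are compatible under localization and glue to the desired morphism.

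For (b), I apply (a) to the inclusion $\iota: R^{(A)} \hookrightarrow R$ (with change-of-grading $A$) to obtain $\psi: \Multiproj R \setminus V(\langle \iota(R^{(A)}_+) \rangle) \to \Multiproj R^{(A)}$. Because $A: \Z^m \to \Z^m$ is an injective endomorphism of a free group of rank $m$, its image $A(\Z^m)$ has finite index in $\Z^m$. Consequently, for any relevant $g \in R$ a suitable power $g^k$ lies in $R^{(A)}$, and a lattice argument --- using that the $\Z$-span $L_g \subseteq \Z^m$ of divisor-degrees of powers of $g$ is of finite index, so $L_g \cap A(\Z^m)$ is also of finite index --- shows that $g^k$ is in fact relevant in $R^{(A)}$. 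This produces a relevant element of $R^{(A)}$ outside any prescribed $Q \in \Multiproj R$, so $V(\langle \iota(R^{(A)}_+) \rangle) = \emptyset$ and $\psi$ is defined on all of $\Multiproj R$.

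To see that $\psi$ is an isomorphism, I verify this on the affine cover of $\Multiproj R^{(A)}$ by the $D_+(f)$ for $f \in R^{(A)}$ relevant in $R^{(A)}$. The same finite-index observation shows such $f$ is also relevant in $R$, and the preimage of $D_+(f)$ under $\psi$ is $D_+(f) \subseteq \Multiproj R$. A direct verification then yields $R_{(f)} = R^{(A)}_{(f)}$: both consist of fractions $r/f^n$ with $\deg_R r = n \deg_R f = A(n \deg_{R^{(A)}} f) \in A(\Z^m)$, which automatically places $r$ in $R^{(A)}$. Therefore $\psi$ restricts to an isomorphism on each chart and is a global isomorphism. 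The main obstacle is the relevance-transfer step: careful lattice bookkeeping is needed to confirm that the $R^{(A)}$-degrees of divisors of powers of $g^k$ in $R^{(A)}$ generate a finite-index subgroup of $\Z^m$, starting only from the assumption that $g$ is relevant in $R$.
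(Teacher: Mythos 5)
Your proposal is correct and follows essentially the same route as the paper: part (a) is glued from the chart maps $R_{(f)} \to S_{(\phi(f))}$ (the paper streamlines the gluing by observing that injectivity of $A$ makes $\phi(f)$ relevant in $S$, so the preimage of $D_+(f)$ is the single affine chart $D_+(\phi(f))$), and part (b) is obtained exactly as in the paper by showing that every point of $\Multiproj R$ avoids a relevant element of $R^{(A)}$ and that $R^{(A)}_{(f)} = R_{(f)}$ on charts. The one step you leave as ``careful lattice bookkeeping'' --- relevance of $g^k$ in $R^{(A)}$ --- is precisely what the paper carries out explicitly, by replacing homogeneous divisors $g_i \mid g^{n_i}$ with powers $g_i^{N_i}$ whose degrees lie in $A(\Z^m)$, so that the divisibility persists inside $R^{(A)}$ and the degrees still generate a finite-index subgroup of $\Z^m$.
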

\begin{proof}
    Since $A$ is injective, the image of a relevant element $f \in R$ under $\phi$ is relevant in $S$. The map $R_{(f)} \to S_{(\phi(f))}$ induces a morphism $\Spec S_{(\phi(f))} \to \Spec R_{(f)}$. For $f, g \in R$ relevant these morphisms can be glued along the inclusion $\Spec R_{(fg)} \hookrightarrow \Spec R_{(f)}$. They therefore define the desired morphism in (\ref{eq:morphism_by_graded_ring_homom}) because the subsets $D_+(\phi(f)) \subseteq \Multiproj S$ for $f \in R$ relevant cover the scheme $(\Multiproj S) \setminus V( \mkern1mu \langle \phi(R_+) \rangle )$.

    By part~\ref{itm:lem_multiproj_morphisms} the inclusion $R^{(A)} \hookrightarrow R$ induces a morphism 
    \begin{align*}
        \Multiproj R \setminus V( \langle R^{(A)}_+ \rangle ) \to \Multiproj R^{(A)}.
    \end{align*}
    First, we show $V( \langle R^{(A)}_+ \rangle ) = \varnothing$. Let $P \in \Multiproj R$ be a homogeneous prime ideal containing $R^{(A)}_+$ and $f \in R$ be a relevant element. We fix homogeneous divisors $g_i \mid f^{n_i}$ (for $i = 1, \dots, m$), such that $\deg g_1, \dots, \deg g_m$ generate a subgroup of $\Z^m$ of finite index. Since $A(e_1), \dots, A(e_m) \in \Z^m$ generate $\Q^m$ as a vector space, the degree $\underline d$ of $f$ can be expressed in the form
    \begin{align*}
        \underline d = \sum_{i=1}^m \frac{p_i}{q_i} \cdot A(e_i)
    \end{align*}
    for some $p_i \in \Z$ and $q_i \in \N$. We set $N = q_1 \dots q_m$. Therefore $f^{N}$ lies in the subring $R^{(A)}$. In the same way we get natural numbers $N_i$ with $g_i^{N_i} \in R^{(A)}$. Then $g_i^{N N_i}$ divides $f^{n_i N N_i}$ and their degrees $\deg g^{N N_i}$ still generate a subgroup of $\Z^m$ of finite index. These degrees lie in the image of $A$. Using the injectivity of $A$ we see that $f^N$ is relevant in $R^{(A)}$. So the ideal $P$ contains $f^N$ and since $P$ is prime, it contains $f$ as well. Hence $R_+ \subseteq P$.

    If $f \in R^{(A)}$ is relevant, then $f$ is also relevant in $R$ and the induced map $R^{(A)}_{(f)} \to R_{(f)}$ is a ring isomorphism. This proves that $\Multiproj R \to \Multiproj R^{(A)}$ is an isomorphism of schemes.
\end{proof}

\begin{example}
    The algebra $R = \K[x,y]$ over $\K$ with the grading $\deg x = (1,0)$ and $\deg y = (1,1)$ shows, that for $\N_0^m$-graded rings the irrelevant ideal does not need to agree with the ideal $R_+'$ generated by all homogeneous elements of degrees $\underline d$ with $d_i \geq 1$ for all $i$. By Lemma~\ref{lem:multiproj_morphisms} we can regrade $R$ via the group homomorphism $A: \Z^2 \to \Z^2$, $(d_1, d_2) \mapsto (d_1 + d_2, d_2)$ and obtain an isomorphism $\Multiproj R \cong \Multiproj R^{(A)}$. The irrelevant ideal of $R^{(A)}$ is equal to $(xy)$ and $\Multiproj R^{(A)}$ is isomorphic to a point. On the other hand $R_+'$ is generated by $y$ and there are two multihomogeneous prime ideals in $R$ not containing $y$: $(0)$ and $(x)$.
\end{example}

\begin{example}
    \label{ex:non_separated_scheme}
    Consider the algebra $R = \K[x,y,z]$ over $\K$ with the grading $\deg x = (1,0)$, $\deg y = (0,1)$ and $\deg z = (1,1)$. Its irrelevant ideal is generated by the relevant elements $xy, xz$ and $yz$, hence the three open subsets defined by these elements cover $\Multiproj R$. We have
    \begin{align*}
        \K[x,y,z]_{(xy)} \cong \K[\tfrac{xy}{z}] \cong \K[x,y,z]_{(xz)} \quad \text{and} \quad \K[x,y,z]_{(yz)} \cong \K[\tfrac{z}{xy}].
    \end{align*}
    The corresponding affine schemes are glued along their intersections
    \begin{align*}
        \K[x,y,z]_{(xy \, \cdot \, xz)} \cong \K[x,y,z]_{(xy \, \cdot \, xz)} \cong \K[x,y,z]_{(xy \, \cdot \, xz)} \cong \K[\tfrac{xy}{z}, \tfrac{z}{xy}]
    \end{align*}
    via the obvious inclusions. Therefore $\Multiproj R$ is a non-separated projective line with two points at infinity.
\end{example}

\begin{lemma}
    \label{lem:closed_subschemes_multiproj}
    Let $I, J \subseteq R$ be multihomogeneous ideals.
    \begin{enumerate}[label=(\alph{enumi})]
        \item The map $R \twoheadrightarrow R/I$ induces a closed immersion $\Multiproj R/I \hookrightarrow \Multiproj R$. Topologically its image coincides with the closed subset $V(I)$. If $I$ is prime, then $\Multiproj R/I$ is an integral scheme.
        \item The scheme-theoretic intersection of $V(I)$ and $V(J)$ is given by $V(I + J)$.
        \item The scheme-theoretic union of $V(I)$ and $V(J)$ is given by $V(I \cap J)$.
    \end{enumerate}
\end{lemma}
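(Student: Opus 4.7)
For part (a), the plan is to invoke Lemma~\ref{lem:multiproj_morphisms}\ref{itm:lem_multiproj_morphisms} with $A = \mathrm{id}$ applied to the surjective $\Z^m$-graded homomorphism $\pi: R \twoheadrightarrow R/I$, which yields a morphism $\Multiproj R/I \setminus V(\langle \pi(R_+) \rangle) \to \Multiproj R$. The first step is to show the exceptional locus is empty: every multihomogeneous prime $\bar P \in \Multiproj R/I$ has $\pi^{-1}(\bar P) \in \Multiproj R$. This rests on the set-theoretic correspondence between multihomogeneous primes of $R/I$ and multihomogeneous primes of $R$ containing $I$, combined with the observation that for any relevant $\bar g \in R/I$ with $\bar g \notin \bar P$, one can produce a relevant $f \in R$ with $f \notin \pi^{-1}(\bar P)$ (essentially by lifting divisor relations modulo $I$ and possibly multiplying with suitable homogeneous units in the localization). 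Once this set-theoretic bijection with $V(I) \subseteq \Multiproj R$ is established, the closed immersion is verified on affine charts: for each relevant $f \in R$ whose image $\bar f \in R/I$ is also relevant (these cover $V(I)$), the natural surjection $R_{(f)} \twoheadrightarrow (R/I)_{(\bar f)}$ has kernel $I_{(f)}$, yielding a closed immersion $\Spec(R/I)_{(\bar f)} \hookrightarrow \Spec R_{(f)} = D_+(f)$. These glue consistently along overlaps $D_+(fg)$. For the integrality statement, if $I$ is prime, each local ring $(R/I)_{(\bar f)}$ is a localization-of-degree-zero inside the fraction field of the domain $R/I$, hence is a domain; this transfers to integrality of $\Multiproj R/I$ since the schemes involved are irreducible and reduced on each chart.

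For parts (b) and (c), the plan is to reduce to the ring-theoretic identities by appealing to (a). The scheme-theoretic intersection $V(I) \cap V(J)$ is the fiber product $V(I) \times_{\Multiproj R} V(J)$; on each affine chart $D_+(f) = \Spec R_{(f)}$, this intersection is $\Spec(R/I)_{(\bar f)} \times_{\Spec R_{(f)}} \Spec(R/J)_{(\bar f)} = \Spec R_{(f)}/(I_{(f)} + J_{(f)}) = \Spec(R/(I+J))_{(\bar f)}$, which by (a) identifies globally with $\Multiproj R/(I+J) = V(I+J)$. For (c), the scheme-theoretic union of two closed subschemes is defined via the intersection of their defining ideal sheaves; on each chart one verifies $I_{(f)} \cap J_{(f)} = (I \cap J)_{(f)}$ (this uses multihomogeneity in a routine way), so the union is locally $\Spec R_{(f)}/(I_{(f)} \cap J_{(f)}) = \Spec(R/(I \cap J))_{(\bar f)}$, gluing to $V(I \cap J)$.

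The main obstacle is the first step of part (a), namely the careful comparison between the irrelevant ideals $\pi(R_+)$ and $(R/I)_+$ on the level of primes. The issue is genuinely subtle in the multigraded setting: relevance is a global condition on divisor-degrees, and passing to a quotient can in principle introduce new divisor relations modulo $I$ that would make $\bar g \in R/I$ relevant without a relevant lift $g \in R$, or conversely render $\bar f$ non-relevant even though $f$ was. The resolution is to argue not at the level of individual relevant elements but at the level of primes: any multihomogeneous prime $\bar P$ missing $(R/I)_+$ corresponds to a prime $P$ in the image of $\Spec R \setminus V(R_+) \to \Multiproj R$, essentially because the geometric quotient description in~\eqref{eq:def_multiproj_as_quotient} is insensitive to passing through $R/I$ provided one verifies torus-stability of $V(I) \subseteq \Spec R$.
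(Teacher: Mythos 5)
Your proposal is correct and follows essentially the same route as the paper: everything is verified on the affine charts $D_+(f) \cong \Spec R_{(f)}$, with parts (b) and (c) reduced to the fact that $I \mapsto \langle \phi_f(I)\rangle \cap R_{(f)}$ preserves sums and intersections, and part (a) obtained by gluing the chart-level closed immersions $\Spec (R/I)_{(\bar f)} \hookrightarrow \Spec R_{(f)}$. Your divisor-lifting observation (lift the relations $\bar g^{\mkern1mu k_i} = \bar h_i \bar c_i$ to $R$ and take $f = \prod_i h_i c_i$, so that $\bar f$ is a power of $\bar g$ and $f$ is relevant in $R$) is exactly what justifies the covering of $\Multiproj R/I$ by the charts $D_+(f+I)$ with $f \in R \setminus I$ relevant, which the paper's proof merely asserts; the concluding appeal to torus-stability of $V(I)$ is not needed and would not by itself settle that point.
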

\begin{proof}
    All three statements can be checked in the open subschemes $\Spec R_{(f)}$ for $f \in R$ relevant. They are compatible with the projection $R \twoheadrightarrow R/I$ as the scheme $\Multiproj R/I$ is covered by all $D_+(f + I)$ for $f \in R \setminus I$ relevant. 
    
    Let $f \in R$ be relevant and $\phi_f$ be the natural map $R \to \R_f$. The second and third statement follow from the analogous statement for affine schemes and the fact that
    \begin{align*}
        \chi: \set{\text{homogeneous ideals in $R$}} &\to \set{\text{ideals in $R_{(f)}$}} \\
        I &\mapsto \langle \phi_f(I) \rangle \cap R_{(f)}
    \end{align*}
    preserves sums and intersections: $\chi(I + J) = \chi(I) + \chi(J)$ and $\chi(I \cap J) = \chi(I) \cap \chi(J)$.
\end{proof}

\begin{lemma}[{\cite[Lemma 3.6]{kuronya2019geometry}}]
    \label{lem:multiproj_dimension}
    If $R$ is an integral domain and the degrees of its non-zero, homogeneous elements span a subgroup of maximal rank in $\Z^m$, then $\Multiproj R$ is an integral scheme of dimension $\dim \Spec R - m$.
\end{lemma}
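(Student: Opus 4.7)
The plan is to combine two reductions: first, to establish integrality of $\Multiproj R$ from the affine open cover by $\Spec R_{(f)}$; second, to compute $\dim R_{(f)}$ by relating $R_f$ to a Laurent polynomial extension of $R_{(f)}$, and finally to identify $\dim R_f$ with $\dim \Spec R$.

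For integrality, I would note that since $R$ is a domain, so is every localization $R_f$ and every subring $R_{(f)} \subseteq R_f$; hence each open affine $\Spec R_{(f)}$ is integral. The hypothesis guarantees relevant elements exist: taking nonzero homogeneous $r_1, \dots, r_m$ whose degrees span a full-rank subgroup of $\Z^m$, the product $f = r_1 \cdots r_m$ is relevant. For any two relevant $f, g$ the point $(0) \cap R_{(fg)} = (0)$ lies in the overlap $\Spec R_{(fg)}$ and maps to $(0)$ in both $\Spec R_{(f)}$ and $\Spec R_{(g)}$, so all these affine opens share a common generic point; the glued scheme is therefore irreducible and reduced.

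For the dimension, I would fix a relevant $f$ and select homogeneous $g_1, \dots, g_m \mid f^k$ in $R$ whose degrees $\underline d_1, \dots, \underline d_m$ span a subgroup $L' \subseteq \Z^m$ of finite index $N$. In $R_f$ each $g_i$ is a unit, and since the $\underline d_i$ are linearly independent over $\Q$ the $g_i$ are algebraically independent over $R_{(f)}$, giving a Laurent polynomial subring $S = R_{(f)}[g_1^{\pm 1}, \dots, g_m^{\pm 1}] \subseteq R_f$. For any homogeneous $h \in R_f$ of degree $\underline e$, the relation $N\underline e = \sum a_i \underline d_i$ for suitable integers $a_i$ shows that $h^N \prod g_i^{-a_i}$ is homogeneous of degree $0$ in $R_f$ and hence lies in $R_{(f)}$. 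Thus $h$ is integral over $S$; as $R_f$ is generated by its homogeneous elements, it is integral over $S$, yielding
\begin{equation*}
\dim R_f \;=\; \dim S \;=\; \dim R_{(f)} + m.
\end{equation*}

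The last step is to show $\dim R_f = \dim \Spec R$. Because $R$ is a domain, the generic point $(0)$ lies in $D(f) \cong \Spec R_f$; under the catenarity/finite-type hypotheses implicit in the multiprojective setting, every maximal chain of primes in $\Spec R$ has a representative avoiding $f$, so passage to the nonempty open $D(f)$ preserves Krull dimension. Combining the three equalities gives $\dim \Spec R_{(f)} = \dim \Spec R - m$, which is independent of the choice of relevant $f$ and therefore equals $\dim \Multiproj R$. The main obstacle will be the final identification $\dim R_f = \dim \Spec R$: the Laurent polynomial comparison is formal, whereas preservation of dimension under localization at a single element is a point where one has to invoke the precise hypotheses under which the lemma is formulated (essentially, that $R$ is a finitely generated graded algebra, as in \cite{kuronya2019geometry}), and not a statement about arbitrary Noetherian integral domains.
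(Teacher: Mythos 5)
The paper offers no proof of this lemma at all: it is quoted as \cite[Lemma 3.6]{kuronya2019geometry}, so your argument is a self-contained proof of a statement the paper only outsources, and in the setting where the lemma is actually applied it is correct. The skeleton is sound: each $R_{(f)}$ is a domain, a relevant element exists as a product $r_1\cdots r_m$ of nonzero homogeneous elements whose degrees are $\Q$-linearly independent (such $r_i$ exist because the degree set spans a rank-$m$ subgroup, hence contains $m$ independent vectors), the charts $D_+(f)\cong\Spec R_{(f)}$ all meet along $D_+(fg)\cong\Spec R_{(fg)}$, which dominates both, so $\Multiproj R$ is irreducible and reduced; and the distinct-degree argument does show that the unit divisors $g_1,\dots,g_m$ are algebraically independent over $R_{(f)}$, while $h^N\prod g_i^{-a_i}\in R_{(f)}$ makes every homogeneous $h\in R_f$ integral over $S=R_{(f)}[g_1^{\pm1},\dots,g_m^{\pm1}]$, whence $\dim R_f=\dim S=\dim R_{(f)}+m$ and the dimension of $\Multiproj R$ is the common value $\dim R_{(f)}$ over the cover.

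Two caveats, both of which you essentially anticipate. First, the steps $\dim S=\dim R_{(f)}+m$ and $\dim R_f=\dim\Spec R$ genuinely require $R$ to be a finitely generated graded algebra over a field (or at least Noetherian with good dimension theory); the lemma as transcribed in the paper suppresses this, but it is part of the hypotheses in \cite{kuronya2019geometry} and holds for every ring the paper feeds into the lemma (multihomogeneous coordinate rings and their associated graded algebras), so flagging it is correct rather than a defect. Second, your justification of $\dim R_f=\dim\Spec R$ via ``maximal chains of primes avoiding $f$'' is not the right mechanism as phrased; for a finitely generated domain over $\K$ the clean argument is that both $\dim R$ and $\dim R_f$ equal the transcendence degree of $\operatorname{Frac}(R)$ over $\K$, which is insensitive to localization (the same transcendence-degree bookkeeping also gives $\dim S=\dim R_{(f)}+m$ directly). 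With that small repair your proof is complete and gives the reader an argument the paper itself does not supply.
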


\begin{lemma}
    \label{lem:multiproj_as_variety}
    If $R$ is an $\N_0^m$-graded, noetherian, reduced ring, that is finitely generated in total degree $1$ as an $R_0$-algebra, then $\Multiproj R$ is a separated, reduced scheme of finite type over $R_0$.
\end{lemma}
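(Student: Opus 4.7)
I will fix a finite set of homogeneous generators $x_1, \ldots, x_N$ of $R$ over $R_0$ in total degree $1$, so $\deg x_i = e_{\pi(i)}$ for some map $\pi : [N] \to [m]$, and set $J_k = \pi^{-1}(k)$. The first key observation is that any relevant element of $R$ has multidegree in $\N^m$: if $f \in R_{\underline d}$ is homogeneous with $d_k = 0$ for some $k$, then $f$ lies in the subring generated by $\{x_i : \pi(i) \neq k\}$, and any homogeneous $g$ dividing some power $f^n$ (with $gh = f^n$) satisfies $(\deg g)_k + (\deg h)_k = 0$, forcing $(\deg g)_k = 0$. Hence the degrees of all such divisors lie in the proper sublattice $\Z^{[m]\setminus\{k\}} \subset \Z^m$, contradicting the relevance of $f$. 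In particular, if some $J_k$ is empty, then $R_+ = 0$ and $\Multiproj R = \varnothing$, so the statement is trivial; I henceforth assume $J_k \neq \varnothing$ for every $k$.

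\textbf{Finite affine cover.} For every tuple $\underline i = (i_1, \ldots, i_m) \in \prod_k J_k$ the monomial $M_{\underline i} = x_{i_1} \cdots x_{i_m}$ is relevant of degree $(1, \ldots, 1)$, and there are only finitely many such tuples. I claim $\Multiproj R = \bigcup_{\underline i} D_+(M_{\underline i})$. Given $P \in \Multiproj R$, suppose for contradiction that for some $k$ every $x_i$ with $i \in J_k$ lies in $P$; then every element of $R_{\underline d}$ with $d_k > 0$ is in $P$ (as an $R_0$-combination of monomials each of which must use some generator of degree $e_k$), so by the first paragraph every relevant element is in $P$, forcing $R_+ \subseteq P$, a contradiction. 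Thus for each $k$ I can pick $i_k \in J_k$ with $x_{i_k} \notin P$, so $M_{\underline i} \notin P$ and $P \in D_+(M_{\underline i})$.

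\textbf{Finite type and reducedness.} Each open piece is $D_+(M_{\underline i}) \cong \Spec R_{(M_{\underline i})}$. Setting $M = M_{\underline i}$ and $G_j = x_j / x_{i_{\pi(j)}} \in R_{(M)}$, any degree-zero monomial $x^\alpha / M^k$ of $R_M$ rewrites as $\prod_j G_j^{\alpha_j}$ by a direct exponent comparison (using that $\sum_{j \in J_\ell} \alpha_j = k$ for each $\ell$), so $R_{(M)}$ is finitely generated over $R_0$ by $G_1, \ldots, G_N$. Combined with the finite cover, this yields the finite-type statement. Reducedness is immediate: each $R_{(M)}$ embeds into the localization $R_M$, which is reduced because $R$ is.

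\textbf{Separatedness.} By the standard criterion applied to the above affine cover, it suffices to check that for every pair the multiplication map $R_{(M_{\underline i})} \otimes_{R_0} R_{(M_{\underline j})} \to R_{(M_{\underline i} M_{\underline j})}$ is surjective. Since all $M_{\underline i}$ have the same degree $(1,\ldots,1)$, an arbitrary element $h/(M_{\underline i} M_{\underline j})^k \in R_{(M_{\underline i} M_{\underline j})}$ (with $\deg h = 2k(1,\ldots,1)$) factors as $(h/M_{\underline i}^{2k}) \cdot (M_{\underline i}/M_{\underline j})^k$, the first factor lying in $R_{(M_{\underline i})}$ and the second in $R_{(M_{\underline j})}$. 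The crux of the whole argument is the initial observation that relevance forces all coordinates of the degree to be positive; once this is established, the finite cover and the remaining formal properties fall out directly.
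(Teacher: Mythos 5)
Your proof is correct, and its skeleton is the same as the paper's: the key point in both is that an element can only be relevant if every coordinate of its degree is positive, so that $R_+$ is generated by the finitely many degree-$(1,\dots,1)$ monomials in the chosen generators, and $\Multiproj R$ is covered by the corresponding finitely many affine charts $D_+(M_{\underline i})\cong\Spec R_{(M_{\underline i})}$; reducedness is handled identically via localization. Where you diverge is after the cover is in place: the paper simply cites Brenner--Schr\"oer (\cite{brenner2003ample}, Proposition 2.5 for finite type and Proposition 3.3 for separatedness), whereas you verify both by hand — the explicit generators $x_j/x_{i_{\pi(j)}}$ of $R_{(M_{\underline i})}$ over $R_0$, and the surjectivity of $R_{(M_{\underline i})}\otimes_{R_0} R_{(M_{\underline j})}\to R_{(M_{\underline i}M_{\underline j})}$ exploiting that all the $M_{\underline i}$ have the same degree. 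This buys a self-contained, elementary argument at the cost of a little bookkeeping; the only step you pass over quickly is that $D_+(M_{\underline i})\cap D_+(M_{\underline j})=D_+(M_{\underline i}M_{\underline j})$ is itself affine (the product of relevant elements is relevant), which is needed before the affine-cover separatedness criterion applies, but this is part of the Multiproj setup recalled in the appendix, so it is a cosmetic omission rather than a gap.
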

\begin{proof}
    We fix homogeneous generators $s_1, \dots, s_k$ of $R$ as an $R_0$-algebra of total degree $1$. The irrelevant ideal is equal to the direct sum $R_+'$ of all subgroups $R_{\underline d}$ with $d_i \geq 1$ for all $i = 1, \dots, m$: As $R$ is $\N_0^m$-graded, every relevant element is contained in $R_+'$. Conversely each monomial in the generators $s_1, \dots s_k$, that lies in $R_+'$, is relevant and therefore $R_+ = R_+'$. This ideal is generated by the finite set $S$ of all monomials in the generators $s_1, \dots s_k$ of degree $(1, \dots, 1)$, hence $\Multiproj R$ is covered by the open subsets $D_+(f)$ for $f \in S$. Using Proposition 3.3 in \cite{brenner2003ample} we see that $\Multiproj R$ is separated. Furthermore Proposition 2.5 in \loccit{} implies, that the morphism $\Multiproj R \to \Spec R_0$ is of finite type. Finally, since every localization of $R$ is reduced, $\Multiproj R$ is covered by reduced affine schemes and thus is reduced itself.
\end{proof}

\subsection{Multiprojective varieties}
\label{subsec:multiproj_varieties}

We summarize some properties of multiprojective varieties, \ie closed subvarieties $X$ of a product $\PP \coloneqq \PP(V_1) \times \dots \times \PP(V_m)$ of projective spaces, where $V_1, \dots, V_m$ are finite-dimensional vector spaces over an algebraically closed field $\K$. It is rather difficult to find the theory of multiprojective varieties in the literature, as it is a direct generalization of the theory of embedded projective varieties $Y \subseteq \PP(V)$.

We fix a closed subvariety $X \subseteq \PP$. The \textbf{multicone} $\hat X \subseteq V_1 \times \dots \times V_m$ of $X$ is the closure in $\PP$ of the preimage of $X$ under the morphism
\begin{align*}
    \pi: (V_1 \setminus \set{0}) \times \dots \times (V_m \setminus \set{0}) \twoheadrightarrow \PP(V_1) \times \dots \times \PP(V_m).
\end{align*}
A point $(v_1, \dots, v_m) \in V_1 \times \dots \times V_m$ is contained in the multicone, if and only if there exist non-zero vectors $w_i \in V_i$ such that $v_i \in \K w_i$ and $\pi(w_1, \dots, w_m) \in X$. The coordinate ring $\K[X] \coloneqq \K[\hat X]$ of the multicone is called the \textbf{multihomogeneous coordinate ring} of $X$. Its prime spectrum is isomorphic to $\hat X$. Note that the multihomogeneous coordinate rings of two multiprojective varieties may not be isomorphic, even if the varieties are isomorphic, so $\K[X]$ does depend on the embedding of $X$ into a product of projective spaces. The $\N_0$-grading on the polynomial ring $\K[V_i] \cong \bigoplus_{d \in \N_0} \operatorname{Sym}^d V_i^*$ induces an $\N_0^m$-grading on $\K[\PP] = \K[V_1] \otimes_\K \dots \otimes_\K \K[V_m]$ which corresponds to the $(\K^\times)^m$-action given by component-wise scalar multiplication:
\begin{align*}
    (t_1, \dots, t_m) \cdot (v_1, \dots, v_m) = (t_1 v_1, \dots, t_m v_m)
\end{align*}
for all $t_1, \dots, t_m \in \K^\times$ and $v_i \in V_i$. The (affine) vanishing ideal $I(\hat X) \subseteq \K[\PP]$ is multihomogeneous, hence $\K[\hat X]$ also graded by $\N_0^m$.

For the rest of this section we write $R = \K[X]$. Every multihomogeneous element $f \in R$ defines the closed subset of all $x \in X$ with $f(x) = 0$ and each closed subset of $X$ is of the form
\begin{align*}
    V_\PP(I) = \set{x \in X \mid \text{$f(x) = 0$ for all $f \in I$ multihomogeneous}}
\end{align*}
for a multihomogeneous ideal $I \subseteq R$. Conversely, every closed subset $Y \subseteq X$ defines the multihomogeneous ideal
\begin{align*}
    I_\PP(Y) = \langle \set{f \in R \mid \text{$f$ multihomogeneous and $f(Y) = 0$}} \rangle \subseteq R.
\end{align*}
In this notation the multicone of $X$ is equal to the (affine) vanishing set $V(I_\PP(X))$. 

The projective Nullstellensatz can also be generalized to the multiprojective setting. It involves the ideal quotient
\begin{align*}
    (I : J) = \set{r \in R \mid rJ \subseteq I}
\end{align*}
of two ideals $I, J \subseteq R$. We say $I$ is \textbf{\boldmath{}$J$-saturated}, if $(I : J) = I$. As we have seen in the proof of Lemma~\ref{lem:multiproj_as_variety}, the irrelevant ideal of $R$ is given by
\begin{align*}
    R_+ = \bigoplus_{\underline d \in \N_0^m \atop d_i \geq 1 \forall i} R_{\underline d}.
\end{align*}

\begin{proposition}[{Multiprojective Nullstellensatz, \cite[Section 1.8]{feigin2021relative}}]
    If $I \subseteq R$ is a multihomogeneous ideal, then
    \begin{align*}
        I_\PP(V_\PP(I)) = (\sqrt{I} : R_+).
    \end{align*}
    In particular, we have a bijection $Y \mapsto I_\PP(Y)$ between the closed subvarieties $Y \subseteq X$ and all $R_+$-saturated, multihomogeneous radical ideals in $R$, which do not contain $R_+$. Irreducible closed subvarieties of $X$ correspond to multihomogeneous prime ideals in $R$ not containing $R_+$ (as they are automatically $R_+$-saturated).
\end{proposition}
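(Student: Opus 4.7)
The plan is to reduce the statement to the affine Hilbert Nullstellensatz applied to the finitely generated reduced $\K$-algebra $R = \K[\hat X]$, which gives $I(V(J)) = \sqrt J$ for every ideal $J \subseteq R$. The geometric bridge between the affine multicone and the multiprojective variety is the following observation: a multihomogeneous element $s$ of multidegree $(d_1, \ldots, d_m)$ with all $d_i \geq 1$ automatically vanishes at every point $\tilde x = (v_1, \ldots, v_m) \in \hat X$ where at least one $v_i$ is zero. Hence $s(\tilde x) \neq 0$ for some $s \in R_+$ if and only if $\tilde x$ descends to a well-defined multiprojective point in $X$; for any lift $\tilde x$ with all components non-zero such an $s$ is constructed by taking a product of linear forms $\ell_i \in V_i^*$ with $\ell_i(v_i) \neq 0$.

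With this bridge in hand, both inclusions in $I_\PP(V_\PP(I)) = (\sqrt I : R_+)$ become short. For $\supseteq$, I would fix a multihomogeneous $f \in (\sqrt I : R_+)$ and $x \in V_\PP(I)$ with a lift $\tilde x \in \hat X$ of all non-zero components; the separator $s$ just described satisfies $fs \in \sqrt I$, so some $(fs)^n \in I$ vanishes on $\tilde x \in V(I)$, and $s(\tilde x) \neq 0$ forces $f(\tilde x) = 0$. For $\subseteq$, I would fix a multihomogeneous $f \in I_\PP(V_\PP(I))$ and an arbitrary multihomogeneous $s \in R_+$, and claim $fs$ vanishes on all of $V(I) \subseteq \hat X$. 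At points of $V(I)$ where $s$ is zero this is immediate, while at any $\tilde x \in V(I)$ with $s(\tilde x) \neq 0$ the bridge observation ensures $\tilde x$ descends to a point of $V_\PP(I)$, where $f$ vanishes by assumption. The affine Nullstellensatz then yields $fs \in \sqrt I$, and since $s$ was arbitrary, $f \in (\sqrt I : R_+)$.

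The corollary about the bijection and the prime correspondence follows formally. For any closed subvariety $Y \subseteq X$, writing $Y = V_\PP(J)$ for some multihomogeneous $J$ gives $J \subseteq I_\PP(Y)$ and hence $V_\PP(I_\PP(Y)) \subseteq V_\PP(J) = Y$, so $V_\PP \circ I_\PP$ is the identity on closed subvarieties; combined with the formula just proved, this produces the bijection. The non-containment of $R_+$ corresponds to non-emptiness of $Y$, since for any non-empty $Y$ the separator construction produces an element of $R_+$ outside $I_\PP(Y)$. For the final clause, a multihomogeneous prime ideal $P$ not containing $R_+$ is automatically $R_+$-saturated: if $rR_+ \subseteq P$ with $r \notin P$, primality would force $R_+ \subseteq P$, a contradiction. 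Irreducibility of $Y$ corresponds to primality of $I_\PP(Y)$ exactly as in the classical projective case. I do not anticipate significant difficulty here; the main subtlety is to consistently exploit that every component of the multidegree of $s \in R_+$ is at least one, which is the precise ingredient distinguishing the multiprojective setting from the ordinary projective one.
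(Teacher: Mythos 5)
Your argument is correct, and there is nothing in the paper to compare it against in detail: the proposition is quoted from the reference \cite[Section 1.8]{feigin2021relative} and the paper supplies no proof of its own. Your route is the expected one, namely reduction to the affine Nullstellensatz for the finitely generated reduced algebra $R = \K[\hat X]$, with the key "bridge" being that a multihomogeneous element of $R_+$ has degree at least one in every block of variables and therefore vanishes at any point of $\hat X$ having a zero component, while conversely a product of linear forms $\ell_1 \cdots \ell_m$ separates any point with all components non-zero; this correctly identifies $V(R_+) \cap \hat X$ with the locus of points that do not descend to $X$, and both inclusions of $I_\PP(V_\PP(I)) = (\sqrt{I} : R_+)$ then go through exactly as you describe (note that the paper's description of the multicone guarantees that a point of $\hat X$ with all components non-zero indeed maps to a point of $X$, which your argument uses). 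Two routine points are used implicitly and deserve a sentence each if you write this up: first, in the inclusion $(\sqrt{I} : R_+) \subseteq I_\PP(V_\PP(I))$ you only treat multihomogeneous $f$, so you should record that $\sqrt{I}$ and hence the colon ideal $(\sqrt{I} : R_+)$ are themselves multihomogeneous (the grading group is $\Z^m$, so radicals and colons of multihomogeneous ideals are multihomogeneous), which makes the reduction to multihomogeneous elements legitimate; second, for the final clause you invoke "as in the classical projective case" the fact that a multihomogeneous ideal is prime as soon as the prime condition holds for multihomogeneous elements, which again relies on the grading group being torsion-free and should be cited or checked. Neither point is a gap in substance, and the degenerate case $V_\PP(I) = \varnothing$ is also handled automatically by your case analysis.
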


\begin{remark}
    If $R$ is an $\N_0^m$-graded, reduced $\K$-algebra, that is finitely generated by elements of total degree $1$, then $R$ is isomorphic to the multihomogeneous coordinate ring of a multiprojective variety $X$ and $\Multiproj R \cong X$.
\end{remark}

It was shown in \cite{herrmann1997reduction} that multiprojective varieties also have an associated Hilbert polynomial: There exists a unique polynomial $H_R \in \Q[x_1, \dots, x_m]$, such that $H_R(\underline d) = \dim \K[X]_{\underline d}$ holds for all $\underline d \geq \underline d'$ (component-wise comparison) for some $\underline d' \in \N_0^m$. Here it is essential that the algebra $\K[X]$ is generated by elements of total degree one, otherwise the Hilbert polynomial is replaced by a function, which is only a quasi-polynomial on certain cones and glued together along their facets. The total degree $\deg H_R$ of the Hilbert polynomial is equal to the dimension of $X$ and it can be uniquely written in the form
\begin{align*}
    H_R(\underline d) = \sum_{\underline k \in \N_0^m} a_{\underline k} \, \binom{d_1 + k_1}{k_1} \cdots \binom{d_m + k_m}{k_m}
\end{align*}
with coefficients $a_{\underline k} \in \Z$. For $k_1 + \dots + k_m = \deg H_R$ these numbers are called the \textbf{multidegrees} of $X$ and they are non-negative. We denote them by $\deg_{\underline k}(X) = a_{\underline k}$. There is a useful criterion proved in \cite{castillo2020multidegrees} for determining which multidegrees are non-zero and thus actually appear in the Hilbert polynomial. It states that $\deg_{\underline k}(X)$ is positive, if and only if
\begin{align*}
    \sum_{i \in I} k_i \leq \dim \pi_I(X)
\end{align*}
holds for all subsets $I \subseteq [m]$, where $\pi_I: \prod_{i=1}^m \PP(V_i) \twoheadrightarrow \prod_{i \in I} \PP(V_i)$ is the natural projection.

\begin{remark}
    Let $\underline k \in \N_0^m$ with $k_1 + \dots + k_m = \dim X$. The multidegree $\deg_{\underline k}(X)$ can also be interpreted as the number of points in the intersection of $X$ in $\prod_{i=1}^m \PP(V_i)$ with a subspace $\PP(L_1) \times \dots \times \PP(L_m)$ in general position, where $L_i \subseteq V_i$ is a non-zero linear subspace of codimension $k_i$.
\end{remark}

The homogeneous component $G_R$ of the Hilbert polynomial $H_R \in \Q[x_1, \dots, x_m]$ of the highest total degree it equal to
\begin{align*}
    G_R = \sum_{\underline k} \frac{\deg_{\underline k}(X)}{k_1! \cdots k_m!} \, x_1^{k_1} \cdots x_m^{k_m},
\end{align*}
where the sum runs over all $\underline k \in \N_0^m$ with $k_1 + \dots + k_m = \dim X$. The value of $G_R$ at a point $\underline d \in \N_0^m$ can also be written as
\begin{align*}
    \label{eq:volume_function}
    G_R(\underline d) = \lim_{n \to \infty} \frac{\dim R_{n \underline d}}{n^{\dim X}}.
\end{align*}
This function $G_R: \N_0^m \to \R$ is sometimes called the \textit{volume function} of $R$. Its connection to convex geometry via global Newton-Okounkov bodies was studied in \cite{cid2021multigraded} and we use the ideas of this paper for the above sections on Newton-Okounkov theory.

\begin{lemma}[Multiprojective Jacobi-criterion]
    \label{lem:jacobi_crit}
    We identify the multihomogeneous coordinate ring of $\PP \coloneqq \prod_{i=1}^m \PP^{n_i}$ with the polynomial ring $S = \K[x_{i,j} \mid (i,j) \in J]$ for $J = \set{(i,j) \in \N_0^2 \mid 1 \leq i \leq m, 0 \leq j \leq n_i}$. Let $X \subseteq \PP$ be a closed subvariety and $f_1, \dots, f_r$ be multihomogeneous generators of the vanishing ideal $I_\PP(X) \subseteq S$. Then a point $x = ([v_1], \dots, [v_m]) \in X$ with $v_i \in \K^{n_i + 1} \setminus \set{0}$ is smooth, if and only if the rank of the Jacobian matrix
    \begin{equation}
    \label{eq:jacobian}
    \left( \frac{\partial f_k}{\partial x_{i,j}}(v_1, \dots, v_m) \right)_{k=1, \dots, r, \mkern1mu j \in J}
    \end{equation}
    is at least $n_1 + \dots + n_m + m - \dim X$.
\end{lemma}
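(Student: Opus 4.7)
The plan is to reduce this multiprojective statement to the classical affine Jacobi criterion applied to the multicone $\hat X \subseteq V = V_1 \times \dots \times V_m$, where I identify $V_i = \K^{n_i+1}$. The key ingredients are (i)~the identification of smoothness of $X$ at $x$ with smoothness of $\hat X$ at $v=(v_1,\dots,v_m)$, and (ii)~the observation that the polynomials $f_1,\dots,f_r$ generate the affine vanishing ideal of $\hat X$.

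First I would relate the two notions of smoothness. Since each $v_i$ is non-zero, the point $v$ lies in the open subset $\pi^{-1}(X)\subseteq \hat X$, and $\pi\colon \pi^{-1}(X)\to X$ is a principal $(\K^\times)^m$-bundle, hence a smooth surjective morphism. It follows that $\hat X$ is smooth at $v$ if and only if $X$ is smooth at $x$. Second, I would verify that $\hat X = V(f_1,\dots,f_r)$ as a closed subscheme of $V$: by construction the multicone is cut out by $I_\PP(X)$, and since $I_\PP(X)$ is already $R_+$-saturated (it is the multiprojective vanishing ideal of a closed subvariety), the multiprojective Nullstellensatz gives $I(\hat X)=I_\PP(X)$, so the $f_k$ indeed generate the affine vanishing ideal of the multicone.

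With these two reductions in place, the lemma follows from the classical affine Jacobi criterion applied to $\hat X \subseteq V$ at the point $v$: $\hat X$ is smooth at $v$ if and only if the rank of the Jacobian $(\partial f_k/\partial x_{i,j}(v))_{k,(i,j)}$ is at least the codimension $\dim V - \dim \hat X = (n_1+\dots+n_m+m) - \dim \hat X$. Identifying $\dim X$ in the statement with $\dim \hat X$ (which is the natural Krull dimension of the multihomogeneous coordinate ring $R$), this is precisely the bound in the lemma.

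The main obstacle, in my view, is not the affine Jacobi step itself (which is standard) but the matching of dimensions and of vanishing ideals: one must be careful to check that passing from $I_\PP(X)$ to $I(\hat X)$ does not require further generators (which uses $R_+$-saturatedness), and that the dimension appearing in the bound is the Krull dimension of $R=\K[\hat X]$ rather than, for example, the dimension of $\widetilde X \subseteq \PP(V)$. Once these bookkeeping points are settled, the equivalence of rank conditions reduces cleanly to the classical criterion on an affine variety, with no additional input from the multiprojective setting beyond the smoothness of the $(\K^\times)^m$-torsor $\pi^{-1}(X)\to X$.
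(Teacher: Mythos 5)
Your route is genuinely different from the paper's. The paper stays in a chart: it assumes $x$ lies in the affine patch where $x_{1,0}=\dots=x_{m,0}=1$, applies the classical affine Jacobi criterion to the dehomogenized ideal there, and then uses the Euler relations $\sum_{j} x_{i,j}\,\partial f/\partial x_{i,j} = \deg(f)_i\, f$ (evaluated at a point of the multicone, where the $f_k$ vanish) to see that the columns of (\ref{eq:jacobian}) belonging to the variables $x_{i,0}$ are linear combinations of the remaining ones, so the full Jacobian and the dehomogenized one have equal rank. You instead pass to the multicone $\hat X\subseteq V$, using that $\pi^{-1}(X)\to X$ is a smooth $(\K^\times)^m$-torsor and that $f_1,\dots,f_r$ generate $I(\hat X)$; this is a sound alternative, and it in effect proves directly the transfer of smoothness between $X$ and $\hat X$ at points with all $v_i\neq 0$, which is exactly what the paper later extracts from this lemma in Corollary~\ref{cor:smooth_in_codim_1} (no circularity, since your torsor argument is independent of the lemma). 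One small repair: $I(\hat X)=I_\PP(X)$ is not really an application of the multiprojective Nullstellensatz (which computes $I_\PP(V_\PP(I))$, not the affine ideal of the multicone); the clean justification is that $I(\hat X)$ is a radical multihomogeneous ideal, because $\hat X$ is stable under the torus action, and its multihomogeneous elements are precisely the multihomogeneous functions vanishing on $X$.

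The genuine gap is the final dimension bookkeeping. The affine Jacobi criterion on the multicone gives: $\hat X$ is smooth at $v$ iff the rank of (\ref{eq:jacobian}) is at least $\dim V-\dim\hat X=(n_1+\dots+n_m+m)-(\dim X+m)=n_1+\dots+n_m-\dim X$, since throughout the paper $\dim X$ is the dimension of the multiprojective variety and $\dim\hat X=\dim X+m$. Your proposal matches the printed bound $n_1+\dots+n_m+m-\dim X$ only by ``identifying $\dim X$ in the statement with $\dim\hat X$'', which is not an identification but a change of the statement by $m$; as written, the reconciliation step is wrong. What your argument actually proves is the criterion with the codimension $n_1+\dots+n_m-\dim X$, and that is the correct bound: the printed one fails already for $I_\PP(X)=0$, $X=\PP$, where the Jacobian has rank $0$, and it would also break the use of the lemma in Corollary~\ref{cor:smooth_in_codim_1}, which needs the same rank threshold for $\hat X\subseteq V$ and for $X$. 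The same slip occurs in the last line of the paper's own proof, where $\dim V-\dim(X\cap U)$ should be $\dim U-\dim(X\cap U)$, consistent with $\operatorname{rank}A=\operatorname{rank}A'$. So state the bound you actually obtain and flag the discrepancy with the printed statement explicitly, rather than absorbing it into a reinterpretation of $\dim X$.
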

\begin{proof}
    Define the affine space $V = \prod_{i=1}^m \K^{n_i + 1}$. We can assume w.\,l.\,o.\,g. that $x$ is contained in the affine patch $U = \set{(u_1, \dots, u_m) \in V \mid x_{1,0}(u_1) = \dots = x_{m,0}(u_m) = 1}$. Let $\iota: U \hookrightarrow V$ be the inclusion. The (affine) coordinate ring of $U$ can be identified with the polynomial ring $S' = \K[x_{i,j} \mid (i,j) \in J']$ for $J' = \set{(i,j) \in \N_0^2 \mid 1 \leq i \leq m, 1 \leq j \leq n_i}$. The ideal $I(X \cap U) \subseteq S'$ is the dehomogenization of $I_\PP(X)$, \ie the image under the comorphism $\iota^*: S \to S'$ of $\iota$. As the smoothness of $x$ can be checked locally in $U$, the affine Jacobi-criterion implies that $x$ is smooth, if and only if the rank of the matrix
    \begin{align*}
        A' = \left( \frac{\partial \circ \iota^*(f_k)}{\partial x_{i,j}}(\iota(\overline v)) \right)_{k=1, \dots, r, j \in J'}
    \end{align*}
    Since $\iota^*$ commutes with the partial derivatives in a coordinate of $J'$, this is a submatrix of the matrix $A$ in (\ref{eq:jacobian}). The columns not contained in this submatrix are linearly dependent to the columns of $A'$, as
    \begin{align*}
        \sum_{j=0}^{n_j} x_{i,j} \cdot \frac{\partial f}{\partial x_{i,j}} = \deg(f)_i \cdot f
    \end{align*}
    holds for every multihomogeneous polynomial $f \in S$. Therefore $x$ is smooth if and only if $\operatorname{rank} A = \operatorname{rank} A' \geq \dim V - \dim (X \cap U) = n_1 + \dots + n_m + m - \dim X$.
\end{proof}

\begin{corollary}
    \label{cor:smooth_in_codim_1}
    Let $X \subseteq \prod_{i=1}^m \PP(V_i)$ be a closed subvariety. Then $X$ is smooth in codimension one, if and only if its affine multicone $\hat X$ is smooth in codimension one.
\end{corollary}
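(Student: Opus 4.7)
The plan is to set up a direct comparison between the Jacobian rank conditions on $X$ and on $\hat X$, then use the torus action on the multicone to translate codimension statements between the two.

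First I would fix multihomogeneous generators $f_1,\dots,f_r$ of $I_\PP(X)$, which coincides with the affine vanishing ideal $I(\hat X)\subseteq\K[V]$, and for $v=(v_1,\dots,v_m)\in V$ write $J(v)$ for their Jacobian matrix. By Lemma~\ref{lem:jacobi_crit}, a point $x=([v_1],\dots,[v_m])\in X$ with all $v_i\neq 0$ is smooth in $X$ iff $\operatorname{rank}J(v)$ meets the prescribed threshold; by the standard affine Jacobi-criterion applied to $\hat X\subseteq V$, the point $v$ is smooth in $\hat X$ iff $\operatorname{rank}J(v)\ge\dim V-\dim\hat X$. Using $\dim\hat X=\dim X+m$ (which follows from the principal-torus-bundle structure described below), both thresholds coincide. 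Hence for any $v\in\hat X$ with all $v_i\neq 0$, smoothness of $\hat X$ at $v$ is equivalent to smoothness of $X$ at $[v]$.

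Next I would exploit the torus $T=(\K^\times)^m$ acting on $V$ by scaling each factor. This action restricts to a free action on the dense open set $\hat X^\circ:=\hat X\cap\prod_i(V_i\setminus\{0\})$, and the projection $\pi\colon\hat X^\circ\twoheadrightarrow X$ is a principal $T$-bundle, hence smooth of relative dimension $m$ and codimension-preserving on closed subsets. Combined with the preceding step this yields the identification $\operatorname{Sing}(\hat X)\cap\hat X^\circ=\pi^{-1}(\operatorname{Sing}(X))$ with $\operatorname{codim}_{\hat X}(\operatorname{Sing}(\hat X)\cap\hat X^\circ)=\operatorname{codim}_X\operatorname{Sing}(X)$. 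The direction ``$\hat X$ smooth in codimension one $\Rightarrow X$ smooth in codimension one'' is then immediate, since $\hat X^\circ$ is open and dense in $\hat X$, so the bound $\operatorname{codim}\ge 2$ transfers directly.

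For the converse, one decomposes $\operatorname{Sing}(\hat X)=(\operatorname{Sing}(\hat X)\cap\hat X^\circ)\cup(\operatorname{Sing}(\hat X)\cap B)$ with $B=\hat X\setminus\hat X^\circ$; the first piece is again handled by the $T$-bundle argument, and the task reduces to showing that the boundary contribution $\operatorname{Sing}(\hat X)\cap B$ has codimension $\ge 2$ in $\hat X$. Since $B=\bigcup_i Y_i$ with $Y_i=\hat X\cap\{v_i=0\}\cong\hat{X_{[m]\setminus\{i\}}}$, any irreducible component of $\operatorname{Sing}(\hat X)\cap B$ of codimension one must coincide with some $Y_i$ of codimension one in $\hat X$; a dimension count forces the generic point of such a $Y_i$ to lie in the stratum where only $v_i$ vanishes.

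The main obstacle will be this last step: verifying that $\hat X$ is actually smooth at the generic point of any codimension-one component $Y_i$ of $B$. The plan here is a local analysis via the projection $\rho_i\colon\hat X\to\hat X_{[m]\setminus\{i\}}$. At a generic point of $Y_i$ the fiber of $\rho_i$ is a line through $0$ in $V_i$, and the resulting line-bundle-like local structure over the smooth locus of $\hat X_{[m]\setminus\{i\}}$ exhibits $\hat X$ as smooth at the zero-section points in question. This local geometric analysis, rather than the Jacobian comparison, is the technical heart of the forward implication.
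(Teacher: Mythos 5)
Your first two steps and the easy direction are essentially the paper's own proof: the paper compares the rank condition of Lemma~\ref{lem:jacobi_crit} with the affine Jacobi criterion on the open locus of $\hat X$ where all components $v_i$ are non-zero (the rank being invariant under the $(\K^\times)^m$-action), and then concludes by the dimension count $\dim S=\dim\pi(S\cap U)+m$, which is exactly your torus-bundle bookkeeping. (Incidentally, the bound printed in Lemma~\ref{lem:jacobi_crit} carries a spurious ``$+m$''; its proof actually yields the threshold $n_1+\dots+n_m-\dim X$, and it is this corrected value that makes your assertion that the two thresholds coincide --- and the paper's use of the lemma --- correct.) Where you go beyond the paper is the boundary: in the direction ``$X$ smooth in codimension one $\Rightarrow\hat X$ smooth in codimension one'' the paper only bounds $\dim S$ for a component $S$ of $\mathrm{Sing}(\hat X)$ under the hypothesis $S\cap U\neq\varnothing$ and says nothing about components contained in $\hat X\setminus U$; you correctly isolate this as the part still requiring an argument.

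However, the argument you propose for that part fails, and this is a genuine gap. A boundary divisor $Y_i=\hat X\cap\set{v_i=0}\cong\set{0}\times\hat X_{[m]\setminus\set{i}}$ has codimension one exactly when the projection $X\to X_{[m]\setminus\set{i}}$ is generically finite; if its degree is at least two, then over a general point of $X_{[m]\setminus\set{i}}$ the multicone has one sheet for each preimage, all of these sheets contain $Y_i$, and $\hat X$ is a union of several ``line bundles'' glued along the common zero section --- hence singular along $Y_i$, so the generic point of $Y_i$ is not smooth and the proposed line-bundle picture breaks down. Concretely, take $m=2$ and let $X\subseteq\PP^1\times\PP^1$ be the graph of $[s:t]\mapsto[s^2:t^2]$, i.e.\ $X=V_\PP(x_0^2y_1-x_1^2y_0)$: then $X\cong\PP^1$ is smooth, while $\hat X=V(x_0^2y_1-x_1^2y_0)\subseteq\A^2\times\A^2$ is singular along the two-dimensional set $\set{x_0=x_1=0}$, which has codimension one in the three-dimensional $\hat X$. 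So your final step cannot be carried out at this level of generality; the forward implication needs an extra hypothesis ruling out generically finite projections of degree $\geq 2$ (for the Schubert-variety strata to which the corollary is applied such projections are birational, and the multicones are in fact normal), and the example shows that the paper's written proof, which tacitly assumes every codimension-one component of $\mathrm{Sing}(\hat X)$ meets $U$, overlooks precisely the case your proposal tries, unsuccessfully, to settle.
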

\begin{proof}
    Let $V = V_1 \times \dots \times V_m$ and $Z_i \subseteq V$ be the preimage of $\set{0} \subseteq V_i$ under the linear projection $\hat\pi_{i}: V \twoheadrightarrow V_i$. The open subvariety $U = V \setminus (\bigcup_{i=1}^m Z_i)$ is the preimage of $\prod_{i=1}^m \PP(V_i)$ under the natural morphism
    \begin{align*}
        \pi: \prod_{i=1}^m V_i \setminus \set{0} \twoheadrightarrow \prod_{i=1}^m \PP(V_i).
    \end{align*}
    By Lemma~\ref{lem:jacobi_crit}, a point $x \in \hat X \cap U$ is smooth, if and only if its corresponding projective point $\pi(x) \in X$ is smooth, since the rank of the Jacobian matrix in (\ref{eq:jacobian}) is independent of the $(\K^\times)^m$-action.
    
    Now suppose that $X$ is smooth in codimension one and let $S$ be an irreducible component of the subvariety $\mathrm{Sing}(\hat X)$ of singular points in the multicone $\hat X$. If $S \cap U \neq \varnothing$ then $S \cap U$ is an irreducible component of $\hat X \cap U$. Hence $\dim S = \dim S \cap U = \dim \pi(S \cap U) + m \leq \dim X - 2 + m = \dim \hat X - 2$, since $\mathrm{Sing}(X)$ has at least codimension $2$ in $X$. Thus $\hat X$ is smooth in codimension one.
    
    Conversely if $\hat X$ is smooth in codimension one and $S$ is an irreducible component of the subvariety $\mathrm{Sing}(X)$ of singular points in $X$, then $\pi^{-1}(S)$ is contained in an irreducible component $S'$ of $\mathrm{Sing}(\hat X)$. Therefore $\dim S = \pi^{-1}(S) - m \leq \dim \hat X - 2 - m = \dim X - 2$ and $X$ is smooth in codimension one.
\end{proof}

We close this section with a lemma which is useful for computing the quasi-valuation of a Seshadri stratification via the decomposition of $R$ into its homogeneous components. The action of the torus $(\K^\times)^m$ on $\hat X$ induces an action on $R$, where an element $\underline t \in (\K^\times)^m$ acts on $g \in R$ via the left translation $\underline t \cdot g =: g^{\underline t}$, where $g^{\underline t}$ is the regular function on $\hat X$ with $g^{\underline t}(x) = g(\underline t^{-1} \cdot x)$ for all $x \in \hat X$.

\begin{lemma}
    \label{lem:span_homog_components}
    For every $h \in R$, the linear subspace generated by the multihomogeneous components $h_{\underline d}$, $\underline d \in \N_0^m$, of $h$ coincides with the linear subspace, which is spanned by all function $h^{\underline t}$ for $\underline t \in (\K^\times)^m$.
\end{lemma}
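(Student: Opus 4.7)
The plan is to unwind the definition of the torus action on multihomogeneous functions and then use linear independence of characters (a Vandermonde-style argument) to pass between the two spanning sets.

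First I would compute the action of $T = (\K^\times)^m$ on a homogeneous element explicitly. The torus acts on $\hat X$ by scaling in each factor, and the $\N_0^m$-grading on $R$ is exactly the weight decomposition for the induced action on functions. Concretely, if $g \in R_{\underline d}$ and $\underline t = (t_1, \dots, t_m) \in T$, then for $x \in \hat X$
\[
    g^{\underline t}(x) = g(\underline t^{-1} \cdot x) = \underline t^{-\underline d} \, g(x), \qquad \underline t^{\underline d} \coloneqq t_1^{d_1} \cdots t_m^{d_m}.
\]
Writing $h = \sum_{\underline d \in S} h_{\underline d}$, where $S \subseteq \N_0^m$ is the finite support of the decomposition, this gives
\[
    h^{\underline t} = \sum_{\underline d \in S} \underline t^{-\underline d} \, h_{\underline d},
\]
which already shows that every $h^{\underline t}$ lies in the $\K$-span of $\set{h_{\underline d} \mid \underline d \in S}$.

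For the reverse inclusion I would show that one can pick finitely many torus elements $\underline t^{(1)}, \dots, \underline t^{(N)}$, with $N = |S|$, such that the matrix $M = \bigl( (\underline t^{(j)})^{-\underline d} \bigr)_{j, \, \underline d \in S}$ is invertible over $\K$; inverting $M$ then expresses each $h_{\underline d}$ as an explicit $\K$-linear combination of the functions $h^{\underline t^{(j)}}$. To produce such a choice I would order $S$ as $\underline d^{(1)}, \dots, \underline d^{(N)}$ and pick the $\underline t^{(j)}$ inductively: since the characters $\chi_{\underline d}: T \to \K^\times$, $\underline t \mapsto \underline t^{-\underline d}$, for distinct $\underline d \in S$ are pairwise different, the Laurent polynomial $\det M$, viewed as a function of the $m N$ coordinates of the $\underline t^{(j)}$, is not identically zero (one can see this by expanding along rows and using that distinct characters of $T$ are $\K$-linearly independent, a standard Artin-type fact proved by induction). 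As $\K$ is algebraically closed and hence infinite, $\det M \ne 0$ for a generic choice of the $\underline t^{(j)} \in T$.

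The only step that requires care is the invertibility of $M$, but this is genuinely routine — it is the standard multivariate Vandermonde argument, which reduces, by choosing all but one coordinate of each $\underline t^{(j)}$ generically, to the classical one-variable Vandermonde determinant. Combining the two inclusions gives the claimed equality of subspaces.
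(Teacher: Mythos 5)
Your proof is correct; the easy inclusion (each $h^{\underline t}$ lies in the span of the components $h_{\underline d}$) is handled the same way in both arguments, but your reverse inclusion follows a genuinely different route from the paper's. The paper first reduces to the ambient product of projective spaces, i.e.\ to a polynomial ring, and then runs an induction on the number $m$ of torus factors: at each step it inverts a one-variable Vandermonde matrix built from powers of a root of unity, and to make the induction close it proves a stronger, uniform statement (the scalars expressing $h_{\underline d}$ through the translates $h^{\underline t}$ depend only on a component-wise degree bound and on $\underline d$, not on $h$). You instead work directly in $R$ --- legitimately so, since the $\N_0^m$-grading on $\K[\hat X]$ is exactly the weight decomposition of the torus action, so $g^{\underline t}=\underline t^{-\underline d}\,g$ for $g\in R_{\underline d}$ holds in $R$ itself --- and you invert a single $N\times N$ matrix of character values $\bigl((\underline t^{(j)})^{-\underline d}\bigr)$ chosen generically. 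The non-vanishing of $\det M$ is indeed routine: in the Leibniz expansion the monomials attached to distinct permutations are pairwise distinct (the $\underline d\in S$ are distinct and each row carries its own block of variables), so $\det M$ is a non-zero Laurent polynomial, and since $\K$ is algebraically closed, hence infinite, it cannot vanish identically on $((\K^\times)^m)^N$; your appeal to linear independence of characters, or the specialization to a one-variable Vandermonde, gives the same conclusion. What your route buys is brevity and the absence of any induction or choice of roots of unity; what the paper's route buys is coefficients that are uniform in $h$ within a fixed degree range, a by-product of its induction that is not needed for the statement of the lemma.
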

\begin{proof}
    It suffices to show this statement for $X = \prod_{i=1}^m \PP(V_i)$. By choosing a basis of every vector space $V_i$, we identify $R$ with the polynomial ring in the variables $x_{i,j}$ for $i \in \set{1, \dots, m}$ and $j \in \set{0, \dots, n_i}$, where $n_i = \dim V_i - 1$. The torus $(\K^\times)^m$ acts as scalars on each subspace $R_{\underline d}$, $d \in \N_0^m$. Hence every function $h^{\underline t}$ for $\underline t \in (\K^\times)^m$ can be written as a linear combination of the multihomogeneous components $h_{\underline d}$. It remains to show the other inclusion of vector spaces. 

    For $\underline c = (c_1, \dots, c_m) \in \N_0^m$ let $R_{\underline c} \subseteq R$ be the linear subspace of all $h \in R$, such that $h_{\underline d} \neq 0$ only if $d_k < c_k$ holds for all $k = 1, \dots, m$. We prove by induction over $m$, that for all $\underline c, \underline d \in \N_0^m$ there exists a finite set $S \subseteq (\K^\times)^m$, such that the multihomogeneous component $h_{\underline d}$ of every $h \in R_{\underline c}$ can be written as $h_{\underline d} = \sum_{\underline t \in S} a_{\underline t} h^{\underline t}$, where the scalars $a_{\underline t} \in \K$ are independent of $h$.

    The induction base $m = 0$ is trivial. So now let $m \geq 1$ and fix a primitive $c_m$-th root of unity $\zeta \in \K^\times$. Let $B_m$ be the basis of the algebra $R_m = \K[x_{m,j} \mid 1 \leq j \leq n_m]$ of all monomials in the variables $x_{m,j}$ and let $h \in R_{\underline c}$, which we write in the form $h = \sum_{g \in B_m} f_g \cdot g$, where $f_g$ lies in the ring $R'$ of polynomials in the variables $x_{i,j}$ for $i \in \set{1, \dots, m-1}$ and $j \in \set{1, \dots, n_i}$. We define $h_j$, $j \in \N_0$, to be the sum of all $f_g \cdot g$, where $g$ is of degree $j$ in $R_m$. For every $i = 0, \dots, c_m$ we have
    \begin{align*}
        h^{(1, \dots, 1, \zeta^i)} = \sum_{j=0}^{c_m} \zeta^{ij} h_j.
    \end{align*}
    As the matrix $A = (\zeta^{ij})_{i,j=0, \dots, c_m}$ is a Vandermonde-matrix and its determinant is non-zero by the choice of $\zeta$, we get $\langle h^{(1, \dots, 1, \zeta^i)} \mid i = 0, \dots, c_m \rangle_\K = \langle h_j \mid j = 0, \dots, c_m \rangle_\K$.

    Fix a tuple $\underline d \in \N_0^m$ and let $\underline d' = (d_1, \dots, d_{m-1})$. By induction the following equation holds for a finite set $S' \subseteq (\K^\times)^{m-1}$:
    \begin{align*}
        h_{\underline d} &= \! \sum_{g \in B_m \atop \deg(g) = d_m} \! (f_g)_{\underline d \mkern1mu '} \mkern2mu g = \sum_{g \in B_m \atop \deg(g) = d_m} \sum_{\underline s' \in S'} a_{\underline s'} (f_g)^{\underline s'} g = \sum_{\underline s' \in S'} a_{\underline s'} \left( \sum_{g \in B_m \atop \deg(g) = d_m} \! (f_g \cdot g)^{(\underline s', 1)} \right) \\
        &= \sum_{\underline s' \in S'} a_{\underline s'} (h_{d_m})^{(\underline s', 1)} = \sum_{\underline s' \in S'} \sum_{i=0}^s a_{\underline s'} \mkern1mu a_i h^{(\underline s', \zeta^i)}
    \end{align*}
    The scalars $a_{i}$ are the entries in the $d_m$-th row of the inverse matrix of $A$. We see that the products $a_{\underline s'} a_i$ only depend on the choice of $\underline c$ and $\underline d$.
\end{proof}

\vskip 6pt

\section*{List of notations}
\pagestyle{empty}

\vskip 4pt

\begin{abbrv}
\item[{$b_{p,q}$}] bond of a covering relation $p > q$
\item[{$\operatorname{Cone} S$}] convex cone generated by a set $S$
\item[{$\operatorname{Conv} S$}] convex hull generated by a set $S$
\item[{$\hat X$}] multicone of embedded projective variety $X$
\item[{$\Gamma$}] fan of monoids
\item[{$\Gamma_C$}] monoids associated to a chain $C \subseteq A$
\item[{$\Gamma_C^{(d)}$}] Veronese submonoid of $\Gamma_C$
\item[{$\mathrm{gr}_{\mathcal V} R$}] associated graded algebra to $\mathcal V$ 
\item[{$\mathrm{gr}_{\mathcal V, \mathfrak C} R$}] subalgebra of $\mathrm{gr}_{\mathcal V} R$
\item[{$\mathrm{gr}_{\mathcal V, \mathfrak C}^{(\underline d)} R$}] Veronese subalgebra of $\mathrm{gr}_{\mathcal V, \mathfrak C} R$
\item[{$\mathcal L^{C}$}] lattice generated by $\Gamma_C$
\item[{$\Lambda^+$}] monoid of dominant weights of $G$
\item[{$[k]$}] set $\set{1, \dots, k}$ for $k \in \N$
\item[{$\max_Q$}] maximal lift (p. \pageref{eq:def_min_Q_max_Q})
\item[{$\min_Q$}] minimal lift (p. \pageref{eq:def_min_Q_max_Q})
\item[{$\pi_Q$}] projection map (p. \pageref{eq:def_pi_Q})
\item[{$P_i$}] a maximal parabolic subgroup in Dynkin type $\texttt{A}$
\item[{$r$}] dimension of the stratified variety $X$
\item[{$\sigma_C$}] cone of multidegrees of $\Gamma_C$
\item[$\supp \underline a$] support of an element in some $\Q^A$ (p. \pageref{txt:def_support})
\item[{$\vert \underline d \vert$}] total degree $d_1 + \dots + d_m$ of $\underline d \in \N_0^m$
\item[{$V$}] ambient affine space of the stratification
\item[{$W_\lambda$}] stabilizer of $\lambda$ in the Weyl group
\item[{$W_Q$}] Weyl subgroup of $Q$
\item[{$X_I$}] projection of a multiprojective variety $X$ to the factors in $I$
\item[{$X_\theta$}] Schubert variety in $G/Q$ to $\theta \in W/W_Q$
\end{abbrv}

\vskip 20pt

\printbibliography


\end{document}